\theoremstyle{plain}
\newtheorem{theorem}{Theorem}[section]
\newtheorem{lemma}[theorem]{Lemma}
\newtheorem{lem}[theorem]{Lemma}
\newtheorem{proposition}[theorem]{Proposition}
\newtheorem{propn}[theorem]{Proposition}
\newtheorem{corollary}[theorem]{Corollary}
\newtheorem{cor}[theorem]{Corollary}
\theoremstyle{remark}
\newtheorem{defn}[theorem]{Definition}
\newtheorem{rem}[theorem]{Remark}
\newtheorem{assu}{Assumption}
\begin{document}

\begin{frontmatter}
\title{On the cover time of Brownian motion\\on the Brownian continuum random tree}
\runtitle{cover time of Brownian motion on the Brownian CRT}

\begin{aug}

\author[A]{\fnms{G.}~\snm{Andriopoulos}\ead[label=e1]{ga73@nyu.edu}},
\author[B]{\fnms{D.\ A.}~\snm{Croydon}\ead[label=e2]{croydon@kurims.kyoto-u.ac.jp}},
\author[C]{\fnms{V.}~\snm{Margarint}\ead[label=e3]{vmargari@charlotte.edu}},
\and
\author[D]{\fnms{L.}~\snm{M\'enard}\ead[label=e4]{laurent.menard@normalesup.org
}}

\address[A]{Department of Computer Science,
New York University Abu Dhabi\printead[presep={ ,\ }]{e1}}

\address[B]{Research Institute for Mathematical Sciences,  Kyoto University, Japan\printead[presep={,\ }]{e2}}

\address[C]{Department of Mathematics and Statistics,
University of North Carolina at Charlotte \printead[presep={,\ }]{e3}}

\address[D]{Modal’X, UMR CNRS 9023, UPL, Univ. Paris-Nanterre, F92000 Nanterre, France\printead[presep={,\ }]{e4}}
\end{aug}

\begin{abstract}
Upon almost-every realisation of the Brownian continuum random tree (CRT), it is possible to define a canonical diffusion process or `Brownian motion'. The main result of this article establishes that the cover time of the Brownian motion on the Brownian CRT (i.e.\ the time taken by the process in question to visit the entire state space) is equal to the infimum over the times at which the associated local times are strictly positive everywhere. The proof of this result depends on the recursive self-similarity of the Brownian CRT and a novel version of the first Ray-Knight theorem for trees, which is of independent interest. As a consequence, we obtain that the suitably-rescaled cover times of simple random walks on critical, finite variance Galton-Watson trees converge in distribution with respect to their annealed laws to the cover time of Brownian motion on the Brownian CRT. Other families of graphs that have the Brownian CRT as a scaling limit are also covered. Additionally, we partially confirm a conjecture made in 1991 by David Aldous regarding cover-and-return times.
\end{abstract}

\begin{keyword}[class=MSC]
\kwd[Primary ]{60K37}
\kwd[; secondary ]{05C81}
\kwd{60G17}
\kwd{60J55}
\kwd{60J68}
\end{keyword}

\begin{keyword}
\kwd{continuum random tree}
\kwd{Brownian snake}
\kwd{Ray-Knight theorem}
\kwd{random walk}
\kwd{local times}
\kwd{cover time}
\kwd{scaling limit}
\end{keyword}

\end{frontmatter}

\tableofcontents

\section{Introduction}

The cover time, that is, the time taken to explore the entire state space, is a natural quantity to study for a stochastic process. In this direction, a question that has been widely considered in the probability and computer science literature is how the cover time behaves asymptotically for a sequence of random walks on graphs of increasing sizes. However, the cover time is not always an easy quantity to pin down. Indeed, even when one has a functional scaling limit for a sequence of random walks, the scaling of the cover time can not typically be read off from this, since such results do not take into account the fine structure of the paths of the processes. For instance, to see a non-trivial functional limit theorem for the random walk on the discrete torus $(\mathbb{Z}/n\mathbb{Z})^d$, equipped with nearest neighbor edges, the time-scaling is $n^2$, but the cover time scales like $n^2(\log n)^2$ for $d=2$ and $n^d(\log n)$ for $d\geq 3$ (see \cite{DPRZ} for a more detailed result in two dimensions, and \cite[Chapter 11]{LPW} for an argument that yields the order of growth in higher dimensions). Moreover, although the random walk on the largest supercritical percolation cluster contained within the box $\{-n,-n+1,\dots,n\}^d$ satisfies the same global scaling (up to constants, and modulo the boundary condition), the cover time in this case is of order $n^d(\log n)^2$ for all $d\geq 2$, see \cite{Abe}. (It is reasonable to expect the same result for supercritical percolation on the discrete torus.) Thus we see that the cover time can be highly sensitive to the detailed geometry of the underlying space.

The goal of this article is to demonstrate convergence of the rescaled cover times of random walks on graphs that converge to the Brownian continuum random tree (CRT). The latter object has, since its introduction by Aldous in \cite{Aldous1, Aldous2, Aldous3}, come to be a central object in modern probability theory. It is known to appear as a scaling limit of critical Galton-Watson trees \cite{Aldous2}, uniform spanning trees on high-dimensional tori \cite{ANS}, critical percolation clusters on certain random hyperbolic maps \cite{AC,CM}, amongst many other examples, and is also conjectured to appear in the description of the scaling limit of critical percolation clusters on the integer lattice in high dimensions \cite{HS}. The limiting cover time in our main convergence result (see Theorem \ref{cor:convres} below) will be expressed in terms of the cover time of the natural diffusion, or `Brownian motion', on the Brownian CRT, which is a process whose definition was suggested by Aldous in \cite{Aldous2} and which was first rigorously defined by Krebs in \cite{Krebs}. In the course of our arguments, we establish various new results for this process.

In fact, cover times of random walks on graph trees, and more general graphs, was something of a research focus for Aldous. For example, in \cite{Aldcov}, he derived some scaling results concerning the cover time of random walks on some particular examples of trees, including giving precise asymptotics in the case of the balanced $b$-ary tree. In the same paper, he discussed the cover times of random walks on critical Galton-Watson trees, including giving a specific conjecture on the scaling limit of the expected `cover-and-return' time (i.e.\ the time taken to hit all vertices and then return to the starting point) for a tree with a Poisson(1) offspring distribution. As a corollary of our main result, we will partially confirm his conjecture, in the sense that we confirm a limit of the relevant sequence exists (see Corollary \ref{crcor} below). Moreover, since our limit is written in terms of the expectation of the cover-and-return time of the Brownian motion on the Brownian CRT, this also supports his conjecture on the exact value of this constant. We highlight that understanding the cover time of the latter process was listed in a section of Aldous' paper \cite{Aldous2} entitled `hard distributional properties [of Brownian motion on continuum trees]'.

Part of the difficulty of studying the cover time of Brownian motion on the Brownian CRT is that it does not concentrate on its mean. In \cite{Aldousthresh}, Aldous gave a characterisation of when concentration of the cover time occurs; namely, the phenomenon arises if the maximal expected hitting time of vertices is asymptotically of a smaller order than the expected cover time, but there exist starting states for which concentration is not seen if this is not the case. (See \cite{Abecov} for work that helps explain which kinds of graph fall into each of these categories. Also, a recent result in \cite{Hermon} gives a related characterisation of cover time concentration for a class of graphs that includes transitive ones in terms of the spectral gaps of the graphs in question.) In the case when concentration occurs, the first order behaviour of the cover time can be understood from its expectation, which was shown in the seminal work of \cite{DLP} to be strongly connected to the expectation of the maximum of an associated Gaussian field; see also \cite{Ding} for tighter estimates in the case of bounded degree graphs and trees (in the concentration regime). However, in `low-dimensional' settings, it is easy to find examples where the cover time does not concentrate on its mean, and so its expectation is not sufficient to describe the asymptotic distribution to first order. Consider, for example, one-dimensional Brownian motion on the unit interval $[0,1]$ and the simple random walks on $(n^{-1}\mathbb{Z})\cap[0,1]$ approximating this. Clearly the cover times of the graphs, when rescaled by $n^{-2}$, have a random limit, and it is a simple exercise to show that this can be described in terms of the cover time of the limiting diffusion. We will show a corresponding result when the limiting space is the Brownian CRT.

In order to state our main results, let us introduce some of the main objects of the subsequent discussion. In this introduction, we denote the Brownian CRT, equipped with its natural metric, by $(\mathcal{T},d_{\mathcal{T}})$ (see Section \ref{sec:rec} for a precise definition of this space in terms of a Brownian excursion, including our choice of normalisation). We also write $\rho$ for the root vertex of $\mathcal{T}$, $\mu_\mathcal{T}$ for its canonical measure, and $\mathbf{P}$ for the probability measure on the underlying probability space upon which $\mathcal{T}$ is built. Given a typical realisation of $(\mathcal{T},d_\mathcal{T},\mu_\mathcal{T})$, one has an associated, canonical Brownian motion $((X^\mathcal{T}_t)_{t\geq 0},(P^\mathcal{T}_x)_{x\in\mathcal{T}})$ (see Section \ref{bmsec} for the definition of such a process on a compact real tree more generally). Setting $X_{[0,t]}^\mathcal{T}:=\{X_s^\mathcal{T}:\:s\in[0,t]\}$ for the range of $X^\mathcal{T}$ up to time $t$, the cover time of $X^\mathcal{T}$ is defined as follows:
\begin{equation}\label{taucovdef}
\tau_{\mathrm{cov}}:=\inf\left\{t\geq 0:\:X^\mathcal{T}_{[0,t]}=\mathcal{T}\right\};
\end{equation}
as we will note in Section \ref{bmsec}, this is a finite random variable, almost-surely. Additionally, the Brownian motion on the Brownian CRT is known to admit jointly continuous local times $(L^\mathcal{T}_t(x))_{x\in \mathcal{T},\:t\geq 0}$, and a study of these will be central to our work. Whilst it is obviously the case for random walks on graphs that the cover time is the first time that the local times are strictly positive everywhere (possibly with a difference of one time-step in the case of discrete-time random walks), and the same property is readily checked for one-dimensional Brownian motion on $[0,1]$, for example, it is by no means clear for diffusions on more general spaces, especially ones with a fractal structure such as the Brownian CRT. Yet, as was discussed in \cite[Remark 7.4]{croydonmoduli} (cf.\ \cite{andblanket}) and will be used in our proof below, checking this property of local times yields a potential route for establishing convergence of the cover times of the random walks approximating the relevant diffusion. Thus it is a key target of ours to prove the following result. Indeed, the majority of the article is devoted to its proof.

\begin{theorem}\label{thm:mainres} For $\mathbf{P}$-a.e.\ realisation of $\mathcal{T}$, for every $x\in\mathcal{T}$, it $P^\mathcal{T}_x$-a.s.\ holds that
\begin{equation}\label{twodefs}
\tau_{\mathrm{cov}}=\inf\left\{t\geq0:\: L^\mathcal{T}_t(y)>0,\:\forall y\in\mathcal{T}\right\}.
\end{equation}
\end{theorem}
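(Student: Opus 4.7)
The inequality $\tau_{\mathrm{cov}} \leq \inf\{t \geq 0 : L^\mathcal{T}_t(y) > 0\ \forall y \in \mathcal{T}\}$ is the soft direction: by joint continuity of $L^\mathcal{T}$ in the space variable, strict positivity of $L^\mathcal{T}_t$ at a point $y$ propagates to an open neighbourhood of $y$, and the occupation time formula then forces $X^\mathcal{T}$ to have spent positive time in that neighbourhood. Since $X^\mathcal{T}_{[0,t]}$ is closed (being the continuous image of $[0,t]$), this places $y$ in the range; strict positivity everywhere thus yields $X^\mathcal{T}_{[0,t]} = \mathcal{T}$.

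For the reverse inequality the plan is to prove the sharper pathwise claim: for $\mathbf{P}$-a.e.\ realisation of $\mathcal{T}$ and $P^\mathcal{T}_x$-almost every path, one has $L^\mathcal{T}_t(y) > 0$ for every $y \in \mathcal{T}$ and every $t > H_y := \inf\{s \geq 0 : X^\mathcal{T}_s = y\}$, simultaneously. Granted this claim, $\inf\{t : L^\mathcal{T}_t(y) > 0\ \forall y\} = \sup_{y \in \mathcal{T}} H_y = \tau_{\mathrm{cov}}$ and the theorem follows. For a single fixed $y$, the strong Markov property at $H_y$ reduces the claim to a purely local statement under $P^\mathcal{T}_y$, namely that $L^\mathcal{T}_\delta(y) > 0$ for every $\delta > 0$; this is accessible via an excursion decomposition of $X^\mathcal{T}$ away from $y$ using the local tree structure at $y$. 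The genuine difficulty is upgrading this pointwise statement to hold simultaneously over the uncountable set $\{y \in \mathcal{T}\}$ outside a single null set, since $H_y$ depends discontinuously on $y$ and naive countability arguments therefore fail.

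To handle the simultaneity, I would exploit the two tools flagged in the abstract. For a fixed reference point $y_0 \in \mathcal{T}$, the novel Ray-Knight theorem for trees should furnish an explicit description of the tree-indexed local time field at a suitable stopping time $\sigma$ (a hitting time or an inverse local time involving $y_0$), from which one would deduce pathwise that $\{y : L^\mathcal{T}_\sigma(y) > 0\}$ coincides with the subtree visited by time $\sigma$. The recursive self-similarity of the Brownian CRT should then allow this argument to be applied at essentially any reference point, and a limiting argument combined with joint continuity of the local times should deliver the simultaneous pathwise claim. I expect the main obstacle to be the pathwise support analysis of the Ray-Knight field: a distributional identification alone is insufficient, and one must rule out additional zeros of the field arising from the fractal geometry of the Brownian CRT, which is precisely why a genuinely tree-level Ray-Knight theorem --- rather than the classical one-dimensional statement applied branch by branch --- is essential.
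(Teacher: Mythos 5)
Your first paragraph (the inequality $\inf\{t:\,L^\mathcal{T}_t(y)>0\ \forall y\}\geq\tau_{\mathrm{cov}}$) is fine and matches the paper's argument at \eqref{bb2}. The hard direction, however, rests on a false intermediate claim. Your ``sharper pathwise claim'' --- that almost surely, simultaneously for all $y\in\mathcal{T}$, $L^\mathcal{T}_t(y)>0$ for every $t>H_y$ --- fails. Already for one-dimensional Brownian motion run until $H_1$: by the first Ray--Knight theorem the local time field at negative levels is a $\mathrm{BESQ}^0$ process absorbed at $0$, and its first zero is exactly the running minimum $-M$, so the level $-M$ has been hit (at the a.s.\ unique minimising time, which is strictly before $H_1$) yet $L_{H_1}(-M)=0$. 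The same phenomenon occurs on the CRT: by the paper's own first Ray--Knight theorem for trees (Lemma \ref{rkhit}), at a hitting time the local times on the subtrees hanging off the arc are tree-indexed $\mathrm{BESQ}^0$ processes absorbed at zero, so the frontier of the visited region consists of points that have been visited but carry zero local time at later times. For the same reason your later assertion that $\{y:L^\mathcal{T}_\sigma(y)>0\}$ ``coincides with the subtree visited by time $\sigma$'' is false at hitting-type times $\sigma$. The exceptional points you worry about genuinely exist; they cannot be ruled out uniformly over all hitting times, and the theorem is not a consequence of any such simultaneity statement.

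What the paper actually proves is a dichotomy for the zero set of the local time field at a time slightly beyond an approximate cover time, and this is the content your proposal is missing. One covers a finite-branch approximation $\mathcal{T}^{(\varepsilon)}$, runs until a nearby point is hit ($\tau_{\mathrm{cov}}^{(\varepsilon)}(\delta)$), and shows via Proposition \ref{rk} that, conditionally on the local times on $\mathcal{T}^{(\varepsilon)}$, the local times on the complementary components are independent tree-indexed $\mathrm{BESQ}^0$ processes; choosing $\mathcal{T}^{(\varepsilon)}$ through the Williams height decomposition makes those components a Poisson process of rescaled CRTs (Proposition \ref{prop:poissonTepsilon}). The decisive input is then Theorem \ref{th:snake}: the tip of a $\mathrm{BESQ}^0$-Brownian snake either never vanishes or vanishes on an open interval, proved via the special Markov property and maximal solutions of $\mathcal{A}u=2u^2$ in the style of Le Gall. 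Hence at $\tau_{\mathrm{cov}}^{(\varepsilon)}(\delta)$ the local time is either strictly positive everywhere or zero on a whole ball; a zero ball is an unvisited ball, which is incompatible with $\tau_{\mathrm{cov}}^{(\varepsilon)}(\delta)\rightarrow\tau_{\mathrm{cov}}(\delta)>\tau_{\mathrm{cov}}$ (Proposition \ref{propmain}). This is exactly the ``pathwise support analysis'' you flag as the main obstacle, but your proposal offers no mechanism for it: a distributional Ray--Knight identity plus recursive self-similarity does not by itself exclude isolated zeros inside the visited set, and that exclusion is the heart of the proof. You would also still need the final step of the paper (Section \ref{53}): the snake argument gives the identity for the root, and extending to every starting point requires the re-rooting invariance of the CRT together with a separate Markov/continuity argument.
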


Before proceeding to the application of the above theorem to scaling limits of cover times, let us briefly outline our strategy for verifying it. As is at the heart of \cite{DLP}, we will appeal to the strong connection between local times and Gaussian fields via a Ray-Knight theorem. More specifically, we give a version of the classical first Ray-Knight theorem in which we describe the distribution of the local time at the hitting time of a specified vertex in terms of certain squared Bessel (BESQ) processes (tree-indexed in our case), see Lemma \ref{rkhit}. We will apply this at a certain approximation to the cover time that occurs (just after) the time $X^\mathcal{T}$ covers a subtree $\mathcal{T}^{(\varepsilon)}\subseteq\mathcal{T}$ with finitely many branches that approximates $\mathcal{T}$ to within $d_\mathcal{T}$-Hausdorff distance $\varepsilon$. When $\mathcal{T}$ is selected according to the It\^{o} excursion measure and $\mathcal{T}^{(\varepsilon)}$ is chosen according to a particular `height decomposition' (see Section \ref{sec:rec} for details), we have that the (closures of) the components of $\mathcal{T}\backslash\mathcal{T}^{(\varepsilon)}$ form a Poisson process of copies of rescaled Brownian CRTs, see Figure \ref{f1}. Together with our Ray-Knight theorem, this implies that the local times of $X^\mathcal{T}$ at our cover time approximation behave on these components as independent tree-indexed zero-dimensional BESQ processes. With this perspective, we are able to obtain the result of interest by appealing to a certain path property of BESQ-Brownian snakes, which we check by applying an argument that involves expressing the measures of certain events as solutions of differential equations; this part of the work is based on techniques developed by Le Gall in \cite{LG}. We emphasize this means that, in our approach, rather than sampling the entire tree, and then the processes $X^\mathcal{T}$ and $L^\mathcal{T}$ upon this, we first sample $\mathcal{T}^{(\varepsilon)}$, then the local times upon this set at a particular stopping time, before finally sampling the remaining parts of the tree and local times together. In particular, we interleave the selection of the random environment and the stochastic process that has this as its state space.

\begin{figure}[t]
\begin{center}
\includegraphics[width=9cm]{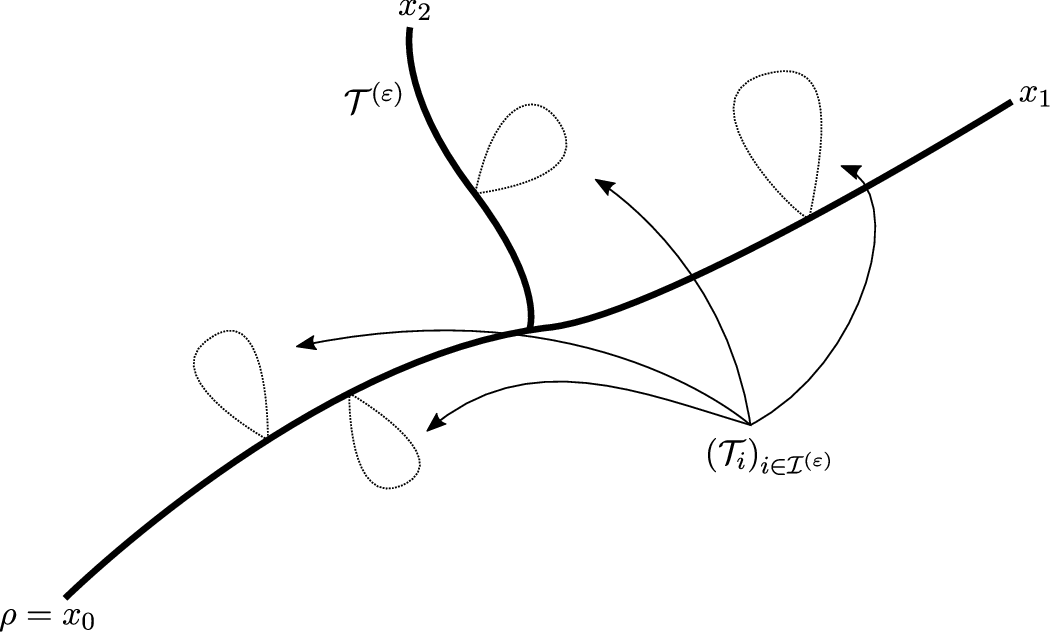}
\end{center}
\caption{The above illustration shows the decomposition of the Brownian CRT that we will apply. The tree $\mathcal{T}^{(\varepsilon)}$, shown with bold lines, spans a collection of leaves $(x_i)_{i=1}^{M(\varepsilon)}$ such that the Hausdorff distance between $\mathcal{T}^{(\varepsilon)}$ and $\mathcal{T}$ is smaller than $\varepsilon$. Moreover, our particular choice of leaves means that the closures of the connected components $\mathcal{T}\backslash \mathcal{T}^{(\varepsilon)}$, which are denoted $(\mathcal{T}_i)_{i\in\mathcal{I}^{(\varepsilon)}}$ are distributed as rescaled copies of the Brownian CRT. On $\mathcal{T}^{(\varepsilon)}$, the local times are easily checked to take strictly positive values just after the cover time. To check the same is true on the remaining parts of $\mathcal{T}\backslash \mathcal{T}^{(\varepsilon)}$, we use a Ray-Knight theorem, which enables us to understand the local times within these in terms of a tree-indexed zero-dimensional squared Bessel process, and thereby apply results from the literature for generalised Brownian snakes.}\label{f1}
\end{figure}

We now come to our main convergence result. Although our motivation is principally to understand cover times of random walks, we will not need to restrict to such. Rather, we will work in a more general setting of stochastic processes associated with resistance forms, as developed in \cite{Croydonscalelim,CHK,KigAOF,Kigq,Noda}, for example. In particular, we consider objects of the form
\begin{equation}\label{quintuple}
\mathcal{K}=\left(K,R_K,\mu_K,\rho_K,P^K\right),
\end{equation}
where $(K,R_K)$ is a compact resistance metric space that satisfies a certain metric entropy condition (see Section \ref{51} below for a definition of this and the other components that we now introduce), $\mu_K$ is a finite Borel measure on $K$ of full support, $\rho_K$ is a marked point of $K$, and $P^K$ is the law of the associated stochastic process $X^K=(X^K_t)_{t\geq 0}$ and its jointly continuous local times $L^K=(L^K_t(x))_{x\in K,\:t\geq 0}$, where we will always assume that $X^K$ is started from the root $\rho_K$. We note that the process $X^K$ takes values in $D(\mathbb{R}_+,K)$, the space of c\`{a}dl\`{a}g paths in $K$, equipped with the usual Skorokhod $J_1$ topology, and $L^K$ takes values in $C(K\times \mathbb{R}_+,\mathbb{R}_+)$. A convenient topology for describing convergence of such objects was introduced in \cite{Noda, Nodametric}, and this will be recalled below in Section \ref{51}. Essentially the convergence $\mathcal{K}_n\rightarrow\mathcal{K}$ means that all the spaces can be isometrically embedded into a common metric space in such a way that the sets converge with respect to Hausdorff distance, the measures converge weakly, the roots are identified, and the laws of the stochastic processes converge weakly, with the local times $L^{K_n}$ close to those of $L^K$ for nearby points (see Lemma \ref{embedding} for details). Following \cite{Noda}, we write the collection of $(K,R_K,\mu_K,\rho_K)$, identified if they are root-preserving and measure-preserving isometric, as $\check{\mathbb{F}}_c$, and the space containing elements of the form $\mathcal{K}$ as $\mathbb{M}_{L,c}$. (In fact, $\mathbb{M}_{L,c}$ is a larger space that contains elements with fewer restrictions, but this is unimportant here.)  In this setting, we have the following consequence of Theorem \ref{thm:mainres}, where we write $\tau_{\mathrm{cov}}(K)$ for the cover time of a process $X^K$, defined analogously to \eqref{taucovdef}.

\begin{theorem}\label{cor:convres} For $\mathbf{P}$-a.e.\ realisation of $\mathcal{T}$, if $(K_n,R_{K_n},\mu_{K_n},\rho_{K_n})\in \check{\mathbb{F}}_c$ are such that
\begin{equation}\label{kconv}
\left({K_n},R_{K_n},\mu_{K_n},\rho_{K_n},P^{K_n}\right)\rightarrow \left(\mathcal{T},2d_\mathcal{T},\mu_\mathcal{T},\rho,P_\rho^\mathcal{T}\right)
\end{equation}
in $\mathbb{M}_{L,c}$, and moreover, for each $n$, either $K_n$ is at most a countable set or $X^{K_n}$ is $P^{K_n}$-a.s.\ continuous, then the law of $\tau_{\mathrm{cov}}(K_n)$ under $P^{K_n}$ converges to that of $\tau_{\mathrm{cov}}(\mathcal{T})$ under $P_\rho^\mathcal{T}$.
\end{theorem}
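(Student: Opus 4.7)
The plan is to use a Skorohod representation to upgrade the weak convergence in $\mathbb{M}_{L,c}$ to an almost-sure coupling in a common compact metric space, and then to verify path-by-path that $\tau_{\mathrm{cov}}(K_n) \to \tau_{\mathrm{cov}}(\mathcal{T})$ by separately establishing matching upper and lower bounds. The upper bound is where Theorem \ref{thm:mainres} plays its essential role; the lower bound is a path-continuity argument in the limit.

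By Lemma \ref{embedding}, the assumed convergence in $\mathbb{M}_{L,c}$ yields a common compact metric space $(M, d_M)$ into which $(K_n, R_{K_n})$ and $(\mathcal{T}, 2d_\mathcal{T})$ embed isometrically so that $K_n \to \mathcal{T}$ in Hausdorff distance on $M$, $\rho_{K_n} \to \rho$, and, viewing $X^{K_n}$ and $L^{K_n}$ (suitably extended to $M$) as random elements of $D(\mathbb{R}_+, M)$ and $C(M \times \mathbb{R}_+, \mathbb{R}_+)$, their laws converge weakly to those of $X^\mathcal{T}$ and $L^\mathcal{T}$. I would apply Skorohod's theorem to realise everything on a common probability space with almost-sure convergence. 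Since $X^\mathcal{T}$ is $P_\rho^\mathcal{T}$-a.s.\ continuous, Skorohod convergence of paths then strengthens to uniform convergence on compact time intervals, and joint continuity of $L^\mathcal{T}$ together with Hausdorff convergence delivers uniform control of local times in the sense that $\sup_{y' \in K_n,\,s \leq T} |L^{K_n}_s(y') - L^\mathcal{T}_s(\pi_n(y'))| \to 0$ for any $T$ and for any selection $\pi_n(y') \in \mathcal{T}$ with $d_M(y', \pi_n(y')) \to 0$.

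For the upper bound, fix $\varepsilon > 0$ and note that Theorem \ref{thm:mainres} gives $L^\mathcal{T}_{\tau_{\mathrm{cov}}(\mathcal{T})+\varepsilon}(y) > 0$ for every $y \in \mathcal{T}$, so by continuity on the compact set $\mathcal{T}$, $\delta := \inf_{y \in \mathcal{T}} L^\mathcal{T}_{\tau_{\mathrm{cov}}(\mathcal{T})+\varepsilon}(y) > 0$. Using Hausdorff convergence together with the uniform local-time convergence above, one deduces $\inf_{y' \in K_n} L^{K_n}_{\tau_{\mathrm{cov}}(\mathcal{T})+\varepsilon}(y') \geq \delta/2$ for all sufficiently large $n$. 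Under either of the structural hypotheses on $K_n$, positivity of $L^{K_n}_t(\cdot)$ everywhere forces $X^{K_n}_{[0,t]} = K_n$: when $K_n$ is at most countable this is immediate from the occupation-measure interpretation; when $X^{K_n}$ is a.s.\ continuous the range $X^{K_n}_{[0,t]}$ is closed and necessarily contains any point supporting positive local time. Hence $\tau_{\mathrm{cov}}(K_n) \leq \tau_{\mathrm{cov}}(\mathcal{T}) + \varepsilon$ for large $n$, and letting $\varepsilon \downarrow 0$ completes this direction.

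For the lower bound, fix $\varepsilon > 0$ and set $t := \tau_{\mathrm{cov}}(\mathcal{T}) - \varepsilon$. By definition of the cover time there exists $y_0 \in \mathcal{T}$ with $y_0 \notin X^\mathcal{T}_{[0,t]}$, and by continuity of $X^\mathcal{T}$ the range is closed, so $\gamma := \inf_{s \leq t} d_M(X^\mathcal{T}_s, y_0) > 0$. Hausdorff convergence supplies $y_n \in K_n$ with $d_M(y_n, y_0) \to 0$, while the uniform convergence $\sup_{s \leq t} d_M(X^{K_n}_s, X^\mathcal{T}_s) \to 0$ then gives $\inf_{s \leq t} d_M(X^{K_n}_s, y_n) \geq \gamma/2$ for $n$ large, so $y_n \notin X^{K_n}_{[0,t]}$ and $\tau_{\mathrm{cov}}(K_n) > t$. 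Combining with the upper bound yields $\tau_{\mathrm{cov}}(K_n) \to \tau_{\mathrm{cov}}(\mathcal{T})$ almost surely in the coupling, from which the stated convergence in distribution follows. The hard part is the upper bound, which is the only place where the genuinely non-trivial equivalence between cover time and positivity of local times must be exploited; it is crucial that on the pre-limit spaces $K_n$ only the easy implication (local times everywhere positive implies everything visited) is required, so that Theorem \ref{thm:mainres} need be invoked only on the limit object $\mathcal{T}$.
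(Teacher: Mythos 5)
Your proposal is correct and follows essentially the same route as the paper: the paper proves an intermediate general statement (Theorem \ref{covertimeconv}) via the embedding/coupling of Lemma \ref{embedding}, obtains the lower bound from an unvisited point and uniform path convergence, and the upper bound by transferring strict positivity of $L^\mathcal{T}$ just after $\tau_{\mathrm{cov}}(\mathcal{T})$ to the $K_n$ via the uniform local-time control, invoking Theorem \ref{thm:mainres} only on the limit and the easy implication (positive local times everywhere implies covered, using countability or continuity) on the prelimit spaces — exactly as you do. The only difference is organisational, not mathematical.
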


For random spaces, it is also natural to deal with the annealed law of the cover time, i.e.\ the law averaged over the randomness of the space. Specifically, we write
\begin{equation}\label{annealedlaw}
\mathbb{P}^K\left(\tau_{\mathrm{cov}}(K)\in \cdot\right)=\int P^K\left(\tau_{\mathrm{cov}}(K)\in \cdot\right)\mathbf{P}\left(d\left(K,R_K,\mu_K,\rho_K\right)\right),
\end{equation}
where we suppose that the underlying probability space for $(K,R_K,\mu_K,\rho_K)$ is also denoted $\mathbf{P}$. We note that the measurability of the map $(K,R_K,\mu_K,\rho_K)\mapsto\mathcal{K}$ was checked in \cite[Proposition 6.1]{Noda}, which means the above integral is well-defined. From the previous result, it is straightforward to deduce the following.

\begin{corollary}\label{anncor}
If $(K_n,R_{K_n},\mu_{K_n},\rho_{K_n})$ are random elements of $\check{\mathbb{F}}_c$ such that \eqref{kconv} holds in distribution, and moreover, for each $n$, either $K_n$ is at most a countable set or $X^{K_n}$ is $P^{K_n}$-a.s.\ continuous, $\mathbf{P}$-a.s.,\ then the law of $\tau_{\mathrm{cov}}(K_n)$ under $\mathbb{P}^{K_n}$ converges to that of $\tau_{\mathrm{cov}}(\mathcal{T})$ under its annealed law $\mathbb{P}^\mathcal{T}$ (defined similarly to \eqref{annealedlaw}, with the measure within the integrand being given by $P_\rho^\mathcal{T}$).
\end{corollary}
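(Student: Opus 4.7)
The plan is to lift the quenched convergence of Theorem \ref{cor:convres} to the annealed level via a standard Skorohod coupling and bounded convergence argument. As noted in the excerpt, $\mathbb{M}_{L,c}$ (and the underlying space $\check{\mathbb{F}}_c$) is a Polish space in the topology of \cite{Noda,Nodametric}, so the assumed convergence in distribution in \eqref{kconv} can be realised almost surely on a common probability space $(\tilde{\Omega},\tilde{\mathbf{P}})$ by the Skorohod representation theorem. Under this coupling, the marginal law of each tuple $(K_n,R_{K_n},\mu_{K_n},\rho_{K_n},P^{K_n})$ and that of $(\mathcal{T},2d_\mathcal{T},\mu_\mathcal{T},\rho,P^{\mathcal{T}}_\rho)$ are preserved, so the annealed laws defined in \eqref{annealedlaw}, which only depend on these marginals, are unchanged.

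By the assumption on countability/continuity (which holds $\mathbf{P}$-a.s., and hence $\tilde{\mathbf{P}}$-a.s.\ after coupling) and the $\tilde{\mathbf{P}}$-almost sure convergence delivered by Skorohod, Theorem \ref{cor:convres} applies realisation by realisation: for $\tilde{\mathbf{P}}$-a.e.\ $\omega$, the law of $\tau_{\mathrm{cov}}(K_n(\omega))$ under $P^{K_n(\omega)}$ converges weakly to the law of $\tau_{\mathrm{cov}}(\mathcal{T}(\omega))$ under $P^{\mathcal{T}(\omega)}_\rho$. Consequently, for any bounded continuous $f:\mathbb{R}_+\to\mathbb{R}$,
\[
E^{K_n}\!\left[f\!\left(\tau_{\mathrm{cov}}(K_n)\right)\right]\longrightarrow E^{\mathcal{T}}_\rho\!\left[f\!\left(\tau_{\mathrm{cov}}(\mathcal{T})\right)\right] \qquad \tilde{\mathbf{P}}\text{-a.s.}
\]
(Measurability of both sides in the random environment is precisely the point verified in \cite[Proposition 6.1]{Noda}, which was flagged in the statement of the corollary.)

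Since $f$ is bounded, the dominated convergence theorem allows us to pass this almost sure convergence through the expectation $\tilde{\mathbf{E}}$, yielding
\[
\mathbb{E}^{K_n}\!\left[f\!\left(\tau_{\mathrm{cov}}(K_n)\right)\right]\longrightarrow \mathbb{E}^{\mathcal{T}}\!\left[f\!\left(\tau_{\mathrm{cov}}(\mathcal{T})\right)\right],
\]
which is exactly the annealed weak convergence claimed. There is essentially no hard step here: the only points requiring mild care are the separability/Polish property of $\mathbb{M}_{L,c}$ needed to invoke the Skorohod representation (already established in \cite{Noda,Nodametric}), and the verification that the annealed law coincides with the expectation of the quenched law under the coupling, which is immediate from the preservation of marginals. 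If one wished to avoid invoking Polishness of the full space $\mathbb{M}_{L,c}$, an alternative route is to use Prokhorov's theorem along subsequences together with the identification of subsequential limits obtained from Theorem \ref{cor:convres}; but the Skorohod approach is cleaner and sufficient under the hypotheses already in place.
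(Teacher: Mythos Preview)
Your proof is correct and follows essentially the same route as the paper: invoke separability of $\mathbb{M}_{L,c}$ to apply Skorohod's representation theorem, then apply the quenched convergence (Theorem~\ref{cor:convres}) realisation by realisation, and finish with dominated convergence. The paper's own proof is a one-sentence sketch of exactly this argument; your write-up simply fills in the details it leaves implicit.
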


Whilst the statements of Theorem \ref{cor:convres} and Corollary \ref{anncor} are broad, they are maybe a little abstract for those not familiar with resistance forms. So, let us outline how they apply in the case of random walks on graphs. In particular, suppose $K$ is the vertex set of a connected, finite graph containing no loops or multiple edges. Equip the graph with conductances $(c(x,y))_{x,y\in K}$, i.e.\ $c(x,y)$ are non-negative numbers such that $c(x,y)=c(y,x)$ and $c(x,y)>0$ if and only if $\{x,y\}$ is an edge of the graph. Write $R_K=(R_K(x,y))_{x,y\in K}$ for the associated effective resistance (see, for example, \cite[Section 9.4]{LPW} for a definition). Moreover, let $\mu_K$ be a finite measure on $K$ placing a non-zero mass on each vertex. In this case, $(K,R_K,\mu_K,\rho_K)\in \check{\mathbb{F}}_c$ for any vertex $\rho_K\in K$, and the associated stochastic process $(X^K_t)_{t\geq 0}$ is the continuous time Markov chain with generator given by
\[\Delta_Kf(x):=\frac{1}{\mu_K(\{x\})}\sum_{y\in K}c(x,y)\left(f(y)-f(x)\right),\]
for $f:K\rightarrow \mathbb{R}$. One standard choice for $\mu_K$ is the counting measure, i.e.\ $\mu_K(\{x\})=1$ for all $x\in K$, in which case $X^K$ is called the \emph{variable-speed random walk (VSRW)}. Another is the conductance measure, i.e.\ $\mu_K(\{x\})=\sum_{y\in K}c(x,y)$ for all $x\in K$, in which case the process is called the \emph{constant-speed random walk (CSRW)} since it has unit mean holding times. The latter property means the CSRW closely resembles the discrete-time simple random walk with transition probabilities given by $P(x,y)=c(x,y)/\sum_{z\in K}c(x,z)$. Although we will not discuss the discrete-time process to any great extent later, where we have the conclusion of Theorem \ref{cor:convres} or Corollary \ref{anncor} for the CSRW, we will typically be able to transfer this to the discrete-time random walk with no problem. (For moments of the cover time, one should be slightly careful in moving between discrete and continuous time, see Section \ref{uisec}.) As for the local times of $X^K$, these are given by
\[L^K_t(x)=\frac{1}{\mu_K(\{x\})}\int_0^t\mathbf{1}_{\{X^K_s=x\}}ds.\]
In examples, we will generally discuss a sequence of graphs $(K_n)_{n\geq 1}$ for which the effective resistance and measure have been rescaled. That is, writing $P^{K_n}$ for the law of $X^{K_n}$ started from $\rho_{K_n}$, we are able to check
\begin{equation}\label{rescaledg}
\left(K_n,a_n^{-1}R_{K_n},b_n^{-1}\mu_{K_n},\rho_{K_n},P^{K_n}\left(\left(\left(X^{K_n}_{ta_nb_n}\right)_{t\geq 0},\left(a_n^{-1}L^{K_n}_{ta_nb_n}(x)\right)_{x\in K_n,\:t\geq 0}\right)\in \cdot\right)\right)
\end{equation}
converges to the right-hand side of \eqref{kconv}, either for a typical realisation of the Brownian CRT (as in Theorem \ref{cor:convres}) or in distribution (as in Corollary \ref{anncor}), where $(a_n)_{n\geq 1}$, $(b_n)_{n\geq 1}$ are some divergent sequences of non-negative numbers. We then obtain that
\begin{equation}\label{graphconv}
(a_nb_n)^{-1}\tau_{\mathrm{cov}}\left(K_n\right)\rightarrow\tau_{\mathrm{cov}}\left(\mathcal{T}\right)
\end{equation}
in the appropriate sense. Various examples for which the convergence of objects as at \eqref{rescaledg} can be checked are presented in \cite{Noda} (see also \cite{andblanket, Nodadisc}), and these include the following. We stress that, although these two examples are graph trees, our result should be applicable beyond such. See Problem 2 below for details of some non-tree models for which we expect similar results to hold.

\begin{description}
\item[\textbf{Example 1:} Critical, finite variance Galton-Watson trees.]
Let $T_n$ be the tree generated by a Galton-Watson process whose offspring distribution is non-trivial, critical (mean one) and has finite variance $\sigma^2$, conditioned to have $n$ vertices. (One may consider only those $n$ for which the probability of the latter event is strictly positive.) Let $c_{T_n}(x,y)=1$ if $\{x,y\}$ is a parent-child bond in $T_n$, and set $c_{T_n}(x,y)=0$ otherwise. Also, let $\mu_{T_n}^V$ be the counting measure on $T_n$, and $\mu_{T_n}^C$ be the conductance (or degree) measure. Since the spaces are trees, one has the associated effective resistance $R_n$ is the usual shortest path graph distance on $T_n$, and we have from \cite{andblanket, Noda} that the convergence described at \eqref{rescaledg} holds in distribution, with scaling factors given by
\[a_n:=\frac{\sqrt{n}}{2\sigma},\]
\[b_n^V:=n,\qquad b_n^C:=2n,\]
where the superscripts $V$ and $C$ distinguish between the cases of the variable- and constant-speed random walks. (The convergence of the random walks was initially established in \cite{croydonconv}.) Moreover, we highlight that in results concerning the scaling of graph trees to the Brownian CRT, the most common measure considered is the counting measure. However, for a tree with unit conductances, it is easy to check the discrepancy between this and half the conductance measure is small, and so there is no problem in checking the result for both the VSRW and the CSRW.) Thus we have that \eqref{graphconv} holds in distribution under the relevant annealed laws. In particular, for the CSRW (or discrete-time random walk) on $T_n$, the time scaling factor is given by $a_nb_n^C=\sigma^{-1}n^{3/2}$.

\item[\textbf{Example 2:} Uniform spanning trees on high-dimensional tori.]\
Consider the uniform spanning tree $U_n$ on a $d$-dimensional torus $(\mathbb{Z}/n\mathbb{Z})^d$ with $d\geq 5$. Equip edges of the graph with unit conductances, and suppose $\mu_{U_n}$ is either the counting measure or the conductance measure on the vertices of $U_n$. Based on the work of \cite{ANS}, it is then shown in \cite{Noda} that the convergence at \eqref{rescaledg} holds in distribution, with scaling factors given by
\[a_n:=\alpha(d)n^{d/2},\]
\[b_n^V:=n^d,\qquad b_n^C:=2n^d,\]
where $\alpha(d)$ is some dimension-dependent constant and again the superscripts $V$ and $C$ distinguish between the cases of the variable- and constant-speed random walks. (In \cite{ANS,Noda}, only the counting measure was considered, but, similarly to the comment of the previous example, it is straightforward to replace this with the conductance measure, and thereby obtain the result for both the VSRW and the CSRW.) In particular, for the CSRW (or discrete-time random walk) on $T_n$, the time scaling factor is given by $a_nb_n^C=2\alpha(d)n^{3d/2}$. We note that \cite{ANS,Noda} also consider a more general model of a `high-dimensional' uniform spanning tree and our results would cover this; we restrict to the case of tori simply for brevity.
\end{description}

Whilst the above discussion covers the main contribution of this paper, we continue this introduction by presenting a couple of further consequences of our results that we believe are also of interest. Firstly, for any $p\geq 1$, we check the uniform $L^p$-integrability of the cover times of random walks on critical Galton-Watson trees whose offspring distribution admits finite exponential moments. In conjunction with our earlier results, this allows us to conclude that the $p$th moments of the cover times of the discrete models converge, with again the limit being expressed in terms of the cover time of Brownian motion on the Brownian CRT. As part of the result we note that the latter quantity also admits finite moments of all orders. (Our argument could also be used to yield a corresponding result for certain stretched exponential moments, see Remark \ref{semom} below.) Note that we write $\mathbb{E}^K$ for the expectation under $\mathbb{P}^K$ (and similarly for other spaces).

\begin{corollary}\label{pocor}
As in Example 1 above, let $T_n$ be the tree generated by a Galton-Watson process whose offspring distribution is non-trivial and critical (mean one), conditioned to have size $n$. Additionally, suppose that the offspring distribution $(p_k)_{k\geq 0}$ has finite exponential moments of some order, i.e.\ $\sum_{k\geq 0}e^{\lambda k}p_k<\infty$ for some $\lambda >0$. Writing $\sigma^2$ for the variance of the offspring distribution and $\tau_{\rm cov}({T}_n)$ for the cover time of the discrete-time or constant-speed random walk on $T_n$ started from the root, it then holds that, for all $p\geq 1$,
\[\sigma^pn^{-3p/2}\mathbb{E}^{T_n}\left(\tau_{\rm cov}({T}_n)^p\right)\rightarrow \mathbb{E}^\mathcal{T}\left(\tau_{\rm cov}(\mathcal{T})^p\right),\]
where the right-hand side above is finite. In particular, the cover time of Brownian motion on the Brownian CRT admits finite polynomial moments of all orders.
\end{corollary}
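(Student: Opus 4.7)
The plan is to upgrade the weak convergence supplied by Corollary~\ref{anncor} (applied in the setting of Example~1) to $L^p$-convergence via uniform integrability. Writing $Y_n := \sigma\, n^{-3/2}\tau_{\mathrm{cov}}(T_n)$, one already has $Y_n \Rightarrow \tau_{\mathrm{cov}}(\mathcal{T})$ in distribution under the annealed laws, so it suffices to show that, for every $q \geq 1$,
\[ \sup_{n \geq 1} \mathbb{E}^{T_n}\!\bigl[Y_n^{q}\bigr] < \infty. \]
This uniform moment bound yields uniform integrability of $Y_n^{p}$ for every $p \geq 1$, giving both the convergence of $p$th moments claimed in the corollary and, by passage to the limit, finiteness of $\mathbb{E}^{\mathcal{T}}[\tau_{\mathrm{cov}}(\mathcal{T})^{p}]$.

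I would establish the uniform moment bound in two stages. \emph{Quenched stage:} conditionally on $T_n$, the cover time is bounded in terms of structural statistics of the tree. The commute-time identity gives $E^{T_n}_\rho[\tau_v] + E^{T_n}_v[\tau_\rho] = 2R_{T_n}(\rho,v)\,\mu_{T_n}(T_n) \leq C\, n\, \mathrm{ht}(T_n)$ for every vertex $v$, where $\mathrm{ht}(T_n)$ is the height of $T_n$ rooted at $\rho$. A tree-specific cover-time argument---tracking excursions of $X^{T_n}$ from $\rho$ into principal subtrees---then produces exponential tails of the form
\[ P^{T_n}\!\bigl(\tau_{\mathrm{cov}}(T_n) > t\bigr) \leq C n \exp\!\bigl(-c\, t/(n\,\mathrm{ht}(T_n))\bigr) \quad \text{for } t \geq C\, n\,\mathrm{ht}(T_n), \]
and hence the quenched moment bound $E^{T_n}[\tau_{\mathrm{cov}}(T_n)^{q}] \leq C_q\,(n\,\mathrm{ht}(T_n))^{q}$. \emph{Annealed stage:} the hypothesis that the offspring distribution admits finite exponential moments entails the classical Gaussian upper tail $\mathbf{P}(\mathrm{ht}(T_n) \geq x\sqrt{n}) \leq C e^{-c x^2}$ for the conditioned tree (Addario-Berry, Devroye, Janson), so $\mathbf{E}[\mathrm{ht}(T_n)^{r}] \leq C_r\, n^{r/2}$ for every $r \geq 1$. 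Combining the two stages yields $\mathbb{E}^{T_n}[\tau_{\mathrm{cov}}(T_n)^{q}] \leq C_q\, n^{3q/2}$, which is precisely what is needed.

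The main obstacle is securing the quenched tail bound above without an extraneous logarithmic factor: a naive Matthews-style argument gives $E^{T_n}[\tau_{\mathrm{cov}}(T_n)] \lesssim n\,\mathrm{ht}(T_n)\,\log n$, and a surviving $(\log n)^q$ breaks the uniform bound against the rescaling $n^{-3q/2}$. To avoid this, one must really exploit the tree structure---for instance via a recursive decomposition of the cover time into independent excursions of $X^{T_n}$ from $\rho$ into the principal subtrees at $\rho$, expressing $\tau_{\mathrm{cov}}(T_n)$ as a sum indexed by subtrees of geometrically-many excursions, each of sub-exponentially concentrated length, and then iterating. Once the uniform moment bound is in hand, the remaining steps---combining weak convergence with uniform integrability, and transferring from the CSRW to the discrete-time random walk (flagged as requiring care in Section~\ref{uisec})---are routine.
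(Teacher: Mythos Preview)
Your overall plan---upgrade the annealed weak convergence of Corollary~\ref{anncor} to $L^p$ convergence by proving a uniform moment bound $\sup_n n^{-3q/2}\mathbb{E}^{T_n}[\tau_{\mathrm{cov}}(T_n)^q]<\infty$---is exactly the paper's strategy, and the annealed stage via the Addario-Berry--Devroye--Janson height tails is also what the paper uses. The gap is in your quenched stage. The deterministic bound $E^{T_n}[\tau_{\mathrm{cov}}(T_n)^q]\leq C_q\,(n\,\mathrm{ht}(T_n))^q$ is false for arbitrary trees: already for the star on $n$ vertices one has $\mathrm{ht}=1$ and $t_{\mathrm{cov}}\asymp n\log n$, so $t_{\mathrm{cov}}/(n\,\mathrm{ht})\to\infty$. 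Thus some genuine input from the random geometry of $T_n$ is required, and your proposed recursive excursion decomposition does not supply it: each recursion over the children of a vertex incurs a coupon-collector factor of order $\log(\mathrm{deg})$, and summing these over the levels reproduces a polylogarithmic factor rather than removing it. (Separately, the tail bound you write, $P^{T_n}(\tau_{\mathrm{cov}}>t)\leq Cn\,e^{-ct/(n\,\mathrm{ht})}$, already yields $E[\tau_{\mathrm{cov}}^q]\lesssim (n\,\mathrm{ht}\,\log n)^q$, not $(n\,\mathrm{ht})^q$, because the prefactor $n$ pushes the effective threshold to $t\asymp n\,\mathrm{ht}\,\log n$.)

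The paper replaces your quenched step by a sharper estimate on $t_{\mathrm{cov}}(T_n)$ that does use the random tree structure. It invokes the Barlow--Ding--Nachmias--Peres bound $t_{\mathrm{cov}}(T_n)\leq C\,nD_n\sum_{i\leq\log_2\log n}\sqrt{2^{-i}\log A_i^n}$, where $A_i^n=N(T_n,d_n,2^{-i}D_n)$ are dyadic covering numbers, and then controls the $A_i^n$ via a modulus-of-continuity estimate for the rescaled contour function of $T_n$ (Gittenberger); this is precisely where the exponential-moment hypothesis enters. Combining with the diameter tails and with Ding's concentration $P^{T}(\tau_{\mathrm{cov}}\geq\lambda t_{\mathrm{cov}})\leq c_1e^{-c_2\lambda}$ for trees gives $\mathbb{P}^{T_n}(n^{-3/2}\tau_{\mathrm{cov}}(T_n)\geq\lambda)\leq Ce^{-c\sqrt{\lambda}}$, hence all polynomial moments. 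In short, the missing idea in your proposal is that the logarithm cannot be removed by a purely deterministic tree argument; one needs a covering-number (metric-entropy) bound for the \emph{random} tree, and that is what the BDNP--Gittenberger--Ding combination supplies.
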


Secondly, again motivated by the work of Aldous in \cite{Aldous2,Aldcov}, we consider the cover-and-return time. In particular, for Brownian motion on the Brownian CRT started from $\rho$, this is defined as
\[\tau_{\mathrm{cov}}^{+}:=\inf\left\{t\geq \tau_{\mathrm{cov}}:\:X^\mathcal{T}_t=\rho\right\},\]
i.e.\ the first time $X^\mathcal{T}$ returns to its starting point $\rho$ after the cover time $\tau_{\mathrm{cov}}$. We define the corresponding stopping time similarly for other processes. It is not difficult to incorporate the additional time from $\tau_{\mathrm{cov}}$ to $\tau_{\mathrm{cov}}^+$ into our arguments and thereby obtain the following result.

\begin{corollary}\label{crcor}
(a) Under the assumptions of Theorem \ref{cor:convres}, the law of $\tau_{\mathrm{cov}}^+(K_n)$ under $P^{K_n}$ converges to that of $\tau_{\mathrm{cov}}^+(\mathcal{T})$ under $P_\rho^\mathcal{T}$.\\
(b) Under the assumptions of Corollary \ref{anncor}, the law of $\tau_{\mathrm{cov}}^+(K_n)$ under $\mathbb{P}^{K_n}$ converges to that of $\tau_{\mathrm{cov}}^+(\mathcal{T})$ under its annealed law $\mathbb{P}^\mathcal{T}$.\\
(c) In the setting of Corollary \ref{pocor},
\[\sigma^pn^{-3p/2}\mathbb{E}^{T_n}\left(\tau_{\rm cov}^+({T}_n)^p\right)\rightarrow \mathbb{E}^\mathcal{T}\left(\tau_{\rm cov}^+(\mathcal{T})^p\right),\]
where the right-hand side above is finite. In particular, the cover-and-return time of Brownian motion on the Brownian CRT admits finite polynomial moments of all orders.
\end{corollary}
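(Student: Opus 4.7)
The plan for each part is to treat the return leg $[\tau_{\mathrm{cov}}, \tau_{\mathrm{cov}}^+]$ as an add-on to the corresponding cover-time result, controlled by the hitting time of $\rho$ from a generic location in the state space.

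For part (a), I would first observe that the convergence in $\mathbb{M}_{L,c}$ described at \eqref{kconv}, together with Theorem \ref{cor:convres}, yields the joint convergence in distribution of the pair $(X^{K_n}, \tau_{\mathrm{cov}}(K_n))$ to $(X^\mathcal{T}, \tau_{\mathrm{cov}}(\mathcal{T}))$ in the natural sense (paths in Skorokhod $J_1$ on a common isometric embedding, together with the cover time). By the Skorokhod representation theorem we may assume this convergence is almost sure. Writing $H(\omega, t, y) := \inf\{s \geq t : \omega_s = y\}$, we have $\tau_{\mathrm{cov}}^+(K_n) = H(X^{K_n}, \tau_{\mathrm{cov}}(K_n), \rho_{K_n})$, and the desired convergence reduces to verifying that $H$ is continuous at $(X^\mathcal{T}, \tau_{\mathrm{cov}}(\mathcal{T}), \rho)$ with $P^\mathcal{T}_\rho$-probability one.

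The continuity step is the heart of the argument. The process $X^\mathcal{T}$ is continuous, and by Theorem \ref{thm:mainres} applied together with the strong Markov property at $\tau_{\mathrm{cov}}^+(\mathcal{T})$, the local time $L^\mathcal{T}_t(\rho)$ is strictly positive and strictly increasing on every interval $(\tau_{\mathrm{cov}}^+(\mathcal{T}), \tau_{\mathrm{cov}}^+(\mathcal{T}) + \delta)$. This forces the set $\{s : X^\mathcal{T}_s = \rho\}$ to accumulate at $\tau_{\mathrm{cov}}^+(\mathcal{T})$ from the right, so no open neighborhood of $\tau_{\mathrm{cov}}^+(\mathcal{T})$ is entirely avoided by this set. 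This is precisely the standard condition under which Skorokhod convergence of paths plus convergence of reference times $\tau_{\mathrm{cov}}(K_n) \to \tau_{\mathrm{cov}}(\mathcal{T})$ implies convergence of the first hitting times of $\rho$ after those reference times; the easier direction (no undershoot) comes from continuity of $X^\mathcal{T}$ at $\tau_{\mathrm{cov}}^+(\mathcal{T})$ itself, which gives $\rho_{K_n}$-hitting for $X^{K_n}$ near $\tau_{\mathrm{cov}}^+(\mathcal{T})$.

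Part (b) then follows by the same integration procedure used to derive Corollary \ref{anncor} from Theorem \ref{cor:convres}. For part (c), given Corollary \ref{pocor} and the distributional convergence from part (b), it suffices to establish uniform $L^p$-integrability of $\sigma^p n^{-3p/2} \tau_{\mathrm{cov}}^+(T_n)^p$, which by the triangle inequality reduces to uniform $L^p$-control of the excess $\tau_{\mathrm{cov}}^+(T_n) - \tau_{\mathrm{cov}}(T_n)$. By the strong Markov property at $\tau_{\mathrm{cov}}(T_n)$, this excess is dominated stochastically by $\max_{x \in T_n} H_\rho$ under $P_x^{T_n}$. The commute time identity yields $E_x[H_\rho] \leq 2 R_{T_n}(x,\rho)\mu_{T_n}(T_n) \leq C n \operatorname{diam}(T_n)$, and higher moments follow by iterating the Markov inequality at hitting times of successive vertices along the path from $x$ to $\rho$, combined with the exponential tail bound $\mathbb{P}(\operatorname{diam}(T_n) \geq t\sqrt{n}) \leq C e^{-c t^2}$ that holds under the exponential-moment assumption on the offspring distribution. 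A parallel resistance-based estimate, applied pathwise on $\mathcal{T}$ (using finiteness of polynomial moments of $\operatorname{diam}(\mathcal{T})$ and $\mu_\mathcal{T}(\mathcal{T}) = 1$), shows the limiting moments are finite.

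The main obstacle is the continuity step in part (a): Skorokhod convergence of paths does not a priori imply convergence of hitting times of a single point, especially when the limit process lives on a fractal state space. Here it is Theorem \ref{thm:mainres}, applied via the strong Markov property at $\tau_{\mathrm{cov}}^+(\mathcal{T})$, that supplies the necessary path regularity (positivity of the local time at $\rho$ immediately after $\tau_{\mathrm{cov}}^+$), making the functional continuous on a set of full $P^\mathcal{T}_\rho$-measure.
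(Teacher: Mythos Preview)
Your overall strategy is sound and parts (b) and (c) are essentially handled as in the paper. The gap is in the continuity step of part (a).

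You argue that because $\{s:X^\mathcal{T}_s=\rho\}$ accumulates at $\tau_{\mathrm{cov}}^+(\mathcal{T})$ from the right, Skorokhod convergence of $X^{K_n}$ to $X^\mathcal{T}$ forces $X^{K_n}$ to hit $\rho_{K_n}$ near $\tau_{\mathrm{cov}}^+(\mathcal{T})$. This is not a ``standard condition'': uniform closeness of $X^{K_n}_s$ to $\rho$ at the (Lebesgue-null) set of $\rho$-visit times for $X^\mathcal{T}$ does \emph{not} imply that $X^{K_n}$ ever equals $\rho_{K_n}$. Convergence of first hitting times of a single point under path convergence is precisely the delicate issue here, and your proposal supplies no mechanism to close it. (Your $\liminf$ direction, by contrast, is fine: if $X^\mathcal{T}$ avoids $\rho$ on a compact time interval it stays uniformly away, and so does $X^{K_n}$.) Incidentally, the local-time-at-$\rho$ increase after $\tau_{\mathrm{cov}}^+$ follows from \eqref{ltinc} and the strong Markov property; Theorem \ref{thm:mainres} is not needed or relevant for this.

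The paper fixes this differently: rather than attempting almost-sure convergence of $\tau_{\mathrm{cov}}^+(K_n)$ on the coupled space, it first obtains $X^{K_n}_{\tau_{\mathrm{cov}}(K_n)}\to X^\mathcal{T}_{\tau_{\mathrm{cov}}(\mathcal{T})}$, and then proves separately that whenever $x_n\to x$, the \emph{law} of $\tau^{K_n}_{\rho_{K_n}}$ under $P^{K_n}_{x_n}$ converges to that of $\tau^\mathcal{T}_{\rho}$ under $P^\mathcal{T}_x$. The upper bound in this latter statement comes from first hitting a small ball around $\rho$ (which \emph{is} continuous under path convergence) and then applying the commute-time identity to bound the residual time to reach $\rho_{K_n}$ from inside that ball. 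You could alternatively patch your argument by invoking the local-time convergence \eqref{k6} that is part of the $\mathbb{M}_{L,c}$ topology: strict increase of $L^\mathcal{T}_\cdot(\rho)$ just after $\tau_{\mathrm{cov}}^+(\mathcal{T})$, transferred to $L^{K_n}_\cdot(\rho_{K_n})$ via \eqref{k6}, forces an actual visit of $X^{K_n}$ to $\rho_{K_n}$ under the countability/continuity hypothesis on $K_n$. Either route works, but some such input is essential and is missing from your proposal as written.
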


Finally, we set out some problems of varying difficulty that are left open by this work.
\begin{description}
  \item[\textbf{Problem 1.}] Whilst the argument we give for the Brownian CRT has some appealing aspects, we believe that it should be possible to generalise the conclusion of Theorem \ref{thm:mainres} significantly. Somewhat obvious modifications should allow us to cover tree-like spaces with similarly nice recursive decompositions, such as $\alpha$-stable trees (see \cite{AbDel}) or the scaling limit of critical Erd\"{o}s-R\'{e}nyi graphs (see \cite{ABG1,ABG2} for a recursive decomposition of this space and \cite{BDNP} for a study of the associated cover time asymptotics); indeed, essentially all our technical arguments for Brownian motion on real trees have been written in a general way so that they could easily be applied directly or adapted to such spaces. More ambitiously, though, it is not unreasonable to conjecture that the identity at \eqref{twodefs} holds for any of the stochastic processes associated with elements of $\check{\mathbb{F}}_c$, such as nested fractals (as introduced in \cite{Lind}) or the two-dimensional Sierpi\'{n}ski carpet. For all these spaces, the processes in question admit jointly continuous local times, and so it is harder to conceive that they admit exceptional points, as would be needed for the conclusion of Theorem \ref{thm:mainres} to fail. At the moment, however, we have no idea of a strategy to tackle this issue.
  \item[\textbf{Problem 2.}] Other examples for which a Brownian CRT scaling limit is known include the high-dimensional critical branching random walk \cite{BCF2,BCF1} and lattice trees \cite{CFHP}, as well as the critical percolation cluster on the random hyperbolic half-planar triangulation \cite{AC}. To apply Corollary \ref{anncor} in these cases, it will suffice to verify the metric entropy condition of \cite{Noda}, as described in Remark \ref{nodarem} below. At least for the branching random walk model, this should be an easy consequence of the corresponding result for Galton-Watson trees. Another model for which we would also expect the result to be applicable is the incipient infinite cluster of percolation on the integer lattice in high dimensions. However, deriving the scaling limit in this case is still an open question. (See \cite{HS} for rigourous work in this direction, and \cite{Croydonscalelim} for a detailed conjecture on the resistance scaling of this model.)
 \item[\textbf{Problem 3.}] We do not believe exponential moments are essential for the conclusion of Corollary \ref{pocor} to be applicable, rather we expect that a finite variance assumption is sufficient. The one part of the argument for which we apply the exponential moment assumption is in deducing a suitable quantitative bound for the metric cover size of the graphs in question (see the comment above Lemma \ref{cc3} for further detail). As with the bounds in \cite{ABDJ}, however, one might hope that second moments of the offspring distribution are enough to yield exponential tail estimates for the distribution of the metric cover size associated with the conditioned Galton-Watson tree.
  \item[\textbf{Problem 4.}] Concerning cover-and-return times, in \cite{Aldous2}, Aldous conjectured that
\[\mathbb{E}\left(\tau_{\mathrm{cov}}^{+}(\mathcal{T})\right)=6\sqrt{2\pi}.\]
This assertion was based on a consideration of the asymptotics of cover-and-return times for random walks on conditioned Poisson(1) trees, as studied in \cite{Aldcov}. (We note that such trees can also be interpreted as uniform random trees on $n$ labelled vertices $\{1,2,\dots,n\}$, with root vertex $1$, see \cite[Lemma 12]{Aldcov} or \cite[Section 2.2]{Kolchin}, for example.) In partial support of this conjecture, Corollaries \ref{pocor} and \ref{crcor} confirm that indeed the cover-and-return times of the relevant discrete models converge to the corresponding quantity for the Brownian CRT. To complete the proof of the above identity, one also needs to make rigourous various heuristic arguments in \cite{Aldcov}. We believe that the local time scaling limits now known to hold in this setting will contribute to this program, but it seems more detailed estimates will be needed to complete it.
\end{description}

The remainder of the article is organised as follows. In Section \ref{bmsec}, we derive basic results for Brownian motions on compact real trees, including a version of the first Ray-Knight theorem for trees. It is here that we reduce the problem of proving Theorem \ref{thm:mainres} to a technical condition (see Assumption \ref{assu:2}) that will later be checked using BESQ-Brownian snakes. Following this, our recursive height decomposition of the Brownian CRT is described in Section \ref{sec:rec}, and then the desired property of BESQ-Brownian snakes is checked in Section \ref{sec:bes}. Putting these pieces together, we prove Theorem \ref{thm:mainres} in Section \ref{sec:mainproof}. Finally, we derive the convergence results of Theorem \ref{cor:convres} and Corollary \ref{anncor} in Section \ref{sec:convsec}, the moment convergence statement of Corollary \ref{pocor} in Section \ref{uisec}, and our result on cover-and-return times, i.e.\ Corollary \ref{crcor}, in Section \ref{sec:cr}. We note that, in various estimates, we will write $C$ and $c$ for constants whose particular values are unimportant, and these might change from line to line.

\section{Brownian motion on compact real trees}\label{bmsec}

The aim of this section is to develop the fundamental results concerning Brownian motions on compact real trees that will provide the basis for the proof of Theorem \ref{thm:mainres}. In particular, after introducing the process of interest in Section \ref{sec:bmdefn} and setting out an assumption that ensures the existence of jointly continuous local times (see Assumption \ref{assu:1} and Lemma \ref{lem:contlt}), in Section \ref{sec:covertimeapprox}, we give a version of Theorem \ref{thm:mainres} (see Proposition \ref{propmain}) under an additional assumption (see Assumption \ref{assu:2}). The latter assumption is rather technical, involving a specific property of the local times at a certain approximation of the cover time. Towards checking this, in Section \ref{sec:rktheorems}, we give a Ray-Knight theorem for local times that holds at the relevant approximation of the cover time. Together with a key property of a BESQ-Brownian snake that we will check in Section \ref{sec:bes}, such a Ray-Knight theorem will enable us to check Assumption \ref{assu:2} in our setting, and thereby establish Theorem \ref{thm:mainres}.

\subsection{Definition and basic properties}\label{sec:bmdefn}

We start by introducing the setting of this section. Since the construction and basic properties of Brownian motions on real trees are by now rather well understood, we will be brief with the details, and refer the interested reader to \cite{AEW} for comprehensive background. (See also \cite{Kigamidendrite} for earlier work on this topic, and \cite{Aldous2, Croydoncrt, Krebs} for an introduction to Brownian motion on the Brownian CRT in particular.) Specifically, for now, we assume that $(\mathcal{T},d_{\mathcal{T}})$ is a fixed compact real tree with root (distinguished vertex) $\rho$. We also suppose that $\mu_\mathcal{T}$ is a Borel probability measure on $\mathcal{T}$ of full support. As is described in \cite{AEW,Kigamidendrite} (see \cite[Theorem 1]{AEW} in particular), there is a canonical local, regular Dirichlet form  $(\mathcal{E}^\mathcal{T},\mathcal{F}^\mathcal{T})$ on $L^2(\mathcal{T},\mu_\mathcal{T})$ that satisfies
\begin{equation}\label{reschar}
2d_\mathcal{T}(x,y)=\left(\inf\left\{\mathcal{E}^\mathcal{T}(f,f):\:f\in\mathcal{F}^\mathcal{T},\:f(x)=0,\:f(y)=1\right\}\right)^{-1},\qquad \forall x,y\in\mathcal{T},\:x\neq y,
\end{equation}
and the $\mu_\mathcal{T}$-symmetric strong Markov process associated with this through the standard correspondence (of \cite[Theorem 7.2.1]{FOT}, for example), which we will denote by $((X^\mathcal{T}_t)_{t\geq 0},(P^\mathcal{T}_x)_{x\in\mathcal{T}})$, has a law that is uniquely defined from every starting point. Moreover, $X^\mathcal{T}$ has continuous sample paths, $P^\mathcal{T}_x$-a.s.\ for every $x\in \mathcal{T}$. This process is called the Brownian motion associated with $(\mathcal{T},d_\mathcal{T},\mu_\mathcal{T})$. (In various works, the constant 2 is not included in the characterisation of the form at \eqref{reschar}, but we choose to do so here to align the definition better with standard one-dimensional Brownian motion, and also the convention in works such as \cite{Aldous2, AEW, ALW}.) Since points have positive capacity in this setting (see \cite[Lemma 3.4]{AEW}), it is known that $X^\mathcal{T}$ admits jointly measurable local times $(L^\mathcal{T}_t(x))_{x\in\mathcal{T},t\geq 0}$ that satisfy the occupation density formula, i.e.\
\begin{equation}\label{odf}
\int_{0}^tf(X^\mathcal{T}_s)ds=\int_\mathcal{T}f(y)L_t^\mathcal{T}(y)\mu_\mathcal{T}(dy),
\end{equation}
for all continuous $f:\mathcal{T}\rightarrow\mathbb{R}$ and $t\geq 0$, $P^\mathcal{T}_x$-a.s.\ for every $x\in \mathcal{T}$, see \cite[Section 2]{GK}. (Precisely, when we say that $X^\mathcal{T}$ or $L^\mathcal{T}$ admits a property $P^\mathcal{T}_x$-a.s.,\ we mean that there exists a version of the relevant process that admits the property in question.)

We finish this subsection by giving a simple sufficient condition for the local time process $L^\mathcal{T}$ to admit a jointly continuous version. In particular, we will generally suppose the following, which can be verified for $\mathbf{P}$-a.e.\ realisation of the Brownian CRT (equipped with its canonical measure) by applying the volume estimates of \cite[Theorem 1.2]{Croydoncrt}, for example.

\begin{assu}\label{assu:1} There exist constants $C$ and $\eta$ such that
\[\inf_{x\in \mathcal{T}}\mu_\mathcal{T}\left(B_\mathcal{T}(x,r)\right)\geq Cr^\eta,\qquad \forall r\in (0,1),\]
where $B_\mathcal{T}(x,r):=\{y\in\mathcal{T}:\:d_\mathcal{T}(x,y)<r\}$.
\end{assu}

\begin{lem}\label{lem:contlt} Suppose Assumption \ref{assu:1} holds. It is then the case that the local times $(L^\mathcal{T}_t(x))_{x\in\mathcal{T},t\geq 0}$ are continuous, $P^\mathcal{T}_y$-a.s.\ for every $y\in \mathcal{T}$.
\end{lem}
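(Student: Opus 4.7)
The plan is to verify joint continuity of $L^\mathcal{T}$ via Kolmogorov--Centsov, combining Green-kernel bounds coming from the resistance-form structure with polynomial metric-entropy bounds on $\mathcal{T}$ that follow from Assumption \ref{assu:1}.

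First, I would record a bound on the resolvent density. For each $\lambda>0$, the $\lambda$-resolvent of $X^\mathcal{T}$ admits a symmetric, jointly continuous density $u_\lambda$ on $\mathcal{T}\times\mathcal{T}$ with respect to $\mu_\mathcal{T}$ (this is standard for resistance forms on compact spaces). Because the map $y\mapsto u_\lambda(\cdot,y)$ takes values in $\mathcal{F}^\mathcal{T}$, the variational identity \eqref{reschar} yields the classical Green-kernel estimate
\[
u_\lambda(x,x)+u_\lambda(y,y)-2u_\lambda(x,y) \leq 2\,d_\mathcal{T}(x,y),\qquad \forall\,x,y\in\mathcal{T}.
\]
Together with the (deterministic) bound $\sup_{x,y} u_\lambda(x,y)<\infty$ coming from compactness, this controls the difference $u_\lambda(z,x)-u_\lambda(z,y)$ by $C\,d_\mathcal{T}(x,y)$ uniformly in $z$.

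Second, I would use the resolvent identity for local times (which are positive continuous additive functionals whose $\lambda$-potential is $y\mapsto u_\lambda(\cdot,y)$) to obtain moment bounds on local-time increments. A standard computation based on the strong Markov property at hitting times of $x$ gives, for $\tau$ an independent exponential of rate $\lambda$,
\[
E^\mathcal{T}_z\!\left[L^\mathcal{T}_\tau(x)^k\right]\leq k!\,u_\lambda(z,x)\,u_\lambda(x,x)^{k-1},\qquad k\geq 1,
\]
and a parallel computation for the signed functional $L^\mathcal{T}_t(x)-L^\mathcal{T}_t(y)$ combined with the Green-kernel estimate yields
\[
\sup_{z\in\mathcal{T}} E^\mathcal{T}_z\!\left[\bigl(L^\mathcal{T}_\tau(x)-L^\mathcal{T}_\tau(y)\bigr)^{2k}\right]\leq C_{k,\lambda}\,d_\mathcal{T}(x,y)^k,\qquad \forall\,x,y\in\mathcal{T}.
\]
Since $t\mapsto L^\mathcal{T}_t(x)$ is non-decreasing and $\tau$ is independent of $X^\mathcal{T}$, the bound transfers, for each fixed $T<\infty$, to
\[
\sup_{z\in\mathcal{T}} E^\mathcal{T}_z\!\left[\sup_{t\leq T}\bigl(L^\mathcal{T}_t(x)-L^\mathcal{T}_t(y)\bigr)^{2k}\right]\leq C_{k,T}\,d_\mathcal{T}(x,y)^k.
\]

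Third, I would combine this with the metric entropy implied by Assumption \ref{assu:1}: any $\varepsilon$-separated subset of $\mathcal{T}$ has cardinality at most $C\varepsilon^{-\eta}$, since disjoint $\varepsilon/2$-balls centered at such points each have $\mu_\mathcal{T}$-mass at least $C(\varepsilon/2)^\eta$, and $\mu_\mathcal{T}(\mathcal{T})=1$. Choosing $k$ sufficiently large so that $k/2>\eta$, the Kolmogorov--Centsov theorem in a parameter space with polynomial metric entropy (a standard chaining argument) produces a jointly continuous modification of $(x,t)\mapsto L^\mathcal{T}_t(x)$ on $\mathcal{T}\times[0,T]$ for every $T$, which yields the desired version. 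The main obstacle is the moment bound in the second step: while the machinery is classical for symmetric Markov processes, the cleanest route is probably to invoke the Dynkin isomorphism theorem to reduce continuity of $L^\mathcal{T}$ to that of a centered Gaussian field with covariance $u_\lambda$, whose continuity follows from the Green-kernel bound via Dudley's theorem. Alternatively one can iterate hitting-time decompositions around the geodesic $[\![x,y]\!]$, which is especially convenient on a tree since every path from a generic point to both $x$ and $y$ must traverse this arc.
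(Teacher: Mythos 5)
The paper gives no self-contained argument for this lemma: its proof is a one-line citation of an external result, and that cited result is established by exactly the kind of argument you sketch (a Green-kernel/resistance estimate $u_\lambda(x,x)+u_\lambda(y,y)-2u_\lambda(x,y)\le R(x,y)=2d_\mathcal{T}(x,y)$, combined with the polynomial metric entropy forced by Assumption \ref{assu:1}, fed either into Kac-type moment bounds plus chaining or into the Marcus--Rosen/Dynkin isomorphism together with Dudley's entropy bound). So your route is in substance the intended one; your resolvent estimate, the entropy count (an $\varepsilon$-separated set has at most $C\varepsilon^{-\eta}$ points because $\mu_\mathcal{T}$ is a probability measure), and the final chaining step are all sound.

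The one step you should not wave through is the passage from the exponential-time moment bound to control of $\sup_{t\le T}\bigl|L^\mathcal{T}_t(x)-L^\mathcal{T}_t(y)\bigr|$. Monotonicity of $t\mapsto L^\mathcal{T}_t(x)$ does not help directly, since the difference $L^\mathcal{T}_t(x)-L^\mathcal{T}_t(y)$ is not monotone in $t$, so its running supremum cannot be bounded by its value at the independent exponential time (nor by conditioning on $\tau>T$). The standard fixes are either (i) the Marcus--Rosen argument that almost-sure continuity of $x\mapsto L^\mathcal{T}_\tau(x)$ at an independent exponential time already implies joint continuity of $(x,t)\mapsto L^\mathcal{T}_t(x)$, via additivity and the strong Markov property, or (ii) a genuine two-parameter Kolmogorov/chaining argument, which additionally needs a time-increment bound of the form $E^\mathcal{T}_z\bigl[(L^\mathcal{T}_t(x)-L^\mathcal{T}_s(x))^{2k}\bigr]\le C|t-s|^{k}$ (obtainable from the occupation density formula and the uniform resolvent bound). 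Your alternative route via the isomorphism theorem and Dudley's theorem sidesteps this issue entirely and is the cleanest complete proof; if you keep the moment-bound route, replace the monotonicity remark by one of these two arguments.
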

\begin{proof}
See \cite[Lemma 2.2]{croydonscale}.
\end{proof}

\begin{rem}
A weaker sufficient condition for the continuity of local times is the metric entropy condition of \cite{Noda}, as described at \eqref{entcond}. In fact, for the results of Section \ref{sec:covertimeapprox}, we only need local time continuity, and so it would be sufficient to make this itself the assumption. However, for our Ray-Knight theorem of Section \ref{sec:rktheorems}, we require continuity of squared Bessel processes on various subtrees, and Assumption \ref{assu:1} is convenient for checking this.
\end{rem}

\subsection{Approximations for the cover time}\label{sec:covertimeapprox}

In order to prove the main result of this section (Proposition \ref{propmain}), we will consider a sequence of approximations to $\mathcal{T}$ with finitely many branches. The precise choice of approximations that we will introduce for the Brownian CRT will be presented in Section \ref{sec:rec}. For now, we merely need to suppose $(x_i)_{i\geq 0}$ is a dense sequence of points in $\mathcal{T}$. It will be consistent with our decomposition for the Brownian CRT and result in no loss of generality to assume that $x_0=\rho$ and that the elements of the sequence $(x_i)_{i\geq 0}$ are distinct. We also let $M=M(\varepsilon)$, $\varepsilon>0$, be natural numbers that are non-increasing as a function of $\varepsilon$ and diverge as $\varepsilon\rightarrow 0$. For each $\varepsilon>0$, we define
\begin{equation}\label{teps}
\mathcal{T}^{(\varepsilon)}:=\bigcup_{i=1}^{M(\varepsilon)}[[\rho,x_i]]_\mathcal{T},
\end{equation}
where for points $x,y\in\mathcal{T}$, we write $[[x,y]]_\mathcal{T}$ for the geodesic arc between $x$ and $y$. We equip $\mathcal{T}^{(\varepsilon)}$ with the restriction of $d_\mathcal{T}$ to $\mathcal{T}^{(\varepsilon)}$ (we will not introduce special notation for this restriction), so that $(\mathcal{T}^{(\varepsilon)},d_\mathcal{T})$ is a compact real tree with finitely many branches. The natural projection from $\mathcal{T}$ to $\mathcal{T}^{(\varepsilon)}$ that maps $x\in\mathcal{T}$ to the (unique) closest point in $\mathcal{T}^{(\varepsilon)}$ will be written $\pi^{(\varepsilon)}$. It is straightforward to check that
\[
d_\mathcal{T}^H\left(\mathcal{T}^{(\varepsilon)},\mathcal{T}\right)=\sup_{x\in\mathcal{T}}d_\mathcal{T}\left(x,\pi^{(\varepsilon)}(x)\right)\rightarrow0,
\]
as $\varepsilon\rightarrow 0$, where $d_\mathcal{T}^H$ is the Hausdorff distance between compact subsets of $\mathcal{T}$.

\begin{figure}[t]
\begin{center}
\includegraphics[width=9cm]{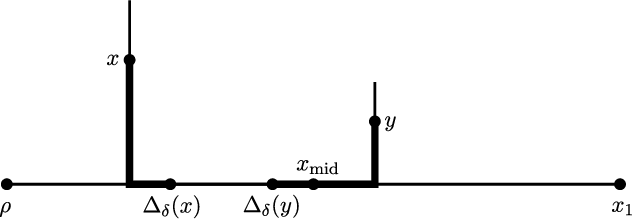}
\end{center}
\caption{An illustration of how the map $\Delta_\delta$ operates. The bold arcs represent $[[x,\Delta_\delta(x)]]_\mathcal{T}$ and  $[[y,\Delta_\delta(y)]]_\mathcal{T}$ for points $x$ and $y$ on opposite sides of the point $x_{\mathrm{mid}}$, which is the midpoint of $[[\rho,x_1]]_{\mathcal{T}}$.}\label{delta}
\end{figure}

We are now able to proceed with our discussion of cover times. The principal quantity of interest will be $\tau_{\mathrm{cov}}$, defined from $X^\mathcal{T}$ as at \eqref{taucovdef}. To show that $\tau_{\mathrm{cov}}$ can be written in terms of local times, as per the conclusion of Theorem \ref{thm:mainres}, we will introduce various approximations of $\tau_{\mathrm{cov}}$. Firstly, for $\varepsilon>0$, we set
\[\tau_{\mathrm{cov}}^{(\varepsilon)}:=\inf\left\{t\geq 0:\:X^{\mathcal{T}}_{[0,t]}\supseteq \mathcal{T}^{(\varepsilon)}\right\},\]
i.e.\ $\tau_{\mathrm{cov}}^{(\varepsilon)}$ is the first time that $X^\mathcal{T}$ has visited every point of $\mathcal{T}^{(\varepsilon)}$. We will actually run the process $X^\mathcal{T}$ for a slightly longer time than this, until it hits a specified point in $\mathcal{T}^{(\varepsilon)}$ at a distance $\delta$ away from $X^{\mathcal{T}}_{\tau_{\mathrm{cov}}^{(\varepsilon)}}$. To this end, for each $\delta\in (0,d_\mathcal{T}(\rho,x_1)/2)$, we define a map $\Delta_\delta:\mathcal{T}\rightarrow\mathcal{T}$ such that, writing $b^{\mathcal{T}}(x,y,z)$ for the branch point of $x,y,z\in\mathcal{T}$ (in particular, this is the unique point that lies on each of the arcs $[[x,y]]_{\mathcal{T}}$, $[[y,z]]_{\mathcal{T}}$ and $[[z,x]]_{\mathcal{T}}$):
\begin{itemize}
  \item if $d_\mathcal{T}(\rho,b^\mathcal{T}(\rho,x,x_1))\geq d_\mathcal{T}(\rho,x_1)/2$, then $\Delta_\delta(x)$ is the point on $[[\rho,x]]_{\mathcal{T}}$ at a distance $\delta$ from $x$;
  \item if $d_\mathcal{T}(\rho,b^\mathcal{T}(\rho,x,x_1))< d_\mathcal{T}(\rho,x_1)/2$, then $\Delta_\delta(x)$ is the point on $[[x_1,x]]_{\mathcal{T}}$ at a distance $\delta$ from $x$.
\end{itemize}
See Figure \ref{delta}. We observe that, for each $\delta$, the map $\Delta_\delta$ is continuous at every point of $\mathcal{T}\backslash \{x_{\mathrm{mid}}\}$, where $x_{\mathrm{mid}}$ is the mid-point of $[[\rho,x_1]]_{\mathcal{T}}$, i.e.\ the unique point of $\mathcal{T}$ such that
\[d_{\mathcal{T}}\left(\rho,x_{\mathrm{mid}}\right)=d_{\mathcal{T}}\left(x_{\mathrm{mid}},x_1\right)=d_\mathcal{T}\left(\rho,x_1\right)/2.\]
Moreover, we highlight that $\Delta_\delta(\mathcal{T}^{(\varepsilon)})\subseteq\mathcal{T}^{(\varepsilon)}$. With these preparations in place, we define, for $\varepsilon>0$ and $\delta\in (0,d_\mathcal{T}(\rho,x_1)/2)$,
\begin{equation}\label{tced}
\tau_{\mathrm{cov}}^{(\varepsilon)}(\delta):=\inf\left\{t\geq \tau_{\mathrm{cov}}^{(\varepsilon)}:\:X^{\mathcal{T}}_{t}=\Delta_\delta\left(X^\mathcal{T}_{\tau_{\mathrm{cov}}^{(\varepsilon)}}\right)\right\}.
\end{equation}
We similarly set
\begin{equation}\label{tcd}\tau_{\mathrm{cov}}(\delta):=\inf\left\{t\geq \tau_{\mathrm{cov}}:\:X^{\mathcal{T}}_{t}=\Delta_\delta\left(X^\mathcal{T}_{\tau_{\mathrm{cov}}}\right)\right\}.
\end{equation}
Before continuing, we note \cite[Lemma 2.3]{croydonscale} yields that, under Assumption \ref{assu:1}, the local times of $X^\mathcal{T}$ diverge uniformly (i.e.\ $\inf_{x\in\mathcal{T}}L^{\mathcal{T}}_t(x)\rightarrow \infty$ as $t\rightarrow\infty$, $P^\mathcal{T}_\rho$-a.s.), which implies in turn that $\tau_{\mathrm{cov}}$ and  $\tau_{\mathrm{cov}}^{(\varepsilon)}$ are finite, $P^\mathcal{T}_\rho$-a.s. Moreover, since $X^\mathcal{T}$ satisfies the commute time identity, i.e.\
\begin{equation}\label{commute}
E_x^\mathcal{T}(\tau_y)+E_y^\mathcal{T}(\tau_x)=2d_\mathcal{T}(x,y),\qquad \forall x,y\in\mathcal{T},
\end{equation}
where $E_x^\mathcal{T}$ is the expectation under $P_x^\mathcal{T}$ and $\tau_x:=\inf\{t\geq 0:\:X^\mathcal{T}_t=x\}$ is the hitting time of $x$ by $X^\mathcal{T}$ (see \cite[equation (2.17)]{CHK}, for example), it further holds that, under Assumption \ref{assu:1}, both $\tau_{\mathrm{cov}}^{(\varepsilon)}(\delta)$ and $\tau_{\mathrm{cov}}(\delta)$ are also finite, $P^\mathcal{T}_\rho$-a.s.

With $\tau_{\mathrm{cov}}^{(\varepsilon)}(\delta)$ and $\tau_{\mathrm{cov}}(\delta)$ defined, we are now able to introduce our second main assumption. This excludes the possibility that, at time $\tau_{\mathrm{cov}}^{(\varepsilon)}(\delta)$, the local time $L^\mathcal{T}$ takes the value zero on a small set.

\begin{assu}\label{assu:2} For each $\varepsilon\in (0,\infty)\cap\mathbb{Q}$,  $2\delta\in (0,d_\mathcal{T}(\rho,x_1))\cap\mathbb{Q}$,
\[P_\rho^\mathcal{T}\left(E_1(\varepsilon,\delta)\cup E_2(\varepsilon,\delta)\right)=1,\]
where
\begin{equation}\label{e1}
E_1(\varepsilon,\delta):=\left\{L^\mathcal{T}_{\tau_{\mathrm{cov}}^{(\varepsilon)}(\delta)}(x)>0,\:\forall x\in\mathcal{T}\right\},
\end{equation}
\begin{equation}\label{e2}
E_2(\varepsilon,\delta):=\left\{\exists x\in\mathcal{T},\:r>0:\:L^\mathcal{T}_{\tau_{\mathrm{cov}}^{(\varepsilon)}(\delta)}(y)=0,\:\forall y\in B_\mathcal{T}(x,r)\right\}.
\end{equation}
\end{assu}

In order to explain how we apply this assumption, which we will do in Proposition \ref{propmain}, we need several preparatory lemmas (none of which require Assumption \ref{assu:2} itself). The first of these lemmas establishes that $\tau_{\mathrm{cov}}(\delta)$ is indeed an approximation of $\tau_{\mathrm{cov}}$. This is a general result in the sense that we need not make use of either of our general assumptions.

\begin{lem}\label{taudelcon} 
$P_\rho^\mathcal{T}$-a.s.,\ as $\delta \to 0$, it holds that 
\[\tau_{\mathrm{cov}}(\delta)\rightarrow \tau_{\mathrm{cov}}.\]
\end{lem}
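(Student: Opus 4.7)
The plan is to combine the strong Markov property at $\tau_{\mathrm{cov}}$, the commute time identity \eqref{commute}, and a monotonicity property in $\delta$. Since $\tau_{\mathrm{cov}}(\delta) \geq \tau_{\mathrm{cov}}$ always, it suffices to show that this difference vanishes in the limit. Setting $Y := X^\mathcal{T}_{\tau_{\mathrm{cov}}}$, the strong Markov property gives that, conditional on $\mathcal{F}_{\tau_{\mathrm{cov}}}$, the increment $\tau_{\mathrm{cov}}(\delta) - \tau_{\mathrm{cov}}$ is distributed as the hitting time $\tau_{\Delta_\delta(y)}$ under $P_y^\mathcal{T}$ evaluated at $y = Y$. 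Since $d_\mathcal{T}(Y, \Delta_\delta(Y)) = \delta$ by construction, the commute time identity \eqref{commute} yields
\[
E_\rho^\mathcal{T}\!\left[\tau_{\mathrm{cov}}(\delta) - \tau_{\mathrm{cov}} \,\big|\, \mathcal{F}_{\tau_{\mathrm{cov}}}\right] \;=\; E_Y^\mathcal{T}\!\left[\tau_{\Delta_\delta(Y)}\right] \;\leq\; 2\delta.
\]

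The next step is to show that $\delta \mapsto \tau_{\mathrm{cov}}(\delta)$ is non-decreasing on $(0, d_\mathcal{T}(\rho, x_1)/2)$. This is a direct consequence of the definition of $\Delta_\delta$: depending only on whether $b^\mathcal{T}(\rho, Y, x_1)$ lies in the half of $[[\rho, x_1]]_\mathcal{T}$ closer to $\rho$ or closer to $x_1$, the point $\Delta_\delta(Y)$ lies at distance $\delta$ from $Y$ along a single fixed arc emanating from $Y$ (either $[[\rho, Y]]_\mathcal{T}$ or $[[x_1, Y]]_\mathcal{T}$), the same arc for all sufficiently small $\delta$. Hence for $0 < \delta' < \delta < d_\mathcal{T}(\rho, x_1)/2$, the point $\Delta_{\delta'}(Y)$ lies strictly between $Y$ and $\Delta_\delta(Y)$ on that arc. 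Continuity of the sample paths of $X^\mathcal{T}$, combined with unique arcwise connectedness of the tree, then forces any trajectory from $Y$ to $\Delta_\delta(Y)$ to pass through $\Delta_{\delta'}(Y)$, giving $\tau_{\mathrm{cov}}(\delta') \leq \tau_{\mathrm{cov}}(\delta)$.

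The pointwise limit $L := \lim_{\delta \downarrow 0} \tau_{\mathrm{cov}}(\delta)$ therefore exists $P_\rho^\mathcal{T}$-a.s. and satisfies $L \geq \tau_{\mathrm{cov}}$. Applying dominated convergence to the decreasing family of conditional expectations (dominated by the integrable variable $\tau_{\mathrm{cov}}(\delta_0) - \tau_{\mathrm{cov}}$ for any fixed $\delta_0$) together with the first-paragraph bound gives
\[
E_\rho^\mathcal{T}\!\left[L - \tau_{\mathrm{cov}} \,\big|\, \mathcal{F}_{\tau_{\mathrm{cov}}}\right] \;=\; \lim_{\delta \downarrow 0} E_\rho^\mathcal{T}\!\left[\tau_{\mathrm{cov}}(\delta) - \tau_{\mathrm{cov}} \,\big|\, \mathcal{F}_{\tau_{\mathrm{cov}}}\right] \;\leq\; \lim_{\delta \downarrow 0} 2\delta \;=\; 0,
\]
which together with $L - \tau_{\mathrm{cov}} \geq 0$ forces $L = \tau_{\mathrm{cov}}$ $P_\rho^\mathcal{T}$-almost surely. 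No single step is a genuine obstacle here; the only point requiring a little care is the monotonicity, which relies on the case split in the definition of $\Delta_\delta$ to keep $\Delta_\delta(Y)$ and $\Delta_{\delta'}(Y)$ on a common arc through $Y$ as $\delta$ varies.
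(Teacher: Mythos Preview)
Your proof is correct and follows essentially the same approach as the paper: both rely on monotonicity of $\tau_{\mathrm{cov}}(\delta)$ in $\delta$, the strong Markov property at $\tau_{\mathrm{cov}}$, and the commute time identity \eqref{commute}. The only cosmetic difference is that the paper bounds $P_\rho^\mathcal{T}(\tau_{\mathrm{cov}}(\delta)-\tau_{\mathrm{cov}}>\eta)\leq 2\delta\eta^{-1}$ via Markov's inequality (so convergence in probability, upgraded to a.s.\ via monotonicity), whereas you bound the conditional expectation by $2\delta$ and pass to the limit in $L^1$; you also spell out the monotonicity argument that the paper simply states as ``clearly''.
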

\begin{proof}
Clearly $\tau_{\mathrm{cov}}(\delta)$ decreases monotonically as $\delta\rightarrow 0$, $P_\rho^\mathcal{T}$-a.s.,\ and so it will suffice to show the desired convergence statement holds in $P_\rho^\mathcal{T}$-probability. Conditioning on the value of $X^\mathcal{T}_{\tau_{\mathrm{cov}}}$, from the strong Markov property we obtain: for any $\eta>0$,
\[P_\rho^\mathcal{T}\left(\left|\tau_{\mathrm{cov}}(\delta) -\tau_{\mathrm{cov}}\right|>\eta\right)\leq \sup_{x\in\mathcal{T}}P_x^\mathcal{T}\left(\tau_{\Delta_\delta(x)}>\eta\right).\]
Now, by applying Markov's inequality and the commute time identity \eqref{commute}, we have that the right-hand side here is bounded above by $2\delta\eta^{-1}$. Hence, it converges to zero as $\delta\rightarrow0$. This is enough to complete the proof.
\end{proof}

Next, we rule out the possibility that $X^\mathcal{T}$ is located at the point $x_{\mathrm{mid}}$, which we recall is the midpoint of $[[\rho,x_1]]_{\mathcal{T}}$, at time $\tau_{\mathrm{cov}}$.

\begin{lem}\label{midlem} If Assumption \ref{assu:1} holds, then
\[P_\rho^\mathcal{T}\left(X^\mathcal{T}_{\tau_{\mathrm{cov}}}=x_{\mathrm{mid}}\right)=0.\]
\end{lem}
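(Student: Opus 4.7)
The plan is a two-case analysis based on whether the range $R_t:=X^\mathcal{T}_{[0,t]}$ strictly grows at $\tau_{\mathrm{cov}}$. First I would record two structural observations. Since $x_{\mathrm{mid}}$ is the midpoint of the nontrivial arc $[[\rho,x_1]]_\mathcal{T}$, the set $\mathcal{T}\setminus\{x_{\mathrm{mid}}\}$ has at least two connected components (one containing $\rho$, one containing $x_1$), so $x_{\mathrm{mid}}$ is not a leaf of $\mathcal{T}$. Moreover, every continuous path in $\mathcal{T}$ from $\rho$ to $x_1$ must cross $x_{\mathrm{mid}}$, and traversing the nontrivial subarc $[[x_{\mathrm{mid}},x_1]]_\mathcal{T}$ takes strictly positive time, so $\tau_{x_{\mathrm{mid}}}<\tau_{x_1}\le\tau_{\mathrm{cov}}$; in particular, $x_{\mathrm{mid}}$ is first visited strictly before the cover time.

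The key pathwise fact, using only the continuity of $X^\mathcal{T}$, is that
\[
\mathcal{T}\setminus R_{\tau_{\mathrm{cov}}-}\subseteq\{X^\mathcal{T}_{\tau_{\mathrm{cov}}}\},
\]
where $R_{\tau_{\mathrm{cov}}-}:=\bigcup_{s<\tau_{\mathrm{cov}}}R_s$. Indeed, for any $\delta>0$, continuity provides $\varepsilon>0$ with $X^\mathcal{T}_t\in B_\mathcal{T}(X^\mathcal{T}_{\tau_{\mathrm{cov}}},\delta)$ for all $t\in[\tau_{\mathrm{cov}}-\varepsilon,\tau_{\mathrm{cov}}]$, and since $\mathcal{T}\setminus R_{\tau_{\mathrm{cov}}-\varepsilon}\subseteq X^\mathcal{T}([\tau_{\mathrm{cov}}-\varepsilon,\tau_{\mathrm{cov}}])$, it follows that $\mathcal{T}\setminus R_{\tau_{\mathrm{cov}}-\varepsilon}\subseteq B_\mathcal{T}(X^\mathcal{T}_{\tau_{\mathrm{cov}}},\delta)$; letting $\varepsilon,\delta\to0$ yields the claim. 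If $R_{\tau_{\mathrm{cov}}-}\subsetneq\mathcal{T}$, this forces $\mathcal{T}\setminus R_{\tau_{\mathrm{cov}}-}=\{X^\mathcal{T}_{\tau_{\mathrm{cov}}}\}$, so $X^\mathcal{T}_{\tau_{\mathrm{cov}}}$ is being visited for the first time at $\tau_{\mathrm{cov}}$; by the first paragraph, this point cannot be $x_{\mathrm{mid}}$, since $x_{\mathrm{mid}}$ was already visited by $\tau_{x_{\mathrm{mid}}}<\tau_{\mathrm{cov}}$.

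The remaining scenario, $R_{\tau_{\mathrm{cov}}-}=\mathcal{T}$, is the main obstacle and requires a separate probabilistic argument. Here the event $\{X^\mathcal{T}_{\tau_{\mathrm{cov}}}=x_{\mathrm{mid}}\}$ forces the stopping time $\tau_{\mathrm{cov}}$ to fall in the (closed, Lebesgue-null) set of visit times to $x_{\mathrm{mid}}$, and one further sees that such a time must coincide with an endpoint of some excursion of $X^\mathcal{T}$ away from $x_{\mathrm{mid}}$. My plan would be to apply the strong Markov property at the inverse local time of $x_{\mathrm{mid}}$ and at successive excursion endpoints, exploiting the fact that each excursion into a connected component of $\mathcal{T}\setminus\{x_{\mathrm{mid}}\}$ contributes its new range strictly inside the open excursion interval (since the endpoints are both at $x_{\mathrm{mid}}$, which was covered earlier). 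Combined with a countable covering over a dense set of local-time levels, this should rule out coincidence of $\tau_{\mathrm{cov}}$ with any excursion endpoint almost surely. This second case, rather than the pathwise argument of the first two paragraphs, is where I expect most of the technical care.
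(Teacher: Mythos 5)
Your first two paragraphs are fine: the observation that uncovered points at times $s<\tau_{\mathrm{cov}}$ must accumulate at $X^\mathcal{T}_{\tau_{\mathrm{cov}}}$ is correct and is in fact the same continuity step the paper uses at the end of its proof, and it disposes of the case where a genuinely new point is visited at $\tau_{\mathrm{cov}}$. The problem is your third paragraph, which is where you yourself locate the real difficulty, and which is not a proof but a conditional plan containing an unjustified (and, as stated, dubious) claim. You assert that on $\{R_{\tau_{\mathrm{cov}}-}=\mathcal{T}\}\cap\{X^\mathcal{T}_{\tau_{\mathrm{cov}}}=x_{\mathrm{mid}}\}$ the time $\tau_{\mathrm{cov}}$ ``must coincide with an endpoint of some excursion'' away from $x_{\mathrm{mid}}$. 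Nothing forces this: in the scenario you are trying to exclude, the uncovered sets shrink to $x_{\mathrm{mid}}$ along infinitely many excursions (possibly into different components of $\mathcal{T}\setminus\{x_{\mathrm{mid}}\}$, and within a component into infinitely many sub-branches near $x_{\mathrm{mid}}$, which for the CRT are dense), and $\tau_{\mathrm{cov}}$ could then be an accumulation point of excursion intervals without being the endpoint of any single one — exactly as a typical point of the zero set of a one-dimensional diffusion is not an excursion endpoint. Moreover, even at a right excursion endpoint $b_i$ your parenthetical mechanism (``new range strictly inside the open excursion interval'') only shows $R_{b_i}=\bigcup_{t<b_i}R_t$, i.e.\ it puts you back into the no-last-point case; it does not produce a contradiction by itself. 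Finally, ``ruling out coincidence of $\tau_{\mathrm{cov}}$ with any excursion endpoint'' cannot follow from strong Markov and a countable union over local-time levels alone, since plenty of natural stopping times do coincide with excursion endpoints; any such argument would have to use special structure of $\tau_{\mathrm{cov}}$, which you have not supplied. So the case that carries all the weight remains open in your proposal.

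The paper closes this case (in fact both cases at once) by using precisely the hypothesis your sketch never touches: Assumption \ref{assu:1} guarantees jointly continuous local times. Since $\inf\{t:L^\mathcal{T}_t(x_{\mathrm{mid}})>0\}=\tau_{x_{\mathrm{mid}}}$ and $\tau_{x_{\mathrm{mid}}}<\tau_{\mathrm{cov}}$, one has $L^\mathcal{T}_{\tau}(x_{\mathrm{mid}})>0$ at the intermediate time $\tau=(\tau_{x_{\mathrm{mid}}}+\tau_{\mathrm{cov}})/2$; joint continuity then gives a random $r>0$ with $L^\mathcal{T}_{\tau}(x)>0$ for all $x\in B_\mathcal{T}(x_{\mathrm{mid}},r)$, and the occupation density formula \eqref{odf} upgrades this to: every point of $B_\mathcal{T}(x_{\mathrm{mid}},r)$ has already been visited by time $\tau<\tau_{\mathrm{cov}}$. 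Combined with your own accumulation observation (for $s$ close to $\tau_{\mathrm{cov}}$ the uncovered points lie in any given ball around $X^\mathcal{T}_{\tau_{\mathrm{cov}}}$), this immediately rules out $X^\mathcal{T}_{\tau_{\mathrm{cov}}}=x_{\mathrm{mid}}$ with no case split and no excursion theory. I would recommend replacing your third paragraph by this local-time argument; as it stands, your proposal has a genuine gap there.
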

\begin{proof}
We may assume that, for each fixed point $x\in\mathcal{T}$, the local time $(L_t^{\mathcal{T}}(x))_{t \geq0}$ is a continuous additive functional satisfying
$P_x^\mathcal{T}$-a.s.,\
\begin{equation}\label{ltinc}
\inf\left\{t\geq 0:\:L_t^\mathcal{T}(x)>0\right\}=0,
\end{equation}
see \cite[Section 3.6]{MR}, for example. It is also straightforward to check from \eqref{odf} and the continuity of the processes $X^\mathcal{T}$ and $L^\mathcal{T}$ that, $P_\rho^\mathcal{T}$-a.s.,\ $L^\mathcal{T}_{\tau_x}(x)=0$. Hence, by applying the strong Markov property at time $\tau_{x_{\mathrm{mid}}}$, we deduce that, $P_\rho^\mathcal{T}$-a.s.,\
\begin{equation}\label{ltinc2}
\inf\left\{t\geq 0:\:L_t^\mathcal{T}(x_{\mathrm{mid}})>0\right\}=\tau_{x_{\mathrm{mid}}}.
\end{equation}
Applying the continuity of $X^\mathcal{T}$, it moreover $P_\rho^\mathcal{T}$-a.s.\ holds that $\tau_{x_{\mathrm{mid}}}<\tau_{x_1}\leq \tau_{\mathrm{cov}}$. Hence, writing $\tau$ for $(\tau_{x_{\mathrm{mid}}}+\tau_{\mathrm{cov}})/2$, we can conclude that, $P_\rho^\mathcal{T}$-a.s.,\ $L_{\tau}^\mathcal{T}(x_{\mathrm{mid}})>0$. Since the local times are jointly continuous by Lemma~\ref{lem:contlt}, it follows that, $P_\rho^\mathcal{T}$-a.s.,\ for some $r>0$,
\[L^\mathcal{T}_{\tau}(x)>0,\qquad \forall x\in B_\mathcal{T}(x_{\mathrm{mid}},r).\]
From this, it is an elementary application of the occupation density formula \eqref{odf} to deduce that, $P_\rho^\mathcal{T}$-a.s.,\ for some $r>0$, $\tau_x\leq \tau$ for all $x\in B_\mathcal{T}(x_{\mathrm{mid}},r)$. The result is a straightforward consequence of this (and the continuity of $X^\mathcal{T}$).
\end{proof}

We now present a useful property concerning the uniformity of the distribution of the time to hit $\Delta_\delta(x)$ from $x$. Again, this requires neither Assumption \ref{assu:1} nor \ref{assu:2}.

\begin{lem}\label{lowerlem} For each $\delta\in (0,d_\mathcal{T}(\rho,x_1)/2)$, it holds that
\[\sup_{x\in\mathcal{T}}P_x^\mathcal{T}\left(\eta>\tau_{\Delta_\delta(x)}\right)\rightarrow 0,\]
as $\eta\rightarrow 0$.
\end{lem}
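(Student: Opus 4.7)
The plan proceeds in two steps. First, by path continuity of $X^\mathcal{T}$ and the fact that $d_\mathcal{T}(x,\Delta_\delta(x))=\delta$, I would observe that
\[
\tau_{\Delta_\delta(x)}\geq \sigma_x^\delta:=\inf\bigl\{t\geq 0:\:d_\mathcal{T}(x,X_t^\mathcal{T})\geq\delta\bigr\},
\]
so it suffices to show that
\[
f_\eta(x):=P_x^\mathcal{T}\left(\sigma_x^\delta<\eta\right)=P_x^\mathcal{T}\Bigl(\sup_{s\leq\eta}d_\mathcal{T}(x,X_s^\mathcal{T})\geq\delta\Bigr)
\]
tends to $0$ uniformly in $x$ as $\eta\to 0$. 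For each fixed $x$, continuity of paths with $X_0^\mathcal{T}=x$ forces $\sigma_x^\delta>0$ $P_x^\mathcal{T}$-a.s., giving $f_\eta(x)\downarrow 0$ as $\eta\downarrow 0$, and $\eta\mapsto f_\eta(x)$ is monotone.

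The second, key step is to upgrade pointwise convergence to uniform convergence using the compactness of $\mathcal{T}$. I would show that for each $\eta>0$ the function $f_\eta$ is upper semi-continuous on $\mathcal{T}$: given $x_n\to x$, appeal to the Feller-type continuity of $X^\mathcal{T}$ in its starting point (which itself is a consequence of the regularity of $(\mathcal{E}^\mathcal{T},\mathcal{F}^\mathcal{T})$ and the commute-time identity), together with a Skorokhod coupling, to obtain paths $X^{(x_n)}$ converging uniformly to $X^{(x)}$ on $[0,\eta]$; combined with the continuity of $d_\mathcal{T}$ and the upper semi-continuity of $r\mapsto \mathbf{1}_{\{r\geq\delta\}}$, this yields $\limsup_n f_\eta(x_n)\leq f_\eta(x)$. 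A Dini-type argument for upper semi-continuous functions then finishes the job: the superlevel sets $\{x:\:f_\eta(x)\geq\varepsilon\}$ are closed, nested decreasingly in $\eta$, and have empty intersection by pointwise convergence, so compactness of $\mathcal{T}$ forces some such set to be empty, i.e.\ $\sup_{x\in\mathcal{T}}f_\eta(x)<\varepsilon$ for all sufficiently small $\eta$.

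The main obstacle I foresee is the upper semi-continuity, which hinges on Feller-type continuity of the law of the path $(X_s^\mathcal{T})_{s\leq\eta}$ with respect to the starting point. Although this is expected to hold, given the framework of regular, local Dirichlet forms on a compact resistance space, writing it out cleanly requires some care (for instance via the strong Markov property applied at the hitting time $\tau_{x}$ under $P_{x_n}^\mathcal{T}$, combined with the bound $E_{x_n}^\mathcal{T}(\tau_x)\leq 2d_\mathcal{T}(x_n,x)$ from the commute-time identity \eqref{commute}, to control the discrepancy between the two laws on $[0,\eta]$). Should a more direct route be preferred, one can instead aim for the quantitative estimate $f_\eta(x)\leq Ce^{-c\delta^2/\eta}$ by comparing the distance process $t\mapsto d_\mathcal{T}(x,X_t^\mathcal{T})$ to a one-dimensional reflecting Brownian motion of variance $2t$, via a Tanaka-type semimartingale decomposition on the tree in which the corrections coming from local times at branch points all have favourable sign; this bypasses the semi-continuity step entirely and produces a uniform Gaussian-type tail.
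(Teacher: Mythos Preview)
Your approach is correct and takes a genuinely different route from the paper's. The key simplification is your first step: bounding $\tau_{\Delta_\delta(x)}$ below by the ball exit time $\sigma_x^\delta$ immediately removes any dependence on the map $\Delta_\delta$, and in particular sidesteps its discontinuity at $x_{\mathrm{mid}}$. The paper instead works directly with $\tau_{\Delta_\delta(x)}$: it first proves, using only the commute-time identity \eqref{commute} and strong Markov, that hitting-time laws $P_{x_n}^\mathcal{T}(\tau_{y_n}\in\cdot)$ converge weakly to $P_x^\mathcal{T}(\tau_y\in\cdot)$ whenever $(x_n,y_n)\to(x,y)$; it then deduces continuity of $x\mapsto E_x^\mathcal{T}(\phi_\eta(\tau_{\Delta_\delta(x)}))$, for a continuous majorant $\phi_\eta$ of $\mathbf{1}_{[0,\eta)}$, on each of two overlapping compact sets on which $\Delta_\delta$ (respectively a symmetrised variant $\tilde\Delta_\delta$) is continuous, and applies classical Dini on each piece. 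Your route is conceptually cleaner; the paper's is more self-contained, since it never needs Feller continuity on path space.

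One caution: your justification of Feller continuity as ``a consequence of regularity and the commute-time identity'' is too loose. Regularity of the Dirichlet form together with commute-time estimates gives you weak convergence of hitting-time distributions (exactly what the paper proves), but upgrading this to weak convergence of path laws on $C([0,\eta],\mathcal{T})$ requires more --- either a separate tightness argument or a direct appeal to the jointly continuous transition density for $X^\mathcal{T}$, which is available here (see the proof of Lemma~\ref{rkhit} and the references cited there). Once Feller holds, Portmanteau applied to the closed event $\{\sup_{s\leq\eta}d_\mathcal{T}(X_0,X_s)\geq\delta\}$ gives the upper semi-continuity you need, and your Dini-type argument via nested closed superlevel sets is fine. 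Your fallback Gaussian estimate via a Tanaka-type decomposition of $t\mapsto d_\mathcal{T}(x,X_t^\mathcal{T})$ would also work and yields a stronger quantitative bound, but writing it out carefully on a general compact real tree is more labour than either of the two soft arguments.
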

\begin{proof}
We first claim that if $x_n\rightarrow x$ and $y_n\rightarrow y$ in $(\mathcal{T},d_\mathcal{T})$, then
\begin{equation}\label{claim}
P^{\mathcal{T}}_{x_n}\left(\tau_{y_n}\in \cdot\right)\rightarrow P^{\mathcal{T}}_{x}\left(\tau_{y}\in \cdot\right)
\end{equation}
weakly as probability measures on $\mathbb{R}$. To prove this, note that, under $P^{\mathcal{T}}_{x_n}$, it holds that $\tau_{y_n}\leq \tau_1^n+\tau_2^n+\tau_3^n$, where $\tau_1^n$ is the time taken by $X^\mathcal{T}$ to hit $x$, $\tau_2^n$ is the time taken from then to hit $y$ and $\tau_3^n$ is the subsequent time to hit $y_n$. Clearly $\tau_2^n$ under $P^{\mathcal{T}}_{x_n}$ has the distribution of $\tau_{y}$ under $P^{\mathcal{T}}_{x}$. Moreover, applying \eqref{commute}, we find that, for any $\eta>0$,
\[P^{\mathcal{T}}_{x_n}\left(\tau_1^n+\tau_3^n\geq \eta\right)\leq P^{\mathcal{T}}_{x_n}\left(\tau_x\geq \eta/2\right)+P^{\mathcal{T}}_{y}\left(\tau_{y_n}\geq \eta/2\right)\leq\frac{4\left(d_{\mathcal{T}}(x_n,x)+d_{\mathcal{T}}(y_n,y)\right)}{\eta}\rightarrow0,\]
as $n\rightarrow\infty$. Hence we obtain that, for any $\eta>0$,
\[\limsup_{n\rightarrow\infty}P^{\mathcal{T}}_{x_n}\left(\tau_{y_n}\geq \eta\right)\leq P^{\mathcal{T}}_{x}\left(\tau_y\geq \eta\right).\]
By considering a similar upper bound for $\tau_y$ under $P^\mathcal{T}_x$, we are also able to deduce that, for any $\eta>0$,
\[\liminf_{n\rightarrow\infty}P^{\mathcal{T}}_{x_n}\left(\tau_{y_n}> \eta\right)\geq P^{\mathcal{T}}_{x}\left(\tau_y>\eta\right).\]
Combining the two previous displays yields the claim.

Now, for each $\eta>0$, define $\phi_\eta(u):=\max\{\min\{1,2-u/\eta\},0\}$ for $u\geq 0$; this is a continuous bounded function that strictly dominates $\mathbf{1}_{\{u<\eta\}}$. Hence to prove the lemma, it will suffice to show that
\[\sup_{x\in\mathcal{T}}E_x^\mathcal{T}\left(\phi_\eta\left(\tau_{\Delta_\delta(x)}\right)\right)\rightarrow 0,\]
as $\eta\rightarrow 0$. To simplify notation, we define $f_\eta(x):=E_x^\mathcal{T}(\phi_\eta(\tau_{\Delta_\delta(x)}))$. Since for any $x\in\mathcal{T}$, $\tau_{\Delta_\delta(x)}$ is strictly positive, $P_x^\mathcal{T}$-a.s.,\ we have from the monotone convergence theorem that $f_\eta$ decreases monotonically to zero pointwise as $\eta\rightarrow 0$. To extend this to uniform convergence, we will appeal to Dini's theorem, which requires continuity. Whilst we can not apply this on the whole space $\mathcal{T}$, the required continuity condition does hold on certain subsets. In particular, define
\[A:=\left\{x\in \mathcal{T}:\:d_\mathcal{T}(\rho,b^\mathcal{T}(\rho,x,x_1))\geq d_\mathcal{T}(\rho,x_1)/2\right\}.\]
By construction, the function $\Delta_\delta$ is continuous on $A$. Thus it follows from \eqref{claim} that, for each $\eta$, $f_\eta$ is also continuous on $A$. Since $A$ is clearly a compact subset of $\mathcal{T}$, we obtain as a consequence that
\[\sup_{x\in A}f_\eta(x)\rightarrow 0,\]
as $\eta\rightarrow 0$. Similarly, set
\[\tilde{A}:=\left\{x\in \mathcal{T}:\:d_\mathcal{T}(\rho,b^\mathcal{T}(\rho,x,x_1))\leq d_\mathcal{T}(\rho,x_1)/2\right\},\]
and consider the function $\tilde{\Delta}_\delta$ given by changing $\geq$ and $<$ to $>$ and $\leq$, respectively, in the definition of $\Delta_\delta$. Note that $\tilde{\Delta}_\delta$ is continuous on $\tilde{A}$ and $\tilde{\Delta}_\delta={\Delta}_\delta$ on $A^c$. Thus, proceeding as in the argument for the set $A$, we find that
\[\sup_{x\in A^c}f_\eta(x)\leq \sup_{x\in \tilde{A}}E_x^\mathcal{T}\left(\phi_\eta\left(\tau_{\tilde{\Delta}_\delta(x)}\right)\right)\rightarrow 0,\]
as $\eta\rightarrow0$. This completes the proof.
\end{proof}

Applying the previous two lemmas, we are able to prove the following result, which, in conjunction with Lemma \ref{taudelcon}, gives that $\tau_{\mathrm{cov}}^{(\varepsilon)}(\delta)$ is also a good approximation for $\tau_{\mathrm{cov}}$ when both $\varepsilon$ and $\delta$ are small.

\begin{lem}\label{tauepsconv} If Assumption \ref{assu:1} holds, then, for each $\delta\in (0,d_\mathcal{T}(\rho,x_1)/2)$,
\[\tau_{\mathrm{cov}}^{(\varepsilon)}(\delta)\rightarrow \tau_{\mathrm{cov}}(\delta)\]
in $P_\rho^\mathcal{T}$-probability as $\varepsilon\rightarrow 0$ .
\end{lem}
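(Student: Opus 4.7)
The plan is to control the difference $|\tau_{\mathrm{cov}}^{(\varepsilon)}(\delta)-\tau_{\mathrm{cov}}(\delta)|$ by distinguishing according to whether the first hit of $y_\varepsilon:=\Delta_\delta(X^\mathcal{T}_{\tau_{\mathrm{cov}}^{(\varepsilon)}})$ after $\tau_{\mathrm{cov}}^{(\varepsilon)}$ occurs before $\tau_{\mathrm{cov}}$ or not. First I would establish the following $P_\rho^\mathcal{T}$-a.s.\ convergences as $\varepsilon\to 0$: (i) $\tau_{\mathrm{cov}}^{(\varepsilon)}\uparrow\tau_{\mathrm{cov}}$, obtained by combining $\tau_{\mathrm{cov}}^{(\varepsilon)}\leq\tau_{\mathrm{cov}}$, the closedness of $X^\mathcal{T}_{[0,t]}$, and the density of $(x_i)_{i\geq 1}\subseteq\bigcup_{\varepsilon>0}\mathcal{T}^{(\varepsilon)}$; (ii) $x_\varepsilon:=X^\mathcal{T}_{\tau_{\mathrm{cov}}^{(\varepsilon)}}\to X^\mathcal{T}_{\tau_{\mathrm{cov}}}=:x_0$ by the continuity of $X^\mathcal{T}$; and (iii) $y_\varepsilon\to\Delta_\delta(x_0)=:y_0$, using Lemma \ref{midlem} (so that $x_0\neq x_{\mathrm{mid}}$ a.s.) and the continuity of $\Delta_\delta$ on $\mathcal{T}\setminus\{x_{\mathrm{mid}}\}$. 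Since $\mathcal{T}$ has finite diameter, (iii) also yields $E_\rho^\mathcal{T}[d_\mathcal{T}(y_\varepsilon,y_0)]\to 0$ by bounded convergence.

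Next, I introduce the event $B_\varepsilon:=\{\tau_{\mathrm{cov}}^{(\varepsilon)}(\delta)<\tau_{\mathrm{cov}}\}$ and show $P_\rho^\mathcal{T}(B_\varepsilon)\to 0$. On $B_\varepsilon$, $\tau_{\mathrm{cov}}^{(\varepsilon)}(\delta)-\tau_{\mathrm{cov}}^{(\varepsilon)}<\tau_{\mathrm{cov}}-\tau_{\mathrm{cov}}^{(\varepsilon)}$, so for any $\eta'>0$,
\[P_\rho^\mathcal{T}(B_\varepsilon)\leq P_\rho^\mathcal{T}\!\left(\tau_{\mathrm{cov}}-\tau_{\mathrm{cov}}^{(\varepsilon)}>\eta'\right)+P_\rho^\mathcal{T}\!\left(\tau_{\mathrm{cov}}^{(\varepsilon)}(\delta)-\tau_{\mathrm{cov}}^{(\varepsilon)}<\eta'\right).\]
The first term tends to $0$ as $\varepsilon\to 0$ by (i); the second, via the strong Markov property at $\tau_{\mathrm{cov}}^{(\varepsilon)}$, is bounded by $\sup_{x\in\mathcal{T}}P_x^\mathcal{T}(\tau_{\Delta_\delta(x)}<\eta')$, which tends to $0$ as $\eta'\to 0$ by Lemma \ref{lowerlem}. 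Sending $\varepsilon\to 0$ and then $\eta'\to 0$ yields $P_\rho^\mathcal{T}(B_\varepsilon)\to 0$.

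On the complementary event $B_\varepsilon^c=\{\tau_{\mathrm{cov}}^{(\varepsilon)}(\delta)\geq\tau_{\mathrm{cov}}\}$, there is no hit of $y_\varepsilon$ on $[\tau_{\mathrm{cov}}^{(\varepsilon)},\tau_{\mathrm{cov}})$, so the first hit of $y_\varepsilon$ after $\tau_{\mathrm{cov}}^{(\varepsilon)}$ coincides with the first hit after $\tau_{\mathrm{cov}}$. Applying the strong Markov property at $\tau_{\mathrm{cov}}$ (noting $y_\varepsilon,y_0$ are $\mathcal{F}_{\tau_{\mathrm{cov}}}$-measurable), and denoting by $\tilde\tau_z$ the hitting time of $z$ for the shifted process $\tilde X_t:=X^\mathcal{T}_{\tau_{\mathrm{cov}}+t}$ (which starts at $x_0$), one has $\tau_{\mathrm{cov}}^{(\varepsilon)}(\delta)-\tau_{\mathrm{cov}}=\tilde\tau_{y_\varepsilon}$ on $B_\varepsilon^c$ and $\tau_{\mathrm{cov}}(\delta)-\tau_{\mathrm{cov}}=\tilde\tau_{y_0}$ always. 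Hence $|\tau_{\mathrm{cov}}^{(\varepsilon)}(\delta)-\tau_{\mathrm{cov}}(\delta)|=|\tilde\tau_{y_\varepsilon}-\tilde\tau_{y_0}|$ on $B_\varepsilon^c$, and splitting on which of $\tilde\tau_{y_\varepsilon},\tilde\tau_{y_0}$ is smaller, then applying the strong Markov property at this earlier hitting time together with the commute time identity \eqref{commute}, gives $E_\rho^\mathcal{T}[|\tilde\tau_{y_\varepsilon}-\tilde\tau_{y_0}|]\leq 2E_\rho^\mathcal{T}[d_\mathcal{T}(y_\varepsilon,y_0)]\to 0$. Combining, for any $\eta>0$,
\[P_\rho^\mathcal{T}\!\left(|\tau_{\mathrm{cov}}^{(\varepsilon)}(\delta)-\tau_{\mathrm{cov}}(\delta)|>\eta\right)\leq P_\rho^\mathcal{T}(B_\varepsilon)+2\eta^{-1}E_\rho^\mathcal{T}[d_\mathcal{T}(y_\varepsilon,y_0)]\to 0,\]
as required. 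I expect the main obstacle to be the control of $P_\rho^\mathcal{T}(B_\varepsilon)$, which crucially relies on the uniform-in-$x$ lower tail bound on $\tau_{\Delta_\delta(x)}$ supplied by Lemma \ref{lowerlem} to rule out the process hitting $y_\varepsilon$ prior to completing the cover of $\mathcal{T}$.
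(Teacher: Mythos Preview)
Your proposal is correct and follows essentially the same route as the paper: you establish $\tau_{\mathrm{cov}}^{(\varepsilon)}\uparrow\tau_{\mathrm{cov}}$, deduce $y_\varepsilon\to y_0$ via Lemma~\ref{midlem}, use Lemma~\ref{lowerlem} to show $P_\rho^\mathcal{T}(\tau_{\mathrm{cov}}^{(\varepsilon)}(\delta)<\tau_{\mathrm{cov}})\to 0$, and then on the complementary event control the discrepancy by the strong Markov property at the earlier of the two hitting times after $\tau_{\mathrm{cov}}$ together with the commute time identity. The only cosmetic difference is that the paper packages the last step via the stopping times $\tau_1,\tau_2$ and bounds the tail probability $P_\rho^\mathcal{T}(\tau_2-\tau_1>\eta)\le E_\rho^\mathcal{T}[\min\{1,2\eta^{-1}d_\mathcal{T}(y_\varepsilon,y_0)\}]$, whereas you bound the $L^1$-norm $E_\rho^\mathcal{T}[|\tilde\tau_{y_\varepsilon}-\tilde\tau_{y_0}|]\le 2E_\rho^\mathcal{T}[d_\mathcal{T}(y_\varepsilon,y_0)]$ directly; both are equivalent here.
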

\begin{proof}
Clearly $\tau_{\mathrm{cov}}^{(\varepsilon)}\leq \tau_{\mathrm{cov}}$ for every $\varepsilon>0$, $P_\rho^\mathcal{T}$-a.s. Moreover, $\tau_{\mathrm{cov}}^{(\varepsilon)}$ monotonically increases as $\varepsilon$ decreases, and so has a $P_\rho^\mathcal{T}$-a.s.\ limit as $\varepsilon\rightarrow0$, $\tau_{\mathrm{cov}}^{(0)}$, say. Suppose $\tau_{\mathrm{cov}}^{(0)}<\tau_{\mathrm{cov}}$. If $t\in [\tau_{\mathrm{cov}}^{(0)},\tau_{\mathrm{cov}})$, then $X_{[0,t]}^\mathcal{T}\supseteq \cup_{\varepsilon>0}\mathcal{T}^{(\varepsilon)}$, and so, by the denseness of $(x_i)_{i\geq 0}$ and the continuity of $X^\mathcal{T}$, $X_{[0,t]}^\mathcal{T}=\mathcal{T}$, but this contradicts that $t<\tau_{\mathrm{cov}}$. Hence, it must $P_\rho^\mathcal{T}$-a.s.\ hold that
\begin{equation}\label{star1}
\tau_{\mathrm{cov}}^{(\varepsilon)}\rightarrow \tau_{\mathrm{cov}},
\end{equation}
as $\varepsilon\rightarrow0$. Applying the continuity of $X^\mathcal{T}$, it is thus also the case that $X^{\mathcal{T}}_{\tau_{\mathrm{cov}}^{(\varepsilon)}}\rightarrow X^{\mathcal{T}}_{\tau_{\mathrm{cov}}}$, and moreover, from Lemma \ref{midlem} and the continuity of $\Delta_\delta$ on $\mathcal{T}\backslash \{x_{\mathrm{mid}}\}$, we deduce that, for every $\delta\in (0,d_\mathcal{T}(\rho,x_1)/2)$, $P_\rho^\mathcal{T}$-a.s.,\
\begin{equation}\label{star2}
\Delta_\delta\left(X^{\mathcal{T}}_{\tau_{\mathrm{cov}}^{(\varepsilon)}}\right)\rightarrow\Delta_\delta\left(X^{\mathcal{T}}_{\tau_{\mathrm{cov}}}\right),
\end{equation}
as $\varepsilon\rightarrow0$.

We will next apply Lemma \ref{lowerlem} and \eqref{star1} to check that, for every $\delta\in (0,d_\mathcal{T}(\rho,x_1)/2)$,
\begin{equation}\label{to1}
P_\rho^\mathcal{T}\left(\tau_{\mathrm{cov}}\leq \tau_{\mathrm{cov}}^{(\varepsilon)}(\delta)\right)\rightarrow 1,
\end{equation}
as $\varepsilon\rightarrow0$. In order to prove this, we start by observing that, for every $\eta>0$,
\begin{eqnarray*}
  P_\rho^\mathcal{T}\left(\tau_{\mathrm{cov}}>\tau_{\mathrm{cov}}^{(\varepsilon)}(\delta)\right) &= &
    P_\rho^\mathcal{T}\left(\tau_{\mathrm{cov}}-\tau_{\mathrm{cov}}^{(\varepsilon)}> \tau_{\mathrm{cov}}^{(\varepsilon)}(\delta)-\tau_{\mathrm{cov}}^{(\varepsilon)}\right) \\
   &\leq& P_\rho^\mathcal{T}\left(\tau_{\mathrm{cov}}-\tau_{\mathrm{cov}}^{(\varepsilon)}>\eta\right)
   +P_\rho^\mathcal{T}\left(\tau_{\mathrm{cov}}^{(\varepsilon)}(\delta)-\tau_{\mathrm{cov}}^{(\varepsilon)}<\eta\right).
\end{eqnarray*}
The first term here converges to zero as $\varepsilon\rightarrow0$ by \eqref{star1}. As for the second term, this is bounded above by $\sup_{x\in\mathcal{T}}P_x^\mathcal{T}(\eta>\tau_{\Delta_\delta(x)})$, which, by Lemma \ref{lowerlem}, can be made arbitrarily small by choosing $\eta$ appropriately. This establishes \eqref{to1}, as desired.

Now, consider the following two stopping times:
\[\tau_1 =\inf\left\{t\geq \tau_{\mathrm{cov}}:\:X^\mathcal{T}_t\in \left\{\Delta_\delta\left(X^{\mathcal{T}}_{\tau_{\mathrm{cov}}^{(\varepsilon)}}\right),\Delta_\delta\left(X^{\mathcal{T}}_{\tau_{\mathrm{cov}}}\right)
\right\}\right\},\]
\[\tau_2 =\inf\left\{t\geq \tau_{\mathrm{cov}}:\:X^\mathcal{T}_{[\tau_{\mathrm{cov}},t]}\supseteq \left\{\Delta_\delta\left(X^{\mathcal{T}}_{\tau_{\mathrm{cov}}^{(\varepsilon)}}\right),\Delta_\delta\left(X^{\mathcal{T}}_{\tau_{\mathrm{cov}}}\right)
\right\}\right\}.\]
Recall the definitions of $\tau_{\mathrm{cov}}^{(\varepsilon)}(\delta)$ and $\tau_{\mathrm{cov}}(\delta)$ from \eqref{tced}
 and \eqref{tcd}, respectively. Clearly, on the event $\{\tau_{\mathrm{cov}}\leq\tau_{\mathrm{cov}}^{(\varepsilon)}(\delta)\}$, it holds that
\[\tau_2-\tau_1=\left|\tau_{\mathrm{cov}}^{(\varepsilon)}(\delta)- \tau_{\mathrm{cov}}(\delta)\right|.\]

From \eqref{to1} and the conclusion of the previous paragraph, to complete the proof, it will consequently suffice to show that $\tau_2-\tau_1\rightarrow 0$ as $\varepsilon\rightarrow 0$ in $P_\rho^\mathcal{T}$-probability. To this end, we define
\[d(\varepsilon,\delta)=d_\mathcal{T}\left(\Delta_\delta\left(X^{\mathcal{T}}_{\tau_{\mathrm{cov}}^{(\varepsilon)}}\right),\Delta_\delta\left(X^{\mathcal{T}}_{\tau_{\mathrm{cov}}}\right)\right),\]
and then apply the strong Markov property at $\tau_1$ (noting that both $\Delta_\delta(X^{\mathcal{T}}_{\tau_{\mathrm{cov}}^{(\varepsilon)}})$ and $\Delta_\delta(X^{\mathcal{T}}_{\tau_{\mathrm{cov}}})$ are $(X^\mathcal{T}_t)_{t=0}^{\tau_{\mathrm{cov}}}$-measurable) to deduce that, for every $\eta>0$,
\[P_\rho^\mathcal{T}\left(\tau_2-\tau_1>\eta\right)  \leq  E_\rho^\mathcal{T}\left(\sup_{\substack{x,y\in\mathcal{T}:\\d_\mathcal{T}(x,y)\leq d(\varepsilon,\delta)}} P_x^\mathcal{T}\left(\tau_y>\eta\right)\right)\leq E_\rho^\mathcal{T}\left( \min\{1,2\eta^{-1}d(\varepsilon,\delta)\}\right),\]
where the second inequality follows from \eqref{commute}. In conjunction with \eqref{star2}, this implies the result.
\end{proof}

Putting the above pieces together, we arrive at the main result of this subsection.

\begin{propn}\label{propmain}
If Assumptions \ref{assu:1} and \ref{assu:2} hold, then, $P^\mathcal{T}_\rho$-a.s.,\
\[\tau_{\mathrm{cov}}=\inf\left\{t\geq0:\: L^\mathcal{T}_t(x)>0,\:\forall x\in\mathcal{T}\right\}.\]
\end{propn}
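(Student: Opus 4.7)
The plan is to prove the two inequalities separately, where I denote by $\tau_L := \inf\{t\geq0:\:L^\mathcal{T}_t(x)>0,\:\forall x\in\mathcal{T}\}$ the right-hand side of the claimed identity. The easy direction is $\tau_{\mathrm{cov}} \leq \tau_L$: for any $t<\tau_{\mathrm{cov}}$, continuity of $X^\mathcal{T}$ makes $U := \mathcal{T} \setminus X_{[0,t]}^\mathcal{T}$ a nonempty open set, and the occupation density formula \eqref{odf} forces $\int_U L_t^\mathcal{T}(y)\mu_\mathcal{T}(dy)=0$; combining this with the joint continuity of $L^\mathcal{T}$ (guaranteed by Assumption \ref{assu:1} and Lemma \ref{lem:contlt}) and the full support of $\mu_\mathcal{T}$, I will conclude that $L_t^\mathcal{T}\equiv 0$ on $U$, so $t<\tau_L$.

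The substance of the proof is the reverse inequality. I would fix a rational $\delta$ with $0 < 2\delta < d_\mathcal{T}(\rho,x_1)$ and first aim to show $\tau_L \leq \tau_{\mathrm{cov}}(\delta)$ $P_\rho^\mathcal{T}$-a.s. To do so, choose rational $\varepsilon_n\downarrow 0$ and, using Lemma \ref{tauepsconv} together with an a.s.\ subsequence extraction, pass to a subsequence along which $\tau_{\mathrm{cov}}^{(\varepsilon_n)}(\delta)\to\tau_{\mathrm{cov}}(\delta)$ $P_\rho^\mathcal{T}$-a.s. Working on the $P_\rho^\mathcal{T}$-a.s.\ event on which this convergence holds, Assumption \ref{assu:2} is in force for every $(\varepsilon_n,\delta)$, and $\tau_{\mathrm{cov}}(\delta) > \tau_{\mathrm{cov}}$ (the last holding a.s.\ by continuity of $X^\mathcal{T}$, since $\Delta_\delta(X_{\tau_{\mathrm{cov}}}^\mathcal{T})$ sits at distance $\delta$ from $X_{\tau_{\mathrm{cov}}}^\mathcal{T}$), my goal is to show that $E_1(\varepsilon_n,\delta)$ occurs for infinitely many $n$. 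Here the key observation will be that on $E_2(\varepsilon_n,\delta)$ the open ball $B$ on which $L_{\tau_{\mathrm{cov}}^{(\varepsilon_n)}(\delta)}^\mathcal{T}$ vanishes has not been entered by $X^\mathcal{T}$ at all by that time: indeed \eqref{odf} gives that $X^\mathcal{T}$ spends zero Lebesgue time in $B$, and since $\{s\geq 0:\:X_s^\mathcal{T}\in B\}$ is open by continuity of $X^\mathcal{T}$, this set must actually be empty on $[0,\tau_{\mathrm{cov}}^{(\varepsilon_n)}(\delta)]$, so that $\tau_{\mathrm{cov}} > \tau_{\mathrm{cov}}^{(\varepsilon_n)}(\delta)$ on $E_2(\varepsilon_n,\delta)$.

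With this in hand, the argument closes quickly. If $\omega$ lay in $E_2(\varepsilon_n,\delta)$ for all large $n$, then letting $n\to\infty$ would yield $\tau_{\mathrm{cov}}(\delta) \leq \tau_{\mathrm{cov}}$, contradicting $\tau_{\mathrm{cov}}(\delta) > \tau_{\mathrm{cov}}$; hence $\omega \in E_1(\varepsilon_n,\delta)$ for infinitely many $n$, and for such $n$ the defining property of $E_1$ delivers $\tau_L \leq \tau_{\mathrm{cov}}^{(\varepsilon_n)}(\delta)$, whence $\tau_L \leq \tau_{\mathrm{cov}}(\delta)$. Intersecting the resulting full-measure events over rational $\delta\downarrow 0$ and appealing to Lemma \ref{taudelcon} will give $\tau_L \leq \tau_{\mathrm{cov}}$ $P_\rho^\mathcal{T}$-a.s.\ and complete the proof. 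I expect the delicate point to be the treatment of $E_2$: one must exploit that an unvisited open ball at time $\tau_{\mathrm{cov}}^{(\varepsilon_n)}(\delta)$ forces this stopping time to lie strictly below $\tau_{\mathrm{cov}}$, which is incompatible in the limit with the strict separation $\tau_{\mathrm{cov}}(\delta) > \tau_{\mathrm{cov}}$.
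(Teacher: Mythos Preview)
Your proposal is correct and follows essentially the same approach as the paper's proof: both rely on the dichotomy in Assumption~\ref{assu:2}, use the observation that on $E_2(\varepsilon,\delta)$ the occupation density formula and continuity of $X^\mathcal{T}$ force $\tau_{\mathrm{cov}}\geq\tau_{\mathrm{cov}}^{(\varepsilon)}(\delta)$, obtain a contradiction with $\tau_{\mathrm{cov}}(\delta)>\tau_{\mathrm{cov}}$ in the limit, conclude that $E_1$ must occur infinitely often along an almost-surely convergent subsequence, and then let $\delta\downarrow 0$ via Lemma~\ref{taudelcon}. The only organizational difference is that you extract the almost-surely convergent subsequence first and argue $\omega$-by-$\omega$, whereas the paper first proves by contradiction that $P_\rho^\mathcal{T}(E_1(\varepsilon_i,\delta)\text{ i.o.})=1$ for any rational sequence and then selects an almost-surely convergent one; these are equivalent.
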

\begin{proof} Fix rational $\delta\in (0,d_\mathcal{T}(\rho,x_1)/2)$ and suppose there exists a rational sequence $(\varepsilon_i)$ with $\varepsilon_i\rightarrow 0$ such that
\[P_\rho^\mathcal{T}\left(E_1(\varepsilon_i,\delta)\mbox{ holds for finitely many $i$}\right)>0.\]
Then, from Assumption \ref{assu:2}, it follows that
\begin{equation}\label{e2hold}
P_\rho^\mathcal{T}\left(E_2(\varepsilon_i,\delta)\mbox{ holds for all large $i$}\right)>0.
\end{equation}
Clearly, on $E_2(\varepsilon_i,\delta)$, the occupation density formula \eqref{odf} and the continuity of $X^\mathcal{T}$ imply that
\[\tau_{\mathrm{cov}}\geq \tau_{\mathrm{cov}}^{(\varepsilon_i)}(\delta).\]
Moreover, by Lemma \ref{tauepsconv} and \cite[Lemma 4.2]{kall}, it is possible to find a (deterministic) subsequence $(\varepsilon_{i_j})$ such that $\tau_{\mathrm{cov}}^{(\varepsilon_{i_j})}(\delta)$ converges to $\tau_{\mathrm{cov}}(\delta)$ as $j\rightarrow\infty$, $P_\rho^\mathcal{T}$-a.s. Consequently, from \eqref{e2hold}, we obtain that
\[P_\rho^\mathcal{T}\left(\tau_{\mathrm{cov}}\geq \tau_{\mathrm{cov}}(\delta)\right)>0.\]
However, from the continuity of $X^\mathcal{T}$, it is clear that $\tau_{\mathrm{cov}}(\delta)>\tau_{\mathrm{cov}}$, $P_\rho^\mathcal{T}$-a.s.,\ and so the above inequality is a contradiction. Thus we have established that, for any rational sequence $(\varepsilon_i)$ with $\varepsilon_i\rightarrow 0$,
\[P_\rho^\mathcal{T}\left(E_1(\varepsilon_i,\delta)\mbox{ holds for infinitely many $i$}\right)=1.\]
Now, again appealing to Lemma \ref{tauepsconv} and \cite[Lemma 4.2]{kall}, we can find a (deterministic) sequence of rationals $(\varepsilon_{i})$ such that $\tau_{\mathrm{cov}}^{(\varepsilon_{i})}(\delta)$ converges to $\tau_{\mathrm{cov}}(\delta)$ as $i\rightarrow\infty$, $P_\rho^\mathcal{T}$-a.s. On the event that $E_1(\varepsilon_i,\delta)$ holds, we clearly have that
\[\inf\left\{t\geq0:\: L^\mathcal{T}_t(x)>0,\:\forall x\in\mathcal{T}\right\}\leq \tau_{\mathrm{cov}}^{(\varepsilon_{i})}(\delta).\]
Since this holds infinitely often along the sequence with probability one, it follows that, $P_\rho^\mathcal{T}$-a.s.,\
\[\inf\left\{t\geq0:\: L^\mathcal{T}_t(x)>0,\:\forall x\in\mathcal{T}\right\}\leq \limsup_{i\rightarrow\infty}\tau_{\mathrm{cov}}^{(\varepsilon_{i})}(\delta)=\tau_{\mathrm{cov}}(\delta).\]
As $\delta\in (0,d_\mathcal{T}(\rho,x_1)/2)$ can be chosen as an arbitrarily small rational number, it follows from this and Lemma \ref{taudelcon} that, $P_\rho^\mathcal{T}$-a.s.,\
\begin{equation}\label{bb1}
\inf\left\{t\geq0:\: L^\mathcal{T}_t(x)>0,\:\forall x\in\mathcal{T}\right\}\leq\tau_{\mathrm{cov}}.
\end{equation}

On the other hand, if $t$ is such that $L^\mathcal{T}_t(x)>0$ for all $x\in\mathcal{T}$, then it readily follows from the occupation density formula \eqref{odf} and the continuity of $X^\mathcal{T}$ that $t\geq \tau_{\mathrm{cov}}$. Hence, it $P_\rho^\mathcal{T}$-a.s.\ holds that
\begin{equation}\label{bb2}
\inf\left\{t\geq0:\: L^\mathcal{T}_t(x)>0,\:\forall x\in\mathcal{T}\right\}\geq\tau_{\mathrm{cov}}.
\end{equation}

Combining \eqref{bb1} and \eqref{bb2} yields the desired result.
\end{proof}

\subsection{A Ray-Knight theorem for trees}\label{sec:rktheorems}

The aim of this section is to give a probabilistic characterisation of the local time process $L^\mathcal{T}$ at time ${\tau_{\mathrm{cov}}^{(\varepsilon)}(\delta)}$ that will facilitate the proof that Assumption \ref{assu:2} holds in our setting. The main result, see Proposition \ref{rk} below, is in the spirit of the classical Ray-Knight theorem, which relates the local times of one-dimensional Brownian motion at a hitting time and a BESQ process. (For background on Ray-Knight theorems, see \cite[Section XI.2]{RevuzYor} or \cite[Chapter 2]{MR}, for example; of the results that appear there, the statement closest to ours is probably \cite[Theorem 2.6.3]{MR}. We also note that a related version of the second Ray-Knight theorem on tree-like spaces is studied in \cite{LZ}.) To state our conclusion, we first need to introduce several additional objects: a certain Brownian motion on $\mathcal{T}^{(\varepsilon)}$, a description of the connected components of $\mathcal{T}\backslash{\mathcal{T}}^{(\varepsilon)}$, and the notion of a tree-indexed Bessel process.

Recall the definition of $\mathcal{T}^{(\varepsilon)}$ from \eqref{teps}. Clearly, for each $\varepsilon>0$, this is a real tree of finite and strictly positive length. Hence the associated one-dimensional Hausdorff measure $\lambda^{(\varepsilon)}$ is a finite Borel measure of full support on $\mathcal{T}^{(\varepsilon)}$. Consequently, similarly to the discussion at the start of Section \ref{sec:bmdefn}, it is possible to define a canonical Brownian motion $((X^{(\varepsilon)}_t)_{t\geq 0},(P^{(\varepsilon)}_x)_{x\in{\mathcal{T}^{(\varepsilon)}}})$ from $(\mathcal{T}^{(\varepsilon)},d_\mathcal{T},\lambda^{(\varepsilon)})$. Since $\lambda^{(\varepsilon)}$ is readily checked to satisfy Assumption \ref{assu:1}, we may assume that $X^{(\varepsilon)}$ admits jointly continuous local times $(L^{(\varepsilon)}_t(x))_{x\in {\mathcal{T}^{(\varepsilon)}},t\geq 0}$. As one further piece of notation concerning the process $X^{(\varepsilon)}$, we define $\tau(\varepsilon,\delta)$ in analogous way to how $\tau_{\mathrm{cov}}^{(\varepsilon)}(\delta)$ was defined from $X^\mathcal{T}$. In particular, if $\tau_{\mathrm{cov}}(X^{(\varepsilon)})$ is the cover time of $X^{(\varepsilon)}$ (defined as at \eqref{taucovdef}), then
\begin{equation}\label{teddef}
\tau(\varepsilon,\delta):=\inf\left\{t\geq \tau_{\mathrm{cov}}(X^{(\varepsilon)}):\:X^{(\varepsilon)}_t=\Delta_\delta\left(X^{(\varepsilon)}_{\tau_{\mathrm{cov}}(X^{(\varepsilon)})}\right)\right\}.    
\end{equation}
Note we require $\delta\in (0,d_\mathcal{T}(\rho,x_1)/2)$ to ensure this is well-defined.

Next, we observe that because $(\mathcal{T},d_\mathcal{T})$ is a compact real tree, it has at most a countable number of branch points (see \cite[Lemma 3.1]{DW}, for example). This means that, for each $\varepsilon>0$, the set $\mathcal{T}\backslash{\mathcal{T}}^{(\varepsilon)}$ consists of at most a countable number of connected components, $(\mathcal{T}_i^o)_{i\in \mathcal{I}^{(\varepsilon)}}$ say. We write $(\mathcal{T}_i)_{i\in \mathcal{I}^{(\varepsilon)}}$ for the closures of these components in $\mathcal{T}$, which means adding one additional point to each of the sets. Namely, for each $i$, there exists a $\rho_i\in \mathcal{T}^{(\varepsilon)}$ such that $\mathcal{T}_i\backslash \mathcal{T}_i^o=\{\rho_i\}$. We then have that each $(\mathcal{T}_i,d_\mathcal{T})$ is a compact real tree (again, we do not introduce specific notation for the restriction of $d_\mathcal{T}$), and we will consider the root of each to be the relevant $\rho_i$.

As the final ingredient for stating Proposition \ref{rk}, we need the notion of a tree-indexed (squared) Bessel process. To introduce this, we start by recalling that a BESQ process of dimension $d\geq 0$, started from $z_0\geq 0$, can be characterised as the unique positive strong solution $(\xi_t)_{t\geq 0}$ of the stochastic differential equation
\[\xi_t:=z_0+2\int_0^t\sqrt{\xi_s}dB_s+d \cdot t,\]
where $(B_t)_{t\geq 0}$ is a standard one-dimensional Brownian motion. (See \cite[Section 14.2]{MR} for a brief introduction to such processes with a focus on aspects that are useful for the study of local times, and also \cite[Chapter XI]{RevuzYor} for a more in-depth discussion.) We will denote the law of such a process by $\mathrm{BESQ}^d(z_0)$. Heuristically, a $\mathcal{T}$-indexed $\mathrm{BESQ}^d(z_0)$ process is a random function $(\xi_x^\mathcal{T})_{x\in\mathcal{T}}$ such that, along each arc $[[\rho,x]]_\mathcal{T}$, $(\xi_y^\mathcal{T})_{y\in [[\rho,x]]_\mathcal{T}}$ is a one-dimensional $\mathrm{BESQ}^d(z_0)$ process (where the arc is parameterised using the distance from the root), and, if $x$ and $x'$ are two distinct vertices, then conditional on $\xi^\mathcal{T}_{b^\mathcal{T}(\rho,x,x')}$, the values of $(\xi_y^\mathcal{T})_{y\in [[b^\mathcal{T}(\rho,x,x'),x]]_\mathcal{T}}$ and $(\xi_y^\mathcal{T})_{y\in [[b^\mathcal{T}(\rho,x,x'),x']]_\mathcal{T}}$ are independent. When the number of leaves in the tree is finite, such as in each $\mathcal{T}^{(\varepsilon)}$, there is no problem in making this definition rigorous by applying the Markov property for one-dimensional Bessel processes (see \cite[Chapter XI]{RevuzYor}). However, for a general compact real tree, some care is needed. More generally, we will say that $(\xi_x^\mathcal{T})_{x\in\mathcal{T}}$ is a $\mathcal{T}$-indexed $\mathrm{BESQ}^d(z_0)$ process if it is a continuous stochastic process such that, for each subtree $\mathcal{T}'\subseteq\mathcal{T}$ that spans a finite set of vertices including the root, $(\xi_x^\mathcal{T})_{x\in\mathcal{T}'}$ satisfies the properties just described. Following the work of Le Gall \cite{LG}, we will later give a construction of such a process in our setting (i.e.\ where the underlying tree is the Brownian CRT) using the notion of a Brownian snake. For now, though, it is convenient to give a preliminary result that ensures certain 0-dimensional tree-indexed BESQ processes are well defined. (We expect that the proof could be modified to address BESQ processes of other dimensions, but we restrict to the 0-dimensional case since that is slightly simpler to handle and is all we need here.)

\begin{lem}\label{besqlem} Suppose Assumption \ref{assu:1} holds. It is then possible to construct, for any $z_0\geq 0$, a $\mathcal{T}$-indexed $\mathrm{BESQ}^0(z_0)$ process. The law of such a process is unique.
\end{lem}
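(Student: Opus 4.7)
My approach is to construct the process by consistency on a countable dense family of finite spanning subtrees and then produce a continuous extension to $\mathcal{T}$ via a Kolmogorov-Chentsov argument that exploits the metric entropy implied by Assumption~\ref{assu:1}. Concretely, for each $m\geq 0$ let $\mathcal{T}^{(m)}:=\bigcup_{i=0}^{m}[[\rho,x_i]]_{\mathcal{T}}$. On each such finite subtree the process is unambiguously defined by induction on $m$: set $\xi_\rho=z_0$, and as each leaf $x_{m+1}$ is added, run an independent one-dimensional $\mathrm{BESQ}^0$ along the new arc starting from the value at the branch point where it attaches to $\mathcal{T}^{(m)}$. The strong Markov property of one-dimensional $\mathrm{BESQ}^0$ ensures that restricting the process built on $\mathcal{T}^{(m+1)}$ to $\mathcal{T}^{(m)}$ recovers the $\mathcal{T}^{(m)}$-indexed process. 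Kolmogorov's extension theorem then furnishes a consistent family $(\xi_{x_i})_{i\geq 0}$ with the required joint laws on every finite subset of the dense set.

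The key quantitative ingredient is a moment bound for increments. For $x, y$ in the dense set, write $b=b^{\mathcal{T}}(\rho,x,y)$. Since a one-dimensional $\mathrm{BESQ}^0$ process is a non-negative martingale with quadratic variation $4\int_0^{\cdot}\xi_s\,ds$, a standard It\^{o} / Burkholder-Davis-Gundy argument yields, for each integer $n\geq 1$,
\[
E\bigl[(\xi_{x}-\xi_{b})^{2n}\,\big|\,\xi_{b}\bigr]\leq C_{n}(1+\xi_{b})^{n}\,d_{\mathcal{T}}(b,x)^{n},
\]
and similarly for $y$. Combining this with the conditional independence of the two excursions from $b$ and the observation that $E[\xi_b^n]$ is bounded in terms of $z_0$, $n$ and $\mathrm{diam}(\mathcal{T})$ alone (because along any geodesic $\gamma$ from $\rho$, $(\xi_{\gamma(t)})$ is a non-negative martingale started at $z_0$ and run for time at most $\mathrm{diam}(\mathcal{T})$, so its moments are controlled via Doob's inequality), one obtains
\[
E\bigl[(\xi_x-\xi_y)^{2n}\bigr]\leq C_n'\,d_{\mathcal{T}}(x,y)^{n},
\]
with $C_n'$ depending only on $n$, $z_0$ and $\mathrm{diam}(\mathcal{T})$.

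Assumption~\ref{assu:1} implies that the covering number of $\mathcal{T}$ at scale $r$ is at most $Cr^{-\eta}$. A Kolmogorov-Chentsov chaining argument adapted to a compact metric space of this metric entropy then shows that, for $n$ chosen sufficiently large compared with $\eta$, the process $(\xi_{x_i})_{i\geq 0}$ is uniformly continuous on the countable dense set almost surely, and hence admits a continuous extension to $(\xi_x)_{x\in\mathcal{T}}$. Since every finite spanning subtree of $\mathcal{T}$ can be approximated arbitrarily closely (in Hausdorff distance) by spanning subtrees with vertices in $\{x_i\}$, the continuous extension inherits the defining Markov and independence properties on every finite spanning subtree, yielding a $\mathcal{T}$-indexed $\mathrm{BESQ}^0(z_0)$ process. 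Uniqueness in law follows because these defining conditions prescribe the joint distribution on each finite spanning subtree, in particular on $(x_i)_{i\geq 0}$, and continuity then forces the law on all of $\mathcal{T}$.

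I expect the main obstacle to be the chaining step. Although the increment bound is standard, $\mathcal{T}$ carries no canonical Euclidean parameterisation, so the continuity argument must be carried out through the entropy estimate from Assumption~\ref{assu:1} rather than via the usual one-dimensional Kolmogorov-Chentsov criterion. Some care is also needed in controlling $\xi$ at the intermediate branch points used in the chaining; a convenient route is to work with a sequence of nested spanning subtrees whose mesh decreases geometrically, derive a uniform modulus-of-continuity estimate along each level, and then sum the tail of these estimates using the polynomial cover bound $N(r)\leq Cr^{-\eta}$.
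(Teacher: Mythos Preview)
Your proof is correct and follows the same overall architecture as the paper's: construct the process consistently on an increasing family of finite spanning subtrees, establish a quantitative increment estimate, and then use a Kolmogorov--Chentsov chaining argument based on the polynomial covering bound $N(r)\leq Cr^{-\eta}$ from Assumption~\ref{assu:1} to produce a continuous extension to $\mathcal{T}$; uniqueness is then read off from the finite-dimensional marginals. The one substantive difference lies in the increment estimate. You obtain polynomial moment bounds $E[(\xi_x-\xi_y)^{2n}]\leq C_n' d_{\mathcal{T}}(x,y)^n$ via the SDE and Burkholder--Davis--Gundy (together with the explicit moment recursion for $\mathrm{BESQ}^0$), which suffices for chaining once $n>\eta$. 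The paper instead represents $\mathrm{BESQ}^0$, up to its absorption at $0$, as a time-change of one-dimensional Brownian motion, and from this extracts an exponential tail bound $\mathbf{P}(|\xi_s-\xi_t|>\varepsilon)\leq C(1+T/(t-s))\exp(-c\varepsilon/\sqrt{t-s})$, then invokes a ready-made Kolmogorov--Chentsov result of Noda. Your route is somewhat more self-contained (no time-change or external continuity theorem needed); the paper's exponential bound is sharper and later convenient because the time-change representation is reused elsewhere in the article (for the Brownian-snake arguments in Section~\ref{sec:bes}).
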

\begin{proof}
Since for $z_0=0$, the one-dimensional $\mathrm{BESQ}^0(z_0)$ process remains at zero (see \cite[Theorem XI.1.5]{RevuzYor}, the generalisation to trees is trivial in this case. So, we fix $z_0>0$. The key input to the proof in this case will be the following estimate for a one-dimensional $\mathrm{BESQ}^0(z_0)$ process, $(\xi_t)_{t\geq 0}$: for any $T\geq 0$,
\begin{equation}\label{bes0bound}
\mathbf{P}\left(\left|\xi_s-\xi_t\right|>\varepsilon\right)\leq C\left(1+\frac{T}{t-s}\right)\exp\left(- \frac{c\varepsilon}{\sqrt{t-s}}\right),\qquad \forall \varepsilon>0,\:0\leq s<t\leq T,
\end{equation}
where the constants $C$ and $c$ depend only upon $z_0$ and $T$, and here we write $\mathbf{P}$ for the probability measure on the space upon which $\xi$ is defined. To prove this, we use the fact that the $\mathrm{BESQ}^0(z_0)$ process can, up to the hitting time of 0, be written as a time-change of one-dimensional Brownian motion $(B_t)_{t\geq 0}$ started from $z_0$, with speed measure given by $\frac{1}{4x}dx$, see \cite[Section V.47.27 and V.48]{RW2}. In particular, if
\[A_t:=\left\{
         \begin{array}{ll}
           \int_0^t\frac{1}{4B_s}ds, & \hbox{for $t<\tau_0^B$}, \\
           \infty, & \hbox{for $t\geq\tau_0^B$},
         \end{array}
       \right.\]
where $\tau_0^B$ is the hitting time of 0 by $B$, and $\alpha(t):=\inf\{s:\:A_s>t\}$ is its right-continuous inverse, then it is possible to construct a version of the $\mathrm{BESQ}^0(z_0)$ process $\xi$ by setting  $\xi_t=B_{\alpha(t)}$. Now, on the event that $\sup_{s\leq T_0}B_s \leq \lambda$, we clearly have that $A_t-A_s\geq (t-s)/4\lambda$ for any $0\leq s< t \leq \min\{T_0,\tau_0^B\}$. Hence $\alpha(t)-\alpha(s)\leq 4\lambda(t-s)$ for any $0\leq s<t\leq T_0/4\lambda$. Consequently, taking $T_0:=4\lambda T$, we deduce that: for any $0\leq s<t\leq T$ and $\lambda>z_0$,
\begin{eqnarray*}
\lefteqn{\mathbf{P}\left(\left|\xi_s-\xi_t\right|>\varepsilon\right)}\\
&\leq& \mathbf{P}\left(\sup_{s\leq T_0}B_s > \lambda\right)+\mathbf{P}\left(\left|B_{\alpha(s)}-B_{\alpha(t)}\right|>\varepsilon,\:\sup_{s\leq T_0}B_s \leq \lambda\right)\\
&\leq &2\mathbf{P}\left(B_{T_0} > \lambda\right)+\mathbf{P}\left(\sup_{q\leq 4\lambda s}\sup_{r\leq 4\lambda(t-s)}|B_{q+r}-B_{q}|>\varepsilon\right)\\
&\leq &2\mathbf{P}\left(B_{1}-B_0 >\frac{\lambda-z_0}{2\sqrt{\lambda T}}\right)+\sum_{i=0}^{\lfloor\frac{s}{t-s}\rfloor}\mathbf{P}\left(\sup_{r\leq 8\lambda(t-s)}|B_{4\lambda(t-s)i+r}-B_{4\lambda(t-s)i}|>\varepsilon/2\right)\\
&\leq &2\mathbf{P}\left(B_{1}-B_0 >\frac{\lambda-z_0}{2\sqrt{\lambda T}}\right)+\left(1+\frac{T}{t-s}\right)\mathbf{P}\left(\sup_{r\leq 8\lambda(t-s)}|B_{r}-B_0|>\varepsilon/2\right)\\
&\leq &2\mathbf{P}\left(B_{1}-B_0 > \frac{\lambda-z_0}{2\sqrt{\lambda T}}\right)+4\left(1+\frac{T}{t-s}\right)\mathbf{P}\left(B_{1}-B_0 >\frac{\varepsilon}{2\sqrt{8\lambda(t-s)}}\right),
\end{eqnarray*}
where we apply a union bound to deduce the third inequality, and the reflection principle to bound the terms involving the supremum of Brownian motion. Now, since $B_1-B_0$ is simply a standard normal random variable, we have that $\mathbf{P}(B_1-B_0>x)\leq C e^{-x^2/2}$, and so applying the above bound with $\lambda=\varepsilon/\sqrt{t-s}$ gives the result at \eqref{bes0bound} when $\varepsilon/\sqrt{t-s}>z_0$. We can extend this to $\varepsilon/\sqrt{t-s}\leq z_0$ by adjusting $C$ as appropriate.

To apply \eqref{bes0bound} to deduce the existence of a $\mathcal{T}$-indexed $\mathrm{BESQ}^0(z_0)$ process, we will apply a version of the Kolmogorov-Chentsov continuity theorem, as appears in \cite[Section 3]{Noda}. To this end, for each $n\geq 0$, let $D_n''$ be a minimal $2^{-n}$ covering of $\mathcal{T}$ (i.e.,\ a collection $(x_i)_{i=1}^{N_n}$ of minimal size such that $\cup_{i=1}^{N_n}B_\mathcal{T}(x_i,2^{-n})=\mathcal{T})$. From Assumption \ref{assu:1}, it is an elementary exercise to check that $N_n\leq C2^{\eta n}$, where $\eta$ is the constant of that assumption. We also let $D_n'=\cup_{m\leq n}D_m''$, and then set $D_n$ to be the union of $D_n'$ and all those points at a distance that is an integer multiple of $2^{-n}$ from the root on one of the arcs $[[\rho,x]]_{\mathcal{T}}$ for some $x\in D_n'$. Note that $D_n$ forms an increasing sequence and each has size bounded by $C2^{(1+\eta)n}$. Finally, we set $D=\cup_{n\geq 0}D_n$, which is countable. If $\mathcal{T}_n:=\cup_{x\in D_n}[[\rho,x]]_\mathcal{T}$, then it is straightforward to construct a $\mathcal{T}_n$-indexed $\mathrm{BESQ}^0(z_0)$ process directly, by piecing together one-dimensional BESQ processes on each of the arcs. Moreover, by Kolmogorov's extension theorem, we can readily extend this construction to give a $\mathrm{BESQ}^0(z_0)$ on $\mathcal{T}_D:=\cup_{x\in D}[[\rho,x]]_\mathcal{T}$. Let us denote the latter process by $(Z_x^{\mathcal{T}_D})_{x\in \mathcal{T}_D}$. Now, since the diameter of $\mathcal{T}$ is finite, we obtain from \eqref{bes0bound} with $\varepsilon =(t-s)^{\frac{1}{2}-\delta}$ for some $\delta>0$ that:
\[\mathbf{P}\left(\left|\xi^{\mathcal{T}_D}_x-\xi^{\mathcal{T}_D}_y\right|>d_\mathcal{T}(x,y)^{\frac{1}{2}-\delta}\right)\leq C\exp\left(- cd_\mathcal{T}(x,y)^{-\delta}\right),\qquad \forall x,y\in\mathcal{T}_D,\]
where the constants here depend only upon the diameter of $\mathcal{T}$ and $z_0$. Hence we can apply \cite[Corollary 3.2]{Noda} to deduce that
\[\mathbf{P}\left(\sup_{\substack{x,y\in D:\\d_\mathcal{T}(x,y)\leq 2^{-(n-1)}}}\left|\xi^{\mathcal{T}_D}_x-\xi^{\mathcal{T}_D}_y\right|>C 2^{-(\frac{1}{2}-\delta)n}\right)\leq  Ce^{-2^{\delta n}}.\]
Since the upper bound here is summable in $n$, we deduce from a Borel-Cantelli argument that $(\xi_x^{\mathcal{T}_D})_{x\in D}$ is (almost-surely) uniformly continuous on $D$, and thus can be extended uniquely to a continuous function $(\xi_x^\mathcal{T})_{x\in \mathcal{T}}$. We further note that the inclusion of the additional points in $D_n$ as compared to $D_n'$ means that $D\cap[[\rho,x_i]]_\mathcal{T}$ is dense in $[[\rho,x_i]]_\mathcal{T}$ for each $x_i\in D$, and from this it is straightforward to check that $\xi_x^\mathcal{T}=\xi_x^{\mathcal{T}_D}$ on the whole tree $\mathcal{T}_D$.

To complete the proof and obtain uniqueness of the law of $(\xi_x^\mathcal{T})_{x\in \mathcal{T}}$, it will suffice to show that for any subtree $\mathcal{T}'\subseteq \mathcal{T}$ spanned by a finite set of vertices including the root, the process $(\xi_x^\mathcal{T})_{x\in \mathcal{T}'}$ is a $\mathcal{T}'$-indexed $\mathrm{BESQ}^0(z_0)$ process. (Since $\xi^\mathcal{T}$ is continuous, its distribution is specified by the distributions of such restrictions.) Suppose $\mathcal{T}'=\cup_{i=1}^M[[\rho,x_i]]_\mathcal{T}$. By the denseness of $D$, for each $i$, there exist sequences $(x_i^n)_{n\geq 0}$ in $D$ such that $d_\mathcal{T}(x_i^n,x_i)\rightarrow 0$ as $n\rightarrow\infty$. It follows that, $b^\mathcal{T}(\rho,x_i,x_i^n)\rightarrow x_i$. Now, from the construction of $(\xi_x^{\mathcal{T}_D})_{x\in \mathcal{T}_D}$, we obtain that the law of $(\xi_x^\mathcal{T})_{x\in \mathcal{T}_n''}$, where $\mathcal{T}_n'':=\cup_{i=1}^M[[\rho,b^\mathcal{T}(\rho,x_i,x_i^n)]]_\mathcal{T}$ is a $\mathcal{T}_n''$-indexed $\mathrm{BESQ}^0(z_0)$ process, for each $n$. From this and the continuity of $(\xi_x^\mathcal{T})_{x\in \mathcal{T}}$, one can deduce the corresponding result holds for $\mathcal{T}'$, as required.
\end{proof}

Often we will apply Lemma \ref{besqlem} to subsets of $\mathcal{T}$. Specifically, for each $x,y\in\mathcal{T}$, $x\neq y$, write $(\mathcal{T}_i)_{i\in \mathcal{I}_{x,y}}$ for the closures of the connected components of $\mathcal{T}\backslash[[x,y]]_\mathcal{T}$, defined similarly to $(\mathcal{T}_i)_{i\in \mathcal{I}^{(\varepsilon)}}$. If Assumption \ref{assu:1} holds for $\mathcal{T}$, then it also holds for the restriction of $\mu_\mathcal{T}$ to each component $\mathcal{T}_i$, $i\in \mathcal{I}_{x,y}$ (with possibly different constants for each). Hence, for any $z_0\geq 0$, it is possible to construct a $\mathcal{T}_i$-indexed $\mathrm{BESQ}^0(z_0)$ process for any $i\in \mathcal{I}_{x,y}$. Obviously, the same result also holds for each component of $(\mathcal{T}_i)_{i\in \mathcal{I}^{(\varepsilon)}}$. For all $i\in \mathcal{I}^{(\varepsilon)}$, the set $ \mathcal{T}_i \cap \mathcal{T}^{(\varepsilon)}$ consists of a single element, which we denote by $\rho_i$.

We are now ready to state the main result of this section, which relates local times and certain BESQ processes.

\begin{propn}\label{rk} Suppose Assumption \ref{assu:1} holds, $\varepsilon>0$, and $2\delta\in (0,d_\mathcal{T}(\rho,x_1))$. Under $P_\rho^\mathcal{T}$, the law of the local time process $L^\mathcal{T}$ at time ${\tau_{\mathrm{cov}}^{(\varepsilon)}(\delta)}$ is characterised as follows.
\begin{enumerate}
  \item[(a)] The law of $(L^\mathcal{T}_{{\tau_{\mathrm{cov}}^{(\varepsilon)}(\delta)}}(x))_{x\in \mathcal{T}^{(\varepsilon)}}$ is equal to that of $(L^{(\varepsilon)}_{{\tau(\varepsilon,\delta)}}(x))_{x\in \mathcal{T}^{(\varepsilon)}}$ under $P^{(\varepsilon)}_\rho$.
  \item[(b)] Conditional on $(L^\mathcal{T}_{{\tau_{\mathrm{cov}}^{(\varepsilon)}(\delta)}}(x))_{x\in \mathcal{T}^{(\varepsilon)}}$, the local times on the various components of $\mathcal{T}\backslash\mathcal{T}^{(\varepsilon)}$, that is, $(L^\mathcal{T}_{{\tau_{\mathrm{cov}}^{(\varepsilon)}(\delta)}}(x))_{x\in \mathcal{T}_i}$, $i\in \mathcal{I}^{(\varepsilon)}$, are independent, with each distributed as a $\mathcal{T}_i$-indexed 0-dimensional BESQ process started from $L^\mathcal{T}_{{\tau_{\mathrm{cov}}^{(\varepsilon)}(\delta)}}(\rho_i)$.
\end{enumerate}
\end{propn}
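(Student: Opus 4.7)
The plan is to prove the two parts of the proposition separately, with part (a) following from the trace theory of resistance forms and part (b) from the classical first Ray-Knight theorem combined with It\^o excursion theory applied to $X^\mathcal{T}$ at each entrance point $\rho_i$.

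For part (a), the key observation is that the inclusion $\mathcal{T}^{(\varepsilon)}\hookrightarrow\mathcal{T}$ preserves effective resistance, so by standard resistance-form theory (see, e.g., \cite{Kigq}) the trace of $(\mathcal{E}^\mathcal{T},\mathcal{F}^\mathcal{T})$ onto $L^2(\mathcal{T}^{(\varepsilon)},\lambda^{(\varepsilon)})$ is precisely the Dirichlet form whose associated process is $X^{(\varepsilon)}$. Concretely, set
\[
A(t) := \int_{\mathcal{T}^{(\varepsilon)}} L^\mathcal{T}_t(x)\, \lambda^{(\varepsilon)}(dx),
\]
a continuous additive functional of $X^\mathcal{T}$ supported on $\{s:\: X^\mathcal{T}_s\in\mathcal{T}^{(\varepsilon)}\}$. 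The time-change theorem (see \cite[Theorem 6.2.1]{FOT}) then identifies $(X^\mathcal{T}_{A^{-1}(t)})_{t\geq 0}$ in law with $X^{(\varepsilon)}$ under $P^{(\varepsilon)}_\rho$, and \eqref{odf} gives the matching identity of local times $L^{(\varepsilon)}_{A(t)}(x)=L^\mathcal{T}_t(x)$ for $x\in\mathcal{T}^{(\varepsilon)}$. Since $A$ preserves the ordering of visits to points of $\mathcal{T}^{(\varepsilon)}$, it sends $\tau_{\mathrm{cov}}^{(\varepsilon)}(\delta)$ to $\tau(\varepsilon,\delta)$, which yields (a).

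For part (b), fix $i\in\mathcal{I}^{(\varepsilon)}$ and observe that $X^\mathcal{T}$ can enter $\mathcal{T}_i^o$ only by passing through the single gateway $\rho_i$. By It\^o excursion theory applied to $X^\mathcal{T}$ at the regular recurrent point $\rho_i$, the excursions of $X^\mathcal{T}$ into $\mathcal{T}_i$ form, when indexed by the local time $L^\mathcal{T}_\cdot(\rho_i)$, a Poisson point process with characteristic measure $n_i$ (the excursion measure at $\rho_i$ restricted to excursions that immediately enter $\mathcal{T}_i$). Because distinct components are accessed through distinct entrance points, these PPPs are mutually independent and independent of the trace process on $\mathcal{T}^{(\varepsilon)}$ once we condition on the vector $(\ell_i)_{i\in\mathcal{I}^{(\varepsilon)}}$, where $\ell_i := L^\mathcal{T}_{\tau_{\mathrm{cov}}^{(\varepsilon)}(\delta)}(\rho_i)$. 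Conditioning on $(L^\mathcal{T}_{\tau_{\mathrm{cov}}^{(\varepsilon)}(\delta)}(x))_{x\in\mathcal{T}^{(\varepsilon)}}$ is therefore equivalent, for the purpose of describing the excursions into $\mathcal{T}_i$, to conditioning on $\ell_i$, and under this conditioning the excursions form a PPP of intensity $\ell_i\cdot n_i$.

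It remains to show that summing the local-time fields of these excursions produces a $\mathcal{T}_i$-indexed $\mathrm{BESQ}^0(\ell_i)$ process. Along any single arc $[[\rho_i,x]]_\mathcal{T}$, this is the content of the classical first Ray-Knight theorem (compare \cite[Theorem XI.2.2]{RevuzYor}) applied to the one-dimensional diffusion obtained by reading off excursion fragments as a function of distance from $\rho_i$. At a branch point $z\in\mathcal{T}_i$, the strong Markov property for the excursion measure decomposes each excursion that enters both subtrees rooted at $z$ into a pair of conditionally independent sub-excursions given the local time at $z$, which in turn forces the conditional independence of local times on opposite sides of $z$ given the value at $z$. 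Iterating this at the branch points spanned by any finite subtree of $\mathcal{T}_i$, and then using joint continuity of $L^\mathcal{T}$ together with the uniqueness assertion of Lemma \ref{besqlem} to pass from finite-dimensional to process-level distributions, yields the claim. The main obstacle is precisely this last step: promoting the one-dimensional first Ray-Knight identity along arcs to a genuine $\mathcal{T}_i$-indexed BESQ${}^0(\ell_i)$ law on the full component, which may carry a countable dense collection of branch points. The argument succeeds because the PPP description is intrinsically consistent across nested finite skeletons, so matching finite-dimensional distributions on an exhausting sequence and then appealing to Lemma \ref{besqlem} is enough.
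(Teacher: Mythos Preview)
Your treatment of part (a) is essentially the same as the paper's: both use the time-change/trace identification of $X^{(\varepsilon)}$ with $X^\mathcal{T}$ via the additive functional $A^{(\varepsilon)}$, and then check that $\tau_{\mathrm{cov}}^{(\varepsilon)}(\delta)$ corresponds to $\tau(\varepsilon,\delta)$ under the time change. One small point: you write $L^{(\varepsilon)}_{A(t)}(x)=L^\mathcal{T}_t(x)$, but the correct identity (and the one the paper uses) is $L^{(\varepsilon)}_t(x)=L^\mathcal{T}_{A^{-1}(t)}(x)$; this does not affect the conclusion.

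For part (b), the paper takes a quite different route from yours. Rather than invoking excursion theory at each gateway $\rho_i$, the paper first proves a Ray--Knight theorem at a \emph{single hitting time} $\tau_y$ (Lemma~\ref{rkhit}), established via a Dynkin-isomorphism-type identity involving the Gaussian free field on $\mathcal{T}$ and an explicit computation of the relevant moment generating function. It then reaches $\tau_{\mathrm{cov}}^{(\varepsilon)}(\delta)$ by writing it as a finite succession of hitting times $\tau_{(1)}\leq\tau_{(2)}\leq\cdots$, applying Lemma~\ref{rkhit} to each increment $L^\mathcal{T}_{\tau_{(i+1)}}-L^\mathcal{T}_{\tau_{(i)}}$, and using the additivity and Markov properties of tree-indexed BESQ processes (Lemma~\ref{add}) to combine the increments. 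This approach never needs to discuss excursions into the components $\mathcal{T}_i$ directly, nor any joint independence statement across the countable family $(\rho_i)_{i\in\mathcal{I}^{(\varepsilon)}}$.

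Your excursion-theoretic route is plausible in spirit, but as written it has a real gap. The sentence ``these PPPs are mutually independent and independent of the trace process on $\mathcal{T}^{(\varepsilon)}$ once we condition on the vector $(\ell_i)$'' is the crux, and you have not justified it. Standard It\^o excursion theory gives you a PPP of excursions away from a \emph{single} point, indexed by local time there; it does not directly give a joint statement across countably many points $\rho_i$, nor does it tell you that conditioning on the \emph{entire} local-time profile $(L^\mathcal{T}_{\tau_{\mathrm{cov}}^{(\varepsilon)}(\delta)}(x))_{x\in\mathcal{T}^{(\varepsilon)}}$ reduces to conditioning on $\ell_i$ alone for the purposes of the $i$th component. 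To make this rigorous you would need an excursion decomposition of $X^\mathcal{T}$ away from the \emph{set} $\mathcal{T}^{(\varepsilon)}$, showing that (i) the trace $X^{(\varepsilon)}$ and the family of excursion point processes are jointly measurable in the right way, (ii) conditionally on the trace, the excursions into distinct $\mathcal{T}_i$ are independent PPPs with intensities $\ell_i\, n_i$, and (iii) the stopping time $\tau_{\mathrm{cov}}^{(\varepsilon)}(\delta)$, being $\alpha^{(\varepsilon)}(\tau(\varepsilon,\delta))$, is measurable with respect to the trace alone and hence does not bias the excursion PPPs. Each of these is believable, but none is immediate, and together they amount to a substantial piece of work that your proposal does not supply. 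The paper's inductive approach via Lemma~\ref{rkhit} sidesteps all of this by reducing to a finite sequence of single-point hitting times, where the strong Markov property does the conditioning cleanly at each step.
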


Towards checking part (a) of the above result, we will use the fact that $X^{(\varepsilon)}$ can be represented as the time change of $X^\mathcal{T}$. In particular, we introduce a continuous additive functional $A^{(\varepsilon)}$ by setting
\[A^{(\varepsilon)}_t:=\int_{\mathcal{T}^{(\varepsilon)}}L_t^\mathcal{T}(x)\lambda^{(\varepsilon)}(dx),\]
and write
\[\alpha^{(\varepsilon)}(t):=\inf\left\{s\geq 0:\:A^{(\varepsilon)}_s>t\right\}\]
for its right-continuous inverse. We then have the following connection.

\begin{lem}\label{ltcoupling} Suppose Assumption \ref{assu:1} holds. For any $\varepsilon>0$ and $x\in \mathcal{T}^{(\varepsilon)}$, it holds that the joint law of $(X^{(\varepsilon)},L^{(\varepsilon)})$ under $P^{(\varepsilon)}_x$ is equal to that of
\[\left(\left(X^{\mathcal{T}}_{\alpha^{(\varepsilon)}(t)}\right)_{t\geq 0},\left(L^{\mathcal{T}}_{\alpha^{(\varepsilon)}(t)}(x)\right)_{x\in {\mathcal{T}^{(\varepsilon)}},t\geq 0}\right)\]
under $P_x^\mathcal{T}$.
\end{lem}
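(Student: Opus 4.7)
My plan is to recognise $Y_t := X^\mathcal{T}_{\alpha^{(\varepsilon)}(t)}$ as the trace of $X^\mathcal{T}$ on $\mathcal{T}^{(\varepsilon)}$, and then identify its law with that of $X^{(\varepsilon)}$ using the resistance form characterisation shared by both processes.

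First, I would verify that $A^{(\varepsilon)}$ is a positive continuous additive functional (PCAF) of $X^\mathcal{T}$ whose Revuz measure is $\lambda^{(\varepsilon)}$, viewed as a finite Borel measure on $\mathcal{T}$ supported on $\mathcal{T}^{(\varepsilon)}$. Continuity and additivity in $t$ are immediate from the joint continuity of $L^\mathcal{T}$ (Lemma \ref{lem:contlt}) and the additivity of local times along the sample path of $X^\mathcal{T}$. The identification of the Revuz measure follows by applying the occupation density formula \eqref{odf} to continuous functions approximating indicators of Borel subsets of $\mathcal{T}^{(\varepsilon)}$, and then invoking the standard characterisation of Revuz measures (see \cite[Theorem 5.1.3]{FOT}).

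Next, by the general theory of time changes for symmetric Markov processes (\cite[Theorem 6.2.1]{FOT}), the right-continuous inverse $\alpha^{(\varepsilon)}$ of $A^{(\varepsilon)}$ produces a $\lambda^{(\varepsilon)}$-symmetric strong Markov process $Y$ on the (fine) support of $\lambda^{(\varepsilon)}$, which equals $\mathcal{T}^{(\varepsilon)}$, whose associated Dirichlet form on $L^2(\mathcal{T}^{(\varepsilon)},\lambda^{(\varepsilon)})$ is the trace form $(\check{\mathcal{E}},\check{\mathcal{F}})$ of $(\mathcal{E}^\mathcal{T},\mathcal{F}^\mathcal{T})$ on $\mathcal{T}^{(\varepsilon)}$. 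To identify $Y$ with $X^{(\varepsilon)}$, it suffices to show $(\check{\mathcal{E}},\check{\mathcal{F}})$ coincides with the canonical Dirichlet form of $(\mathcal{T}^{(\varepsilon)},d_\mathcal{T},\lambda^{(\varepsilon)})$. Both are resistance forms, so by \eqref{reschar} this reduces to comparing effective resistances: for $x,y\in\mathcal{T}^{(\varepsilon)}$, the trace resistance equals the ambient resistance, which is $2d_\mathcal{T}(x,y)$, and this is exactly the effective resistance computed inside $(\mathcal{T}^{(\varepsilon)},d_\mathcal{T})$ since $\mathcal{T}^{(\varepsilon)}$ is a real tree with the restricted metric. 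The uniqueness correspondence between resistance forms and regular Dirichlet forms (see \cite{KigAOF}) then gives $(\check{\mathcal{E}},\check{\mathcal{F}}) = (\mathcal{E}^{(\varepsilon)},\mathcal{F}^{(\varepsilon)})$, and hence $Y$ and $X^{(\varepsilon)}$ have the same law from every starting point in $\mathcal{T}^{(\varepsilon)}$.

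For the local times, the change of variables $r = \alpha^{(\varepsilon)}(s)$, the definition of $A^{(\varepsilon)}$, and the occupation density formula \eqref{odf} for $X^\mathcal{T}$ yield, for any continuous $f:\mathcal{T}^{(\varepsilon)}\to\mathbb{R}$ and $t\geq 0$,
\[
\int_0^t f\bigl(X^\mathcal{T}_{\alpha^{(\varepsilon)}(s)}\bigr)\,ds \;=\; \int_0^{\alpha^{(\varepsilon)}(t)} f(X^\mathcal{T}_r)\,dA^{(\varepsilon)}_r \;=\; \int_{\mathcal{T}^{(\varepsilon)}} f(x)\,L^\mathcal{T}_{\alpha^{(\varepsilon)}(t)}(x)\,\lambda^{(\varepsilon)}(dx).
\]
Comparing with the occupation density formula for $X^{(\varepsilon)}$, and using the joint continuity of $L^{(\varepsilon)}$ and of $x\mapsto L^\mathcal{T}_{\alpha^{(\varepsilon)}(t)}(x)$ on $\mathcal{T}^{(\varepsilon)}$ (noting that $\lambda^{(\varepsilon)}$ satisfies Assumption \ref{assu:1}, so Lemma \ref{lem:contlt} applies to $X^{(\varepsilon)}$), uniqueness of the local time density forces $L^{(\varepsilon)}_t(x) = L^\mathcal{T}_{\alpha^{(\varepsilon)}(t)}(x)$ for every $x\in\mathcal{T}^{(\varepsilon)}$ and $t\geq 0$, $P^\mathcal{T}_x$-a.s.

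The main obstacle is the identification of the trace Dirichlet form with the canonical form on $\mathcal{T}^{(\varepsilon)}$ in step two. While intuitively transparent from the tree structure, a rigorous proof requires either an appeal to the stability of resistance forms under restriction to the support of a suitable measure, or an explicit computation showing that the harmonic extensions from $\mathcal{T}^{(\varepsilon)}$ into $\mathcal{T}$ (which enter the trace energy) are, along each arc of $\mathcal{T}\setminus\mathcal{T}^{(\varepsilon)}$, the affine functions in the arc-length parameter that would enter the resistance calculation on $\mathcal{T}^{(\varepsilon)}$ alone. On a real tree this reduces, arc by arc, to the classical one-dimensional fact that affine functions minimise Dirichlet energy subject to boundary values, and poses no difficulty once set up carefully.
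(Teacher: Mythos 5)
Your proposal is correct and takes essentially the same route as the paper, whose proof simply cites \cite[Lemmas 2.6 and 3.4]{Croydoncrt} for exactly this time-change/trace Dirichlet form argument (PCAF with Revuz measure $\lambda^{(\varepsilon)}$, identification of the trace form with the canonical resistance form on $(\mathcal{T}^{(\varepsilon)},d_\mathcal{T})$, and identification of the occupation densities). The one point the paper singles out for explicit verification — that $\alpha^{(\varepsilon)}(0)=0$ holds $P^\mathcal{T}_x$-a.s., so the time-changed process genuinely starts at $x$ — is exactly what you assert without proof when you claim the fine support of the PCAF equals $\mathcal{T}^{(\varepsilon)}$; it follows from \eqref{ltinc}, the joint continuity of $L^\mathcal{T}$ and the full support of $\lambda^{(\varepsilon)}$, so you should add this short check.
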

\begin{proof}
The proof is identical to that of \cite[Lemmas 2.6 and 3.4]{Croydoncrt}, which dealt with realisations of the Brownian CRT, but applies to more general compact real trees. The one point that should be expanded upon is that the starting points of the two processes are the same, i.e.,\ that $X^{\mathcal{T}}_{\alpha^{(\varepsilon)}(0)}=x$ almost-surely under $P_x^\mathcal{T}$. For this, it is sufficient to show that $\alpha^{(\varepsilon)}(0)=0$ almost-surely under $P_x^\mathcal{T}$. However, it is an elementary exercise to check  this fact by applying that $\lambda^{(\varepsilon)}$ has full support, \eqref{ltinc} holds and the local times $L^\mathcal{T}$ are almost-surely jointly continuous.
\end{proof}

In what follows, we will often suppose that $X^{(\varepsilon)}$ (and its local times) are coupled with $X^{\mathcal{T}}$ (and its local times) in the way given by the previous result. Namely, we will suppose that both processes are built on the same probability space in such a way that we have equality between $X^{(\varepsilon)}_t$ and $X^{\mathcal{T}}_{\alpha^{(\varepsilon)}(t)}$ for all $t\geq 0$ (and similarly for the associated local times), almost-surely. When this is the case, we will simply say the processes are coupled, and write the joint law as $P_x^\mathcal{T}$ if the starting point is $x\in \mathcal{T}^{(\varepsilon)}$. Under such a coupling, we also have that various stopping times of interest for the processes align with respect to the time change. Recall $\tau_x$ is the hitting time of $x$ by $X^\mathcal{T}$. We will write $\tau_x^{(\varepsilon)}$ for the corresponding quantity for $X^{(\varepsilon)}$.

\begin{lem}\label{htident} Suppose Assumption \ref{assu:1} holds, $\varepsilon>0$,  $2\delta\in (0,d_\mathcal{T}(\rho,x_1))$, $x,y\in \mathcal{T}^{(\varepsilon)}$, and that $X^{(\varepsilon)}$ and $X^{\mathcal{T}}$ are coupled, each with starting point $x$. It then $P_x^\mathcal{T}$-a.s.\ holds that
\[\tau_y=\alpha^{(\varepsilon)}\left(\tau_y^{(\varepsilon)}\right),\qquad \tau_{\mathrm{cov}}^{(\varepsilon)}(\delta)=\alpha^{(\varepsilon)}\left(\tau(\varepsilon,\delta)\right),\]
where we recall the definition of $\tau(\varepsilon,\delta)$ from \eqref{teddef}.
\end{lem}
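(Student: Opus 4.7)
The plan is to deduce both identities directly from the coupling $X^{(\varepsilon)}_t = X^{\mathcal{T}}_{\alpha^{(\varepsilon)}(t)}$ of Lemma \ref{ltcoupling}, combined with two structural facts: that $X^{(\varepsilon)}$ has continuous paths (because any excursion of $X^{\mathcal{T}}$ into a component of $\mathcal{T}\setminus\mathcal{T}^{(\varepsilon)}$ returns to its entry point, thanks to the tree structure), and that the additive functional $A^{(\varepsilon)}$ is strictly increasing at the first time $X^{\mathcal{T}}$ hits any point of $\mathcal{T}^{(\varepsilon)}$. The latter follows by running the strong Markov property at $\tau_y$ and invoking the regularity relation \eqref{ltinc} to conclude that $L^{\mathcal{T}}_{\tau_y+\eta}(y)>0$ for every $\eta>0$; joint continuity of $L^{\mathcal{T}}$ then propagates strict positivity of $L^{\mathcal{T}}_{\tau_y+\eta}-L^{\mathcal{T}}_{\tau_y}$ to an open neighbourhood of $y$ in $\mathcal{T}^{(\varepsilon)}$, which carries positive $\lambda^{(\varepsilon)}$-mass since $\lambda^{(\varepsilon)}$ is one-dimensional Hausdorff measure on a real tree containing $y$.

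For the first identity, the easy inequality comes from evaluating the coupling at $\tau_y^{(\varepsilon)}$: continuity of $X^{(\varepsilon)}$ gives $X^{\mathcal{T}}_{\alpha^{(\varepsilon)}(\tau_y^{(\varepsilon)})}=X^{(\varepsilon)}_{\tau_y^{(\varepsilon)}}=y$, hence $\tau_y\le \alpha^{(\varepsilon)}(\tau_y^{(\varepsilon)})$. For the reverse inequality, the strict increase of $A^{(\varepsilon)}$ at $\tau_y$ yields $\alpha^{(\varepsilon)}(A^{(\varepsilon)}(\tau_y))=\tau_y$, so evaluating the coupling at $A^{(\varepsilon)}(\tau_y)$ gives $X^{(\varepsilon)}_{A^{(\varepsilon)}(\tau_y)}=X^{\mathcal{T}}_{\tau_y}=y$; this forces $\tau_y^{(\varepsilon)}\le A^{(\varepsilon)}(\tau_y)$, and applying the monotone $\alpha^{(\varepsilon)}$ produces $\alpha^{(\varepsilon)}(\tau_y^{(\varepsilon)})\le\tau_y$.

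For the second identity, I would first note the intermediate equality $\tau_{\mathrm{cov}}^{(\varepsilon)}=\alpha^{(\varepsilon)}(\tau_{\mathrm{cov}}(X^{(\varepsilon)}))$. Writing each cover time as a supremum of hitting times over $\mathcal{T}^{(\varepsilon)}$, monotonicity of $\alpha^{(\varepsilon)}$ combined with the first identity gives $\tau_{\mathrm{cov}}^{(\varepsilon)}=\sup_y\tau_y\le \alpha^{(\varepsilon)}(\tau_{\mathrm{cov}}(X^{(\varepsilon)}))$, while the matching upper bound is obtained at the last-covered point $y^{*}:=X^{(\varepsilon)}_{\tau_{\mathrm{cov}}(X^{(\varepsilon)})}$, for which the first identity gives $\alpha^{(\varepsilon)}(\tau_{\mathrm{cov}}(X^{(\varepsilon)}))=\alpha^{(\varepsilon)}(\tau_{y^{*}}^{(\varepsilon)})=\tau_{y^{*}}\le \tau_{\mathrm{cov}}^{(\varepsilon)}$. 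In particular, $X^{\mathcal{T}}_{\tau_{\mathrm{cov}}^{(\varepsilon)}}=X^{(\varepsilon)}_{\tau_{\mathrm{cov}}(X^{(\varepsilon)})}=y^{*}$, so the two target points $\Delta_\delta(X^{\mathcal{T}}_{\tau_{\mathrm{cov}}^{(\varepsilon)}})$ and $\Delta_\delta(X^{(\varepsilon)}_{\tau_{\mathrm{cov}}(X^{(\varepsilon)})})$ coincide with a common point $z\in\mathcal{T}^{(\varepsilon)}$. I would then apply the strong Markov property of $X^{\mathcal{T}}$ at $\tau_{\mathrm{cov}}^{(\varepsilon)}$ (correspondingly, of $X^{(\varepsilon)}$ at $\tau_{\mathrm{cov}}(X^{(\varepsilon)})$), reducing to the first identity for the hitting time of the deterministic point $z$ by the post-cover processes, started at $y^{*}$.

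The main obstacle is the strict increase of $A^{(\varepsilon)}$ at $\tau_y$, which is the only place where the specific structure of $\lambda^{(\varepsilon)}$ and the regularity of the points of $\mathcal{T}$ enter; once that is in hand the remaining steps are bookkeeping with the right-continuous inverse and the strong Markov property.
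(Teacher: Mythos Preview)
Your proposal is correct and follows essentially the same approach as the paper: both use the coupling to get one inequality, the strict increase of $A^{(\varepsilon)}$ at $\tau_y$ (via \eqref{ltinc}/\eqref{ltinc2}, joint continuity of $L^{\mathcal{T}}$, and full support of $\lambda^{(\varepsilon)}$) to get the other, and then pass to the cover-and-hit time by combining the first identity with the strong Markov property. The only cosmetic difference is that the paper writes $\tau_{\mathrm{cov}}^{(\varepsilon)}=\max_{i\le M(\varepsilon)}\tau_{x_i}$ as a finite maximum over the leaves of $\mathcal{T}^{(\varepsilon)}$ (so the interchange with $\alpha^{(\varepsilon)}$ is immediate), whereas you phrase it as a supremum over all of $\mathcal{T}^{(\varepsilon)}$ and identify the last-covered point; both are fine.
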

\begin{proof}
Under the coupling, we $P_x^\mathcal{T}$-a.s.\ have that
\[X^\mathcal{T}_{\alpha^{(\varepsilon)}\left(\tau_y^{(\varepsilon)}\right)}=X^{(\varepsilon)}_{\tau_y^{(\varepsilon)}}=y.\]
Hence $\alpha^{(\varepsilon)}(\tau_y^{(\varepsilon)})\geq \tau_y$. On the other hand, from the version of \eqref{ltinc2} with ${x_{\mathrm{mid}}}$ replaced by $y$ (and the continuity of the local times, as well as the full support of $\lambda^{(\varepsilon)}$), we have that $A^{(\varepsilon)}_{\tau_y+t}>A^{(\varepsilon)}_{\tau_y}=0$ for all $t>0$, $P_x^\mathcal{T}$-a.s. Hence, it $P_x^\mathcal{T}$-a.s.\ holds that $\alpha^{(\varepsilon)}\left(A^{(\varepsilon)}_{\tau_y}\right)=\tau_y$, and so
\[X^{(\varepsilon)}_{A^{(\varepsilon)}_{\tau_y}}=X^\mathcal{T}_{\alpha^{(\varepsilon)}\left(A^{(\varepsilon)}_{\tau_y}\right)}=X^{\mathcal{T}}_{\tau_y}=y,\]
which implies $A^{(\varepsilon)}_{\tau_y}\geq \tau_y^{(\varepsilon)}$, and in turn that $\tau_y\geq \alpha^{(\varepsilon)}(\tau_y^{(\varepsilon)})$. Thus we have completed the proof of the first equality.

To verify the second equality, we start by applying the first one to deduce that, $P_x^\mathcal{T}$-a.s.,\
\begin{align*}
\tau_{\mathrm{cov}}^{(\varepsilon)}&=\max_{i=1,\dots ,M(\varepsilon)}\tau_{x_i}=\max_{i=1,\dots ,M(\varepsilon)}\alpha^{(\varepsilon)}\left(\tau_{x_i}^{(\varepsilon)}\right)\\
&=\alpha^{(\varepsilon)}\left(\max_{i=1,\dots ,M(\varepsilon)}\tau_{x_i}^{(\varepsilon)}\right)=\alpha^{(\varepsilon)}\left(\tau_{\mathrm{cov}}\left(X^{(\varepsilon)}\right)\right),
\end{align*}
where we recall from \eqref{teps} that $\mathcal T^{(\varepsilon)} = \cup_{i=0}^{M(\varepsilon)}[[\rho,x_i]]_{\mathcal{T}}$. We then apply the strong Markov property at this time, together with another application of the first equality to deduce the result.
\end{proof}

With these preparations, we can prove the first part of Proposition \ref{rk}.

\begin{proof}[Proof of Proposition \ref{rk}(a)]
Supposing that $X^{(\varepsilon)}$ and $X^{\mathcal{T}}$ are coupled, each with starting point $\rho$, we have from Lemma \ref{htident} that, $P_\rho^\mathcal{T}$-a.s.,\
\[L^{(\varepsilon)}_{\tau(\varepsilon,\delta)}(x)=L^\mathcal{T}_{\alpha^{(\varepsilon)}\left(\tau(\varepsilon,\delta)\right)}(x)=L^\mathcal{T}_{\tau_{\mathrm{cov}}^{(\varepsilon)}(\delta)}(x),\qquad \forall x\in\mathcal{T}^{(\varepsilon)}.\]
Hence the claim follows from Lemma \ref{ltcoupling}.
\end{proof}

For the proof of part (b) of Proposition \ref{rk}, the additional ingredients we need are the following Ray-Knight-type result at a single hitting time (Lemma \ref{rkhit}) and certain additivity and Markov properties for tree-indexed BESQ processes (Lemma \ref{add}). Although we will not need the greater generality here, it should be straightforward to extend this result to locally-compact real trees (at least when $X^\mathcal{T}$ is recurrent).

\begin{lem}\label{rkhit} Suppose Assumption \ref{assu:1} holds, and that $x,y\in\mathcal{T}$, $x\neq y$. Under $P_x^\mathcal{T}$, the law of the local time process $L^\mathcal{T}$ at time $\tau_y$ is characterised as follows.
\begin{enumerate}
  \item[(a)] The law of $(L^\mathcal{T}_{\tau_y}(z))_{z\in [[x,y]]_\mathcal{T}}$ is given by a $\mathrm{BESQ}^2(0)$ distribution, with time parameterised by distance from $y$.
  \item[(b)] Conditional on $(L^\mathcal{T}_{\tau_y}(z))_{z\in [[x,y]]_\mathcal{T}}$, the local times on the components of $\mathcal{T}\backslash[[x,y]]_\mathcal{T}$, that is, $(L^\mathcal{T}_{\tau_y}(z))_{z\in \mathcal{T}_i}$, $i\in \mathcal{I}_{x,y}$, are independent, with each distributed as a $\mathcal{T}_i$-indexed 0-dimensional BESQ process started from $L^\mathcal{T}_{\tau_y}(\rho_i)$.
\end{enumerate}
\end{lem}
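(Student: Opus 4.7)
The plan is to treat the two parts of the lemma separately; in each case the strategy is to reduce the problem to a classical one-dimensional Ray--Knight theorem by time-changing $X^\mathcal{T}$ onto a suitable subtree of $\mathcal{T}$.

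For part (a), I would apply the analog of Lemma \ref{ltcoupling} to the subtree $\mathcal{T}' := [[x,y]]_\mathcal{T}$ equipped with its one-dimensional Hausdorff measure $\lambda'$. Since $(\mathcal{T}', d_\mathcal{T}, \lambda')$ is abstractly a compact interval of length $d_\mathcal{T}(x,y)$ endowed with Lebesgue measure, $X^{\mathcal{T}'}$ is precisely standard Brownian motion on that interval with reflecting boundary conditions at its endpoints, the factor of $2$ in \eqref{reschar} ensuring the correct normalisation of speed. The analog of Lemma \ref{htident} then yields $\tau_y = \alpha'(\tau_y^{\mathcal{T}'})$, so that Lemma \ref{ltcoupling} gives the pointwise identity $L^\mathcal{T}_{\tau_y}(z) = L^{\mathcal{T}'}_{\tau_y^{\mathcal{T}'}}(z)$ for every $z \in \mathcal{T}'$. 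The classical first Ray--Knight theorem (see e.g.\ \cite[Theorem XI.2.2]{RevuzYor}) then identifies this field, viewed as a function of the distance from $y$, as a $\mathrm{BESQ}^2(0)$ process, which is exactly what part (a) claims.

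For part (b), I would use excursion theory to decompose $X^\mathcal{T}$ at the entry points where the subtrees $\mathcal{T}_i$ hang off $[[x,y]]_\mathcal{T}$. For each $i \in \mathcal{I}_{x,y}$, let $n_i$ denote the It\^o excursion measure of $X^\mathcal{T}$ at $\rho_i$ restricted to excursions entering $\mathcal{T}_i^o$. A standard application of excursion theory (simultaneously at all branch points on $[[x,y]]_\mathcal{T}$) then asserts that, conditionally on $(L^\mathcal{T}_{\tau_y}(z))_{z \in [[x,y]]_\mathcal{T}}$, the collection of excursions of $X^\mathcal{T}$ into $\mathcal{T}_i^o$ before $\tau_y$ forms a Poisson point process of intensity $n_i\, dl$ for $l \in [0, L^\mathcal{T}_{\tau_y}(\rho_i)]$, and these PPPs are mutually independent across $i$. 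Since $X^\mathcal{T}$ and the canonical Brownian motion $\widetilde{X}^{(i)}$ on $\mathcal{T}_i$ share the same Dirichlet form on $\mathcal{T}_i^o$ and both treat $\rho_i$ as a regular reflecting boundary, $n_i$ coincides with the excursion measure of $\widetilde{X}^{(i)}$ at $\rho_i$. Consequently, conditional on $L^\mathcal{T}_{\tau_y}(\rho_i) = l$, the field $(L^\mathcal{T}_{\tau_y}(z))_{z \in \mathcal{T}_i}$ has the same law as the local-time field of $\widetilde{X}^{(i)}$ at the inverse local time $\widetilde{\tau}^{(i)}(l) := \inf\{t : \widetilde{L}^{(i)}_t(\rho_i) > l\}$.

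It then remains to identify this last object as the $\mathcal{T}_i$-indexed $\mathrm{BESQ}^0(l)$ process, which is a tree-indexed analog of the classical second Ray--Knight theorem. I would prove this by a recursion on the branching structure of $\mathcal{T}_i$: along each arc emanating from $\rho_i$, one further application of the time-change of Lemma \ref{ltcoupling} combined with the classical second Ray--Knight theorem gives a one-dimensional $\mathrm{BESQ}^0(l)$ distribution for the local-time profile; at each internal branch point $\rho' \in \mathcal{T}_i$, excursion theory yields conditional independence of the local-time fields on the sub-subtrees hanging off $\rho'$, each an independent $\mathrm{BESQ}^0$ started from the value of the field at $\rho'$; and Lemma \ref{besqlem} then pins down the joint law uniquely. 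The principal obstacle is this final step: carefully handling the excursion-theoretic decomposition on a general compact real tree so as to glue the one-dimensional BESQ pieces into exactly the tree-indexed $\mathrm{BESQ}^0(l)$ distribution, and verifying the claimed conditional independence across all the subtrees in a measurable way.
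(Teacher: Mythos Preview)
Your approach is genuinely different from the paper's and is conceptually sound, though the obstacle you flag at the end is real. The paper avoids excursion theory and recursion entirely: it introduces the Gaussian field $(G_z)_{z\in\mathcal{T}}$ with covariance $d_\mathcal{T}(y,b^\mathcal{T}(y,w,z))$ and proves, for any finite collection of points $z_1,\dots,z_M\in\mathcal{T}$, the distributional identity
\[
L^\mathcal{T}_{\tau_y}(z_i)+\bigl(G_{z_i}-G_{b^\mathcal{T}(x,y,z_i)}\bigr)^2+\bigl(\bar G_{z_i}-\bar G_{b^\mathcal{T}(x,y,z_i)}\bigr)^2
\ \overset{(d)}{=}\ G_{z_i}^2+\bar G_{z_i}^2,
\]
directly via the moment generating function formula \cite[Lemma 3.10.2]{MR} for local times at a hitting time, which expresses the MGF as a ratio of determinants $\det(I-\hat\Sigma\Lambda)/\det(I-\Sigma\Lambda)$ built from the potential density $u_{\tau_y}(z,w)=2d_\mathcal{T}(y,b^\mathcal{T}(y,z,w))$. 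The tree geometry forces $\hat\Sigma$ to be block upper triangular after a suitable ordering of the $z_i$ (points on the arc contribute zero diagonal blocks, points in a common subtree $\mathcal{T}_k$ contribute a symmetric block $\hat\Sigma^{(k)}$), so the numerator factors over subtrees. This single linear-algebra observation yields both (a) and (b) simultaneously for finite-dimensional distributions, and continuity (from Assumption~\ref{assu:1}) does the rest.

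What your route buys is a pathwise, probabilistically transparent picture: the time-change for (a) and the excursion decomposition for (b) explain \emph{why} the BESQ processes arise. What the paper's route buys is that it sidesteps exactly the difficulty you identify---there is no infinite recursion on the branching structure of $\mathcal{T}_i$, no need to identify It\^o excursion measures on general real trees, and no delicate argument for conditional independence across infinitely many attachment points. Two cautions if you pursue your approach: for (a), the trace on $[[x,y]]_\mathcal{T}$ is \emph{reflecting} Brownian motion at both endpoints, so you cannot invoke \cite[Theorem XI.2.2]{RevuzYor} verbatim (though the result still holds, e.g.\ via the Green's function $u_{\tau_y}(z,w)=2(d(y,z)\wedge d(y,w))$ on the arc); for (b), your ``second Ray--Knight on $\mathcal{T}_i$'' step is essentially a result of comparable depth to the lemma itself, so the argument risks circularity unless you reduce to finite subtrees and invoke Lemma~\ref{besqlem}, which you do hint at but do not make precise.
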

\begin{proof}
In this proof, we consider the Gaussian process $(G_z)_{z\in \mathcal{T}}$ whose distribution is characterised by
\[\mathbf{E}\left(G_z\right)=0,\]
\[\mathrm{Cov}\left(G_w,G_z\right)=d_\mathcal{T}\left(y,b^\mathcal{T}(y,w,z)\right).\]
As per the discussion in \cite[Section 6]{DLeG}, it is possible to check that, under Assumption \ref{assu:1}, this process has a continuous version. (Alternatively, one could follow the proof of Lemma \ref{besqlem}, using a Gaussian estimate in place of \eqref{bes0bound}.) We will show that
\begin{equation}\label{e0}
\left(L^\mathcal{T}_{\tau_y}(z)+\left(G_z-G_{b^\mathcal{T}(x,y,z)}\right)^2+\left(\bar{G}_z-\bar{G}_{b^\mathcal{T}(x,y,z)}\right)^2\right)_{z\in\mathcal{T}}\overset{(d)}=
\left(G_z^2+\bar{G}_z^2\right)_{z\in \mathcal{T}},
\end{equation}
where $(\bar{G}_z)_{z\in \mathcal{T}}$ is an independent copy of $(G_z)_{z\in \mathcal{T}}$, and both $G$ and $\bar{G}$ are independent of $L^\mathcal{T}_{\tau_y}$. From this, the result follows exactly as in the one-dimensional case; see the proof of \cite[Theorem 2.6.3]{MR}.

\begin{figure}[t]
\begin{center}
\includegraphics[width=8.5cm]{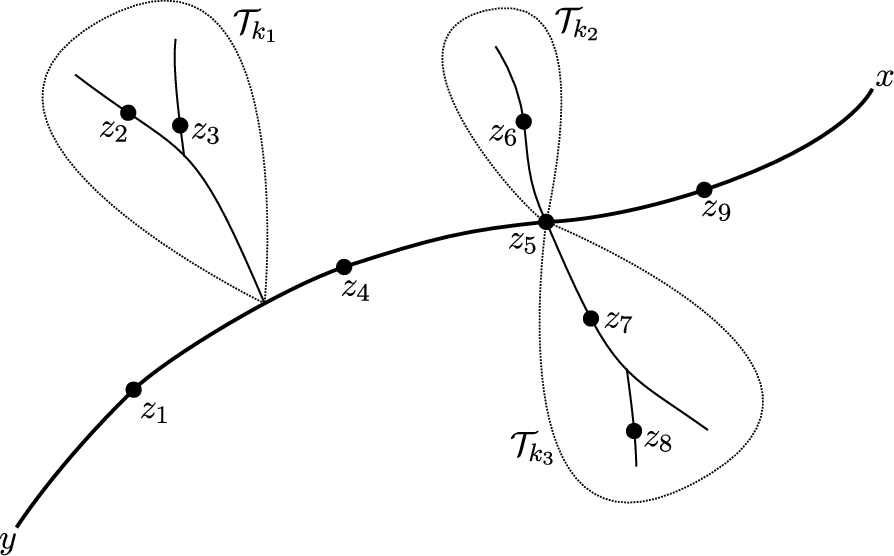}
\end{center}
\caption{An example of the labelling of the points $(z_i)_{i=1}^M$ and subtrees $(\mathcal{T}_{k_n})_{n=1}^N$ from the proof of Lemma \ref{rkhit}. The arc $[[x,y]]_\mathcal{T}$ is shown in bold.}\label{f2}
\end{figure}

Since all the processes in question are continuous, it will suffice to check that the finite-dimensional distributions of the various objects align. In particular, for the remainder of the proof, we fix vertices $(z_i)_{i=1}^M$ in $\mathcal{T}$. Without loss of generality, we will suppose these are distinct. Moreover, it will be convenient to suppose these are ordered so that $d_\mathcal{T}(y,b^\mathcal{T}(x,y,z_i))$ is non-decreasing, and if $d_\mathcal{T}(y,b^\mathcal{T}(x,y,z_i))=d_\mathcal{T}(y,b^\mathcal{T}(x,y,z_j))$ with $z_i\in[[x,y]]_\mathcal{T}$, then $i\leq j$ (i.e.,\ vertices on the arc $[[x,y]]_\mathcal{T}$ appear in the order before those in the subtrees that emanate from them). Moreover, vertices within the same subtree off $[[x,y]]_\mathcal{T}$ should be consecutive in the sequence, though the order within these is unimportant. See Figure \ref{f2} for an example of this labelling. To understand the distribution of $(L^\mathcal{T}_{\tau_y}(z_i))_{i=1}^M$, we consider its moment generating function. In particular, from \cite[Lemma 3.10.2]{MR}, we have that, for $\lambda_1,\dots,\lambda_M$ sufficiently small,
\begin{equation}\label{lt}
E_x^\mathcal{T}\left(\exp\left(\sum_{i=1}^M \lambda_i L^\mathcal{T}_{\tau_y}(z_i)\right)\right)=\frac{\det\left(I-\hat{\Sigma}\Lambda\right)}{\det\left(I-{\Sigma}\Lambda\right)},
\end{equation}
where $I=(I_{i,j})_{i,j=1}^M$ is the $M\times M$ identity matrix, $\Lambda$ is the $M\times M$ diagonal matrix with entries given by $\Lambda_{i,j}=\lambda_iI_{i,j}$, $\Sigma=(\Sigma_{i,j})_{i,j=1}^M$ is given by
\[\Sigma_{i,j}=2d_\mathcal{T}\left(y,b^\mathcal{T}(y,z_i,z_j)\right),\]
and $\hat{\Sigma}=(\hat{\Sigma}_{i,j})_{i,j=1}^M$ is given by
\[\hat{\Sigma}_{i,j}=2d_\mathcal{T}\left(y,b^\mathcal{T}(y,z_i,z_j)\right)-2d_\mathcal{T}\left(y,b^\mathcal{T}(x,y,z_j)\right).\]

We highlight that, in \cite[Lemma 3.10.2]{MR}, $\Sigma$ and $\hat{\Sigma}$ are expressed in terms of a certain potential density, namely $\Sigma_{i,j}=u_{\tau_y}(z_i,z_j)$ and $\hat{\Sigma}_{i,j}=u_{\tau_y}(z_i,z_j)-u_{\tau_y}(x,z_j)$, where $u_{\tau_y}(z,\cdot)$ is the occupation density for $X^\mathcal{T}$, started from $z$ and run until hitting $y$. In our case, it is known that $u_{\tau_y}(z,w)=2d_\mathcal{T}(y,b^\mathcal{T}(y,z,w))$, see \cite[Corollary 3.10]{AEW} or \cite[Proposition 2.12]{ALW}, for example. Moreover, \cite[Lemma 3.10.2]{MR} requires a continuous $\alpha$-potential density, but this readily follows from the existence of a jointly continuous transition density for $X^\mathcal{T}$, $(p^\mathcal{T}_t(w,z))_{w,z\in\mathcal{T},t>0}$ say (see \cite[Theorem 10.4]{Kigq}, for example), which, under Assumption \ref{assu:1}, satisfies $p^\mathcal{T}_t(z,z)\leq Ct^{-\eta/(1+\eta)}$ for $t\in (0,1)$ (as follows from the argument of \cite[Proposition 4.1]{Kum}, for example; applying such ideas, for the Brownian CRT in particular, detailed transition density estimates were presented in \cite{Croydoncrt}).

Thus it remains to show that the right-hand side of \eqref{lt} can be expressed in terms of the Laplace transforms of the relevant Gaussian processes. Firstly, we observe that $\mathrm{Cov}(G_{z_i},G_{z_j})=\frac12\Sigma_{i,j}$ by definition. Hence, writing $g^T$ for the transpose of a vector, we can directly compute that
\begin{eqnarray*}
\mathbf{E}\left(\exp\left(\sum_{i=1}^M \lambda_i G_{z_i}^2\right)\right)&=&\frac{1}{(2\pi)^{M/2}\det\left(\frac12\Sigma\right)^{1/2}}\int_{\mathbb{R}^M}\exp\left( g^T\Lambda g -\frac{1}{2}g^T\left(\frac12\Sigma\right)^{-1}g\right)dg\\
&=&\frac{1}{(2\pi)^{M/2}\det\left(\frac12\Sigma\right)^{1/2}}\int_{\mathbb{R}^M}\exp\left(-\frac{1}{2}g^T\left(2\Sigma^{-1}-2\Lambda\right)g\right)dg\\
&=&\frac{\det\left(\left(2\Sigma^{-1}-2\Lambda\right)^{-1}\right)^{1/2}}{\det\left(\frac12\Sigma\right)^{1/2}}\\
&=&\det\left(1-\Sigma \Lambda\right)^{-1/2},
\end{eqnarray*}
which of course implies
\begin{equation}\label{gausscalc}
\mathbf{E}\left(\exp\left(\sum_{i=1}^M \lambda_i \left(G_{z_i}^2+\bar{G}_{z_i}^2\right)\right)\right)=\det\left(1-\Sigma \Lambda\right)^{-1}.
\end{equation}
To handle the numerator of \eqref{lt}, we will consider the form of $\hat{\Sigma}$ carefully. To do this, we first note that every $z_i$ is either on the arc $[[x,y]]_\mathcal{T}$ or in a set of the form $\mathcal{T}_k\backslash\{\rho_k\}$, where $\mathcal{T}_k$ is one of the trees in the collection $(\mathcal{T}_l)_{l\in\mathcal{I}_{x,y}}$. Write $(\mathcal{T}_{k_n})_{n=1}^N$ for the set of trees that contain such a vertex, ordered in a way that is consistent with the ordering of the points from $(z_i)_{i=1}^M$ that they contain. Again, see Figure \ref{f2} for an example. Now, if $d_\mathcal{T}(y,b^\mathcal{T}(x,y,z_i))>d_\mathcal{T}(y,b^\mathcal{T}(x,y,z_j))$, then
\begin{eqnarray*}
\hat{\Sigma}_{i,j}&=&2d_\mathcal{T}\left(y,b^\mathcal{T}(y,z_i,z_j)\right)-2d_\mathcal{T}\left(y,b^\mathcal{T}(x,y,z_j)\right)\\
&=&2d_\mathcal{T}\left(y,b^\mathcal{T}(x,y,z_j)\right)-2d_\mathcal{T}\left(y,b^\mathcal{T}(x,y,z_j)\right)\\
&=&0.
\end{eqnarray*}
Similarly, if $d_\mathcal{T}(y,b^\mathcal{T}(x,y,z_i))=d_\mathcal{T}(y,b^\mathcal{T}(x,y,z_j))$ and $z_i$ lies on the arc $[[x,y]]_{\mathcal{T}}$, or if $z_i$ and $z_j$ lie in different subtrees emanating from the same point on the arc $[[x,y]]_{\mathcal{T}}$, then $\hat{\Sigma}_{i,j}=0$. Furthermore, if $z_i$ and $z_j$ lie in the same tree $\mathcal{T}_k\backslash\{\rho_k\}$, then
\begin{eqnarray*}
\hat{\Sigma}_{i,j}&=&2d_\mathcal{T}\left(y,b^\mathcal{T}(y,z_i,z_j)\right)-2d_\mathcal{T}\left(y,b^\mathcal{T}(x,y,z_j)\right)\\
&=&2d_\mathcal{T}\left(y,b^\mathcal{T}(y,z_i,z_j)\right)-2d_\mathcal{T}\left(y,b^\mathcal{T}(x,y,z_i)\right)\\
&=&\hat{\Sigma}_{j,i}.
\end{eqnarray*}
Consequently, writing $\hat{\Sigma}^{(k_n)}$ for the symmetric matrix $(\hat{\Sigma}_{i,j})_{z_i,z_j\in \mathcal{T}_{k_n}}$, we have that $\hat{\Sigma}$ is block upper triangular of the form
\[\begin{pmatrix}
\begin{array}{|@{\,}c@{\,}|}
     \hline \hphantom{\hspace{1.85pt}}0\hphantom{\hspace{1.85pt}} \\\hline
   \end{array} &        &           &   &        &  \\
          & \hskip-2.5pt \begin{array}{|@{\,}c@{\,}|}
     \hline  \hphantom{\hspace{1.85pt}}0\hphantom{\hspace{1.85pt}} \\ \hline
   \end{array} &           &   &        &   \\
          &        & \hskip-2.5pt \begin{array}{|@{\,}ccc@{\,}|}
     \hline &&\\
&\hphantom{\hspace{-2.8pt}}\hat{\Sigma}^{(k_1)}\hphantom{\hspace{-2.8pt}}&\\
&& \\\hline
   \end{array} &   &        &   \\
          &        &           &  \hskip-2.5pt\begin{array}{|@{\,}c@{\,}|}
     \hline  \hphantom{\hspace{1.85pt}}0\hphantom{\hspace{1.85pt}}  \\\hline
   \end{array}  &        &   \\
          &        &           &   &  \hskip-2.5pt\begin{array}{|@{\,}ccc@{\,}|}
     \hline &&\\
&\hphantom{\hspace{-2.8pt}}\hat{\Sigma}^{(k_2)}\hphantom{\hspace{-2.8pt}}&\\
&& \\\hline
   \end{array} &   \\
0         &        &           &   &        & \ddots \\
\end{pmatrix}.\]
In particular, the matrix has 0 entries below the diagonal block, the 0 entries on the diagonal correspond to those $z_i$ on the arc $[[x,y]]_\mathcal{T}$ and the matrices $\hat{\Sigma}^{(k_n)}$ to those vertices in subtrees emanating from that. From this observation, we are readily able to deduce that
\begin{equation}\label{e3}
\det\left(I-\hat{\Sigma}\Lambda\right)=\prod_{n=1}^N\det\left(I^{(k_n)}-\hat{\Sigma}^{(k_n)}\Lambda^{(k_n)}\right)=\det\left(I-\tilde{\Sigma}\Lambda\right),
\end{equation}
where $I^{(k_n)}$ and $\Lambda^{(k_n)}$ are the restrictions to vertices in $\mathcal{T}_{k_n}$ of $I$ and $\Lambda$, respectively, and $\tilde{\Sigma}$ is the symmetric matrix given by replacing in $\hat{\Sigma}$ all the entries above the block diagonal by 0. Finally, we observe that
\[\mathbf{E}\left(G_{z_i}-G_{b^\mathcal{T}(x,y,{z_i})}\right)=0,\]
\begin{eqnarray*}
\lefteqn{\mathrm{Cov}\left(G_{z_i}-G_{b^\mathcal{T}(x,y,{z_i})},G_{z_j}-G_{b^\mathcal{T}(x,y,{z_j})}\right)}\\
&=&d_\mathcal{T}\left(y,b^\mathcal{T}(y,{z_i},{z_j})\right)+d_\mathcal{T}\left(y,b^\mathcal{T}(y,b^\mathcal{T}(x,y,{z_i}),b^\mathcal{T}(x,y,{z_j}))\right)\\
&&-d_\mathcal{T}\left(y,b^\mathcal{T}(y,b^\mathcal{T}(x,y,{z_i}),{z_j})\right)-d_\mathcal{T}\left(y,b^\mathcal{T}(y,{z_i},b^\mathcal{T}(x,y,{z_j})\right)\\
&=&\left\{
     \begin{array}{ll}
       d_\mathcal{T}(y,b^\mathcal{T}(y,z_i,z_j))-d_\mathcal{T}(y,b^\mathcal{T}(x,y,z_j)), & \begin{array}{l}\hbox{if $z_i,z_j\in \mathcal{T}_{k_n}\backslash\{\rho_{k_n}\}$}\\\hbox{for some $n=1,\dots,N$,}\end{array}\\
       0, & \hbox{otherwise,}
     \end{array}
   \right.\\
&=&\frac{1}{2}\tilde{\Sigma}_{i,j}.
\end{eqnarray*}
Thus, by following the calculation used to deduce \eqref{gausscalc}, we find that
\begin{equation}\label{e4}
\mathbf{E}\left(\exp\left(\sum_{i=1}^M \lambda_i \left(\left(G_{z_i}-G_{b^\mathcal{T}(x,y,{z_i})}\right)^2+\left(\bar{G}_{z_i}-\bar{G}_{b^\mathcal{T}(x,y,{z_i})}\right)^2\right)\right)\right)=\det\left(1-\tilde{\Sigma} \Lambda\right)^{-1}.
\end{equation}
Combining \eqref{lt}, \eqref{gausscalc}, \eqref{e3} and \eqref{e4} yields \eqref{e0}, as desired.
\end{proof}

\begin{lem}\label{add}
(a) Suppose $(\xi^i_x)_{x\in\mathcal{T}}$, $i=1,2$, are independent $\mathcal{T}$-indexed $\mathrm{BESQ}^{d_i}(z_i)$ processes. It is then the case that $(\xi^1_x+\xi^2_x)_{x\in\mathcal{T}}$ is a $\mathcal{T}$-indexed $\mathrm{BESQ}^{d_1+d_2}(z_1+z_2)$ process.\\
(b) Suppose $(\xi_x)_{x\in\mathcal{T}}$ is a $\mathcal{T}$-indexed $\mathrm{BESQ}^{d}(z_0)$ process and $\mathcal{T}'\subseteq \mathcal{T}$ is a subtree spanning a finite set of vertices including the root. Write $(\mathcal{T}_i)_{i\in\mathcal{I}'}$ for the closures of the components of $\mathcal{T}\backslash\mathcal{T}'$. It is then the case that, conditional on $(\xi_x)_{x\in\mathcal{T}'}$, $(\xi_x)_{x\in\mathcal{T}_i}$, $i\in \mathcal{I}'$, are independent, with each distributed as a $\mathcal{T}_i$-indexed $\mathrm{BESQ}^{d}(\xi_{\rho_i})$ process.
\end{lem}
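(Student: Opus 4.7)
The strategy is to reduce to finite subtrees via the definition of a tree-indexed BESQ process, and then combine the classical Shiga--Watanabe additivity of one-dimensional squared Bessel processes (\cite[Theorem XI.1.2]{RevuzYor}) with the observation that the conditional law of the sum of two independent $\mathrm{BESQ}$ processes depends on the pair of starting values only through their sum. Specifically, fix a subtree $\mathcal{T}'\subseteq\mathcal{T}$ spanning finitely many vertices and containing $\rho$. Along each arc of $\mathcal{T}'$, the pair $(\xi^1,\xi^2)$ consists of independent one-dimensional $\mathrm{BESQ}^{d_1}$ and $\mathrm{BESQ}^{d_2}$ processes, so Shiga--Watanabe yields that their sum is a one-dimensional $\mathrm{BESQ}^{d_1+d_2}(z_1+z_2)$ process. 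To verify the branch-point conditional independence for the sum process at $b=b^\mathcal{T}(\rho,x,x')$, first condition on the pair $(\xi^1_b,\xi^2_b)$: independence of $\xi^1$ and $\xi^2$ combined with the branch-point property for each of them shows that $(\xi^1_y+\xi^2_y)_{y\in[[b,x]]_\mathcal{T}}$ and $(\xi^1_y+\xi^2_y)_{y\in[[b,x']]_\mathcal{T}}$ are conditionally independent, while Shiga--Watanabe applied arc-wise identifies the conditional law of each as a $\mathrm{BESQ}^{d_1+d_2}$ process started from $\xi^1_b+\xi^2_b$. Since these conditional laws depend on the pair only through its sum, passing to the coarser $\sigma$-algebra generated by $\xi^1_b+\xi^2_b$ preserves the conditional independence and the correct one-dimensional distributions; continuity of the sum process is immediate.

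\textbf{Plan for part (b).} This is essentially a restatement of the definition. Fix any finite collection $\{y_{i,j}:j=1,\dots,n_i\}\subseteq\mathcal{T}_i$ for $i\in\mathcal{I}'$, and let $\mathcal{T}'':=\mathcal{T}'\cup\bigcup_{i,j}[[\rho,y_{i,j}]]_\mathcal{T}$, which is a finite subtree of $\mathcal{T}$ containing $\rho$. By hypothesis, $(\xi_x)_{x\in\mathcal{T}''}$ satisfies the defining properties of a tree-indexed $\mathrm{BESQ}^d$ process: $\mathrm{BESQ}^d$ law along every arc, and conditional independence of branches at each branch point given the value there. Applying the branch-point property at each $\rho_i$ together with the one-dimensional Markov property of $\mathrm{BESQ}^d$ along the arcs of $\mathcal{T}'$, one obtains that, given the entire path $(\xi_x)_{x\in\mathcal{T}'}$, the families $(\xi_x)_{x\in\mathcal{T}''\cap\mathcal{T}_i}$ for different $i$ are conditionally independent, each distributed as a $(\mathcal{T}''\cap\mathcal{T}_i)$-indexed $\mathrm{BESQ}^d(\xi_{\rho_i})$ process. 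Letting the $y_{i,j}$ fill out a countable dense subset of each $\mathcal{T}_i$ and invoking the continuity of $\xi$ (as in the closing step of the proof of Lemma \ref{besqlem}) extends the assertion from finite subtrees to all of $\mathcal{T}_i$.

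\textbf{The main obstacle} is the coarsening step in part (a). The branch-point Markov structure is transparent when one conditions on the full pair $(\xi^1_b,\xi^2_b)$, and the content of the argument is to verify that this structure survives reduction to the coarser $\sigma$-algebra generated by $\xi^1_b+\xi^2_b$. The underlying reason is the additivity property of $\mathrm{BESQ}$: the sum of two independent $\mathrm{BESQ}$ processes depends on the starting values only through their sum, a feature peculiar to this family and precisely what makes the sum of independent tree-indexed BESQ processes again a tree-indexed BESQ.
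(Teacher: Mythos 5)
Your proposal is correct and follows essentially the same route as the paper's proof: reduce to finite spanning subtrees (equivalently, finite-dimensional distributions) using continuity, then import the one-dimensional Shiga--Watanabe additivity of \cite[Theorem XI.1.2]{RevuzYor} and the Markov property of one-dimensional BESQ processes. The only difference is one of detail: you make explicit the coarsening from conditioning on the pair $(\xi^1_b,\xi^2_b)$ to conditioning on the sum $\xi^1_b+\xi^2_b$, a step the paper leaves implicit in its appeal to the analogous one-dimensional claims.
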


\begin{proof}
Given the continuity of the processes in question, for both parts (a) and (b), it suffices to check the claims for the finite-dimensional distributions of the processes in question. This is straightforward from the construction of the tree-indexed BESQ processes and the analogous claims that hold for the one-dimensional process. In particular, for the additivity property, see \cite[Theorem XI.1.2]{RevuzYor}. The Markovianity of the processes is also discussed in \cite[Chapter XI]{RevuzYor}.
\end{proof}

\begin{proof}[Proof of Proposition \ref{rk}(b)]
In this proof, we write
\[\tau_{(i)}:=\max_{j=1,\dots,i}\tau_{x_j},\]
so that $\tau_{\mathrm{cov}}^{(\varepsilon)}=\tau_{(M(\varepsilon))}$.

By Lemma \ref{rkhit}, we have that, conditional on $(L^\mathcal{T}_{\tau_{(1)}}(z))_{z\in [[\rho,x_1]]_\mathcal{T}}$, $(L^\mathcal{T}_{\tau_{(1)}}(z))_{z\in \mathcal{T}_i}$, $i\in \mathcal{I}_{\rho,x_1}$, are independent, with each distributed as a $\mathcal{T}_i$-indexed 0-dimensional BESQ process started from $L^\mathcal{T}_{\tau_{(1)}}(\rho_i)$. Using the Markov property for tree-indexed Bessel processes of Lemma \ref{add}(b), we obtain from this that, conditional on $(L^\mathcal{T}_{\tau_{(1)}}(z))_{z\in \mathcal{T}^{(\varepsilon)}}$, $(L^\mathcal{T}_{\tau_{(1)}}(z))_{z\in \mathcal{T}_i}$, $i\in \mathcal{I}^{(\varepsilon)}$, are independent, with each distributed as a $\mathcal{T}_i$-indexed 0-dimensional BESQ process started from $L^\mathcal{T}_{\tau_{(1)}}(\rho_i)$.

We proceed from the previous observation inductively. Suppose for some $i<M(\varepsilon)$ we have that, conditional on $(L^\mathcal{T}_{\tau_{(i)}}(z))_{z\in \mathcal{T}^{(\varepsilon)}}$, $(L^\mathcal{T}_{\tau_{(i)}}(z))_{z\in \mathcal{T}_j}$, $j\in \mathcal{I}^{(\varepsilon)}$, are independent, with each distributed as a $\mathcal{T}_j$-indexed 0-dimensional BESQ process started from $L^\mathcal{T}_{\tau_{(i)}}(\rho_j)$. We have two situations to consider: either $\tau_{x_{i+1}}<\tau_{(i)}$ or $\tau_{x_{i+1}}>\tau_{(i)}$. (The times $\tau_{x_{i+1}}$ and $\tau_{(i)}$ can not be equal, because we assumed the vertices $(x_j)_{j\geq 0}$ to be distinct.) Moreover, we observe that, since it $P_\rho^\mathcal{T}$-a.s.\ holds that $\tau_{x_{i+1}}=\inf\{t\geq 0:\:L^\mathcal{T}_t(x_{i+1})>0\}$ (cf.\ \eqref{ltinc}), we have that $L^\mathcal{T}_{\tau_{(i)}}(x_{i+1})>0$ implies $\tau_{x_{i+1}}<\tau_{(i)}$ and $L^\mathcal{T}_{\tau_{(i)}}(x_{i+1})=0$ implies $\tau_{x_{i+1}}>\tau_{(i)}$. In particular, which of the events $\{\tau_{x_{i+1}}<\tau_{(i)}\}$ or $\{\tau_{x_{i+1}}>\tau_{(i)}\}$ occurs is determined by $(L^\mathcal{T}_{\tau_{(i)}}(z))_{z\in \mathcal{T}^{(\varepsilon)}}$. Similarly, $X^{\mathcal{T}}_{\tau_{(i)}}$ is determined by $(L^\mathcal{T}_{\tau_{(i)}}(z))_{z\in \mathcal{T}^{(\varepsilon)}}$, being the unique one of the vertices $x_1,\dots,x_i$ with zero local time at time $\tau_{(i)}$.

Now, conditional on $(L^\mathcal{T}_{\tau_{(i)}}(z))_{z\in \mathcal{T}^{(\varepsilon)}}$, if $\tau_{x_{i+1}}>\tau_{(i)}$, then
\[\left(L^\mathcal{T}_{\tau_{(i+1)}}(z)-L^\mathcal{T}_{\tau_{(i)}}(z)\right)_{z\in\mathcal{T}},\]
is distributed as the local time $(L_{\tau_{x_{i+1}}}^\mathcal{T}(z))_{z\in\mathcal{T}}$ under $P^\mathcal{T}_{X_{\tau_{(i)}}^\mathcal{T}}$, independently of $(L^\mathcal{T}_{\tau_{(i)}}(z))_{z\in \mathcal{T}_j}$, $j\in \mathcal{I}^{(\varepsilon)}$. In particular, applying Lemma \ref{rkhit} and the Markov property for tree-indexed Bessel processes (i.e.\ Lemma \ref{add}(b)), conditional on $(L^\mathcal{T}_{\tau_{(i)}}(z))_{z\in \mathcal{T}^{(\varepsilon)}}$ and  $(L^\mathcal{T}_{\tau_{(i+1)}}(z)-L^\mathcal{T}_{\tau_{(i)}}(z))_{z\in \mathcal{T}^{(\varepsilon)}}$, if $\tau_{x_{i+1}}>\tau_{(i)}$, then
\[\left(L^\mathcal{T}_{\tau_{(i+1)}}(z)-L^\mathcal{T}_{\tau_{(i)}}(z)\right)_{z\in\mathcal{T}_j},\qquad j\in\mathcal{I}^{(\varepsilon)},\]
are independent, with each distributed as a $\mathcal{T}_j$-indexed 0-dimensional BESQ process started from $L^\mathcal{T}_{\tau_{(i+1)}}(\rho_j)-L^\mathcal{T}_{\tau_{(i)}}(\rho_j)$ (again, independently of $(L^\mathcal{T}_{\tau_{(i)}}(z))_{z\in \mathcal{T}_j}$, $j\in \mathcal{I}^{(\varepsilon)}$). On the other hand, conditional on $(L^\mathcal{T}_{\tau_{(i)}}(z))_{z\in \mathcal{T}^{(\varepsilon)}}$, if $\tau_{x_{i+1}}<\tau_{(i)}$, then $\tau_{(i+1)}=\tau_{(i)}$, and so
\[L^\mathcal{T}_{\tau_{(i+1)}}(z)-L^\mathcal{T}_{\tau_{(i)}}(z)=0,\qquad\forall z \in\mathcal{T}.\]
Since the 0-dimensional BESQ process started from 0 remains at 0, then we can express this in an analogous way to the conclusion in the first case. Namely, conditional on $(L^\mathcal{T}_{\tau_{(i)}}(z))_{z\in \mathcal{T}^{(\varepsilon)}}$ and  $(L^\mathcal{T}_{\tau_{(i+1)}}(z)-L^\mathcal{T}_{\tau_{(i)}}(z))_{z\in \mathcal{T}^{(\varepsilon)}}$,
\[\left(L^\mathcal{T}_{\tau_{(i+1)}}(z)-L^\mathcal{T}_{\tau_{(i)}}(z)\right)_{z\in\mathcal{T}_j},\qquad j\in\mathcal{I}^{(\varepsilon)},\]
are independent, with each distributed as a $\mathcal{T}_j$-indexed 0-dimensional BESQ process started from $L^\mathcal{T}_{\tau_{(i+1)}}(\rho_j)-L^\mathcal{T}_{\tau_{(i)}}(\rho_j)$ (again, independently of $(L^\mathcal{T}_{\tau_{(i)}}(z))_{z\in \mathcal{T}_j}$, $j\in \mathcal{I}^{(\varepsilon)}$). Consequently, applying the inductive hypothesis and the additivity property of Lemma \ref{add}(a), we conclude that, conditional on $(L^\mathcal{T}_{\tau_{(i)}}(z))_{z\in \mathcal{T}^{(\varepsilon)}}$ and  $(L^\mathcal{T}_{\tau_{(i+1)}}(z)-L^\mathcal{T}_{\tau_{(i)}}(z))_{z\in \mathcal{T}^{(\varepsilon)}}$,
\[\left(L^\mathcal{T}_{\tau_{(i+1)}}(z)\right)_{z\in\mathcal{T}_j},\qquad j\in\mathcal{I}^{(\varepsilon)},\]
are independent, with each distributed as a $\mathcal{T}_j$-indexed 0-dimensional BESQ process started from $L^\mathcal{T}_{\tau_{(i+1)}}(\rho_j)$. Since the law only depends on the pointwise sum of $(L^\mathcal{T}_{\tau_{(i)}}(z))_{z\in \mathcal{T}^{(\varepsilon)}}$ and  $(L^\mathcal{T}_{\tau_{(i+1)}}(z)-L^\mathcal{T}_{\tau_{(i)}}(z))_{z\in \mathcal{T}^{(\varepsilon)}}$, it follows that we have the same result when we condition simply upon $(L^\mathcal{T}_{\tau_{(i+1)}}(z))_{z\in \mathcal{T}^{(\varepsilon)}}$.

The above argument gives the result corresponding to Proposition \ref{rk}(b) at the time $\tau_{\mathrm{cov}}^{(\varepsilon)}$. To get from here to the time $\tau_{\mathrm{cov}}^{(\varepsilon)}(\delta)$, we need to add the local time accumulated between these two times, but this can be dealt with in an identical fashion to the inductive step of the proof, and so we omit the details. (In fact, the argument is easier, because we certainly have that $\tau_{\mathrm{cov}}^{(\varepsilon)}(\delta)>\tau_{\mathrm{cov}}^{(\varepsilon)}$, and so we only have one case to deal with.)
\end{proof}

Before concluding the section, we give one other simple consequence of the Ray-Knight theorem for hitting times.

\begin{cor}\label{poscor}
Suppose Assumption \ref{assu:1} holds, $\varepsilon>0$, and $2\delta\in (0,d_\mathcal{T}(\rho,x_1))$. It then $P_\rho^\mathcal{T}$-a.s.\ holds that
\[\inf_{z\in\mathcal{T}^{(\varepsilon)}}L^\mathcal{T}_{\tau_{\mathrm{cov}}^{(\varepsilon)}(\delta)}(z)>0.\]
\end{cor}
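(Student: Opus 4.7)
The plan is to first reduce, via Proposition \ref{rk}(a), to showing that $\inf_{z \in \mathcal{T}^{(\varepsilon)}} L^{(\varepsilon)}_{\tau(\varepsilon,\delta)}(z) > 0$, $P^{(\varepsilon)}_\rho$-a.s. Since $\lambda^{(\varepsilon)}$ readily satisfies Assumption \ref{assu:1} on the compact finite-branch tree $\mathcal{T}^{(\varepsilon)}$, Lemma \ref{lem:contlt} gives joint continuity of $L^{(\varepsilon)}$, so $z \mapsto L^{(\varepsilon)}_{\tau(\varepsilon,\delta)}(z)$ is continuous on the compact set $\mathcal{T}^{(\varepsilon)}$, and it suffices to establish strict positivity at every $z \in \mathcal{T}^{(\varepsilon)}$ simultaneously $P^{(\varepsilon)}_\rho$-almost surely.

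To do this, I would apply Lemma \ref{rkhit} iteratively along the (random) order $x_{i_1},\dots,x_{i_{M(\varepsilon)}}$ in which the leaves of $\mathcal{T}^{(\varepsilon)}$ are hit by $X^{(\varepsilon)}$, mirroring the inductive scheme used in the proof of Proposition \ref{rk}(b); denote $z^* = x_{i_{M(\varepsilon)}} = X^{(\varepsilon)}_{\tau_{\mathrm{cov}}(X^{(\varepsilon)})}$ and $y^* = \Delta_\delta(z^*)$, and include one additional application of Lemma \ref{rkhit} to the final segment from $z^*$ to $y^*$. This expresses $L^{(\varepsilon)}_{\tau(\varepsilon,\delta)}$ as a pathwise sum of $\mathrm{BESQ}^2$ contributions on the arc traversed during each phase (each parameterised by distance from the phase endpoint), plus non-negative $\mathrm{BESQ}^0$ contributions on branches already explored. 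The key path property is that a one-dimensional $\mathrm{BESQ}^2$ process started from $0$ is, $\mathbf{P}$-almost surely, strictly positive on the whole of $(0,\infty)$, as it equals in law the squared modulus of planar Brownian motion started at the origin, which does not return to the origin after time $0$.

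Each $\mathrm{BESQ}^2$ summand can therefore vanish only at its parameterising endpoint, which lies in the finite random set $\{x_{i_1},\dots,x_{i_{M(\varepsilon)}},y^*\}$. At any endpoint $x_{i_k}$ with $k < M(\varepsilon)$, the $(k{+}1)$-th phase's $\mathrm{BESQ}^2$ is evaluated at the positive distance $d_{\mathcal{T}}(x_{i_k},x_{i_{k+1}})$ from its parameterising endpoint $x_{i_{k+1}}$ and is therefore strictly positive; at $z^*$, the final-phase $\mathrm{BESQ}^2$ is evaluated at time $\delta > 0$, and here the hypothesis $\delta > 0$ enters essentially; at the random point $y^*$, which lies on the arc of $\mathcal{T}^{(\varepsilon)}$ carrying $z^*$ and hence on $[[x_{i_{M(\varepsilon)-1}},z^*]]$, the $M(\varepsilon)$-th phase $\mathrm{BESQ}^2$ is evaluated at distance $\delta$ from $z^*$ and so is strictly positive. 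Summing the phase contributions gives $L^{(\varepsilon)}_{\tau(\varepsilon,\delta)}(z) > 0$ for every $z \in \mathcal{T}^{(\varepsilon)}$ simultaneously $P^{(\varepsilon)}_\rho$-a.s., and continuity together with the compactness of $\mathcal{T}^{(\varepsilon)}$ upgrades this to the desired uniform lower bound.

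The main obstacle is the combinatorial bookkeeping in this iterated Ray-Knight decomposition: one must verify that $\{x_{i_1},\dots,x_{i_{M(\varepsilon)}},y^*\}$ exhausts all locations at which some $\mathrm{BESQ}^2$ summand can vanish, and identify at each such endpoint a different phase whose $\mathrm{BESQ}^2$ contribution is strictly positive there. The treatment of $y^*$ is the subtlest step, relying on the fact that it is at distance $\delta > 0$ from both $z^*$ and the parameterising endpoint of the final-phase $\mathrm{BESQ}^2$, so that it is hit strictly before $\tau_{\mathrm{cov}}(X^{(\varepsilon)})$.
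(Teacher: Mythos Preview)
Your approach works in outline but is substantially more involved than the paper's, and the treatment of $y^*$ contains a gap.

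The paper neither reduces to $X^{(\varepsilon)}$ via Proposition~\ref{rk}(a) nor iterates along the random hitting order. Instead it applies Lemma~\ref{rkhit}(a) just once per leaf, always from the fixed starting point $\rho$: for each $i$, the law of $(L^\mathcal{T}_{\tau_{x_i}}(z))_{z\in[[\rho,x_i]]_\mathcal{T}}$ is $\mathrm{BESQ}^2(0)$ parameterised by distance from $x_i$, hence strictly positive on $[[\rho,x_i]]_\mathcal{T}\setminus\{x_i\}$. Combined with $\tau_{x_i}=\inf\{t:L^\mathcal{T}_t(x_i)>0\}$ (cf.\ \eqref{ltinc2}), this yields $\tau_{x_i}=\inf\{t:L^\mathcal{T}_t(z)>0\text{ for all }z\in[[\rho,x_i]]_\mathcal{T}\}$. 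Since $\mathcal{T}^{(\varepsilon)}=\bigcup_i[[\rho,x_i]]_\mathcal{T}$ and $\tau_{\mathrm{cov}}^{(\varepsilon)}=\max_i\tau_{x_i}$, it follows that $\tau_{\mathrm{cov}}^{(\varepsilon)}=\inf\{t:L^\mathcal{T}_t(z)>0\text{ for all }z\in\mathcal{T}^{(\varepsilon)}\}$, and then $\tau_{\mathrm{cov}}^{(\varepsilon)}(\delta)>\tau_{\mathrm{cov}}^{(\varepsilon)}$ together with monotonicity and continuity of local times finishes the proof. No random ordering, no phase-by-phase summation, and no special analysis of $y^*$ is required.

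As for your handling of $y^*$: the assertion that $y^*$ lies on $[[x_{i_{M(\varepsilon)-1}},z^*]]_\mathcal{T}$ presumes that the branch point of $\mathcal{T}^{(\varepsilon)}$ nearest to $z^*$ is at distance at least $\delta$ from $z^*$, which nothing guarantees; that branch point can be arbitrarily close to $z^*$. The repair is routine --- since the first $M(\varepsilon)$ phase arcs already cover all of $\mathcal{T}^{(\varepsilon)}$, some earlier phase's arc contains $y^*$, and one checks $y^*$ is not that phase's terminal leaf (or passes to the next phase if it is), giving a positive $\mathrm{BESQ}^2$ contribution there --- but this additional case analysis illustrates why the paper's direct argument, which never singles out $y^*$ at all, is cleaner.
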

\begin{proof}
Since a $\mathrm{BESQ}^2(0)$ process is simply the square of the modulus of a two-dimensional Brownian motion started from 0 (see \cite[Section 14.2]{MR}, for example), which is a process that almost-surely never returns to 0, it follows from Lemma \ref{rkhit}(a) that, for each $i$, $P_\rho^\mathcal{T}$-a.s.,\
\[L^\mathcal{T}_{\tau_{x_i}}(z)>0,\qquad \forall z\in [[\rho,x_i]]_\mathcal{T}\backslash\{x_i\}.\]
As we also know that $\tau_{x_i}=\inf\{t\geq 0:\: L^\mathcal{T}_t(x_i)>0\}$ (cf.\ \eqref{ltinc2}), it follows that
\[\tau_{x_i}=\inf\left\{t\geq 0:\: L^\mathcal{T}_t(z)>0,\:\forall z\in [[\rho,x_i]]_\mathcal{T}\right\}.\]
Putting the results together for each of the arcs forming $\mathcal{T}^{(\varepsilon)}$, we consequently obtain, $P_\rho^\mathcal{T}$-a.s.,\
\[\tau_{\mathrm{cov}}^{(\varepsilon)}=\inf\left\{t\geq 0:\: L^\mathcal{T}_t(z)>0,\:\forall z\in \mathcal{T}^{(\varepsilon)}\right\}.\]
Since $\tau_{\mathrm{cov}}^{(\varepsilon)}(\delta)>\tau_{\mathrm{cov}}^{(\varepsilon)}$ and the local times are continuous and non-decreasing, the result follows.
\end{proof}

\section{Height decomposition for the Brownian CRT}\label{sec:rec}

The aim of this section is to give an approximation procedure for the Brownian CRT by recursively adding branches of maximal height. This procedure relies on William's decomposition of Brownian excursions under the It\^o measure \cite{Williams}. The following description of this decomposition is based on \cite[Chapter XII.4]{RevuzYor}. We note that there are other recursive decompositions of the Brownian CRT, such as the stick-breaking construction of \cite[Process 3]{Aldous1} or the self-similarity seen when splitting at the branch point of the root and two uniformly chosen vertices, as described in \cite[Theorem 2]{Aldousrec} (see also \cite{AG,CHrec}), but we adopt the current approach due to the convenience of the description of the remainder of the tree in terms of a Poisson process of smaller copies of the Brownian CRT. See Proposition \ref{prop:poissonTepsilon} below for details.

We denote by $\mathbf n$ the It\^o measure on excursions $\zeta=(\zeta_s)_{s\geq 0}$. For an excursion $\zeta$, we define its (finite by definition) lifetime by setting
\[\sigma (\zeta):=\inf \left\{ s \geq 0 : 
\zeta_t = 0,\: \forall t >s  \right\},\]
and write $\sigma$ when the context is clear. We normalise the It\^o measure such that
\begin{equation} \label{eq:Ito}
\mathbf n \left( \sup_{s \geq 0} \zeta_s > \varepsilon \right) = \frac{1}{2 \varepsilon} \quad \text{and} \quad \mathbf n \left( \sigma > \varepsilon \right) = \frac{1}{\sqrt{ 2 \pi \varepsilon}}.
\end{equation}
(See \cite[Chapter XII]{RevuzYor} for background on the excursion measure of Brownian motion.) Each excursion in the support of $\mathbf n$ corresponds to a compact real tree via contour functions. In particular, to define $\mathcal{T}=\mathcal{T}_\zeta$, we first introduce
\[d_\zeta(s,t):=\zeta(s)+\zeta(t)-2\min_{r\in[s\wedge t,s\vee t]}\zeta(r),\qquad s,t\geq 0,\]
and then set $\mathcal{T}:=[0,\sigma]/\sim$, where we suppose $s\sim t$ if and only if $d_\zeta(s,t)=0$. (Here and below, we use the notation $s\wedge t:=\min\{s,t\}$ and $s\vee t:=\max\{s,t\}$.) We equip $\mathcal{T}$ with the quotient metric induced by $2d_\zeta$ (the scaling factor of 2 is included to align our definition with that of Aldous, see \cite[Corollary 22]{Aldous3}), the measure $\mu_\mathcal{T}$ obtained as the push-forward of Lebesgue measure on $[0,\sigma]$ by the quotient map, and a root $\rho$, as given by the equivalence class of $0$ with respect to $\sim$. (See \cite[Section 2]{rrt} for an introduction to this coding procedure.) It is known that the push-forward map from $(\zeta,\sigma)$ to $(\mathcal{T},d_\mathcal{T},\mu_\mathcal{T},\rho)$ is continuous when the image space is equipped with the marked Gromov-Hausdorff-Prokhorov topology (cf.\ \cite[Proposition 3.3]{ADH}), and thus the map $\zeta\mapsto(\mathcal{T},d_\mathcal{T},\mu_\mathcal{T},\rho)$ is measurable. Hence, $(\mathcal{T},d_\mathcal{T},\mu_\mathcal{T},\rho)$ is a well-defined random object, and we denote its law by $\mathbf N$. With a slight abuse of notation, the law $\mathbf P$ of the Brownian CRT is given by
\[\mathbf P = \mathbf N \left( \cdot \middle| \sigma = 1 \right).\]
We remark that, from the scaling property of Brownian motion, it is possible to check that
\begin{equation}\label{scaling}
    \mathbf N \left(\left(\mathcal{T},d_\mathcal{T},\mu_\mathcal{T},\rho\right)\in  \cdot \middle| \sigma = s \right)=\mathbf{P}\left(\left(\mathcal{T},s^{1/2}d_\mathcal{T},s\mu_\mathcal{T},\rho\right)\in  \cdot\right).
\end{equation}
Since, by \eqref{eq:Ito}, it holds that $\mathbf{N}=\int_0^\infty \frac{1}{2\sqrt{2\pi s^3}}\mathbf{N}(\cdot\:|\:\sigma=s)ds $, if a property of $(\mathcal{T},d_\mathcal{T},\mu_\mathcal{T},\rho)$ that does not depend on the particular scaling of the tree holds $\mathbf{N}$-a.s.,\ then it also holds $\mathbf{P}$-a.s.\ (and vice versa).

The Brownian CRT $\mathcal T$ has a unique vertex $x_\emptyset$ of maximal height (i.e.\ distance from the root), $\mathbf P$-.a.s. Indeed, this vertex corresponds to the maximum of a Brownian excursion, which is unique by William's decomposition, for example. (See the proof of Lemma \ref{lem:supHTi} below for further details.) In the same spirit as in previous sections, we denote by $(\mathcal T_i)_{i \geq 1}$ the subtrees of $\mathcal T$ corresponding to the closures of the connected components of $\mathcal T \setminus [[\rho, x_\emptyset]]_{\mathcal{T}}$ ordered by decreasing heights. (Our subsequent decomposition will make clear that, almost-surely, these heights are distinct.) For every $i$, we denote by $\rho_i$ the root of $\mathcal T_i$ (i.e.\ the point of its intersection with $[[\rho, x_\emptyset]]_{\mathcal{T}}$), and by $x_i$ its $\mathbf P$-a.s.\ unique vertex of maximal height (again, our decomposition will make clear such points are almost-surely well-defined). We will also denote the height of $\mathcal T_i$, which is the length of the segment $[[\rho_i,x_i]]_{\mathcal{T}}$, by $H(\mathcal T_i)$. We highlight that, unlike the assumption of the previous section, the points $(x_i)_{i\geq 1}$ will not be dense in $(\mathcal{T},d_\mathcal{T})$.

We can extend this procedure by induction; the following makes sense $\mathbf{P}$-a.s. As an index set, we introduce $\Sigma_* = \bigcup_{n \geq 0} \Sigma_n$, where $\Sigma_0:=\{\emptyset\}$ and, for every $n \geq 1$, $\Sigma_n := \mathbb N^n$. Suppose that subtrees $\mathcal T_i$ of $\mathcal T$ with respective roots $\rho_i$ and vertices of maximal height $x_i$ have been constructed for $i \in \Sigma_*$ such that $|i| \leq n$. Fix $i \in \Sigma_n$. We define the trees $(\mathcal T_{ij})_{j\geq1}$ as the subtrees of $\mathcal T_i$ corresponding to the closures of the connected components of $\mathcal T_i \setminus [[\rho_i , x_i]]_{\mathcal{T}}$ ordered by decreasing height. Each $\mathcal T_{ij}$ has root $\rho_{ij} \in [[\rho_i , x_i]]_{\mathcal{T}}$ and we denote by $x_{ij}$ its $\mathbf P$-a.s.\ unique vertex of maximal height. As before, we also denote the height of $\mathcal T_{ij}$ by $H(\mathcal T_{ij})$. See Figure~\ref{fig:CRTdec} for an illustration. The next lemma will ensure that, for any $\varepsilon>0$, there will only be a finite number of arcs $[[\rho_i,x_i]]_\mathcal{T}$, $i\in\Sigma_*$, with a length that exceeds $\varepsilon$.

\begin{figure}[t]
\begin{center}
\includegraphics[width=6cm]{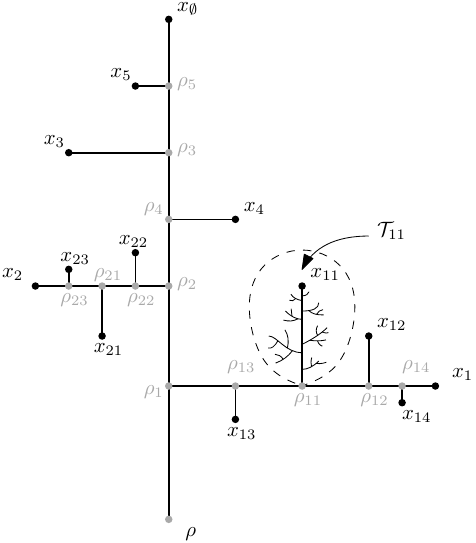}
\end{center}
\caption{Height decomposition of the Brownian CRT. A subset of leaves $x_i$ with $|i| \leq 2$ is represented in black; the corresponding roots $\rho_{i}$ are represented in gray.}\label{fig:CRTdec}
\end{figure}

\begin{lemma} \label{lem:supHTi}
$\mathbf P$-a.s.,\ as $n \to \infty$, it holds that 
\[\sup_{i \in \Sigma_n} H(\mathcal T_i) \underset{n \to \infty}{\rightarrow} 0.\]
\end{lemma}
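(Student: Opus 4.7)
The plan is to combine the $\mathbf P$-a.s.\ compactness of $\mathcal T$ with K\"onig's lemma. The key inequality I would first establish is that, $\mathbf P$-a.s.\ and for every $i\in\Sigma_*$ and $j\geq 1$,
\[
H(\mathcal T_{ij})+d_{\mathcal T}(\rho_i,\rho_{ij})<H(\mathcal T_i).
\]
This holds because $x_i$ is the $\mathbf P$-a.s.\ unique maximiser of $d_{\mathcal T}(\rho_i,\cdot)$ on $\mathcal T_i$, and the leaf $x_{ij}\in\mathcal T_{ij}$ is distinct from $x_i$ (indeed, $\rho_{ij}\neq x_i$: otherwise some leaf of $\mathcal T_{ij}$ would lie strictly above $x_i$ when measured from $\rho_i$). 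In particular $\sup_{i\in\Sigma_n}H(\mathcal T_i)$ is non-increasing in $n$, so it suffices to show that the random rooted subtree
\[
T_\varepsilon:=\{i\in\Sigma_*:H(\mathcal T_i)\geq\varepsilon\}
\]
is finite $\mathbf P$-a.s.\ for every $\varepsilon>0$.

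Next I would check that $T_\varepsilon$ has only finitely many children at each node. Conditional on $\mathcal T_i$, Williams' decomposition applied to the sub-excursion of $\zeta$ coding $\mathcal T_i$ expresses the heights $(H(\mathcal T_{ij}))_{j\geq 1}$ as the atoms of a Poisson point process on $(0,H(\mathcal T_i))$ whose intensity on the spine of $\mathcal T_i$ is bounded by a constant times the It\^o measure $\mathbf n$; by \eqref{eq:Ito}, the number of atoms above level $\varepsilon$ then has finite mean. By the displayed inequality above, $T_\varepsilon$ is closed under taking prefixes, so it is a rooted subtree of $\Sigma_*$ with locally finite branching $\mathbf P$-a.s.

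If $T_\varepsilon$ were infinite, K\"onig's lemma would produce an infinite ray $(i^{(n)})_{n\geq 0}$ in $T_\varepsilon$ (with $i^{(n)}\in\Sigma_n$ and $i^{(n)}$ a prefix of $i^{(n+1)}$). I would then show that the corresponding leaves $(x_{i^{(n)}})$ form an infinite $\varepsilon$-separated subset of $\mathcal T$, contradicting the $\mathbf P$-a.s.\ compactness of $\mathcal T$ (which follows from the continuity of the coding excursion). For $k<l$, note that $x_{i^{(l)}}\in\mathcal T_{i^{(l)}}\subseteq\mathcal T_{i^{(k+1)}}$ while $x_{i^{(k)}}$ lies on the spine of $\mathcal T_{i^{(k)}}$ and is distinct from $\rho_{i^{(k+1)}}$, so the branch point of $x_{i^{(k)}}$ and $x_{i^{(l)}}$ in $\mathcal T$ is $\rho_{i^{(k+1)}}$, and therefore
\[
d_{\mathcal T}(x_{i^{(k)}},x_{i^{(l)}})\geq d_{\mathcal T}(x_{i^{(k)}},\rho_{i^{(k+1)}})=H(\mathcal T_{i^{(k)}})-d_{\mathcal T}(\rho_{i^{(k)}},\rho_{i^{(k+1)}})>H(\mathcal T_{i^{(k+1)}})\geq\varepsilon,
\]
where the strict inequality uses the key inequality above with $(i,j)=(i^{(k)},i^{(k+1)}_{k+1})$.

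The main obstacle is the rigorous implementation of the Poisson description of $(H(\mathcal T_{ij}))_{j\geq 1}$ conditional on $\mathcal T_i$, together with its validity simultaneously for every $i\in\Sigma_*$. Since $\Sigma_*$ is countable and each $\mathcal T_i$ is coded by a sub-excursion of $\zeta$, this should follow from an inductive application of Williams' decomposition, but some bookkeeping is required to ensure the conditional independence structure.
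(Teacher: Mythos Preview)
Your proof is correct and takes a genuinely different route from the paper's. The paper proceeds quantitatively: from the Poisson intensity \eqref{eq:intensityHdec} it computes, for $\theta>1$,
\[
\mathbf N^{H=h}\Bigl(\sup_{j\geq 1}H(\mathcal T_{ij})^\theta\,\Big|\,H(\mathcal T_i)\Bigr)\leq \frac{1}{2\theta(\theta-1)}H(\mathcal T_i)^\theta,
\]
iterates to obtain geometric decay of $\mathbf N^{H=h}(\sup_{i\in\Sigma_n}H(\mathcal T_i)^\theta)$, and concludes by Borel--Cantelli. Your argument is purely topological: you reduce to finiteness of $T_\varepsilon$, invoke K\"onig's lemma, and derive a contradiction with compactness via an $\varepsilon$-separated family of leaves. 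The paper's approach gives a rate; yours is more conceptual and transfers verbatim to any compact real tree in which the height-maximising leaf is unique at every stage of the recursion.

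One remark: the ``main obstacle'' you flag is not actually needed. Local finiteness of $T_\varepsilon$ follows from compactness alone, by the same mechanism you use for the infinite ray. If $j\neq j'$ and $H(\mathcal T_{ij}),H(\mathcal T_{ij'})\geq\varepsilon$, then the geodesic from $x_{ij}$ to $x_{ij'}$ passes through $\rho_{ij}$ and $\rho_{ij'}$ on the spine $[[\rho_i,x_i]]_{\mathcal T}$, so
\[
d_{\mathcal T}(x_{ij},x_{ij'})\geq d_{\mathcal T}(x_{ij},\rho_{ij})+d_{\mathcal T}(\rho_{ij'},x_{ij'})=H(\mathcal T_{ij})+H(\mathcal T_{ij'})\geq 2\varepsilon.
\]
Hence $\{x_{ij}:H(\mathcal T_{ij})\geq\varepsilon\}$ is $2\varepsilon$-separated in the compact space $\mathcal T$, and therefore finite. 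With this observation, your argument uses Williams' decomposition only to guarantee the almost-sure uniqueness of the height maximiser at each stage (so that the construction and the strict inequality $H(\mathcal T_{ij})+d_{\mathcal T}(\rho_i,\rho_{ij})<H(\mathcal T_i)$ make sense), and no conditional-independence bookkeeping is required.
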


\begin{proof}
We start by presenting William's decomposition of the Brownian excursion \cite[Chapter XII.4]{RevuzYor}. In the context of random trees, this description can also be found in \cite[Theorem 3.3]{AbDel}. Denoting $H(\mathcal{T})$ the height of a rooted real tree $\mathcal{T}$, our normalisation of the Brownian excursion gives
\[\mathbf N \left( H > \varepsilon \right) = \mathbf n \left( \sup_{s \geq 0} \zeta_s > \varepsilon/2 \right) = \frac{1}{\varepsilon},\]
and, as follows from \cite[Theorem 3.3]{AbDel}, $\mathbf N$-a.e.\ realisation of $\mathcal{T}$ has a unique vertex of height $H$. For $h >0$, we set $\mathbf N^{H=h}( \cdot )  := \mathbf N( \cdot \:|\:H=h )$, so that
\[\mathbf N = \int_0^\infty \frac{{d} h}{h^2} \, \mathbf N^{H=h} .\]
Fix $h >0$. Under $\mathbf N^{H=h}$, a tree has a unique vertex $x_h$ at height $h$ almost-surely, and we denote the (possibly empty) subtrees grafted on the left-hand side and right-hand side of the segment $[[\rho, x_h]]_{\mathcal{T}}$ at distance $x$ from $x_h$ and $h-x$ from $\rho$ by $\overleftarrow {\mathcal{T}}_x^h$ and $\overrightarrow{\mathcal{T}}_x^h$. (The left- and right- sides of the tree are determined by the whether the part of the excursion $\zeta$ that describes them appears before or after the unique index that corresponds to the vertex $x_h$.) Williams' decomposition states that, under $\mathbf N^{H=h}$, the collection $(x,\overleftarrow{\mathcal{T}}_x^h, \overrightarrow{\mathcal{T}}_x^h)$ with $x \in [0,h]$ and at least one of $\overleftarrow {\mathcal{T}}_x^h$ and $\overrightarrow{\mathcal{T}}_x^h$ being non-empty is countable, and that the point measure over such elements
\[\sum_{x\in [0,h]:\:\overleftarrow {\mathcal{T}}_x^h\cup\overrightarrow {\mathcal{T}}_x^h\neq\emptyset} \delta_{(x,\overleftarrow {\mathcal{T}}_x^h, \overrightarrow {\mathcal{T}}_x^h)}\]
is a Poisson random measure with intensity
\begin{equation} \label{eq:intensityHdec}
\frac{1}{4}\mathbf 1_{[0,h]}(x) \, {d} x \, \left(\delta_\emptyset ( d \overleftarrow{\mathcal{T}}_x^h) \, \mathbf N \left( \mathbf 1_{\{H \leq x\}} \, d \overrightarrow{\mathcal{T}}_x^h \right)+ \delta_\emptyset(d \overrightarrow{\mathcal{T}}_x^h) \, \mathbf N \left( \mathbf 1_{\{H \leq x\}} \,  d \overleftarrow{\mathcal{T}}_x^h \right)\right).
\end{equation}
(The factor $\frac14$ differs from that in \cite{AbDel} due to our choice of normalisation of the Brownian CRT.) As a consequence, conditional on their branching points $x \in [0,h]$ and on their heights $(h_x)_x$, the atoms of the point process, the trees $\overrightarrow{\mathcal{T}}_x^h$ or $\overleftarrow{\mathcal{T}}_x^h$ are independent and have respective laws $\mathbf N ^{H=h_x}$. (In addition, the event that there exists an $x$ such that both $\overrightarrow{\mathcal{T}}_x^h$ or $\overleftarrow{\mathcal{T}}_x^h$ are nonempty has probability $0$.) It is straightforward to track this decomposition in our recursive construction of $(\mathcal T_i)_{i \in \Sigma_*}$. Indeed, for every $n \geq 1$, conditional on $(h_i,\rho_i,x_i)_{|i| \leq n}$, the random trees $(\mathcal T_i)_{|i| \leq n}$ are independent and have respective laws $\mathbf N^{H=h_i}$ (and are on the left hand side or right hand side of $[[\rho, x_h]]_{\mathcal{T}}$ with probability $1/2$).

Now fix $\theta > 1$. From the intensity measure \eqref{eq:intensityHdec}, we calculate
\begin{align*}
\mathbf N^{H=h} \left( \sup_{x\in [0,h]} \left\{ H(\overrightarrow{\mathcal{T}}_x^h)^\theta \vee  H(\overleftarrow{\mathcal{T}}_x^h)^\theta \right\} \right)
&\leq\mathbf N^{H=h} \left( \sum_{x\in [0,h]} H(\overrightarrow{\mathcal{T}}_x^h)^\theta +  H(\overleftarrow{\mathcal{T}}_x^h)^\theta \right)\\
&= \frac{1}{2}\int_0^h {d}x \, \mathbf N \left(H^\theta \, \mathbf 1_{H \leq x} \right)\\
&= \frac{1}{2}\int_0^h {d}x \, \int_0^x \frac{u^\theta}{u^2} {d} u\\
&= \frac{1}{2\theta (\theta -1)} \, h^\theta.
\end{align*}
This immediately yields, for every $i \in \Sigma_*$:
\[\mathbf N^{H=h} \left( \sup_{j \geq 1} H(\mathcal T_{ij})^\theta  \middle| H(\mathcal T_i) \right)
\leq\frac{1}{2 \theta (\theta -1)} \, H(\mathcal T_i)^\theta.\]
By induction, for any $k \geq 1$, we therefore have
\[\mathbf N^{H=h} \left( \sup_{i \in \Sigma_k} H (\mathcal T_{i})^\theta  \right)\leq \left( \frac{1}{2 \theta (\theta -1)}\right)^k \, h^\theta.\]
Applying Borel-Cantelli with $\theta$ suitably large gives the result.
\end{proof}

We can now define our approximations $\mathcal T^{(\varepsilon)}$ for $\mathcal T$ that fit with the setting of Section \ref{bmsec}. Supposing $\mathcal{T}$ is chosen according to $\mathbf N^{H=h}$, for every $\varepsilon >0$, we set
\begin{equation} \label{eq:defTepsilon}
\mathcal T^{(\varepsilon)} := [[\rho,x_\emptyset]]_{\mathcal{T}} \cup \bigcup_{i \in \Sigma_* \, : \, h_i > \varepsilon} \mathcal [[\rho_i,x_i]]_{\mathcal{T}}.
\end{equation}
Lemma~\ref{lem:supHTi} ensures that, almost-surely, the trees $\mathcal T^{(\varepsilon)}$ are finite unions of lines and that $d_\mathcal{T}^H\left(\mathcal{T}^{(\varepsilon)},\mathcal{T}\right)\to 0$ as $\varepsilon \to 0$. In addition, from our definition of the $\mathcal T_i$'s, the $\mathcal T^{(\varepsilon)}$'s are subtrees of $\mathcal T$ and are nested when $\varepsilon$ decreases. In particular, by relabelling, it is possible to suppose that the $x_i$'s that appear in the above description (together with $\rho$) can be written as $(x_i)_{i=0}^{M(\varepsilon)}$ for some sequences $(x_i)_{i=0}^\infty$ and $(M(\varepsilon))_{\varepsilon>0}$ that satisfy the assumptions set out at the start of Section \ref{sec:covertimeapprox}. (Namely, the sequence $(x_i)_{i=0}^\infty$ is dense and composed of distinct points, and $M(\varepsilon) \rightarrow\infty$ as $\varepsilon \to 0$. Strictly speaking, these assumptions are only met when $\varepsilon\in(0,d_\mathcal{T}(\rho,x_1))$, but a straightforward reparameterisation from this range to $(0,\infty)$ shows that the results of Section \ref{bmsec} hold even when Assumption \ref{assu:2} is given in terms of this smaller range of $\varepsilon$.) Finally, we have the following Poisson description.

\begin{proposition} \label{prop:poissonTepsilon}
Fix $\varepsilon >0$ and denote by $\lambda^{(\varepsilon)}$ the one-dimensional Hausdorff measure on $\mathcal T^{(\varepsilon)}$. Denote by $(\mathcal T_x)_{x \in \mathcal T^{(\varepsilon)}}$ the countable collection of closures of the connected components of $\mathcal T \setminus \mathcal T^{(\varepsilon)}$. For every $x \in \mathcal T^{(\varepsilon)}$, we denote by $h_x$ the distance between $x$ and the  leaf $x_i$ such that $x$ belongs to the branch $[[\rho_i,x_i]]_{\mathcal{T}}$ (which is unique apart from when $x$ is a branch point of $\mathcal{T}^{(\varepsilon)}$).  Conditional on $\mathcal T^{(\varepsilon)}$, the point process
\[\sum_{x \in \mathcal T^{(\varepsilon)}} \delta_{(x,\mathcal T_x)}\]
is an inhomogeneous Poisson point process with intensity measure
\[ \frac{1}{2}\lambda^{(\varepsilon)} ({d} x) \, \mathbf N ^{H \leq h_x\wedge \varepsilon} (d \mathcal T_x),\]
where $\mathbf{N}^{H\leq h}:=\int_0^h \frac{dh'}{(h')^2} N^{H=h'}=\mathbf{N}\left(\cdot\cap \{H\leq h\}\right)$.
\end{proposition}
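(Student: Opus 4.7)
The plan is to lift Williams' decomposition \eqref{eq:intensityHdec} from the single spine $[[\rho,x_\emptyset]]_{\mathcal{T}}$ to the entire skeleton $\mathcal{T}^{(\varepsilon)}$ by iterating it through the construction of Section~\ref{sec:rec} and repeatedly invoking the splitting property of Poisson point processes. I would work under $\mathbf{N}^{H=h}$ for generic $h>0$; the statements under $\mathbf{N}$ and $\mathbf{P}$ then follow from the disintegration $\mathbf{N}=\int_0^\infty h^{-2}\mathbf{N}^{H=h}\,dh$ and by conditioning on $\sigma=1$, respectively.

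First, I would rewrite Williams' decomposition in the parameterisation of the proposition. Combining the left- and right-hand atoms in \eqref{eq:intensityHdec}, and parameterising grafting points along $[[\rho,x_\emptyset]]_{\mathcal{T}}$ by their distance $h_x$ to $x_\emptyset$ (rather than by their height from $\rho$), the grafted subtrees form a Poisson point process of intensity $\tfrac12\,\lambda(dx)\,\mathbf{N}^{H\leq h_x}(d\mathcal{T}_x)$, where $\lambda$ is length measure on the spine. Marking each atom by whether $H>\varepsilon$ or $H\leq\varepsilon$ splits this into two independent PPPs: the ``small'' atoms contribute components of $\mathcal{T}\setminus\mathcal{T}^{(\varepsilon)}$ grafted on $[[\rho,x_\emptyset]]_{\mathcal{T}}$ with intensity $\tfrac12\,\lambda(dx)\,\mathbf{N}^{H\leq h_x\wedge\varepsilon}(d\mathcal{T}_x)$, which is exactly the target intensity on this first-level spine, while the ``large'' atoms are the $(\mathcal{T}_i)_{|i|=1}$ with $H(\mathcal{T}_i)>\varepsilon$, which by Williams' decomposition are, conditional on their grafting points and heights, independent with respective laws $\mathbf{N}^{H=H(\mathcal{T}_i)}$.

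Second, I would iterate by induction on generation depth $n\geq 0$. The inductive claim is that, conditional on the skeleton assembled up to level $n$ together with the grafting points and heights of all surviving level-$n$ trees, the ``small'' components revealed at levels $\leq n$ form a PPP on the portion of $\mathcal{T}^{(\varepsilon)}$ built so far with the desired intensity, and the surviving level-$n$ large trees are independent with the appropriate Williams' laws. Applying the first step simultaneously to each such $\mathcal{T}_i$ advances the induction. Lemma~\ref{lem:supHTi} guarantees $\mathbf{N}$-a.s.\ termination after finitely many levels, and superposing the resulting independent PPPs along the spines comprising $\mathcal{T}^{(\varepsilon)}$ (which are pairwise disjoint up to a countable and hence $\lambda^{(\varepsilon)}$-null set of branch points) yields the global PPP on $\mathcal{T}^{(\varepsilon)}$ with the stated intensity.

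The principal technical point is the conditional independence at the inductive step: one must check that the restarted Williams' decompositions inside distinct level-$n$ trees $\mathcal{T}_i$ are conditionally independent both of one another and of the ``small'' PPP revealed at earlier levels, so that their atoms combine without destroying the Poisson property. This is already encoded in \eqref{eq:intensityHdec}, where, conditional on the heights and grafting points, the grafted subtrees are independent with the prescribed Williams' laws; the only obstacle is the measure-theoretic bookkeeping required to thread this independence consistently through the recursion, after which the result follows by iterating the splitting property for PPPs.
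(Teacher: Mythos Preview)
Your proposal is correct and follows essentially the same route as the paper: both iterate Williams' decomposition through the recursive construction, split the grafted subtrees at each spine into those of height $>\varepsilon$ (which become further spines of $\mathcal{T}^{(\varepsilon)}$) and those of height $\leq\varepsilon$ (which become the components of $\mathcal{T}\setminus\mathcal{T}^{(\varepsilon)}$), invoke Lemma~\ref{lem:supHTi} for $\mathbf{N}$-a.s.\ finiteness of the index set, and then superpose the resulting conditionally independent Poisson processes on the disjoint spines. The paper's version is slightly more compressed, packaging your induction into a single observation that, conditional on $\mathcal{T}^{(\varepsilon)}$, the restricted point measures $\mathcal{P}_i^{(\varepsilon)}$ for $i\in I_\varepsilon$ are independent with disjoint supports; your explicit level-by-level induction makes the same conditional-independence bookkeeping more transparent.
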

\begin{rem}\label{remrem}
Note that, for a given $\varepsilon > 0$, the intensity measure of Proposition \ref{prop:poissonTepsilon} is dominated by the homogeneous measure
\[ \frac{1}{2}\lambda^{(\varepsilon)} ({d} x) \, \mathbf N ^{H \leq \varepsilon} ({d} \mathcal T_x). \]
Moreover, although in the statement of the above result we only explicitly keep track of the subsets of the form $\mathcal{T}_x$, we have implicitly included the restriction of the metric $d_\mathcal{T}$ on these and the marked point that represents their root. Additionally, we could have included the restriction of the measure $\mu_\mathcal{T}$, and thus considered each $\mathcal{T}_x$ as a marked compact metric space equipped with a finite Borel measure. We will later need this more refined viewpoint.
\end{rem}
\begin{proof}
Fix $\varepsilon >0$ and set $I_\varepsilon = \{ i \in \Sigma_*: \, h_i > \varepsilon \}$. Under $\mathbf P$, Lemma~\ref{lem:supHTi} ensures that the random set $I_\varepsilon$ is finite almost-surely. In addition, $\mathcal T^{(\varepsilon)} = [[\rho , x_\emptyset ]] \cup \left( \bigcup_{i \in I_\varepsilon} [[\rho_i,x_i]] \right)$ by construction. Therefore, the collection $(\mathcal T_x)_{x \in \mathcal T^{(\varepsilon)}}$ of Proposition \ref{prop:poissonTepsilon} consists of all the trees $\mathcal T_j$ with $j\notin I_\varepsilon$. Alternatively,
\begin{align*}
(\mathcal T_x)_{x \in \mathcal T^{(\varepsilon)}}
&= \bigcup_{i \in I_\varepsilon} \left( \bigcup_{j \geq 1 \, : \, ij \notin I_\varepsilon} \mathcal T_{ij} \right) = \bigcup_{i \in I_\varepsilon} \left( \bigcup_{j \geq 1 \, : \, h_{ij} \leq \varepsilon} \mathcal T_{ij} \right),
\end{align*}
where the unions are disjoint. Given $\mathcal T^{(\varepsilon)}$ and thanks to~\eqref{eq:intensityHdec}, for every $i \in I_{\varepsilon}$, the point measure
\[\mathcal P_i : = \sum_{j \geq 1} \delta_{(\rho_{ij},\mathcal T_{ij})}\]
is a Poisson point process with intensity $\frac12\mathbf 1_{[[\rho_i, x_i]]_{\mathcal{T}}}(x) {d} x \, \mathbf N \left( \mathbf 1_{H \leq h_x} \, {d} \mathcal{T}_x\right)$. Therefore the point measure
\[\mathcal P_i^{(\varepsilon)} : = \sum_{j \geq 1 \, : \, h_{ij} \leq \varepsilon} \delta_{(\rho_{ij},\mathcal T_{ij})}\]
is a Poisson point process with intensity
\[\frac12\mathbf 1_{[[\rho_i, x_i]]_{\mathcal{T}}}(x) {d} x \, \mathbf N \left( \mathbf 1_{H \leq h_x \wedge \varepsilon} \, {d} \mathcal{T}_x\right) = \frac12\mathbf 1_{[[\rho_i, x_i]]_{\mathcal{T}}}(x) {d} x \, \mathbf N^{H \leq h_x \wedge \varepsilon} \left( {d} \mathcal{T}_x\right).\]
Given $\mathcal{T}^{(\varepsilon)}$, the point measures $\mathcal P_i^{(\varepsilon)}$ have disjoint supports and are independent, and the result follows.
\end{proof}

\section{A path property of BESQ-Brownian snakes}\label{sec:bes}

In this section we to prove a path property of snakes driven by a Brownian motion with spatial displacements following a 0-dimensional BESQ process. In Section~\ref{sec:snakedef}, we first give a quick overview of snakes and then formally state our path property as Theorem~\ref{th:snake}. The proof of this result involves the special Markov property of snakes that we state in Section~\ref{sec:specialMarkov}. These two sections cover well-known properties of Brownian snakes and we refer the reader to Le Gall's book \cite[Chapter IV]{LG} for more details. (An interested reader might also want to refer to \cite{DLeG2}, where more general branching mechanisms are treated.) Finally, in Section~\ref{sec:proofsnake}, we prove Theorem~\ref{th:snake}.

\subsection{Definition and statement of the path property} \label{sec:snakedef}

We start by giving a description of the snakes we will work with, which are path-valued processes. A finite path is a continuous mapping $\mathrm w : [0,\zeta] \to [0, \infty)$, and $\zeta$ is called the lifetime of $\mathrm w$. The restriction to nonnegative values comes from the fact that we will only deal with Bessel squared processes.
Let $\mathcal W$ be the set of all finite paths in $[0, \infty)$. For $\mathrm w \in \mathcal W$, we denote by $\zeta_{(\mathrm w)}$ the lifetime of $\mathrm w$. The tip of the path $\mathrm w$ is denoted by $\widehat{\mathrm w} = \mathrm w ( \zeta_{(\mathrm w)} )$. For every $x \in \mathbb R$, we set $\mathcal W_x = \left\{ \mathrm w \in \mathcal W : \mathrm w (0) = x \right\}$, and we identify the trivial path of $\mathcal W_x$ with lifetime $0$ with the point $x$. The space $\mathcal W$ is equipped with the following distance:
\begin{equation}\label{distd}
   d(\mathrm w_1, \mathrm w_2) := \left| \zeta_{(\mathrm w_1)} - \zeta_{(\mathrm w_2)} \right| + \sup_{t \geq 0} \left| \mathrm w_1 ( t \wedge \zeta_{(\mathrm w_1)} ) - \mathrm w_2 ( t \wedge \zeta_{(\mathrm w_2)} ) \right|.
\end{equation}
A path-valued process is a random element of the space $\mathcal C (\mathbb R_+, \mathcal W)$ of continuous functions from $\mathbb R_+$ to $\mathcal W$ equipped with the topology of uniform convergence on every compact subset of $\mathbb R_+$.

For $\mathrm w \in \mathcal W$, the law of the \emph{zero-dimensional BESQ-Brownian snake} started from $\mathrm{w}$, which we will denote by $\mathbb P_{\mathrm w}$, is characterised as follows. Note that we will write $(W_s)_{s\geq 0}$ for a random element of $\mathcal C (\mathbb R_+, \mathcal W)$ sampled according to $\mathbb P_{\mathrm w}$. First, under $\mathbb P_{\mathrm w}$, the lifetime process $\left( \zeta_{(W_s)} \right)_{s \geq 0}$ is a reflected Brownian motion started at $\zeta_{(\mathrm{w})}$. To simplify notation we write $\zeta_s := \zeta_{(W_s)}$ for every $s \geq 0$. Second, conditionally on the lifetime process $(\zeta_s)_{s\geq 0}$, the law of process $(W_s)_{s \geq 0}$, denoted by $\Theta_{\mathrm w}^{\zeta}$, satisfies
\begin{enumerate}
\item The initial value $W_0$ is $\Theta_{\mathrm w}^{\zeta}$-almost-surely equal to $\mathrm w$.
\item The process $(W_s)_{s \geq 0}$ is time-homogeneous Markov under $\Theta_{\mathrm w}^{\zeta}$. Its transition kernel is specified as follows. For $0 \leq s \leq s'$:
\begin{itemize}
\item $W_{s'} (t) = W_s(t)$ for all $t \leq m_\zeta(s,s') := \min \{ \zeta_r : s \leq r \leq s' \}$;
\item The random path $\left( W_{s'} (m_\zeta(s,s') + t) \right)_{0 \leq t \leq \zeta_{s'} - m_\zeta(s,s')}$ only depends on $W_s$ through $W_{s} (m_\zeta(s,s'))$ and is distributed as a zero-dimensional squared Bessel process started at $W_{s} (m_\zeta(s,s'))$ and stopped at time $\zeta_{s'} - m_\zeta(s,s')$.
\end{itemize}
\end{enumerate}
The full construction of this process is covered in \cite[Chapter IV]{LG} when the zero-dimensional BESQ process is replaced by a generic Markov process with uniformly H\"older continuous sample paths (see Assumption (C) in Section IV.4 of \cite{LG}). In particular, \cite[Chapter IV]{LG} proves that the process has continuous sample paths (Proposition 5) and is strong Markov (Theorem 6). Inconveniently for our purposes, the zero-dimensional BESQ process does not have uniformly H\"older continuous sample paths and a (very minor) tweak is needed to prove that our snake process is still continuous. We include this proof (which replaces the proof of Proposition 5 in \cite[Chapter IV]{LG}) for the sake of completeness. The proof of the strong Markov property (Theorem 6 in \cite[Chapter IV]{LG}) is identical as it does not use the H\"older property.

\begin{proposition}[{cf.\ \cite[Proposition IV.5]{LG}}]
For every $\mathrm w \in \mathcal W$, the process $(W_s)_{s \geq 0}$ has a continuous modification under $\mathbb P_{\mathrm w}$.
\end{proposition}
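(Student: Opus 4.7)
The plan is to apply the Kolmogorov--Chentsov continuity criterion, combined with a standard localization argument to handle the dependence on the starting value of the constants in the tail bound \eqref{bes0bound} for the zero-dimensional BESQ process. For fixed $T_0>0$ and $R\geq 1$, I work on the event
\[A_R := \Bigl\{\sup_{s\in[0,T_0]}\zeta_s\leq R\Bigr\}\cap\Bigl\{\sup_{s\in[0,T_0]}\sup_{t\in[0,\zeta_s]}|W_s(t)|\leq R\Bigr\},\]
whose $\mathbb P_{\mathrm w}$-probability tends to $1$ as $R\to\infty$, as may be verified by standard Brownian estimates for the lifetime factor and a covering argument based on \eqref{bes0bound} for the second factor. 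Producing a continuous modification on each $A_R$ via Kolmogorov--Chentsov (applied after a truncation of $W$ making $d(W_s,W_{s'})$ vanish off $A_R$), then patching as $R\uparrow\infty$ and $T_0\uparrow\infty$, will yield the global continuous modification.

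For the moment bound on $A_R$, I exploit the snake's structure. Writing $m:=m_\zeta(s,s')$, the identities $W_s(t)=W_{s'}(t)$ for $t\leq m$ and $W_s(m)=W_{s'}(m)$ give
\[d(W_s,W_{s'})\leq |\zeta_s-\zeta_{s'}|+O_s+O_{s'},\qquad O_u:=\sup_{r\in[0,\zeta_u-m]}\bigl|W_u(m+r)-W_u(m)\bigr|.\]
The first term is a reflected Brownian increment, so $\mathbb E_{\mathrm w}[|\zeta_s-\zeta_{s'}|^q]\leq C_q|s-s'|^{q/2}$ for any $q\geq 1$. For $O_u$, property~(2) of the snake states that, conditional on $\zeta$ and on the path $(W_r)_{r\leq s\wedge s'}$, $(W_u(m+r))_{r\in[0,\zeta_u-m]}$ is a $\mathrm{BESQ}^0$ process started from $W_s(m)$. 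On $A_R$ (which is contained in $\{|W_s(m)|\leq R,\,\zeta_u-m\leq R\}$) the starting point and interval length are both bounded by $R$, so a chaining argument driven by \eqref{bes0bound}, in the spirit of the derivation appearing in the proof of Lemma~\ref{besqlem}, yields $\mathbb E_{\mathrm w}[\mathbf{1}_{A_R}O_u^q]\leq C_{R,q}\,\mathbb E_{\mathrm w}[(\zeta_u-m)^{q/2}]$.

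To conclude, $\zeta_u-m$ is dominated by the oscillation $\Delta:=\sup_{r,r'\in[s\wedge s',s\vee s']}|\zeta_r-\zeta_{r'}|$ of the reflected Brownian lifetime on $[s\wedge s',s\vee s']$, for which $\mathbb E_{\mathrm w}[\Delta^{q/2}]\leq C_q|s-s'|^{q/4}$ by standard Brownian estimates. Combining these yields $\mathbb E_{\mathrm w}[\mathbf{1}_{A_R}d(W_s,W_{s'})^q]\leq C_{R,q}|s-s'|^{q/4}$; taking $q=8$ gives an exponent $q/4=2>1$, and Kolmogorov--Chentsov then supplies the required continuous modification on $A_R$. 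The main obstacle, and the only substantive deviation from Le Gall's proof of \cite[Proposition IV.5]{LG}, is the absence of a deterministic H\"older bound on BESQ$^0$ sample paths; this is replaced by the subgaussian tail \eqref{bes0bound}, with its starting-value dependence absorbed into the localization $A_R$.
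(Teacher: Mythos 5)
Your overall architecture (Kolmogorov--Chentsov on increments, with the snake decomposition $d(W_s,W_{s'})\leq|\zeta_s-\zeta_{s'}|+O_s+O_{s'}$ and Brownian moment bounds for the lifetime part) matches the paper's, but the way you handle the starting-point dependence of the BESQ$^0$ oscillation bound introduces a genuine gap. Your localization event $A_R$ contains the condition $\sup_{s\in[0,T_0]}\sup_{t\leq\zeta_s}|W_s(t)|\leq R$, and everything downstream rests on the claim $\mathbb P_{\mathrm w}(A_R)\to 1$. Before any continuity in $s$ is known, this is a supremum over uncountably many $s$ (so not even obviously measurable; restricting to a countable dense set fixes measurability but not the substance), and its almost-sure finiteness does not follow from the pointwise bound \eqref{bes0bound}: for each fixed $s$ the path $W_s$ has controlled moments, but a countable supremum of such variables need not be finite without a chaining argument that is uniform in $s$ --- which is exactly the increment estimate you are trying to prove, and which in your scheme is only available \emph{on} $A_R$. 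The proposed ``covering argument based on \eqref{bes0bound}'' is therefore circular as sketched, and this is the one step that cannot be waved away.

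The paper avoids the issue entirely, and this is the instructive difference. It works conditionally on the lifetime process (a typical reflected Brownian path, hence $(\tfrac12-\eta)$-H\"older), and instead of truncating the value $W_s(m_\zeta(s,s'))$ it uses the scaling estimate $\Pi_x\bigl(\sup_{0\leq t\leq T}|\xi_t-x|^p\bigr)\leq C\,x^qT^{p/2}$ with $C$ independent of both $x$ and $T$; the polynomial factor $x^q$ is then simply integrated out using the finite polynomial moments of $\xi_{m_\zeta(s,s')}$, yielding an unconditional-in-the-spatial-variable moment bound and hence Kolmogorov's criterion with no localization or patching. If you replace your indicator $\mathbf 1_{\{W_s(m)\leq R\}}$ by this explicit polynomial dependence (plus, say, Cauchy--Schwarz), your argument goes through and the $A_R$ machinery becomes unnecessary. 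Two smaller points: your appeal to property (2) for the term $O_s$ at the \emph{earlier} time is not literally what that property states (you need the Markov property of the spatial displacement applied to the pair $(0,s)$, and for a general starting path $\mathrm w$ the portion of $W_s$ beyond $m_\zeta(s,s')$ may include a deterministic piece of $\mathrm w$, which is why the paper, following Le Gall, first treats $\mathrm w\equiv x$ and handles general $\mathrm w$ separately); and even granting $\mathbb P_{\mathrm w}(A_R)\to1$, the truncate-and-patch step needs the agreement of the modifications $\hat W^R$ on $A_{R\wedge R'}$ argued via continuity plus a countable dense set, which you should not leave implicit.
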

\begin{proof}
The proof is almost identical to that of Le Gall and we only sketch the main difference. In particular, we only prove the statement for $\mathrm w \equiv x$ for some $x>0$. The extension to any starting path is then identical to Le Gall's proof. We denote by $\xi = (\xi_t)_{t \geq 0}$ a zero-dimensional squared Bessel process and by $\Pi_x$ the law of $\xi$ started from $x$. Fix a deterministic function $f \in \mathcal C(\mathbb R_+ , \mathbb R_+)$ such that $f(0) =0$, as well as constants $p > 2$ and $T >0$. By the definition of the distance at \eqref{distd}, for every $0 \leq s \leq s' \leq T$, one has
\begin{align*}
\lefteqn{\Theta_x^f \left( d (W_s , W_{s'})^p\right)}\\
&\leq c_p \left( |f(s) - f(s')|^p + \Pi_x \left( \sup_{t \in [m_f(s,s'), f(s)]} |\xi_t - \xi_{m_f(s,s')} |^p \right) \vphantom{+ \Pi_x \left( \sup_{t \in [m_f(s,s'), f(s')]} |\xi_t - \xi_{m_f(s,s')} |^p \right)} \right.\\
&\qquad \qquad\qquad \qquad\qquad \qquad\left.\vphantom{ |f(s) - f(s')|^p + \Pi_x \left( \sup_{t \in [m_f(s,s'), f(s)]} |\xi_t - \xi_{m_f(s,s')} |^p \right)}+ \Pi_x \left( \sup_{t \in [m_f(s,s'), f(s')]} |\xi_t - \xi_{m_f(s,s')} |^p \right) \right).
\end{align*}
Applying the Markov property of $\xi$, we get
\begin{align} 
\lefteqn{\Theta_x^f \left( d (W_s , W_{s'})^p \right)}\nonumber\\
&\leq& c_p \left( |f(s) - f(s')|^p + 2 \Pi_x \left( \Pi_{\xi_{m_f(s,s')}} \left( \sup_{t \in [0, f(s)\vee f(s') - m_f(s,s')]} |\xi_t - \xi_0 |^p \right) \right) \right).\label{eq:markovsnakecontinuity}
\end{align}
Now, since we will be supposing that $f$ is a typical realisation of a Brownian path, and such paths are $(2-\eta)$-H\"older continuous, for $\eta$ arbitrarily small, there exists a constant $C$ such that $f$ satisfies
\[|f(s) - f(s')| \leq C |s - s'|^{\frac{1}{2} - \eta}.\]
for $0 \leq s \leq s' \leq T$. This is where we need to tweak Le Gall's proof, since the H\"older continuity of $\xi$ is not uniform with respect to its starting point. However, by the scaling property of Bessel processes, one can check that, for every $\varepsilon >0$, $x \geq 0$ and $T\geq0$, there exists $q>0$ and a constant $C$ that does not depend on $x$ or $T$ such that
\[\Pi_x \left( \sup_{0\leq t \leq T } | \xi_t - x|^p \right) \leq C \, x^q T^{p/2}.\]
Plugging this into \eqref{eq:markovsnakecontinuity} yields
\begin{align*}
\Theta_x^f \left( d (W_s , W_{s'})^p \right) &\leq C' \left( |s - s'|^{p(\frac{1}{2} - \eta)} +  \left(f(s)\vee f(s')-m_f(s,s')\right)^{p/2} \, \Pi_x \left( \xi_{m_f(s,s')}^q \right) \right),\\
&\leq C' \left( |s - s'|^{p(\frac{1}{2} - \eta)} + |s - s'|^{\frac{p}{2}(\frac{1}{2} - \eta)} \right),
\end{align*}
where the constant $C'$, which can depend on $p,q,x,f,T$, is finite, since Bessel processes have finite polynomial moments. Choosing $p>4$ and then $\eta$ accordingly, we get
\[\Theta_x^f \left( d (W_s , W_{s'})^p \right) \leq C  \, |s - s'|^{p'},\]
with $p'  >1$, and $C$ a finite positive constant that depends on $x$, $f$ and $T$ (and $p$ and $p'$). Kolmogorov's lemma ensures that there is a continuous modification of $(W_s)_{s \geq 0}$ under $\Theta_x^f$, and the proposition follows.
\end{proof}

We will denote by $\mathbb P_x$ the law of the snake started at $x > 0$. Since $0$ is an absorbing point for the zero-dimensional Bessel process, we can extend the definition to $x = 0$ by simply setting $W_s(t) = 0$ for all $s \geq 0$ and $0\leq t \leq \zeta_s$. Every point $x \geq0$ is then regular for $W$ in the sense that $\mathbb P_x \left( \inf \{s >0 : W_s = x \} = 0 \right) = 1$. The general excursion theory for Markov processes can then be used to construct the excursion measure of $W$ away from $x$, which we denote by $\mathbb N_x$.

Under the (infinite) measure $\mathbb N_x$, the lifetime process $(\zeta_s)_{s \geq 0}$ is distributed according to the It\^o excursion measure $\mathbf n$, normalised as in \eqref{eq:Ito}. As before, we will denote by $\sigma$ the duration of an excursion under $\mathbb N_x$, that is $\sigma = \sup \{ s >0 : \, \zeta_s >0 \}$. To connect with the discussion of Section \ref{bmsec}, we highlight that if $\mathcal{T}$ is the real tree associated with $\zeta$ through the procedure described at the start of Section \ref{sec:rec}, then the map $s\mapsto \widehat W_s = W_s(\zeta_s)$ to the tip of the path induces a well-defined map $y\mapsto\xi^\mathcal{T}_y$ on $\mathcal{T}$ given by setting $\xi^\mathcal{T}_y= \widehat W_s $ for any $s$ in the equivalence class that corresponds to $y\in\mathcal{T}$. It is straightforward to check from the construction that, conditional on $\mathcal{T}$, $(\xi^\mathcal{T}_y)_{y\in\mathcal{T}}$ is a $\mathcal{T}$-indexed 0-dimensional BESQ process started from $x$. (Cf.\ the discussion of \cite{DLeG}, which presented a similar discussion for tree-indexed Brownian motion.)

The main conclusion of this section is the following. For its statement, we again recall that $\widehat W_s = W_s(\zeta_s)$ is the tip of the path $W_s$. Specifically, the result gives that either $\widehat W$ does not hit zero, or it takes the value zero on an open set; this will be a key input into checking Assumption \ref{assu:2} for the Brownian CRT.

\begin{theorem}\label{th:snake}
 We have, for every $x >0$:
\[\mathbb N_x \left( \left(\left\{ \widehat W_s = 0 \, \text{on an open interval} \right\} \cup \left\{ \forall s \in [0,\sigma] , \, \widehat W _s \neq 0\right\} \right)^c\right) = 0.\]
\end{theorem}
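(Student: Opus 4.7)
The plan is to reduce the statement to a technical claim about the first snake time at which $\widehat W$ reaches zero, and then to establish that claim via an ODE argument in the spirit of Le Gall.

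First, set $\tau := \inf\{s \geq 0 : \widehat W_s = 0\}$. Continuity of $\widehat W$ together with $\widehat W_0 = x > 0$ gives $\tau > 0$, with $\widehat W_\tau = 0$ on $\{\tau < \infty\}$; moreover $\zeta_\tau > 0$ there, since otherwise $W_\tau$ would be the trivial path at $x \neq 0$. Let $T_\tau := \inf\{t \in [0,\zeta_\tau] : W_\tau(t) = 0\}$, so $T_\tau \leq \zeta_\tau$. The key reduction is the following: on the event $\{\tau < \infty,\, T_\tau < \zeta_\tau\}$, we have $\widehat W_s = 0$ on an open right-neighborhood of $\tau$. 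Indeed, by continuity of $\zeta$, $m_\zeta(\tau, s) \to \zeta_\tau$ as $s \searrow \tau$, so $m_\zeta(\tau, s) > T_\tau$ once $s > \tau$ is close enough to $\tau$. The snake coincidence rule then gives $W_s(m_\zeta(\tau, s)) = W_\tau(m_\zeta(\tau, s)) = 0$, using absorption of BESQ$^0$ at $0$ past time $T_\tau$, and the extension of $W_s$ beyond level $m_\zeta(\tau, s)$ is an independent BESQ$^0$ started at $0$, hence identically $0$. Thus $\widehat W_s = 0$ on some interval $(\tau, \tau + \delta)$.

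The main task is then to show that $\mathbb N_x(\tau < \infty,\, T_\tau = \zeta_\tau) = 0$. My plan is to apply the PDE/ODE methodology of Le Gall referenced in the paper's introduction: introduce a family of approximating events $\{E_\varepsilon\}_{\varepsilon > 0}$ that can be read off from the snake stopped at a suitable stopping line (for instance, the first snake time at which the tip enters $[0, \varepsilon]$ and subsequently satisfies a condition capturing ``first hit at endpoint''), then apply the special Markov property of Section~\ref{sec:specialMarkov} to decompose the snake past this line into a Poisson process of sub-snakes. The resulting identity expresses $u_\varepsilon(x) := \mathbb N_x(E_\varepsilon)$ as a fixed-point of an operator involving the law of BESQ$^0$ and a quadratic nonlinearity arising from the binary branching of the snake, yielding a nonlinear ODE in $x$ of Le Gall type. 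By standard comparison arguments together with natural boundary conditions at $0$ and $\infty$, this ODE should force $u_\varepsilon$ to be sufficiently small uniformly in $\varepsilon$, and passing to the limit $\varepsilon \to 0$ gives the desired vanishing.

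The main obstacle is the choice of the approximating family $\{E_\varepsilon\}$. The event $\{T_\tau = \zeta_\tau\}$ is not itself directly amenable to the special Markov property at a fixed stopping line, since it couples the first hitting time of $0$ by the tip process $\widehat W$ with the first hitting time of $0$ within the profile $W_\tau$; producing events that simultaneously (i) sit at a stopping line on which the special Markov property applies cleanly, (ii) converge to the target event as $\varepsilon \to 0$, and (iii) reduce to a tractable recursive equation, is the delicate step. Once these pieces are in place, the verification that the resulting ODE admits only the trivial solution should be routine.
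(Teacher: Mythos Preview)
Your reduction has a fatal flaw: the event $\{\tau<\infty,\,T_\tau<\zeta_\tau\}$ is $\mathbb N_x$-a.s.\ empty, so your ``main task'' of showing $\mathbb N_x(\tau<\infty,\,T_\tau=\zeta_\tau)=0$ is asking you to prove that $\mathbb N_x(\tau<\infty)=0$, which is false (it equals $2/x$). The reason is that your own argument for an open interval to the \emph{right} of $\tau$ works equally well to the \emph{left}: if $T_\tau<\zeta_\tau$, then for $s<\tau$ close to $\tau$ we have $m_\zeta(s,\tau)>T_\tau$, hence $W_s(T_\tau)=W_\tau(T_\tau)=0$; since $W_s$ is itself a $\mathrm{BESQ}^0$ path absorbed at $0$ and $\zeta_s\ge m_\zeta(s,\tau)>T_\tau$, we get $\widehat W_s=0$, contradicting the minimality of $\tau$. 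So on $\{\tau<\infty\}$ one \emph{always} has $T_\tau=\zeta_\tau$, and the open interval of zeros cannot be produced at the first hitting time by this mechanism.

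The paper's argument is structurally different and avoids this trap. Rather than analysing $W_\tau$, it looks at the \emph{exit local time} $\Lambda_\sigma$ accumulated on the set $\{s:\tau(W_s)=\zeta_s\}$. By the special Markov property, conditionally on the pre-$0$ information the excursions of the snake at $0$ form a Poisson process with intensity $\Lambda_\sigma\,\mathbf n(d\zeta)$, so $\{\widehat W=0\text{ on an open interval}\}=\{\Lambda_\sigma>0\}$. One then shows $\mathbb N_x(\Lambda_\sigma>0)=\mathbb N_x(\exists s:\widehat W_s=0)$ by computing \emph{both} quantities: scaling forces each to be $C/x$, and the ODE $\mathcal A u=2u^2$ (with $\mathcal A=2x\,d^2/dx^2$) together with nontriviality pins down $C=2$ in both cases. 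The inclusion $\{\Lambda_\sigma>0\}\subseteq\{\exists s:\widehat W_s=0\}$ then upgrades equality of measures to a.s.\ equality of events. The open interval is thus produced not at $\tau$ itself but by the Poisson collection of subsequent excursions, which is nonempty precisely when $\Lambda_\sigma>0$.
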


This result is a consequence of the special Markov property of the Brownian snake, first stated in Le Gall's book \cite{LG}. A more general version is available in the Appendix of \cite{LGsubordinationoftrees}. In the next section, we state the relevant property in the setting that we will use to prove Theorem~\ref{th:snake}.

\subsection{The special Markov property} \label{sec:specialMarkov}

In this section, $x >0$ is fixed. For any path $\mathrm w \in \mathcal W_x$, we write
\[\tau (\mathrm w) = \inf \{ t \geq 0 : \mathrm w (t) = 0\}\]
for the hitting time of $0$ by $\mathrm w$ (or equivalently, since $x >0$, the first exit time of the domain $(0, + \infty)$), with the convention $\inf \emptyset = + \infty$. We define the first hitting time of zero by the snake to be
\[T := \inf \left\{s \geq 0 : \widehat W_s = 0\right\} = \inf \left\{s \geq 0 : \tau ( W_s ) < +\infty \right\}.\]
Since the spatial displacement of our Brownian snake is a zero-dimensional squared Bessel process, we have
\begin{equation*} 
\mathbb P_x \left( T = + \infty \right) = 0\quad \text{and} \quad 0 < \mathbb N_x \left( T < + \infty \right) < \infty.
\end{equation*}
The first equality is straightforward and if $N_x \left( T < + \infty \right) = 0$, by excursion theory, under $\mathbb P_x$, no excursion would hit $0$ contradicting the first inequality. The last inequality reads
\begin{equation} \label{eq: N_x T finite is finite}
\mathbb N_x \left( T < + \infty \right) = \mathbb N_x \left(\exists s \in [0,\sigma] : \widehat W_s = 0 \right) < \infty.
\end{equation}
If \eqref{eq: N_x T finite is finite} was not true, then, again by excursion theory, under $\mathbb{P}_x$, infinitely many excursions of the snake would exit $[x/2, \infty)$ before any given positive time. This contradicts the fact that the snake is continuous under $\mathbb{P}_x$.

We will work under the probability measure $\mathbb N_x$ conditioned on the event $\{ T < +\infty \}$:
\[\mathbb N_x^T := \mathbb N_x \left( \, \cdot \, \middle| T < +\infty \right).\]
This probability measure can also be interpreted as the law under $\mathbb P_x$ of the first Brownian snake excursion away from $x$ that hits $0$. The exit local time process from $(0,+\infty)$ under $\mathbb N_x^T$ is the process $(\Lambda_s)_{0 \leq s \leq \sigma}$ defined by
\[\Lambda_s : = \lim_{\varepsilon \to 0} \int_0^s \mathbf{1}_{\{\tau(W_r) < \zeta_r < \tau(W_r) + \varepsilon \}} {d}r,\]
and the measure ${d} \Lambda_s$ is supported on $\{s \in [0,\sigma] : \tau(W_s) = \zeta_s \}$ (see \cite[Section V.1]{LG} and \cite[Appendix]{LGsubordinationoftrees}).

In our setting, the special Markov property describes the law of the excursions of the zero-dimensional BESQ-Brownian snake at $0$, conditionally on the information provided by the paths of the snake before they hit $0$. Let us next describe the filtration $\mathcal F$ containing this information. For every $s \geq 0$, we set
\[\eta_s := \inf \left\{ t \geq 0 : \int_0^t \mathbf{1}_{\{\zeta_r \leq \tau(W_r) \}} {d}r > s \right\}.\]
This definition makes sense $\mathbb P_x$ almost-surely, and we define $\mathcal F$ to be the $\sigma$-field generated by the process $\left( W_{\eta_s}\right)_{s \geq 0}$ and the $\mathbb P_x$-negligible sets. We note that $(\Lambda_{\eta_s})_{s=0}^\sigma$ is $\mathcal{F}$-measurable (see \cite[Lemma 19]{LGsubordinationoftrees}).

Now let us define the excursions at $0$ of the snake. For every $s \geq 0$, we set $\gamma_s := \left( \zeta_s - \tau(W_s) \right)^+$. Under $\mathbb P_x$, the set $\{s \geq 0 : \gamma_s >0\}$ is almost-surely a countable union of disjoint open intervals:
\[\{s \geq 0 : \gamma_s >0\} = \bigcup_{i \in \mathbb N} (a_i,b_i),\]
and the enumeration can be fixed in such a way that the variables $a_i$ and $b_i$ are measurable with respect to the $\sigma$-field generated by $(\gamma_s)_{s \geq 0}$.

From the properties of the Brownian snake and of the zero-dimensional squared Bessel process, we have that $\mathbb P_x$-a.s.,\ for every $i$ and every $s \in [a_i,b_i]$,
\[\tau(W_s) = \tau(W_{a_i}) = \zeta_{a_i},\]
and that all the paths $W_s$ for $s\in [a_i,b_i]$ coincide up to their hitting time of $0$, from which time they remain at $0$ up to their lifetime. For every $i$, we define $W^{(i)}$ by setting
\[W^{(i)}_s(t) := W_{(a_i +s) \wedge b_i} \left( \zeta_{a_i} + t \right), \quad \text{for $s \geq 0$ and $0 \leq t \leq \zeta_s^{(i)}:= \zeta_{(a_i +s) \wedge b_i} - \zeta_{a_i}$}.\]
These variables are the excursions of the zero-dimensional BESQ-Brownian snake at $0$, and their structure is fully described by the special Markov property. The variant of this property that we will use is the following.

\begin{theorem}[{Special Markov property, \cite[Corollary 22]{LGsubordinationoftrees}}] \label{th:specialMarkov}
Under $\mathbb N_x^T$, conditionally on the $\sigma$-field $\mathcal F$, the point measure
\[\sum_{i\in \mathbb N} \delta_{W^{(i)}}({d} \omega)\]
is Poisson with intensity
\[ \int \Lambda_\sigma \, \mathbb N_0({d} \omega ) = \int \Lambda_\sigma \, \mathbf n ({d} \zeta),\]
with a slight abuse of notation in the last equality (the associated spatial process is understood to be the constant 0).
\end{theorem}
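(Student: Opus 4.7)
The strategy is to adapt the classical special Markov property for the Brownian snake, as developed in \cite[Chapter V]{LG} and in more general form in \cite{LGsubordinationoftrees}, to the zero-dimensional BESQ spatial motion. First I would work under $\mathbb P_x$ using the strong Markov property of the snake, and then transfer the statement to $\mathbb N_x^T$ by standard excursion theory for $W$ away from $x$ (restricting to the first excursion of $W$ away from $x$ that reaches a path hitting $0$, using \eqref{eq: N_x T finite is finite} to justify that this restriction is well defined). The key preliminary is the rigourous construction of the exit local time
\[
\Lambda_s=\lim_{\varepsilon\to 0}\int_0^s \mathbf 1_{\{\tau(W_r)<\zeta_r<\tau(W_r)+\varepsilon\}}\,dr,
\]
verified to be a continuous additive functional, $\mathcal F$-measurable, and supported on $\{s:\tau(W_s)=\zeta_s\}$; this is standard in the snake literature.

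Next I would analyse the excursion intervals $(a_i,b_i)$. At each left endpoint $a_i$, one has $\tau(W_{a_i})=\zeta_{a_i}$, i.e.\ $W_{a_i}$ hits $0$ exactly at its lifetime, and moreover, since $0$ is absorbing for the zero-dimensional BESQ, the snake transition kernel forces $W_s(t)=0$ for every $s\in(a_i,b_i)$ and $t\in[\zeta_{a_i},\zeta_s]$. Consequently, the rescaled process $W^{(i)}_s(t)=W_{(a_i+s)\wedge b_i}(\zeta_{a_i}+t)$ is automatically a path-valued process in $\mathcal W_0$ (no truncation of the spatial coordinate is needed, in contrast to the general framework of \cite{LGsubordinationoftrees}), and between times $a_i$ and $b_i$ the evolution of $W$ is precisely that of a zero-dimensional BESQ-Brownian snake started at the trivial path $\mathbf 0$ and killed when its lifetime process returns to $0$; in particular, $W^{(i)}$ is distributed as an excursion of $W$ away from the constant path $0$, i.e.\ a sample from $\mathbb N_0$.

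The heart of the argument is the Poissonian bookkeeping for the collection $(W^{(i)})_i$. The standard approach is to approximate $\Lambda$ by
\[
\Lambda_s^{(\varepsilon)}=\varepsilon^{-1}\int_0^s \mathbf 1_{\{\tau(W_r)<\zeta_r<\tau(W_r)+\varepsilon\}}\,dr,
\]
and decompose the snake path above the exit level; for each $\varepsilon$, excursion theory for the strong Markov process $W$ at the stopping times when $\zeta$ first exceeds $\tau(W)+\varepsilon$ identifies the corresponding excursions as, conditional on $\mathcal F$, a Poisson process with rate $\varepsilon^{-1}$ and shape distribution the law of a small snake excursion at $0$. Letting $\varepsilon\to 0$ and using the scaling invariance of the zero-dimensional BESQ-Brownian snake, the rate converges to the compensator $\Lambda_\sigma$ while the shape distribution converges to $\mathbb N_0$; taking limits in the corresponding Laplace functionals yields the claimed conditional Poisson intensity $\Lambda_\sigma\,\mathbb N_0$. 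The final equality in the statement is an immediate consequence of the fact that the spatial process is identically $0$ under $\mathbb N_0$, so that the only nontrivial randomness is that of the lifetime process, distributed according to the It\^o measure $\mathbf n$.

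The main obstacle is the convergence-of-Poisson-measures step: one must verify that the $\mathcal F$-measurable approximants $\Lambda^{(\varepsilon)}$ converge in a strong enough sense, and that the associated approximate excursion point measures converge jointly with their compensators. This is handled by showing tightness of the excursion point measures on a suitable Polish space of snake paths, identifying weak limits via Laplace functionals, and using the continuity of $\Lambda$ together with the dominated convergence argument enabled by the boundedness of $\sigma$ under $\mathbf n(\cdot\cap\{\sigma\leq M\})$. A secondary technical point is the identification of each limit $W^{(i)}$ with a genuine sample from $\mathbb N_0$, which relies on the regularity of $\mathbf 0$ for the path-valued process $W$ started at the zero path; this regularity is inherited from the regularity of $0$ for the zero-dimensional BESQ itself, and ensures that $\mathbb N_0$ is a well-defined $\sigma$-finite measure that can serve as the shape distribution of the Poisson limit.
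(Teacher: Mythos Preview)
The paper does not prove this theorem at all: it is stated as a direct citation of \cite[Corollary 22]{LGsubordinationoftrees}, with no argument given beyond the reference. Your proposal therefore goes well beyond the paper, which simply invokes the result as known.

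Your sketch is broadly in the spirit of Le Gall's original arguments, and the observation that the absorbing nature of $0$ for the zero-dimensional BESQ forces the spatial part of each $W^{(i)}$ to be identically zero (so that $\mathbb N_0$ reduces to $\mathbf n$ on the lifetime) is exactly the point behind the ``slight abuse of notation'' in the statement. That said, the proof strategy you outline for the Poisson structure is not quite how the special Markov property is established in \cite{LG,LGsubordinationoftrees}: there the argument proceeds by computing, for a nonnegative measurable functional $F$ on snake paths, the conditional expectation of $\exp(-\sum_i F(W^{(i)}))$ given $\mathcal F$ directly via the strong Markov property of the snake applied at the successive stopping times where the lifetime drops back to the exit level, together with the first-moment formula for the exit local time, rather than by an $\varepsilon$-approximation of the excursions and a weak-limit-of-point-processes argument. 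Your approximation scheme could in principle be made to work, but the tightness and identification steps you flag as ``the main obstacle'' are genuinely delicate and are avoided entirely by the Laplace-functional computation in the cited reference. If you were asked to supply a proof here, the cleanest route would be to verify that the hypotheses of \cite[Corollary 22]{LGsubordinationoftrees} (continuity of the snake, existence of the exit local time from $(0,\infty)$, and the relevant first-moment formula) are met for the zero-dimensional BESQ spatial motion, and then invoke that result, rather than redoing the Poisson limit from scratch.
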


\subsection{Proof of Theorem~\ref{th:snake}} \label{sec:proofsnake}

Theorem~\ref{th:snake} states that $\mathbb N_x$-almost-surely, on the event when the snake hits $0$, it has at least one excursion of positive length at $0$. A direct reformulation of the statement we want to prove is: $\mathbb N_x$-a.s.,\
\[\left\{ \exists s_1, s_2 :\:  0 < s_1 < s_2 < \sigma \text{ and } \widehat W_s = 0,\:\forall s \in (s_1,s_2)\right\} = \left\{ \exists s \in [0,\sigma]:\: \widehat W _s = 0\right\}.\]
For $\varepsilon \geq 0$, set
\[A_\varepsilon := \left\{ \widehat W_s = 0 \text{ on an open interval of length $>\varepsilon$} \right\}.\]
The special Markov property stated in Theorem~\ref{th:specialMarkov} and our normalisation for It\^o excursion measure (see \eqref{eq:Ito}) gives, for $\varepsilon >0$:
\[\mathbb N_x^T (A_\varepsilon) = \mathbb N_x^T \left( 1 - \exp \left( - \frac{\Lambda_\sigma}{\sqrt{2 \pi \varepsilon}} \right) \right).\]
Taking the limit when $\varepsilon >0$ we obtain
\[\lim_{\varepsilon \to 0} \mathbb N_x^T (A_\varepsilon) = \mathbb N_x^T (\Lambda_\sigma >0),\]
or equivalently, $\mathbb N_x$-a.s.,\
\begin{equation}\label{firstineq}
    \left\{ \Lambda_\sigma >0 \right\} =\left\{ \widehat W_s = 0 \quad \text{on an open interval} \right\}.
\end{equation}
We will now prove, $\mathbb N_x$-a.s.,\
\[\left\{ \Lambda_\sigma >0 \right\} = \left\{ \exists s \in [0,\sigma] , \, \widehat W _s = 0\right\},\]
which will then immediately give Theorem~\ref{th:snake}.

For every $x>0$, set
\[u(x) : = \mathbb N_x \left( \exists s \in [0,\sigma] , \, \widehat W _s = 0\right).\]
Writing $\mathbb N_x^{(u)}$ for $\mathbb{N}_x(\cdot|\:\sigma=u)$, the scaling properties of Brownian excursions (cf.\ \eqref{scaling}) and squared Bessel processes yield, for $\lambda >0$:
\begin{align*}
u(x) &= \mathbb N_x \left( \exists s \in [0,\sigma] , \, \widehat W _s = 0\right)\\
& = \int_0^\infty \frac{d u}{2 \sqrt{2 \pi u^3}} \mathbb N_x^{(u)} \left( \exists s \in [0,\sigma] , \, \widehat W _s = 0\right)\\
& = \frac{1}{\lambda} \int_0^\infty \frac{d u}{2 \sqrt{2 \pi u^3}} \mathbb N_x^{(\lambda^2 u)} \left( \exists s \in [0,\sigma/\lambda^2] , \, \frac{1}{\lambda} \widehat W _{\lambda^2 s} = 0\right)\\
&=\frac{1}{\lambda} \int_0^\infty \frac{d u}{2 \sqrt{2 \pi u^3}} \mathbb N_{x/\lambda}^{(u)} \left( \exists s \in [0,\sigma] , \, \widehat W _{s} = 0\right)\\
& = \lambda^{-1} \, \mathbb N_{x/\lambda} \left( \exists s \in [0,\sigma] , \, \widehat W _s = 0\right) \\
&=\lambda^{-1} \, u(x/\lambda).
\end{align*}
Since we know from \eqref{eq: N_x T finite is finite} that the function $u$ is not infinite, there is a finite constant $C = u(1) \geq 0$ such that, for every $x > 0$,
\[u(x) = C \, x^{-1}.\]
In addition, Proposition VI.2 of Le Gall's book~\cite{LG} states that the function $u$ is the maximal non-negative solution of the equation
\begin{equation}\label{ade}
\mathcal A u (x) = 2 u^2 (x),\qquad \forall x >0,
\end{equation} 
where $\mathcal A$ is the Markov generator of the zero-dimensional BESQ process:
\[\mathcal A f(x) = 2xf''(x),\]
see \cite[Section XI.1]{RevuzYor}. Precisely, \cite[Proposition VI.2]{LG} treats a Brownian snake whose spatial displacements follow a Brownian motion. However, it is possible to check that, with \eqref{eq: N_x T finite is finite} in place of \cite[Proposition V.9(i)]{LG}, the same argument applies. Applying \eqref{ade}, a quick calculation gives that either $C = 0$ or $C = 2$, and since $u$ is maximal, we have $u(x) = 2 x^{-1}$ for every $x>0$. (Note that, apart from the difference in the Markov process describing the spatial displacements, this argument is the same as that given by Le Gall in the example appearing after \cite[Proposition VI.2]{LG}.)

\bigskip

Similarly, we can calculate $v(x) := \mathbb N_x \left( \Lambda_\sigma > 0\right)$. Since $v(x) \leq \mathbb N_x ( \exists s \in [0,\sigma] , \, \widehat W _s = 0)$, we obtain from \eqref{eq: N_x T finite is finite} that $v(x)<\infty$ for any $x>0$. Moreover, the same scaling arguments as for $u$ give
\[v(x) = \mathbb N_x \left( \Lambda_\sigma >0 \right) = \lambda^{-1} \, \mathbb N_{x/\lambda} \left( \Lambda_\sigma > 0\right) =\lambda^{-1} \, v(x/\lambda).\]
Therefore, there is another finite constant $C' \geq 0$ such that, for every $x > 0$,
\[v(x) = C' \, x^{-1}.\]
For $n >0$, set
\[v_n(x) : = \mathbb N_x \left( 1 - \exp \left( - n \, {\Lambda_\sigma} \right) \right).\]
A straightforward adaptation to the case of a 0-dimensional BESQ-Brownian snake of \cite[Theorem V.6]{LG} gives that $v_n$ satisfies
\[\begin{cases}
\mathcal A v_n = 2 v_n^2 \quad \text{on} \quad (0,+\infty),\\
\lim_{x \to 0^+} v_n(x) = n.
\end{cases}\]
We highlight that, as for the result of \cite{LG} that was cited in the previous part of the proof, \cite[Theorem V.6]{LG} is written for a Brownian snake whose spatial displacements follow a Brownian motion. However, the proof is readily adapted by using the fact that a 0-dimensional BESQ process can be written as the time-change of a Brownian motion. (See the proof of Lemma \ref{besqlem} for details.) Indeed, the only sensitive part of Le Gall's argument is an appeal to \cite[Theorem 6.6]{PS} to check that, for a suitable function $f$, it holds that $\frac{1}{2}\Delta \int G^{\mathrm{BM}}_B(x,y)f(y)dy=-f(x)$, where $G^{\mathrm{BM}}_B(x,y)$ is the Green's function of Brownian motion killed on exiting a ball $B$ contained in $(0,\infty)$. In our case, we are required to check $\mathcal{A}\int G^{\mathrm{BESQ^0}}_B(x,y)f(y)dy=-f(x)$ for suitable $f$, where $G^{\mathrm{BESQ^0}}_B(x,y)$ is the corresponding Green's function of the 0-dimensional BESQ process. However, since $G^{\mathrm{BESQ^0}}_B(x,y)=\frac{1}{4y}G^{\mathrm{BM}}_B(x,y)$ and the density of the speed measure, $\frac{1}{4y}$, is bounded away from 0 and $\infty$ on compact subsets of $(0,\infty)$, there is no problem in applying the result of \cite{PS} in our case. We next note that, since $v$ is the pointwise limit of the increasing sequence of functions $(v_n)$ and the set of non-negative solutions on $(0,+\infty)$ of the equation $\mathcal A f = 2f^2$ is closed under pointwise convergence (to check this, one may follow the argument of \cite[Proposition V.9(iii)]{LG}, for example), the function $v$ also satisfies $\mathcal A v = 2v^2$ on $(0,+\infty)$ and is not identically equal to $0$. Therefore we have $C'=2$, that is, for every $x>0$:
\begin{equation*} 
\mathbb N_x \left( \Lambda_\sigma >0 \right) = \mathbb N_x \left( \exists s \in [0,\sigma] , \, \widehat W _s = 0\right) = \frac{2}{x}.
\end{equation*}
Since it clearly follows from \eqref{firstineq} that $\{ \Lambda_\sigma >0 \} \subseteq \{  \exists s \in [0,\sigma] , \, \widehat W _s = 0 \}$, $\mathbb N_x$-a.s.,\ the two events are therefore $\mathbb N_x$-a.s.\ equal, and Theorem~\ref{th:snake} follows.

\section{Proof of Theorem \ref{thm:mainres}}\label{sec:mainproof}

The aim of this section is to prove Theorem \ref{thm:mainres}. Clearly, by the comment below the scaling property at \eqref{scaling}, establishing that the desired property of the Brownian CRT holds $\mathbf{P}$-a.s.\ is equivalent to showing the result almost-surely for trees under the push-forward of the It\^{o} excursion measure (i.e.,\ $\mathbf{N}$), as introduced in Section \ref{sec:rec}. Our strategy is to first prove that Assumptions \ref{assu:1} and \ref{assu:2} hold for $\mathbf{N}$-typical trees, which readily yields the conclusion of Theorem \ref{thm:mainres} when $X^\mathcal{T}$ is started from the root (see Section \ref{52}). To complete the proof, we give a small additional argument involving random re-rooting and a continuity property to verify that the result can be extended to arbitrary starting points (see Section \ref{53}). Before this, in Section \ref{51}, we formalise the setting so as to ensure all the objects of the discussion are well-defined.

\subsection{An extended Gromov-Hausdorff-type topology}\label{51}

In this section, we introduce (a simplification of) the extended Gromov-Hausdorff-type topology used in \cite{Noda} for the purposes of studying random metric-measure spaces equipped with stochastic processes and corresponding local times. (A similar viewpoint was also set out in \cite{andblanket}.) In order to keep the presentation concise, we will be brief with the details; for a more in-depth presentation, we refer the reader to \cite{Nodametric} (cf.\ \cite{KA}), which also provides a discussion of earlier works in this area.

We will consider two basic spaces: $\check{\mathbb{F}}_c$ and $\mathbb{M}_{L,c}$. For the first of these, we consider objects of the form
\begin{equation}\label{quad}
\left(K,R_K,\mu_K,\rho_K\right),
\end{equation}
where, as in the introduction, $(K,R_K)$ is a (non-empty) compact resistance metric space, $\mu_K$ is a finite Borel measure on $K$ of full support, and $\rho_K$ is a marked point (or root) of $K$. Here, we recall that a resistance metric, which was introduced by Kigami in the context of analysis on fractals \cite{KigAOF,Kigq}, is defined as follows.

\begin{defn}[{\cite[Definition 2.3.2]{KigAOF}}]
Let $K$ be a set. A function $R:K\times K\rightarrow \mathbb{R}$ is a resistance metric if, for every finite $K'\subseteq K$, one can find a weighted (i.e.\ equipped with conductances) graph with vertex set $K'$ for which $R|_{K'\times K'}$ is the associated effective resistance.
\end{defn}

Similarly to \cite{Croydonscalelim,CHK,Noda}, the collection of spaces of the form at \eqref{quad}, identified if they are measure- and root-preserving isometric, will be written $\mathbb{F}_c$. (The subscript $c$ here refers to the restriction to compact spaces.) It is known that such spaces can be associated with a canonical stochastic process $((X^K_t)_{t\geq 0},(P^K_x)_{x\in K})$ that admits jointly measurable local times $(L^K_t(x))_{x\in K,\:t\geq 0}$ satisfying an occupation density formula as at \eqref{odf}. (See the discussion in \cite{Croydonscalelim,CHK} for the existence of the process, and \cite[Lemma 2.4]{CHK}, in particular, for the existence of local times. In these papers, a certain regularity condition is imposed on the resistance metric, but this is immediate in the compact case, see \cite[Corollary 6.4]{Kigq}.) To ensure that the local times are jointly continuous, we assume a further condition involving the metric entropy, as introduced by Noda in \cite{Noda}, which we now describe. In particular, for a compact metric space $(K,d_K)$ and $\varepsilon>0$, $K'\subseteq K$ is called an $\varepsilon$-covering of $(K, d_K)$ if for each point $x\in K$, there exists an $x'\in K'$ such that $d_K(x,x')\leq \varepsilon$. Let
\begin{equation}\label{coveringdef}
N\left(K,d_K,\varepsilon\right):= \min\left\{|K'|:\:K'\text{\ is an }\varepsilon\text{-covering of }(K, d_K)\right\}.
\end{equation}
It is standard in the literature to call the family (over $\varepsilon$) of the logarithm of $N(K, d_K, \varepsilon)$, the metric entropy of $(K, d_K)$, though in \cite{Noda} (and some other places), this term is used to denote $(N(K,d_K, \varepsilon))_{\varepsilon>0}$ instead. It is shown in \cite[Corollary 4.12]{Noda} that if $(K,R_K,\mu_K,\rho_K)\in \mathbb{F}_c $ and there exists an $\alpha\in (0,1/2)$ such that
\begin{equation}\label{entcond}
\sum_{k\geq 1}^\infty N\left(K,R_K, 2^{-k}\right)^2 \exp{\left(-2^{\alpha k}\right)}=0,
\end{equation}
then the local times $L^K$ can be constructed to be jointly continuous. We denote the subset of $\mathbb{F}_c$ containing elements for which \eqref{entcond} is satisfied by $\check{\mathbb{F}}_c$; this is the space appearing in the statements of Theorem \ref{cor:convres} and Corollary \ref{anncor}.

The space ${\mathbb{F}}_c$ may be equipped with a metric yielding the marked Gromov-Hausdorff-Prokhorov metric, as given by setting
\begin{align}
\lefteqn{d_{GHP}\left(\left(K_1,R_{K_1},\mu_{K_1},\rho_{K_1}\right),\left(K_2,R_{K_2},\mu_{K_2},\rho_{K_2}\right)\right)}\nonumber\\
&=\inf_{(M,d_M),\:\phi_1,\:\phi_2}\max\left\{d_M^H\left(\phi_1(K_1),\phi_2(K_2)\right),d_M^P\left(\mu_{K_1}\circ\phi_1^{-1}, \mu_{K_2}\circ\phi_2^{-1}\right)\right\},\label{ghpmet}
\end{align}
where the infimum is taken over compact rooted metric spaces $(M,d_M,\rho_M)$ and root-preserving isometric embeddings $\phi_i:K_i\rightarrow M$, $d_M^H$ is the Hausdorff metric between compact subsets of $(M,d_M)$, and $d_M^P$ is the Prokhorov metric between Borel measures on $(M,d_M)$. Note that there are various slightly different versions of the above metric used in the literature, but, as discussed in \cite[Section 2.1]{Noda}, they all give rise to the same topology. On the larger collection of marked compact metric spaces equipped with a finite Borel measure, it is known that this topology is complete and separable, as is convenient for various probabilistic applications. To say $(K,R_K,\mu_K,\rho_K)$ is a random element of $\check{\mathbb{F}}_c$, as we do in Corollary \ref{anncor}, we mean that it is chosen according to a probability measure on this larger space whose support is contained in $\check{\mathbb{F}}_c$.

Secondly, we consider objects of the form \eqref{quintuple}, i.e.\ to each element $(K,R_K,\mu_K,\rho_K)\in \check{\mathbb{F}}_c$, we add $P^K$, which is the law of $((X^K_t)_{t\geq 0},(L^K_t(x))_{x\in K,\:t\geq 0})$ under $P^{K}_{\rho_K}$. Now, for such spaces, we have that $X^K$ is $P^K$-a.s.\ an element of $D(\mathbb{R}_+,K)$, the space of c\`{a}dl\`{a}g paths in $K$, which we equip with the usual Skorokhod $J_1$ topology, and $L^K$ takes values in $C(K\times \mathbb{R}_+,\mathbb{R}_+)$. The larger space, where we drop the constraints that $R_K$ is a resistance metric satisfying \eqref{entcond} and $\mu_K$ has full support, and allow $P^K$ to be any probability measure on $D(\mathbb{R}_+,K)\times C(K\times \mathbb{R}_+,\mathbb{R}_+)$, will be denoted $\mathbb{M}_{L,c}$. (This is the restriction of the space $\mathbb{M}_L$ of \cite{Noda} to those elements for which the metric space is compact; we do not need the generality of non-compact spaces considered in that article. The subscript $L$ refers to the fact elements of the space are equipped with local time-type functions, and $c$ again refers to the restriction to compact spaces.) To measure distances between two objects $(K_i,R_{K_i},\mu_{K_i},\rho_{K_i},P^{K_i})$, $i=1,2$, we use a similar metric to \eqref{ghpmet}, but incorporate an additional distance
\[d_M^L\left(P^{K_1}\circ\phi_1^{-1},P^{K_1}\circ\phi_2^{-1}\right)\]
which is the Prokhorov distance between the laws of 
\[\left((\phi_i(X^{K_i}_t))_{t\geq 0},(L^{K_i}_t(\phi_i^{-1}(x)))_{x\in \phi_i(K_i),\:t\geq 0}\right),\qquad i=1,2.\]
Of course, to give full definition of this Prokhorov metric, one needs to define a distance between objects of the latter form. To this end, in \cite{Noda}, such a distance is defined in terms of a standard metrisation of the Skorokhod $J_1$ topology for the components $(\phi_i(X^{K_i}_t))_{t\geq 0}$, $i=1,2$, and a notion of uniform convergence for functions with variable domains for the components $(L^{K_i}_t(\phi_i^{-1}(x)))_{x\in \phi_i(K_i),\:t\geq 0}$, $i=1,2$. See \cite[Section 2.2]{Noda} for details. Whilst we omit the precise definition, the most important consequence of it for our purposes is set out in Lemma \ref{embedding}, which describes how convergence in $\mathbb{M}_{L,c}$ implies the existence of isometric embeddings of the spaces in question with various nice properties.

Before proceeding, let us finish this section with a few remarks about measurability issues that are relevant here. Firstly, \cite[Proposition 6.1]{Noda} ensures that the map $(K,R_K,\mu_K,\rho_K)\mapsto (K,R_K,\mu_K,\rho_K,P^K)$ is a measurable mapping from $\check{\mathbb{F}}_c$ to $\mathbb{M}_{L,c}$. This ensures the integrals defining annealed measures, cf.\ \eqref{annealedlaw}, are well-defined. We will also need in the proof of Theorem \ref{cor:convres} measurability of the map
\begin{equation}\label{aimain}
\left(\mathcal{T},d_{\mathcal{T}},\mu_\mathcal{T},\rho_\mathcal{T},\ell\right)\mapsto\left(\mathcal{T},d_{\mathcal{T}},\mu_\mathcal{T},\rho_\mathcal{T},\ell,P^{\mathcal{T}\hbox{-}\mathrm{BESQ}^0(\ell)}\right),
\end{equation}
which takes a compact, measured (with a probability measure of full support), rooted real tree $(\mathcal{T},d_{\mathcal{T}},\mu_\mathcal{T},\rho_\mathcal{T})$ satisfying Assumption \ref{assu:1} (or, more generally, \eqref{entcond}) and an value $\ell\geq 0$, and returns in addition the law of the associated tree-indexed 0-dimensional BESQ process started at the root from $\ell$, $P^{\mathcal{T}\hbox{-}\mathrm{BESQ}^0(\ell)}$; we recall from Lemma \ref{besqlem} that, under Assumption \ref{assu:1}, a continuous version of such a process can be constructed. (The topology on the domain above can be assumed to be given by the product of the marked Gromov-Hausdorff-Prokhorov topology on $(\mathcal{T},d_{\mathcal{T}},\mu_\mathcal{T},\rho_\mathcal{T})$ and the usual Euclidean topology for the final component. As for the codomain, a topology here can be constructed as for local-time-type functions, using the notion of uniform convergence of functions with variable domains of \cite{Noda}.) To check the measurability in this case, one can follow the approach of \cite[Lemma 8.1]{croydonconv}, which gave a similar result for the laws of Brownian motions on real trees. In particular, it is easy to check that convergence of the form
\[\left(\mathcal{T}_n,d_{\mathcal{T}_n},\mu_{\mathcal{T}_n},\rho_{\mathcal{T}_n},\ell_n\right)\rightarrow\left(\mathcal{T},d_{\mathcal{T}},\mu_\mathcal{T},\rho_\mathcal{T},\ell\right)\]
implies, for each $k\geq 1$, convergence in distribution of
\[\left(\mathcal{T}_n,d_{\mathcal{T}_n},\mu_{\mathcal{T}_n},\rho_{\mathcal{T}_n},\ell_n,x_1^n,\dots,x_k^n\right)\rightarrow\left(\mathcal{T},d_{\mathcal{T}},\mu_\mathcal{T},\rho_\mathcal{T},\ell,x_1,\dots,x_k\right),\]
where we add additional marks to the spaces by selecting $k$ points according to the measure on the space in question. (For a suitable topology for such convergence, see \cite[Section 8.3]{Croydonscalelim} or \cite[Section 3.7.2]{KA} or \cite[Section 4.2]{Nodametric}.) Using the description of the moment generating functions of the associated BESQ process at the points $(x_1,\dots,x_k)$, cf.\ the proof of Lemma \ref{rkhit}, it is clear that the laws of the finite-dimensional marginals of these random variables simultaneously converge in distribution (considered as probability measures on the space of continuous functions with variable domains). Summarising so far, this gives that, for each $k$, the map from $(\mathcal{T},d_{\mathcal{T}},\mu_\mathcal{T},\rho_\mathcal{T},\ell)$ to the law of \[\left(\mathcal{T},d_{\mathcal{T}},\mu_\mathcal{T},\rho_\mathcal{T},\ell,P^{\mathcal{T}\hbox{-}\mathrm{BESQ}^0(\ell)}|_{x_1,\dots,x_k}\right),\]
is continuous. Finally, since the uniformly selected points become increasingly dense in $\mathcal{T}$, and the BESQ processes are continuous, it readily follows that
\[\left(\mathcal{T},d_{\mathcal{T}},\mu_\mathcal{T},\rho_\mathcal{T},\ell,P^{\mathcal{T}\hbox{-}\mathrm{BESQ}^0(\ell)}|_{x_1,\dots,x_k}\right)\rightarrow \left(\mathcal{T},d_{\mathcal{T}},\mu_\mathcal{T},\rho_\mathcal{T},\ell,P^{\mathcal{T}\hbox{-}\mathrm{BESQ}^0(\ell)}\right),\]
almost-surely, which yields that the map from the element $(\mathcal{T},d_{\mathcal{T}},\mu_\mathcal{T},\rho_\mathcal{T},\ell)$ to the law of the right-hand side above is measurable. However, the latter is deterministic, and so we get the measurability of the map at \eqref{aimain}, as desired.

\subsection{Checking the assumptions and the result with $X^\mathcal{T}$ started from $\rho$}\label{52}

We are now ready to start putting the pieces together and check the main result when $X^\mathcal{T}$ is started from the root $\rho$ (see Lemma \ref{rootres}). We first make the following observation, which readily follows from the measure estimates of \cite{Croydoncrt} (and the scaling property of $\mathbf{N}$, as described at \eqref{scaling}).

\begin{lem}[{\cite[Theorem 1.2]{Croydoncrt}}]\label{a1check}
For $\mathbf{N}$-a.e.\ realisation of $\mathcal{T}$, Assumption \ref{assu:1} is satisfied.
\end{lem}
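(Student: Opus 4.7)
The plan is to invoke the uniform volume lower bound for the Brownian CRT established in \cite[Theorem 1.2]{Croydoncrt}, which, under the normalisation used here, provides constants $C,\eta>0$ such that $\mathbf{P}$-a.s.\ one has $\inf_{x\in\mathcal{T}}\mu_{\mathcal{T}}(B_{\mathcal{T}}(x,r)) \geq C r^\eta$ for every $r \in (0,1)$, and then to transfer this bound to $\mathbf{N}$-a.e.\ realisation via the scaling identity \eqref{scaling}. Since $\mathbf{N} = \int_0^\infty (2\sqrt{2\pi s^3})^{-1}\,\mathbf{N}(\,\cdot\,|\,\sigma = s)\,ds$, it suffices to verify the property $\mathbf{N}(\,\cdot\,|\,\sigma=s)$-a.s.\ for every fixed $s>0$.

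For such an $s$, by \eqref{scaling}, $\mathbf{N}(\,\cdot\,|\,\sigma=s)$ is the law of the rescaled space $(\mathcal{T}, s^{1/2}d_{\mathcal{T}}, s\mu_{\mathcal{T}}, \rho)$ when $(\mathcal{T},d_{\mathcal{T}},\mu_{\mathcal{T}},\rho)$ is sampled from $\mathbf{P}$. Given $r\in(0,1)$, I would split into two cases to handle the mismatch between the original and rescaled unit scales. If $r \leq s^{1/2}$, then $B_{s^{1/2}d_\mathcal{T}}(x,r) = B_{d_\mathcal{T}}(x,r/s^{1/2})$ with $r/s^{1/2}\in(0,1)$, and the $\mathbf{P}$-a.s.\ bound gives rescaled mass at least $s\cdot C(r/s^{1/2})^\eta = Cs^{1-\eta/2}r^\eta$. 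If instead $r > s^{1/2}$, the rescaled ball contains $B_{d_\mathcal{T}}(x,1)$, whose $\mathbf{P}$-a.s.\ mass is at least $C$ (by monotonicity and a limit $r\uparrow 1$ in the assumed bound), so the rescaled mass is at least $sC \geq sC r^\eta$ because $r\leq 1$. Taking $\tilde{C}(s) := C\min(s, s^{1-\eta/2})$ and keeping the exponent $\eta$ yields Assumption \ref{assu:1} for the rescaled tree.

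There is no substantive obstacle here; the only care required is in the range $r > s^{1/2}$, which the case split handles using nothing more than the finite total mass of the rescaled tree. Since the constants in Assumption \ref{assu:1} are allowed to depend on the realisation, the $s$-dependence of $\tilde{C}$ poses no issue, and the $\mathbf{N}$-a.e.\ statement follows by integrating the almost-sure statement over the disintegration described above.
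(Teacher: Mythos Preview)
Your proposal is correct and follows precisely the approach indicated in the paper, which simply cites \cite[Theorem 1.2]{Croydoncrt} for the $\mathbf{P}$-a.s.\ statement and appeals to the scaling relation \eqref{scaling} to transfer it to $\mathbf{N}$; you have merely spelled out the case split needed to make the scaling argument rigorous, and correctly observed that the resulting $s$-dependent constant is harmless since Assumption \ref{assu:1} permits realisation-dependent constants.
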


More challenging to check is Assumption \ref{assu:2}, but we are able to do this by combining the Ray-Knight theorem of Section \ref{bmsec}, the recursive decomposition of Section \ref{sec:rec} and the path property of snakes proved in Section \ref{sec:bes}. In what follows in this section, we suppose that $\mathcal{T}^{(\varepsilon)}$ is the subtree defined in Section \ref{sec:rec}, see \eqref{eq:defTepsilon} in particular. We note that the map $\mathcal{T}\mapsto (\mathcal{T}^{(\varepsilon)},(\rho_i,\mathcal{T}_i)_{i\in\mathcal{I}^{(\varepsilon)}})$ can be defined measurably with respect to a suitable product Gromov-Hausdorff-type topology on the image space (and, indeed, continuously at typical realisations of the Brownian CRT); since it is straightforward to check this, we omit the details. (Similarly to the comment in Section \ref{sec:rec}, we highlight that it will be sufficient to check that Assumption \ref{assu:2} holds with $\varepsilon$ taking a countable selection of values in the restricted range $(0,d_\mathcal{T}(\rho,x_1)$.)

\begin{lem}\label{a2check}
For $\mathbf{N}$-a.e.\ realisation of $\mathcal{T}$, Assumption \ref{assu:2} is satisfied.
\end{lem}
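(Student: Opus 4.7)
The plan is to combine Proposition~\ref{rk}, Proposition~\ref{prop:poissonTepsilon}, and Theorem~\ref{th:snake}. Since Assumption~\ref{assu:2} concerns only countably many pairs $(\varepsilon,\delta)$, it suffices to fix one such pair and argue $\mathbf{N}$-almost surely on the realisation of $\mathcal{T}$. By Corollary~\ref{poscor}, I may restrict to the $P_\rho^\mathcal{T}$-full event on which $\ell_i:=L^\mathcal{T}_{\tau_{\mathrm{cov}}^{(\varepsilon)}(\delta)}(\rho_i)>0$ for every $i\in\mathcal{I}^{(\varepsilon)}$. Proposition~\ref{rk}(b) then states that, conditionally on $\mathcal T^{(\varepsilon)}$, the family $(\mathcal T_i)_{i\in\mathcal I^{(\varepsilon)}}$, and the local time values on $\mathcal T^{(\varepsilon)}$, the restrictions $\xi^{(i)}:=L^\mathcal{T}_{\tau_{\mathrm{cov}}^{(\varepsilon)}(\delta)}|_{\mathcal T_i}$ are independent, with each distributed as a $\mathcal T_i$-indexed $\mathrm{BESQ}^0(\ell_i)$ process.

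The central claim, which I would isolate as the key intermediate lemma, is that for each $i$, almost surely
\[
\xi^{(i)}_y>0 \text{ for all } y\in\mathcal T_i,\qquad\text{or}\qquad \{y\in\mathcal T_i:\xi^{(i)}_y=0\}\text{ has non-empty interior in }\mathcal T_i.
\]
Once this is granted, the proof is finished by taking the union over the at most countably many components: if every $\xi^{(i)}$ is strictly positive on its tree, then together with the positivity on $\mathcal{T}^{(\varepsilon)}$ supplied by Corollary~\ref{poscor} we obtain $E_1(\varepsilon,\delta)$; otherwise, some $\xi^{(i)}$ vanishes on an open subset $U\subseteq \mathcal T_i$, and since $\xi^{(i)}_{\rho_i}=\ell_i>0$ the set $U$ must avoid the attachment point $\rho_i$, so $U$ is also open in $\mathcal T$ and we land in $E_2(\varepsilon,\delta)$.

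The main obstacle will be to extract the above dichotomy from Theorem~\ref{th:snake}. The bridge is that, by Proposition~\ref{prop:poissonTepsilon}, each $\mathcal T_i$ is coded (after conditioning on $\mathcal T^{(\varepsilon)}$) by a Brownian excursion drawn from a restriction of the It\^o measure~$\mathbf{n}$, and by the construction in Section~\ref{sec:snakedef} a $\mathcal T_i$-indexed $\mathrm{BESQ}^0(\ell_i)$ process is the tip process $(\widehat W_s)_{0\le s\le\sigma}$ of a 0-dimensional BESQ-Brownian snake started from the constant path $\mathrm w\equiv \ell_i$ whose lifetime equals this coding excursion. Theorem~\ref{th:snake} asserts that under the excursion measure $\mathbb N_{\ell_i}$ the snake almost surely either never hits $0$ on $[0,\sigma]$ or equals $0$ on an open interval of $s$-values. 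I would then disintegrate $\mathbb N_{\ell_i}$ over its lifetime process, which pushes forward to $\mathbf N$ on trees via the coding map, using the mixture decomposition $\mathbf{N}=\int_0^\infty \tfrac{1}{2\sqrt{2\pi s^3}}\mathbf{N}(\cdot\,|\,\sigma=s)\,ds$ together with the scaling identity~\eqref{scaling}, to transfer the snake dichotomy to $\mathbf n$-a.e.\ excursion and hence to $\mathbf N$-a.e.\ coded tree. Absolute continuity of the restriction measure $\mathbf N^{H\le h_x\wedge\varepsilon}$ with respect to $\mathbf N$ then yields the same conclusion under the distribution of each $\mathcal T_i$ prescribed by Proposition~\ref{prop:poissonTepsilon}. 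The final geometric step, which I expect to be routine, is to verify that an open interval of $s$-values in the coding excursion projects to a non-empty open subset of $\mathcal T_i$, and thereby to translate the snake dichotomy into the desired dichotomy for $\xi^{(i)}$.
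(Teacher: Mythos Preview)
Your proposal is correct and follows essentially the same route as the paper's proof: fix $(\varepsilon,\delta)$, use Corollary~\ref{poscor} to ensure positivity on $\mathcal{T}^{(\varepsilon)}$, invoke Proposition~\ref{rk}(b) to reduce to tree-indexed $\mathrm{BESQ}^0$ processes on the components $\mathcal{T}_i$, feed in the Poisson description of Proposition~\ref{prop:poissonTepsilon}, and close with the snake dichotomy of Theorem~\ref{th:snake}. The paper organises the same ingredients as an integral identity $\mathcal{N}(\varepsilon,\delta)=0$ and applies Campbell's theorem to evaluate the Poisson sum, whereas you phrase the endgame as a disintegration of $\mathbb{N}_{\ell}$ over its lifetime together with absolute continuity of $\mathbf{N}^{H\le h_x\wedge\varepsilon}$ with respect to $\mathbf{N}$; these are two packagings of the same computation. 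One small point to tidy: Proposition~\ref{rk}(b) is stated for a fixed tree, so your phrase ``conditionally on $\mathcal{T}^{(\varepsilon)}$, the family $(\mathcal{T}_i)$, and the local time values'' is really combining that proposition with Proposition~\ref{prop:poissonTepsilon} in a single breath, and the random $\ell_i$ must be frozen by first integrating over $P_\rho^{(\varepsilon)}$ (as the paper does via Proposition~\ref{rk}(a)) before applying the snake statement for each fixed starting level; your Fubini/disintegration sketch does this, but it is worth making explicit. The geometric step you flag as routine---that an open $s$-interval on which $\widehat W$ vanishes projects to a subset of the tree containing a ball---is indeed straightforward: pick $s_0$ in the interval with $\zeta_{s_0}>m_\zeta(a,s_0)\vee m_\zeta(s_0,b)$ (such $s_0$ exists since Brownian excursions are nowhere monotone), and then any $t\notin(a,b)$ satisfies $d_\zeta(s_0,t)\ge \zeta_{s_0}-m_\zeta(a,s_0)\vee m_\zeta(s_0,b)>0$.
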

\begin{proof}
Since Assumption \ref{assu:2} only requires us to check the desired property holds for a countable number of $\varepsilon$ and $\delta$, it will suffice to check it for a single pair. In particular, in this proof, we fix $\varepsilon,\delta>0$ and write $E:=E_1(\varepsilon,\delta)\cup E_2(\varepsilon,\delta)$, where the latter events were defined at \eqref{e1} and \eqref{e2}. We will show that ${\mathcal{N}(\varepsilon,\delta)}=0$, where
\[\mathcal{N}(\varepsilon,\delta):=\int\mathbf{N}\left(d\mathcal{T}\right) \mathbf{1}_{\{2\delta,\varepsilon\in (0, d_\mathcal{T}(\rho,x_1))\}}P_\rho^\mathcal{T}\left(E^c\right).\]
Now, by Corollary \ref{poscor} and Lemma \ref{a1check}, we readily deduce that
\begin{eqnarray*}
  \mathcal{N}(\varepsilon,\delta)&=& \int\mathbf{N}\left(d\mathcal{T}\right) \mathbf{1}_{\{2\delta,\varepsilon\in (0, d_\mathcal{T}(\rho,x_1))\}} P_\rho^\mathcal{T}\left(E^c,\:\inf_{z\in\mathcal{T}^{(\varepsilon)}}L^\mathcal{T}_{\tau_{\mathrm{cov}}^{(\varepsilon)}(\delta)}(z)>0\right)\\
   &\leq & \int\mathbf{N}\left(d\mathcal{T}\right)\mathbf{1}_{\{2\delta,\varepsilon\in (0, d_\mathcal{T}(\rho,x_1))\}} P_\rho^\mathcal{T}\left(E(\mathcal{T}_i)^c\mbox{ holds for some }i\in\mathcal{I}^{(\varepsilon)}\right)\\
   &\leq & \int\mathbf{N}\left(d\mathcal{T}\right)\mathbf{1}_{\{2\delta,\varepsilon\in (0, d_\mathcal{T}(\rho,x_1))\}} \sum_{i\in \mathcal{I}^{(\varepsilon)}}P_\rho^\mathcal{T}\left(E(\mathcal{T}_i)^c\right),
\end{eqnarray*}
where $E(\mathcal{T}_i)$ is defined exactly as $E$ is, but with $\mathcal{T}$ replaced by $\mathcal{T}_i$. Applying the tower property of conditional expectation and the Ray-Knight result of Proposition \ref{rk} to the inner probability, we deduce that
\begin{eqnarray*}
P_\rho^\mathcal{T}\left(E(\mathcal{T}_i)^c\right)&=&E_\rho^\mathcal{T}\left(P_\rho^\mathcal{T}\left(E(\mathcal{T}_i)^c\:\vline\:L^\mathcal{T}_{\tau_{\mathrm{cov}}^{(\varepsilon)}(\delta)}(z),\:z\in\mathcal{T}^{(\varepsilon)}\right)\right)\\
&=&\int P_\rho^{(\varepsilon)}\left(dL^{(\varepsilon)}_{\tau(\varepsilon,\delta)}\right) p\left(\mathcal{T}_i,L^{(\varepsilon)}_{\tau(\varepsilon,\delta)}(\rho_i)\right),
\end{eqnarray*}
where $p(\mathcal{T},\ell)$ is the probability a $\mathcal{T}$-indexed $\mathrm{BESQ}^0(\ell)$ process satisfies the claim equivalent to $E^c$, i.e.\ it hits zero, but is not zero on any open ball. (The measurability of this function, as is required for the above integral to be well-defined, was discussed at the end of Section \ref{51}; technically, to proceed as described there, one also needs to incorporate suitable measures on the trees, but there is no problem in equipping each tree $\mathcal{T}_i$ with the probability measure $\mu(\cdot\cap \mathcal{T}_i)/\mu(\mathcal{T}_i)$, and checking that this measure has the desired properties, almost-surely.) Next, we insert this into the above upper bound for $\mathcal{N}(\varepsilon,\delta)$, decompose $\mathbf{N}(d\mathcal{T})$ by conditioning on $\mathcal{T}^{(\varepsilon)}$ and apply Fubini to yield the following:
\begin{eqnarray*}
{\mathcal{N}(\varepsilon,\delta)} &\leq & \int\mathbf{N}\left(d\mathcal{T}\right) \mathbf{1}_{\{2\delta,\varepsilon\in (0, d_\mathcal{T}(\rho,x_1))\}} \int P_\rho^{(\varepsilon)}\left(dL^{(\varepsilon)}_{\tau(\varepsilon,\delta)}\right) \sum_{i\in \mathcal{I}^{(\varepsilon)}}p\left(\mathcal{T}_i,L^{(\varepsilon)}_{\tau(\varepsilon,\delta)}(\rho_i)\right)\\
  &=&  \int\mathbf{N}\left(d\mathcal{T}^{(\varepsilon)}\right)\mathbf{1}_{\{2\delta,\varepsilon\in (0, d_\mathcal{T}(\rho,x_1))\}} \int P_\rho^{(\varepsilon)}\left(dL^{(\varepsilon)}_{\tau(\varepsilon,\delta)}\right) \\
  &&\hspace{80pt}\int \mathbf{N}\left(d\left((\mathcal{T}_i,\rho_i)_{i\in\mathcal{I}^{(\varepsilon)}}\right)\:\vline\:\mathcal{T}^{(\varepsilon)}\right)  \sum_{i\in \mathcal{I}^{(\varepsilon)}}p\left(\mathcal{T}_i,L^{(\varepsilon)}_{\tau(\varepsilon,\delta)}(\rho_i)\right).
\end{eqnarray*}
Under the conditional law $\mathbf{N}(\cdot \:\vline\:\mathcal{T}^{(\varepsilon)})$, we have from Proposition \ref{prop:poissonTepsilon} (see also Remark \ref{remrem}) that $(\mathcal{T}_i,\rho_i)_{i\in\mathcal{I}^{(\varepsilon)}}$ are the atoms of a Poisson process with intensity measure bounded above by
$\lambda^{(\varepsilon)}(d\tilde{\rho})  \mathbf{N}^{H\leq \varepsilon}(d\tilde{\mathcal{T}})$.  Hence Campbell's theorem (see \cite[Section 3.2]{Kingman}, for example) implies that
\begin{eqnarray*}
\lefteqn{\int\mathbf{N}\left(d\left((\mathcal{T}_i,\rho_i)_{i\in\mathcal{I}^{(\varepsilon)}}\right)\:\vline\:\mathcal{T}^{(\varepsilon)}\right)  \sum_{i\in \mathcal{I}^{(\varepsilon)}}p\left(\mathcal{T}_i,L^{(\varepsilon)}_{\tau(\varepsilon,\delta)}(\rho_i)\right)}\\
&\leq &\int \lambda^{(\varepsilon)}\left(d\tilde{\rho}\right)\int
  \mathbf{N}^{H\leq\varepsilon}\left(d\tilde{\mathcal{T}}\right) p\left(\tilde{\mathcal{T}},L^{(\varepsilon)}_{\tau(\varepsilon,\delta)}(\tilde{\rho})\right).
\end{eqnarray*}
Finally, we know from Theorem \ref{th:snake} that
\[\int  \mathbf{N}^{H\leq \varepsilon}\left(d\tilde{\mathcal{T}}\right) p\left(\tilde{\mathcal{T}},\ell\right)\leq \int  \mathbf{N}\left(d\tilde{\mathcal{T}}\right) p\left(\tilde{\mathcal{T}},\ell\right)=0,\]
for any $\ell\geq 0$, and substituting this into our earlier bound gives that
${\mathcal{N}(\varepsilon,\delta)}=0$, as desired.
\end{proof}

\begin{lem}\label{rootres} For $\mathbf{P}$-a.e.\ realisation of $\mathcal{T}$, the equality at \eqref{twodefs} holds $P^\mathcal{T}_\rho$-a.s.
\end{lem}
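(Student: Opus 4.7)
The plan is to combine the three main pillars already assembled in this section. First, Lemma \ref{a1check} yields that Assumption \ref{assu:1} holds for $\mathbf{N}$-a.e.\ realisation of $\mathcal{T}$, and Lemma \ref{a2check} yields the same for Assumption \ref{assu:2}. Hence, outside of an $\mathbf{N}$-null set of trees, both assumptions are simultaneously in force, and Proposition \ref{propmain} is directly applicable. This immediately gives that, for $\mathbf{N}$-a.e.\ realisation of $\mathcal{T}$,
\[
\tau_{\mathrm{cov}}=\inf\left\{t\geq 0:\:L^\mathcal{T}_t(x)>0,\:\forall x\in \mathcal{T}\right\},\qquad P^\mathcal{T}_\rho\text{-a.s.}
\]

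The only remaining issue is to transfer this from the infinite measure $\mathbf{N}$ to the probability measure $\mathbf{P}$. For this, I would appeal to the disintegration $\mathbf{N}=\int_0^\infty\frac{1}{2\sqrt{2\pi s^3}}\mathbf{N}(\cdot\mid\sigma=s)\,ds$ recalled below \eqref{scaling}, together with the scaling identity \eqref{scaling}, which allows one to conclude that a scale-invariant property of $(\mathcal{T},d_\mathcal{T},\mu_\mathcal{T},\rho)$ that holds $\mathbf{N}$-a.s.\ must also hold $\mathbf{P}$-a.s. Thus it remains to verify that the identity of interest is indeed scale-invariant. Concretely, under the rescaling $(d_\mathcal{T},\mu_\mathcal{T})\mapsto(s^{1/2}d_\mathcal{T},s\mu_\mathcal{T})$, the associated Brownian motion is obtained from the original one by the deterministic time change $t\mapsto s^{3/2}t$ (as can be read off from the resistance form characterisation \eqref{reschar} and the occupation density formula \eqref{odf}), and the local times rescale accordingly by a factor of $s^{1/2}$. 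In particular, the sets on either side of the identity both get multiplied by $s^{3/2}$, so the set-theoretic equality is preserved. This invariance allows the $\mathbf{N}$-a.s.\ statement to be converted into the desired $\mathbf{P}$-a.s.\ statement.

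I do not anticipate a genuine obstacle here, since all of the hard work has been done: the technical core is the verification of Assumption \ref{assu:2} carried out in Lemma \ref{a2check} (which in turn relied on the Ray-Knight decomposition of Proposition \ref{rk}, the Poissonian height decomposition of Proposition \ref{prop:poissonTepsilon}, and the snake path property of Theorem \ref{th:snake}). The only minor point to be careful about in the transfer step is measurability, i.e.\ ensuring that the $\mathbf{N}$-null set excluded in the previous paragraph really is a measurable subset of the relevant space of compact rooted measured real trees (so that its image under the scaling map is still null for each conditional measure $\mathbf{N}(\cdot\mid\sigma=s)$); this follows from the measurability considerations recorded in Section \ref{51} and the fact that the property \eqref{twodefs} is expressible in terms of the canonical stochastic process and its local times on $(\mathcal{T},d_\mathcal{T},\mu_\mathcal{T},\rho)$.
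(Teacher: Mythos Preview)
Your proposal is correct and follows essentially the same approach as the paper: combine Lemmas \ref{a1check} and \ref{a2check} with Proposition \ref{propmain}, then use the scaling relation \eqref{scaling} to pass from $\mathbf{N}$ to $\mathbf{P}$. The only cosmetic difference is that the paper transfers Assumptions \ref{assu:1} and \ref{assu:2} to $\mathbf{P}$ first and then applies Proposition \ref{propmain}, whereas you apply Proposition \ref{propmain} under $\mathbf{N}$ first and then transfer the resulting identity; both orderings are valid since the relevant properties are scale-invariant.
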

\begin{proof}
By Lemmas \ref{a1check} and \ref{a2check}, $\mathbf{N}$-a.e.\ realisation of $\mathcal{T}$ satisfies Assumptions \ref{assu:1} and \ref{assu:2}. From the scaling property of $\mathbf{N}$, as set out at \eqref{scaling}, the same is true under $\mathbf{P}$. Hence we obtain the result from Proposition \ref{propmain}.
\end{proof}

\subsection{Extending to arbitrary starting points}\label{53}

The proof of Theorem \ref{thm:mainres} is almost complete. It remains to explain how to generalise to arbitrary starting points. For this, the following root invariance property is useful.

\begin{lem}[{\cite[{(20)}]{Aldous2}, \cite[Proposition 4.8]{DLeG}}]\label{rootinv} The law of the rooted compact real tree $(\mathcal{T},\rho)$ under $\mathbf{N}$ is the same as that of $(\mathcal{T},x)$ under $\mathbf{N}(d\mathcal{T})\mu_\mathcal{T}(dx)/\mu_\mathcal{T}(\mathcal{T})$.
\end{lem}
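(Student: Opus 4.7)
The strategy is to work at the level of the excursion encoding of Section \ref{sec:rec} and prove the corresponding invariance for the It\^{o} measure $\mathbf{n}$. Given an excursion $\zeta$ of duration $\sigma(\zeta)$ coding the tree $\mathcal{T}_\zeta=[0,\sigma]/\!\!\sim_\zeta$, and given $u\in[0,\sigma(\zeta)]$, there is a natural cyclic shift $\zeta^{[u]}$ obtained by reading off the excursion around the circle $\mathbb{R}/\sigma(\zeta)\mathbb{Z}$ starting from time $u$ and measuring heights relative to the resulting cyclic minimum. A direct check from the definitions of $d_\zeta$ and the quotient map shows that the rooted, measured, real tree coded by $\zeta^{[u]}$ with its canonical root $[0]$ is root-preserving isometric to $(\mathcal{T}_\zeta,d_\mathcal{T},\mu_\mathcal{T},[u])$. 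In particular, since $\mu_{\mathcal{T}_\zeta}$ is the pushforward of Lebesgue measure on $[0,\sigma(\zeta)]$, the lemma is equivalent to the excursion-level identity
\[\int\mathbf{n}(d\zeta)\,G(\zeta)=\int\mathbf{n}(d\zeta)\,\frac{1}{\sigma(\zeta)}\int_0^{\sigma(\zeta)}G\bigl(\zeta^{[u]}\bigr)\,du\]
holding for every bounded measurable $G$ on the space of excursions, with both sides subsequently pushed forward through the (measurable) coding map $\zeta\mapsto(\mathcal{T}_\zeta,\rho,\mu_\mathcal{T})$.

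To verify the displayed identity I would disintegrate the It\^{o} measure over the duration, using
\[\mathbf{n}=\int_0^\infty\frac{ds}{\sqrt{2\pi s^3}}\,\mathbf{n}(\,\cdot\,\mid\sigma=s),\]
and then exploit Brownian scaling $\zeta\mapsto s^{-1/2}\zeta(s\,\cdot)$ to reduce the claim for each $s>0$ to the case $s=1$. It therefore suffices to establish that, under the normalised Brownian excursion law $\mathbf{n}(\,\cdot\,\mid\sigma=1)$, the cyclic shift $\zeta^{[U]}$ with $U$ an independent uniform on $[0,1]$ has the same distribution as $\zeta$; this is precisely the rerooting invariance of the standard Brownian CRT. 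I would obtain this either as a consequence of Vervaat's theorem, which identifies the Brownian excursion with a Brownian bridge cyclically shifted to its minimum and thereby inherits the obvious rotational invariance of the bridge path measure under $(\zeta,u)\mapsto(\zeta^{[u]},0)$, or alternatively as a scaling-limit consequence of the trivial combinatorial rerooting invariance of uniform discrete excursions combined with Aldous's invariance principle \cite{Aldous3} for conditioned critical Galton--Watson trees.

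The main obstacle is the last reduction, i.e.\ the rerooting invariance of the standard Brownian excursion; the remainder is essentially bookkeeping with the scaling relation \eqref{scaling} and a change of variables. Care is required only in checking that the excursion cyclic shift truly realises the tree-level rerooting, but this amounts to the fact that $d_\zeta(u,v)$ depends on $(u,v)$ only through the values of $\zeta$ on the cyclic arc between them relative to its minimum there, which is immediate from the definition of $d_\zeta$.
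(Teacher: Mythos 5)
The paper does not prove this lemma at all: it is quoted directly from \cite[(20)]{Aldous2} and \cite[Proposition 4.8]{DLeG}. What you sketch is, in effect, the standard argument behind those citations — disintegrate $\mathbf{n}$ over the duration, use Brownian scaling to reduce to the normalised excursion, and invoke invariance of the normalised excursion (equivalently, of the CRT with its mass measure) under re-rooting at an independent uniform time, proved either through Vervaat's theorem and the rotational invariance of the bridge or through the exact re-rooting invariance of uniform plane trees plus the invariance principle. That architecture is sound, and the scaling/disintegration bookkeeping is fine (the missing factor $\tfrac12$ in your disintegration is harmless, since the invariance is proved conditionally on $\sigma=s$ for each $s$).

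The step you dismiss as immediate is, however, exactly where care is needed, and as written it is incorrect. The literal cyclic shift of the excursion does not code the re-rooted tree. Indeed, set $W(t):=\zeta(u\oplus t)$; for $s<t$ such that the shifted exploration wraps past the original root, i.e.\ $u+s\le\sigma<u+t$, one has $d_W(s,t)=\zeta(u\oplus s)+\zeta(u\oplus t)$, because the minimum defining $d_W$ is taken over an arc containing the time $0\equiv\sigma$ where $\zeta$ vanishes. This is the distance \emph{through the old root}, which in general strictly exceeds $d_\zeta(u\oplus s,u\oplus t)$; since re-rooting does not change distances, the tree coded by the shifted path is genuinely different from $(\mathcal{T}_\zeta,[u])$. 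Your justification — that $d_\zeta(u,v)$ depends only on $\zeta$ over ``the cyclic arc between them'' — is where the slip occurs: $d_\zeta$ uses the minimum over the linear interval $[u\wedge v,u\vee v]$, i.e.\ one specific arc, and after wrapping the naive shift uses the other one. The correct re-rooting transform is $\zeta^{[u]}(t):=d_\zeta(u,u\oplus t)$, and the statement you actually need is the identity $d_{\zeta^{[u]}}(s,t)=d_\zeta(u\oplus s,u\oplus t)$ for all $s,t$ (the re-rooting lemma of Duquesne--Le Gall), which requires a short case analysis rather than following from the definition alone. With that replacement, the remainder of your plan — deducing the uniform re-rooting invariance of the normalised excursion from the bridge via Vervaat (where one must check that re-rooting the excursion corresponds to a plain rotation of the bridge), or from the discrete re-rooting invariance of uniform plane trees together with the scaling limit — is the standard and correct route to the lemma.
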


From the scaling property of $\mathbf{N}$ (see \eqref{scaling}), it readily follows that the conclusion of Lemma \ref{rootinv} holds when $\mathbf{N}$ is replaced by $\mathbf{P}$. Combining this observation with Lemma \ref{rootres} yields the following.

\begin{lem}\label{asres} For $\mathbf{P}$-a.e.\ realisation of $\mathcal{T}$, for $\mu_\mathcal{T}$-almost-every $x$, the equality at \eqref{twodefs} holds $P^\mathcal{T}_x$-a.s.
\end{lem}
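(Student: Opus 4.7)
The plan is to combine Lemma \ref{rootres}, which gives the desired identity when $X^\mathcal{T}$ is started at the root $\rho$, with the root-invariance property of Lemma \ref{rootinv}, suitably transferred from the It\^{o} measure $\mathbf{N}$ to the normalised probability $\mathbf{P}$. The key observation is that, since the roles of $\rho$ and a $\mu_\mathcal{T}$-typical point are statistically interchangeable under $\mathbf{P}$, any $\mathbf{P}$-almost-sure property of the process started from $\rho$ automatically holds almost-surely from a $\mu_\mathcal{T}$-typical starting point.

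First, I would introduce the ``bad event''
\[
A := \left\{(\mathcal{T}, x)\,:\, P^\mathcal{T}_x\left(\tau_{\mathrm{cov}} \neq \inf\{t \geq 0 : L^\mathcal{T}_t(y) > 0,\,\forall y \in \mathcal{T}\}\right) > 0\right\},
\]
viewed as a subset of the space of rooted, measured compact real trees. The measurability of $A$ follows from the framework outlined in Section \ref{51}; in particular, the map sending $(\mathcal{T}, d_\mathcal{T}, \mu_\mathcal{T}, x)$ to the law of the associated Brownian motion and its local times is measurable. In this notation, Lemma \ref{rootres} is the statement that $\mathbf{P}((\mathcal{T}, \rho) \in A) = 0$.

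Next, I would upgrade Lemma \ref{rootinv} from $\mathbf{N}$ to $\mathbf{P}$. Using the disintegration $\mathbf{N} = \int_0^\infty \tfrac{ds}{2\sqrt{2\pi s^3}}\, \mathbf{N}(\cdot \mid \sigma = s)$ together with the scaling identity \eqref{scaling}, root-invariance under $\mathbf{N}$ descends to each conditional law $\mathbf{N}(\cdot \mid \sigma = s)$ (scaling acts on the metric and measure but preserves the event of equality in \eqref{twodefs}, as both $\tau_{\mathrm{cov}}$ and the local time positivity time transform in the same way). In particular, the law of $(\mathcal{T}, \rho)$ under $\mathbf{P} = \mathbf{N}(\cdot \mid \sigma = 1)$ coincides with the law of $(\mathcal{T}, x)$ under $\mathbf{P}(d\mathcal{T})\, \mu_\mathcal{T}(dx)$ (using that $\mu_\mathcal{T}(\mathcal{T}) = 1$ almost-surely under $\mathbf{P}$). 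Combining the two inputs and applying Fubini,
\[
0 \;=\; \mathbf{P}\left((\mathcal{T}, \rho) \in A\right) \;=\; \int \mathbf{P}(d\mathcal{T}) \int_\mathcal{T} \mathbf{1}_A(\mathcal{T}, x)\,\mu_\mathcal{T}(dx),
\]
which yields the desired conclusion: for $\mathbf{P}$-a.e.\ $\mathcal{T}$, the slice $\{x \in \mathcal{T} : (\mathcal{T}, x) \in A\}$ has $\mu_\mathcal{T}$-measure zero.

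I do not anticipate a substantive obstacle here; the only care needed is in verifying the measurability of $A$ and articulating the joint law of the rooted tree together with the process and its local times, both of which are precisely the issues addressed in Section \ref{51}. The real work has already been carried out in establishing Lemma \ref{rootres}; the present statement is essentially a bookkeeping consequence via root-invariance.
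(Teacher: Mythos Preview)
Your proposal is correct and follows essentially the same approach as the paper: transfer root-invariance from $\mathbf{N}$ to $\mathbf{P}$ via the scaling relation \eqref{scaling}, then combine with Lemma \ref{rootres} and Fubini. The paper's own proof is a one-line remark to exactly this effect; you have simply unpacked the Fubini step and the measurability considerations more explicitly. One small comment: in your parenthetical justification for why root-invariance descends to $\mathbf{N}(\cdot\mid\sigma=s)$, the scale-invariance of the event $A$ is not really the crux---what matters is that $\sigma=\mu_\mathcal{T}(\mathcal{T})$ is unchanged by re-rooting, so conditioning on $\sigma$ commutes with the re-rooting operation (or, equivalently, that the scaling map commutes with re-rooting). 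Your argument is nonetheless sound.
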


We can now conclude our proof.

\begin{proof}[Proof of Theorem \ref{thm:mainres} (final step)]
Let us consider the general situation for a compact real tree $(\mathcal{T},d_\mathcal{T})$ equipped with a finite Borel measure of full support $\mu_\mathcal{T}$ such that Assumption \ref{assu:1} holds (so that we have a continuous stochastic process $X^\mathcal{T}$ with jointly continuous local times $L^\mathcal{T}$). Moreover, suppose that the conclusion of Lemma \ref{asres} holds. In particular, there exists a dense subset $D\subseteq\mathcal{T}$ for which \eqref{twodefs} holds, $P^\mathcal{T}_x$-a.s.,\ for all $x\in D$.

Now, let $x\in \mathcal{T}$ and $(x_n)_{n\geq 0}$ be a sequence in $D$ such that $d_\mathcal{T}(x_n,x)\rightarrow 0$. Clearly, from the commute time identity \eqref{commute}, we have for any $\varepsilon>0$ that
\[P^\mathcal{T}_x\left(\tau_{x_n}>\varepsilon\right)\leq 2\varepsilon^{-1}d_\mathcal{T}(x_n,x)\rightarrow 0.\]
Hence, by the continuity of $X^\mathcal{T}$, there exists a deterministic sequence $(\varepsilon_n)_{n\geq 0}$ with $\varepsilon_n\downarrow 0$ such that
\begin{equation}\label{conc1}
P^\mathcal{T}_x\left(X^\mathcal{T}_{[0,\tau_{x_n}]}\subseteq B_\mathcal{T}(x,\varepsilon_n)\right)\rightarrow 1.
\end{equation}

For the next part of the proof, we define $\theta_t$ to be the temporal shift (by time $t$), so that $\tau_{x}\circ\theta_{\tau_{x_n}}$ is the time taken to return to $x$ after hitting $x_n$. Then, for $\varepsilon, \eta>0$, we have that
\begin{eqnarray*}
\lefteqn{P^\mathcal{T}_x\left(B_\mathcal{T}(x,\varepsilon)\not\subseteq X^\mathcal{T}_{[\tau_{x_n},\tau_{\mathrm{cov}}]}\right)}\\
&\leq &
P^\mathcal{T}_x\left(B_\mathcal{T}(x,\varepsilon)\not\subseteq X^\mathcal{T}_{[\tau_{x_n}+\tau_x\circ\theta_{\tau_{x_n}},\eta+\tau_{x_n}+\tau_x\circ\theta_{\tau_{x_n}}]}\right)
+P^\mathcal{T}_x\left(\eta+\tau_{x_n}+\tau_x\circ\theta_{\tau_{x_n}}>\tau_{\mathrm{cov}}\right)\\
&\leq &P^\mathcal{T}_x\left(B_\mathcal{T}(x,\varepsilon)\not\subseteq X^\mathcal{T}_{[0,\eta]}\right)
+P^\mathcal{T}_x\left(2\eta>\tau_{\mathrm{cov}}\right)+2\eta^{-1}d_\mathcal{T}(x_n,x),
\end{eqnarray*}
where the first term has been simplified using the strong Markov property, and we have used the commute time identity again to control the probability of the event $\tau_{x_n}+\tau_x\circ\theta_{\tau_{x_n}}\geq \eta$. Next, since local times increase at the starting point immediately (i.e.,\ \eqref{ltinc}) holds, it is readily deduced from the continuity of the local times that, $P_x^\mathcal{T}$-a.s.,\ $\forall \eta>0$, $X^\mathcal{T}_{[0,\eta]}\supseteq B_\mathcal{T}(0,\varepsilon)$ for some (random) $\varepsilon>0$ (cf.\ the argument of Lemma \ref{midlem}). Thus, for any $\delta,\eta>0$, by choosing (deterministic) $\varepsilon_\eta$ suitably small, it is possible to suppose that
\begin{equation}\label{rrtt}
P_x^\mathcal{T}\left(X^\mathcal{T}_{[0,\eta]}\not \supseteq B_\mathcal{T}(0,\varepsilon_\eta)\right)\leq \delta.
\end{equation}
Moreover, since $\tau_{\mathrm{cov}}>0$, $P_x^\mathcal{T}$-a.s.\ (for which we assume that $\mathcal{T}$ has at least two points to avoid the trivial case that arises otherwise), we have that $P^\mathcal{T}_x\left(2\eta>\tau_{\mathrm{cov}}\right)<\delta$ for suitably small $\eta>0$. With this choice of $\eta$ and $\varepsilon_\eta$ satisfying \eqref{rrtt}, and taking $n$ suitably large, it follows that
\begin{equation}\label{conc2}
P^\mathcal{T}_x\left(B_\mathcal{T}(x,\varepsilon_\eta)\subseteq X^\mathcal{T}_{[\tau_{x_n},\tau_{\mathrm{cov}}]}\right)\geq 1- 3 \delta.
\end{equation}

On the intersection of the events in the probabilities shown at \eqref{conc1} and \eqref{conc2}, and with $n$ suitably large, we have that
\[X^\mathcal{T}_{[0,\tau_{x_n}]}\subseteq B_\mathcal{T}(x,\varepsilon_n)\subseteq B_\mathcal{T}(x,\varepsilon_\eta)\subseteq X^\mathcal{T}_{[\tau_{x_n},\tau_{\mathrm{cov}}]},\]
which implies that $\tau_{\mathrm{cov}}=\tau_{x_n}+\tau_{\mathrm{cov}}\circ\theta_{\tau_{x_n}}$. Consequently, applying the conclusion of Lemma \ref{asres}, we have that $P_x^\mathcal{T}$-a.s.\ on the same event,
\begin{eqnarray*}
\inf\left\{t:\:L^\mathcal{T}_t(y)>0,\:\forall y\in\mathcal{T}\right\}
&\leq &\inf\left\{t\geq \tau_{x_n}:\:L^\mathcal{T}_t(y)-L^\mathcal{T}_{\tau_{x_n}}(y)>0,\:\forall y\in\mathcal{T}\right\}\\
&=&\tau_{x_n}+\tau_{\mathrm{cov}}\circ\theta_{\tau_{x_n}}\\
&=&\tau_{\mathrm{cov}}.
\end{eqnarray*}
Since this inequality holds with probability greater than $1-4\delta$, and $\delta$ was arbitrary, it therefore holds $P_x^\mathcal{T}$-a.s. Since the reverse inequality (i.e.\ \eqref{bb2}) is also true (as can be shown by the same simple continuity argument), we obtain that in fact \eqref{twodefs} holds  $P_x^\mathcal{T}$-a.s.

Finally, by Lemma \ref{a1check} and Lemma \ref{asres} (and the scaling property of \eqref{scaling}), we know that the assumptions on $\mathcal{T}$ hold $\mathbf{P}$-a.s.\ for the Brownian CRT, and so we are done.
\end{proof}

\section{Convergence to the cover time of the Brownian CRT}\label{sec:convsec}

The aim of this section is to prove Theorem \ref{cor:convres} and Corollary \ref{anncor}. The basic strategy is as set out for the Sierpi\'nski gasket in \cite{croydonmoduli}, see \cite[Corollary 7.3 and Remark 7.4]{croydonmoduli} in particular, though in that paper the equivalent to the conclusion of Theorem \ref{thm:mainres} was not known.

We start by presenting a convenient embedding result, which characterises convergence in the space $\mathbb{M}_{L,c}$. The result is essentially a simplification of \cite[Theorem 2.28]{Noda}, which also covered the non-compact case. The notation $d_M^H$, $d_M^P$ and $d_M^L$ was introduced in Section \ref{51}.

\begin{lem}[{\cite{Noda}}]\label{embedding} Let $(K_n,d_{K_n},\mu_{K_n},\rho_{K_n},P^{K_n})$, $n\geq 1$, and $(K,d_{K},\mu_{K},\rho_{K},P^K)$ be elements of $\mathbb{M}_{L,c}$ such that
\begin{equation}\label{k0}
\left({K_n},d_{K_n},\mu_{K_n},\rho_{K_n},P^{K_n}\right)\rightarrow \left({K},d_{K},\mu_{K},\rho_{K},P^{K}\right)
\end{equation}
in $\mathbb{M}_{L,c}$. It is then the case that there exists a rooted compact metric space $(M,d_M,\rho_M)$ and isometric embeddings $\phi_n:K_n\rightarrow M$, $n\geq 1$, and $\phi:K\rightarrow M$ such that
\begin{equation}
\phi_n\left(\rho_{K_n}\right)=\rho_M=\phi\left(\rho_{K}\right),\label{k1}
\end{equation}
\begin{equation}
d_M^H\left(\phi_n\left({K_n}\right),\phi\left({K}\right)\right)\rightarrow 0,\label{k2}
\end{equation}
\begin{equation}
d_M^P\left(\mu_{K_n}\circ\phi_n^{-1},\mu_{K}\circ\phi^{-1}\right)\rightarrow 0,\label{k3}
\end{equation}
\begin{equation}
d_M^L\left(P^{K_n}\circ\phi_n^{-1},P^{K}\circ\phi^{-1}\right)\rightarrow 0.\label{k4}
\end{equation}
Moreover, given \eqref{k2}, the final condition is equivalent to the existence of a coupling of $P^{K_n}\circ\phi_n^{-1}$, $n\geq 1$, and $P^{K}\circ\phi^{-1}$ under which, almost-surely,
\begin{equation}
\left(\phi_n\left(X^{K_n}_t\right)\right)_{t\geq 0}\rightarrow \left(\phi\left(X^{K}_t\right)\right)_{t\geq 0}\label{k5}
\end{equation}
in $D(\mathbb{R}_+,M)$ and also, for any $T>0$,
\begin{equation}
\lim_{\delta\rightarrow0}\limsup_{n\rightarrow\infty}\sup_{\substack{x_n\in K_n,\:x\in K:\\d_M(\phi_n(x_n),\phi(x))\leq \delta}}\sup_{t\in [0,T]}\left|L^{K_n}_t\left(x_n\right)-L^{K}_t\left(x\right)\right|=0.\label{k6}
\end{equation}
\end{lem}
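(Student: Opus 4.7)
The plan is to reduce the statement to (the compact case of) \cite[Theorem 2.28]{Noda}, which provides exactly such an embedding result for the extended Gromov-Hausdorff-type topology on $\mathbb{M}_L$. By definition of the metric topology on $\mathbb{M}_{L,c}$, the convergence \eqref{k0} is witnessed, for each $n$, by a compact rooted metric space $(M_n,d_{M_n},\rho_{M_n})$ and root-preserving isometric embeddings $\phi_n^{(n)}:K_n\to M_n$, $\phi^{(n)}:K\to M_n$ for which the maximum of $d_{M_n}^H(\phi^{(n)}(K),\phi_n^{(n)}(K_n))$, $d_{M_n}^P(\mu_K\circ(\phi^{(n)})^{-1},\mu_{K_n}\circ(\phi_n^{(n)})^{-1})$ and $d_{M_n}^L(P^K\circ(\phi^{(n)})^{-1},P^{K_n}\circ(\phi_n^{(n)})^{-1})$ tends to zero as $n\to\infty$.

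To obtain a single metric space $(M,d_M,\rho_M)$ hosting $K$ and all the $K_n$ simultaneously, I would employ the standard gluing construction: form the disjoint union $K\sqcup\bigsqcup_{n\geq 1}K_n$ and define cross-distances via the witnessing spaces, namely $d(x,y):=d_{M_n}(\phi^{(n)}(x),\phi_n^{(n)}(y))$ for $x\in K$, $y\in K_n$, together with analogous definitions for pairs taken from different $K_n$'s (passing through $K$). Extending by the path-infimum formula yields a pseudometric on the disjoint union, and identifying points at zero distance followed by metric completion produces the desired $(M,d_M)$, with $\rho_M:=\phi(\rho_K)=\phi_n(\rho_{K_n})$ after identification. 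Compactness of $M$ is a consequence of the fact that $\phi_n(K_n)$ eventually sits in an arbitrarily small neighbourhood of the compact set $\phi(K)$, so $M$ is totally bounded. Properties \eqref{k1}--\eqref{k4} are then immediate from the construction: the bounds witnessing the convergence in $\mathbb{M}_{L,c}$ carry over verbatim to $M$.

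For the equivalence of \eqref{k4} with the coupling, I would appeal to the Skorohod representation theorem. The distance $d_M^L$ is precisely the Prohorov distance on the space of probability measures over $D(\mathbb{R}_+,M)\times C(F\times\mathbb{R}_+,\mathbb{R}_+)$, where the second factor uses the uniform-on-variable-domains topology introduced in \cite{Noda} and $F$ ranges over compact subsets of $M$. Since this product space is Polish and the embedded laws $P^{K_n}\circ\phi_n^{-1}$ converge weakly to $P^K\circ\phi^{-1}$ (given the Hausdorff convergence \eqref{k2} fixing the domains), Skorohod's theorem yields a coupling with almost-sure convergence in the product topology, which precisely unpacks as \eqref{k5} and \eqref{k6}. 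Conversely, existence of such a coupling immediately implies Prohorov convergence, giving \eqref{k4}.

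The main obstacle, which is essentially bookkeeping, is the careful verification that the path-infimum metric on the disjoint union is non-degenerate (i.e.\ that distances between distinct points in $K$ or in a fixed $K_n$ are preserved) and that the quotient/completion $M$ is compact; this is exactly the technical content handled in \cite{Nodametric}, so for these details I would cite that reference rather than reproduce the construction in full. The translation between the local-time component of the distance $d_M^L$ and the uniform-on-variable-domains statement \eqref{k6} also requires care, but is a direct consequence of the definition of the underlying metric on local-time-type functions adopted in \cite{Noda}.
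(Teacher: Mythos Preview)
Your proposal is correct and follows essentially the same approach as the paper: the paper's proof simply cites \cite[Theorem 2.28]{Noda} for the embedding \eqref{k1}--\eqref{k4} and \cite[Theorem 2.19]{Noda} together with Skorohod's representation theorem for the coupling equivalence \eqref{k5}--\eqref{k6}. Your sketch of the gluing construction and the Skorohod argument is a faithful (and more explicit) rendering of what those cited results contain.
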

\begin{proof}
See \cite[Theorem 2.28]{Noda} for an embedding result that covers \eqref{k1}, \eqref{k2}, \eqref{k3} and \eqref{k4}. The coupling at \eqref{k5} and \eqref{k6} follows from \cite[Theorem 2.19]{Noda} and Skorokhod's representation theorem.
\end{proof}

\begin{rem}\label{nodarem}
The main result of \cite{Noda} provides conditions under which \eqref{k0} holds. In addition to the convergence of spaces, this involves the metric entropy, as introduced in Section \ref{51}. In \cite{Noda}, it is shown that if $(K_n,d_{K_n},\mu_{K_n},\rho_{K_n})$, $n\geq 1$, are elements of $\check{\mathbb{F}}_c$ satisfying
\[\left({K_n},d_{K_n},\mu_{K_n},\rho_{K_n}\right)\rightarrow \left({K},d_{K},\mu_{K},\rho_{K}\right)\]
for some element of $\mathbb{F}_c$ and there exists an $\alpha\in (0, 1/2)$ such that
\begin{equation}\label{nodacond1}
\lim_{m\to \infty} \limsup_{n\to \infty}\sum_{k=m}^\infty N\left(K_n,d_n, 2^{-k}\right)^2 \exp{\left(-2^{\alpha k}\right)}=0,
\end{equation}
then $(K,d_{K},\mu_{K},\rho_{K})\in \check{\mathbb{F}}_c$ and \eqref{k0} holds (with the laws $P^{K_n}$, $n\geq 1$, and $P^K$ being those naturally associated with the spaces in question). This is a (simplification to compact spaces of) \cite[Theorem 1.7]{Noda}. (See also \cite[Theorem 1.9]{Noda} for a version of this result concerning random spaces.) As discussed in \cite{Noda}, the metric entropy condition at \eqref{nodacond1} is somewhat natural, and can be checked using lower volume estimates (see \cite[Section 7]{Noda}). It is similar to conditions used to deduce continuity of Gaussian processes, and ensures the tightness of the laws of local times in the appropriate space. We note that, in \cite{Noda}, for non-compact spaces, this metric entropy condition is imposed on balls of finite radius, and an additional non-explosion condition is assumed.
\end{rem}

Applying the embedding result of Lemma \ref{embedding}, we prove the following adaptation of Theorem \ref{cor:convres}.

\begin{theorem} \label{covertimeconv}
Suppose $(K_n,d_{K_n},\mu_{K_n},\rho_{K_n})$, $n\geq 1$, and $(K,d_{K},\mu_{K},\rho_{K})$ are elements of $\check{\mathbb{F}}_c$ such that
\[\left({K_n},d_{K_n},\mu_{K_n},\rho_{K_n},P^{K_n}\right)\rightarrow \left({K},d_{K},\mu_{K},\rho_{K},P^{K}\right)\]
in $\mathbb{M}_{L,c}$. Moreover, suppose that, for each $n$, either $K_n$ is at most a countable set or $X^{K_n}$ is $P^{K_n}$-a.s.\ continuous, and that $P^K$-a.s.,\ $X^K$ is continuous and
\begin{equation}\label{taucovK}
\tau_{\mathrm{cov}}(K)=\inf\left\{t\geq 0:\:L^K_t(x)>0,\:\forall x\in K\right\}.
\end{equation}
It is then the case that the law of $\tau_{\mathrm{cov}}(K_n)$ under $P^{K_n}$ converges to that of $\tau_{\mathrm{cov}}(K)$ under $P^K$.
\end{theorem}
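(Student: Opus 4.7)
The plan is to apply the embedding result of Lemma \ref{embedding} to pass from convergence in $\mathbb{M}_{L,c}$ to a strong coupling on a common compact metric space $(M, d_M, \rho_M)$: there exist isometric embeddings $\phi_n : K_n \to M$ and $\phi : K \to M$ such that $\phi_n(K_n) \to \phi(K)$ in Hausdorff distance, the embedded measures converge weakly, and, almost-surely under the coupling, $(\phi_n(X^{K_n}_t))_{t \geq 0} \to (\phi(X^K_t))_{t \geq 0}$ in Skorohod's $J_1$ topology together with the uniform-on-close-points local time convergence \eqref{k6}. Since $X^K$ is $P^K$-a.s.\ continuous, the Skorohod convergence upgrades to uniform convergence on every compact time interval. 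It then suffices to show $\tau_{\mathrm{cov}}(K_n) \to \tau_{\mathrm{cov}}(K)$ almost-surely under the coupling, which I will do via matching upper and lower bounds.

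For the upper bound $\limsup_n \tau_{\mathrm{cov}}(K_n) \leq \tau_{\mathrm{cov}}(K)$, fix $\delta > 0$. The identity \eqref{taucovK} gives $L^K_{\tau_{\mathrm{cov}}(K) + \delta}(x) > 0$ for every $x \in K$, and joint continuity together with compactness of $K$ upgrades this to a uniform positive lower bound $\ell > 0$. Using \eqref{k2} to pair each $y \in K_n$ with an $M$-close point of $K$, and \eqref{k6} to transfer the bound, one obtains $\inf_{y \in K_n} L^{K_n}_{\tau_{\mathrm{cov}}(K) + \delta}(y) > 0$ for all sufficiently large $n$. In the countable case this forces every point of $K_n$ to have been visited by time $\tau_{\mathrm{cov}}(K) + \delta$, so $\tau_{\mathrm{cov}}(K_n) \leq \tau_{\mathrm{cov}}(K) + \delta$; in the continuous case the occupation density formula \eqref{odf} applied on $K_n$ implies that $X^{K_n}_{[0,\tau_{\mathrm{cov}}(K)+\delta]}$ contains the full support of $\mu_{K_n}$, which is $K_n$ itself, and continuity of $X^{K_n}$ combined with closedness of the range yields the same bound. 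Letting $\delta \downarrow 0$ completes the upper bound.

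For the lower bound $\liminf_n \tau_{\mathrm{cov}}(K_n) \geq \tau_{\mathrm{cov}}(K)$, I argue by contradiction: if along some subsequence $(n_k)$ one has $\tau_{\mathrm{cov}}(K_{n_k}) \leq t^*$ with $t^* < \tau_{\mathrm{cov}}(K)$, then for each $x \in K$ and $\epsilon > 0$, \eqref{k2} provides $y_{n_k} \in K_{n_k}$ with $d_M(\phi_{n_k}(y_{n_k}), \phi(x)) < \epsilon$, and the covering property gives $s_{n_k} \in [0, t^*]$ with $X^{K_{n_k}}_{s_{n_k}} = y_{n_k}$; uniform convergence of the embedded paths on $[0, t^*]$ then places $\phi(X^K_{s_{n_k}})$ within $2\epsilon$ of $\phi(x)$, so $X^K_{[0,t^*]}$ is dense in $K$. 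Compactness of $[0,t^*]$ and continuity of $X^K$ make the range $X^K_{[0,t^*]}$ closed, hence equal to $K$, so $\tau_{\mathrm{cov}}(K) \leq t^*$, contradicting the choice of $t^*$. The main subtlety is that cover times are generally not continuous functionals of paths; it is precisely the local-time characterisation \eqref{taucovK} supplied by the hypothesis (which for the Brownian CRT is Theorem \ref{thm:mainres}) that enables the upper bound, while the lower bound follows from purely topological considerations once one has uniform path convergence.
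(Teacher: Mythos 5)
Your proposal is correct and follows essentially the same route as the paper: couple everything on a common space via Lemma \ref{embedding}, obtain the upper bound by transferring the uniform positivity of $L^K$ just after $\tau_{\mathrm{cov}}(K)$ to $L^{K_n}$ via \eqref{k2} and \eqref{k6} (using \eqref{taucovK} and the countable/continuous dichotomy), and obtain the lower bound from Hausdorff convergence plus locally uniform path convergence. The only differences are presentational (contradiction versus direct argument for the lower bound, and the order of the two bounds), so no further comment is needed.
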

\begin{proof} To prove the result, we will, as noted at the start of the section, generalise the argument of \cite[Corollary 7.3]{croydonmoduli}. By Lemma \ref{embedding}, we may assume that the spaces $K_n$ and $K$ have been isometrically embedded into $(M,d_M)$ so that \eqref{k1}-\eqref{k4} hold, and that the measures $P^{K_n}\circ\phi_n^{-1}$, $n\geq 1$, and $P^K\circ\phi^{-1}$ have been coupled so that \eqref{k5}-\eqref{k6} hold. From now on in the proof, we will omit the mappings $\phi_n$ and $\phi$ from the notation to simplify the presentation.

Suppose $t<\tau_{\text{cov}}(K)$, which means that there exists an $x\in K$ such that $x$ is not contained in $X^K_{[0,t]}$. From the compactness of $[0,t]$ and the continuity of $X^K$, there exists a $\varepsilon>0$ such that
\[X^K_{[0,t]}\cap B_K(x,\varepsilon)=\emptyset.\]
Since we have the almost-sure convergence of processes as at \eqref{k5}, it follows that, for large $n$,
\[X^{K_n}_{[0,t]}\cap B_K(x,\varepsilon/2)=\emptyset.\]
However, we also obtain from \eqref{k2} that, for large $n$,
\[K_n\cap B_K(x,\varepsilon/2)\neq\emptyset.\]
Hence it must be the case that, for large $n$, $t\leq \tau_{\text{cov}}(K_n)$. In particular, on the coupled probability space, we have shown that $\liminf_{n\to \infty} \tau_{\text{cov}}(K_n)\ge \tau_{\text{cov}}(K)$ almost-surely.

Next we prove the opposite bound. Write $\tilde{\tau}_{\mathrm{cov}}(K)$ for the right-hand side of \eqref{taucovK}, i.e.\ the first time at which the local times $L^K$ are strictly positive at each point of $K$, and suppose that $t>\tilde{\tau}_{\text{cov}}(K)$. Since local times are increasing in $t$, it must be the case that $L_t^K(y) >0$, for every $y\in K$. Together with the joint continuity of local times, this implies that for some $\varepsilon>0$, $L_t^K(y)>\varepsilon$ for every $y\in K$. Consequently, applying \eqref{k2} and \eqref{k6}, we deduce that, for all large $n$, it is also the case that $L_t^{K_n}(y)>\varepsilon/2$ for every $y\in K_n$. It therefore holds that $t\geq \tilde{\tau}_{\text{cov}}(K_n)$, where the latter quantity is defined analogously to $\tilde{\tau}_{\mathrm{cov}}(K)$. Hence this argument yields that, on the coupled probability space, $\limsup_{n\to \infty} \tilde{\tau}_{\text{cov}}(K_n)\le \tilde{\tau}_{\text{cov}}(K)$. Finally, if $X^{K_n}$ is continuous, we can check that $\tilde{\tau}_{\text{cov}}(K_n)\geq{\tau}_{\text{cov}}(K_n)$ as we did for trees at \eqref{bb2}, and if $K_n$ is at most a countable set, we have from the property of local times at \eqref{ltinc2} that $\tilde{\tau}_{\text{cov}}(K_n)={\tau}_{\text{cov}}(K_n)$. (Neither of the arguments used in the previous sentence are specific to trees.) Since we also have \eqref{taucovK}, it follows that $\limsup_{n\to \infty} {\tau}_{\text{cov}}(K_n)\le {\tau}_{\text{cov}}(K)$, and consequently the proof is complete.
\end{proof}

\begin{proof}[Proof of Theorem \ref{cor:convres}]
It is confirmed in \cite[Corollary 8.1]{Noda} that $(\mathcal{T},2d_\mathcal{T},\mu_\mathcal{T},\rho)\in\check{\mathbb{F}}_c$, $\mathbf{P}$-a.s. Moreover, as discussed in Section \ref{bmsec}, for $\mathbf{P}$-a.e.\ realisation of $\mathcal{T}$, $X^\mathcal{T}$ has continuous sample paths, $\mathbf{P}$-a.s. Hence the result follows on combining Theorems \ref{thm:mainres} and \ref{covertimeconv}.
\end{proof}

\begin{proof}[Proof of Corollary \ref{anncor}]
Since the space $\mathbb{M}_{L,c}$ is a separable metric space (when equipped with the metric described in Section \ref{51}), the result readily follows from Theorem \ref{thm:mainres} on application of Skorokhod's representation theorem.
\end{proof}

\section{Moments of cover times for critical Galton-Watson trees}\label{uisec}

The aim of this section is to prove Corollary \ref{pocor}. Unless otherwise stated, we will suppose that $(T_n)_{n\geq1}$ is a sequence of critical Galton-Watson trees, as described in the statement of that result. Given Corollary \ref{anncor} and the discussion of Example 1, to obtain the desired conclusion, it will suffice to check the uniform boundedness of the $L^p$ moments of the cover times in question. In particular, the main result of this section is as follows.

\begin{propn}\label{momentbound}
In the setting of Corollary \ref{pocor}, it holds that, for $p\geq 1$,
\[\sup_{n\geq 1} n^{-3p/2}\mathbb{E}^{T_n}\left(\tau_{\rm cov}({T}_n)^p\right)<\infty.\]
\end{propn}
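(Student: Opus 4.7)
The plan is to reduce the claimed moment bound to a deterministic bound on the mean cover time, and then to control the latter via the metric structure of $T_n$.

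First, for any finite connected graph $G$, set $M(G) := \max_v E^G_v[\tau_{\mathrm{cov}}(G)]$. Iterating the strong Markov property at times that are integer multiples of $2M(G)$ together with Markov's inequality yields the standard tail estimate
\[
P^G_v\bigl(\tau_{\mathrm{cov}}(G) > 2kM(G)\bigr) \leq 2^{-k},\qquad k \geq 1,
\]
and hence $E^G[\tau_{\mathrm{cov}}(G)^p] \leq C_p M(G)^p$ for every $p \geq 1$. Applying this to $G=T_n$ and taking expectations over the tree, it suffices to show that $\mathbb{E}[M(T_n)^p] \leq C_p n^{3p/2}$ for all $p \geq 1$.

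The next step is to control $M(T_n)$ in terms of metric cover quantities of $T_n$. The commute-time identity, which for a graph tree with unit conductances reads $E^{T_n}_u[\tau_v] \leq 2(n-1)\, d_{T_n}(u,v)$ (since effective resistance coincides with graph distance on a tree), combined with Matthews' inequality gives directly $M(T_n) \leq C\, n\, \mathrm{diam}(T_n)\,\log n$. To remove the spurious logarithm, the plan is to refine Matthews by chaining over dyadic scales of the metric covering number $N(T_n, d_{T_n}, r)$, yielding an inequality of the form $M(T_n) \leq C\, n\, F(T_n)$, where $F(T_n)$ is a metric entropy functional of $T_n$ expected to scale like $\sqrt{n}$.

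It then remains to show $\mathbb{E}[F(T_n)^p] \leq C_p n^{p/2}$ uniformly in $n$. Here the exponential moment assumption on the offspring distribution enters crucially: classical estimates on conditioned critical Galton-Watson trees (for instance, the exponential tail bounds on the diameter due to Addario-Berry, Devroye and Janson) together with the analogous tail estimates on the metric cover size --- the content of Lemma \ref{cc3} referenced above --- supply the required moment bound. Combining the three steps yields $\mathbb{E}[\tau_{\mathrm{cov}}(T_n)^p] \leq C_p n^{3p/2}$, as desired.

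The main obstacle is the third step: obtaining sharp, $n$-uniform tail estimates on the metric cover size $N(T_n, d_{T_n}, r)$ of the conditioned Galton-Watson tree. This is precisely where the exponential offspring moment hypothesis is used, as the authors themselves flag in Problem 3 of the introduction; lifting it to merely finite variance would demand markedly more delicate combinatorial estimates on $T_n$.
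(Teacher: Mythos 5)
Your proposal follows the same architecture as the paper's proof: reduce to the maximal quenched expected cover time $M(T_n)=t_{\rm cov}(T_n)$, bound that by $n$ times a dyadic metric-entropy functional, and control the latter's moments under $\mathbf{P}$ via diameter tails and covering-number tails, with the exponential-moment hypothesis entering only through the covering numbers. Your first reduction is correct and in fact more elementary than the paper's: the restart argument giving $P^{G}_v(\tau_{\mathrm{cov}}(G)>2kM(G))\leq 2^{-k}$, hence $E^G(\tau_{\mathrm{cov}}(G)^p)\leq C_pM(G)^p$, is valid (and applies to both the discrete-time and constant-speed walks), whereas the paper obtains its quenched tail $P^T(\tau_{\rm cov}(T)\geq\lambda t_{\rm cov}(T))\leq c_1e^{-c_2\lambda}$ (Lemma \ref{treeexp}) from Ding's Gaussian free field concentration results together with \cite{DLP}; your version would suffice for the moment bound.

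The genuine gap is your second step. Removing the logarithm from Matthews' bound is not a routine ``chaining refinement of Matthews'': after the walk has covered a coarse net, the time to reach a point of the finer net is governed by hitting times from wherever the walk actually is, not from the nearby already-visited point, so iterating Matthews scale by scale does not yield an inequality $M(T_n)\leq Cn\,F(T_n)$ with $F(T_n)$ of order $\sqrt{n}$. What is actually needed is the paper's Lemma \ref{cc1}, namely the bound $t_{\rm cov}(T_n)\leq C\bigl(\sum_i\sqrt{2^{-i}\log A_i^n}\bigr)nD_n$ of \cite{BDNP}, whose proof rests on the cover-time/Gaussian-field connection of \cite{DLP} rather than on any elementary modification of Matthews; your phrase ``expected to scale like $\sqrt n$'' marks this step as asserted rather than established. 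Similarly, the covering-number tail estimate you invoke as ``the content of Lemma \ref{cc3}'' is itself a substantive part of the paper's proof of this very proposition: establishing $\mathbf{P}(A_i^n\geq\lambda)\leq C2^i\lambda^{-\alpha}$ is exactly where the exponential moment assumption does its work, via Gittenberger's H\"older-type estimates for the conditioned contour function, and citing it without proof leaves the proposal incomplete at both technical pillars. Granted Lemmas \ref{cc1}--\ref{cc3}, your assembly (moments of the product of diameter and entropy sum via Cauchy--Schwarz, then the restart bound) would indeed deliver the proposition, much as the paper's Lemmas \ref{combin} and \ref{treeexp} do.
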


\begin{rem}\label{semom}
In fact, our proof of the above result gives that
\[\sup_{n\geq 1} \mathbb{E}^{T_n}\left(e^{c\sqrt{n^{-3/2}\tau_{\rm cov}({T}_n)}}\right)\]
is finite for suitably small $c>0$. However, we are not able to extend this to full exponential moments (i.e.\ we are not able to remove the square root).
\end{rem}

In Proposition \ref{momentbound}, $\tau_{\rm cov}({T}_n)$ could be taken to be the cover time of the discrete-time or constant-speed random walk. To avoid constantly switching between these, we make the following observation, which will allow us to concentrate solely on the discrete-time version of the process for the remainder of the section. In the statement, we write $\|\cdot\|^{T_n}_p$ for the $L^p$-norm under the measure $\mathbb{P}^{T_n}$. In fact the claim is true for random walks on graphs more generally, but we restrict to the specific case to avoid introducing new notation.

\begin{lem}
In the setting of Corollary \ref{pocor}, if $\tau_{\rm cov}^D({T}_n)$ is the cover time of the discrete-time random walk and $\tau_{\rm cov}^C({T}_n)$ is the cover time of the constant-speed random walk (each started from the root), then, for $p=q=1$ or for $1<p<q$, it holds that
\[\left\Vert\tau_{\rm cov}^C({T}_n)\right\Vert^{T_n}_p\leq C_{p,q}\left\Vert\tau_{\rm cov}^D({T}_n)\right\Vert^{T_n}_q,\]
where $C_{p,q}$ is a constant depending only upon $p$ and $q$.
\end{lem}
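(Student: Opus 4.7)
The approach is to couple the constant-speed walk with the discrete-time walk via standard Poisson subordination. Letting $(Y_k)_{k \geq 0}$ denote the DTRW on $T_n$ started from the root, and $(N_t)_{t \geq 0}$ an independent rate-one Poisson process with jump times $0 < T_1 < T_2 < \cdots$, the process $Z_t := Y_{N_t}$ has the law of the CSRW on $T_n$ (since the CSRW has exponential-$1$ holding times between jumps, each jump being distributed according to the DTRW transition kernel $P(x,y) = c(x,y)/\sum_z c(x,z)$). Because $\{Z_s : s \in [0,t]\} = \{Y_k : 0 \leq k \leq N_t\}$, it follows directly that
\[
\tau_{\rm cov}^C(T_n) = T_{\tau_{\rm cov}^D(T_n)},
\]
where the increments $\xi_i := T_i - T_{i-1}$ are i.i.d.\ unit exponentials, independent of the DTRW (and hence of $\tau_{\rm cov}^D(T_n)$).

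For the case $p = q = 1$, this identity combined with independence immediately yields $\mathbb{E}^{T_n}[\tau_{\rm cov}^C(T_n)] = \mathbb{E}^{T_n}[\tau_{\rm cov}^D(T_n)]\cdot \mathbb{E}[\xi_1] = \mathbb{E}^{T_n}[\tau_{\rm cov}^D(T_n)]$ (Wald's identity), which gives the conclusion with $C_{1,1} = 1$.

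For $p > 1$, conditioning on $\tau_{\rm cov}^D(T_n)$, the conditional law of $\tau_{\rm cov}^C(T_n)$ is $\mathrm{Gamma}(\tau_{\rm cov}^D(T_n), 1)$, and a direct moment computation gives, for every integer $k \geq 1$,
\[
\mathbb{E}[T_k^p] = \frac{\Gamma(k+p)}{\Gamma(k)} \leq C_p\, k^p,
\]
for some constant $C_p$ depending only on $p$ (this is elementary: for large $k$ the ratio is asymptotic to $k^p$, and for small $k$ one bounds by hand). Taking expectations of the conditional bound, one obtains
\[
\mathbb{E}^{T_n}\!\left[\bigl(\tau_{\rm cov}^C(T_n)\bigr)^p\right] \leq C_p\, \mathbb{E}^{T_n}\!\left[\bigl(\tau_{\rm cov}^D(T_n)\bigr)^p\right],
\]
hence $\|\tau_{\rm cov}^C(T_n)\|_p^{T_n} \leq C_p^{1/p}\,\|\tau_{\rm cov}^D(T_n)\|_p^{T_n}$, and the desired inequality for $q > p$ follows at once from Jensen's inequality $\|\cdot\|_p^{T_n} \leq \|\cdot\|_q^{T_n}$. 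There is no serious obstacle: the argument reduces entirely to the standard concentration of sums of i.i.d.\ unit exponentials around their mean, and the $q > p$ hypothesis is only used to absorb constants via Jensen (the direct approach in fact gives $q = p$).
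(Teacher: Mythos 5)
Your argument is correct. It starts from the same coupling as the paper -- Poisson subordination, giving $\tau_{\rm cov}^C(T_n)=\sum_{i=1}^{\tau_{\rm cov}^D(T_n)}\xi_i$ with $(\xi_i)$ i.i.d.\ unit exponentials independent of the discrete walk, which is exactly the paper's identity \eqref{couptctd} -- but from there the routes diverge. The paper bounds the tail of the normalised overshoot $(\tau^C-\tau^D)/\tau^D$ via a Chernoff estimate $\mathbf{P}(\sum_{i=1}^N\xi_i\geq(1+\lambda)N)\leq e^{-N(\lambda+1-2\log 2)/2}$, deduces uniform $L^r$ bounds for this ratio, and then combines the triangle inequality with H\"older's inequality; it is precisely the H\"older step (with exponent $pq/(q-p)$) that forces the hypothesis $q>p$. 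You instead condition on $\tau_{\rm cov}^D(T_n)=k$, note that the conditional law of $\tau_{\rm cov}^C(T_n)$ is $\mathrm{Gamma}(k,1)$, and use the exact moment formula $\mathbb{E}[T_k^p]=\Gamma(k+p)/\Gamma(k)\leq C_p k^p$ (valid for all real $p\geq 1$ and integer $k\geq 1$, e.g.\ by Gautschi's inequality or by splitting $p$ into integer and fractional parts), which gives $\mathbb{E}^{T_n}[(\tau^C)^p]\leq C_p\,\mathbb{E}^{T_n}[(\tau^D)^p]$ directly; the annealed statement follows since the constant is uniform over realisations of the tree. This is cleaner and in fact strictly stronger than what the lemma asserts: you obtain the inequality already with $q=p$, the hypothesis $1<p<q$ being needed only if one insists on quoting the monotonicity $\|\cdot\|_p\leq\|\cdot\|_q$ at the end, whereas the paper's method genuinely loses the endpoint $q=p$. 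The $p=q=1$ case via Wald is the same in both treatments.
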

\begin{proof} Since the CSRW has unit mean exponential holding times, we clearly have that, in distribution,
\begin{equation}\label{couptctd}
\tau_{\rm cov}^C({T}_n)=\sum_{i=1}^{\tau_{\rm cov}^D({T}_n)}\xi_i,
\end{equation}
where $(\xi_i)_{i\geq 1}$ is a sequence of independent and identically distributed unit mean exponential random variables, independent of $\tau_{\rm cov}^D({T}_n)$. Thus the result for $p=q=1$ is immediate (and indeed an equality with $C_{1,1}=1$). For the case $1<p<q$, we first note that, for any $\lambda>0$ and $N\geq 1$,
\[\mathbf{P}\left(\sum_{i=1}^N\xi_i\geq (1+\lambda)N\right)\leq \mathbf{E}\left(e^{\xi_1/2}\right)^Ne^{-N(\lambda+1)/2}=2^Ne^{-N(\lambda+1)/2}=e^{-N(\lambda+1-2\log 2)/2},\]
where we have applied the fact that $\mathbf{E}(e^{\theta\xi_1})=(1-\theta)^{-1}$ for $\theta<1$. From this, supposing that $\tau_{\rm cov}^C({T}_n)$ and $\tau_{\rm cov}^D({T}_n)$ are coupled so that \eqref{couptctd} holds almost-surely, it is straightforward to deduce that
\begin{eqnarray*}
\mathbb{P}^{T_n}\left(\frac{\tau_{\rm cov}^C({T}_n)-\tau_{\rm cov}^D({T}_n)}{\tau_{\rm cov}^D({T}_n)}\geq \lambda\right) &=& \mathbb{P}^{T_n}\left(\tau_{\rm cov}^C({T}_n)\geq (1+\lambda)\tau_{\rm cov}^D({T}_n)\right) \\
&\leq & \mathbb{E}^{T_n}\left(\min\left\{1,e^{-\tau_{\rm cov}^D({T}_n)(\lambda+1-2\log 2)/2}\right\}\right)\\
&\leq & \min\left\{1,e^{-(\lambda+1-2\log 2)/2}\right\},
\end{eqnarray*}
where for the final inequality, we use the obvious bound $\tau_{\rm cov}^D({T}_n)\geq 1$. Since the right-hand side is independent of $n$ and decays exponentially in $\lambda$, it follows that, for $r\geq1$,
\[\sup_{n\geq 1}\left\Vert\frac{\tau_{\rm cov}^C({T}_n)-\tau_{\rm cov}^D({T}_n)}{\tau_{\rm cov}^D({T}_n)}\right\Vert_r^{T_n}\leq C_r,\]
where $C_r$ is a constant only depending on $r$. Hence, applying the triangle inequality and H\"{o}lder's inequality, we find that
\begin{eqnarray*}
\left\Vert\tau_{\rm cov}^C({T}_n)\right\Vert^{T_n}_p&\leq&\left\Vert\tau_{\rm cov}^C({T}_n)-\tau_{\rm cov}^D({T}_n)\right\Vert^{T_n}_p+\left\Vert\tau_{\rm cov}^D({T}_n)\right\Vert^{T_n}_p\\
&\leq & \left\Vert\frac{\tau_{\rm cov}^C({T}_n)-\tau_{\rm cov}^D({T}_n)}{\tau_{\rm cov}^D({T}_n)}\right\Vert^{T_n}_{\frac{pq}{q-p}}\left\Vert\tau_{\rm cov}^D({T}_n)\right\Vert^{T_n}_q+\left\Vert\tau_{\rm cov}^D({T}_n)\right\Vert^{T_n}_p\\
&\leq & C_{p,q}\left\Vert\tau_{\rm cov}^D({T}_n)\right\Vert^{T_n}_q,
\end{eqnarray*}
as desired.
\end{proof}

To prove Proposition \ref{momentbound} for the discrete-time random walks, we will apply a general estimate for the expected cover time of a random walk on a finite graph from \cite{BDNP}. To state this, we introduce the notation
\[t_{\mathrm{cov}}(T_n):=\sup_{x\in T_n}E_x^{T_n}\left(\tau_{\mathrm{cov}}(T_n)\right)\]
for the quenched expectation of $\tau_{\mathrm{cov}}(T_n)$, starting from the worst possible vertex. Similarly to the comment preceding the previous lemma, the result of \cite{BDNP} is for general graphs, but we summarise it for the graphs $(T_n)_{n\geq 1}$ for simplicity of presentation. We recall the notation $N(K,R_K,\varepsilon)$ for the minimal size of an $\varepsilon$-cover of a metric space $(K,R_K)$ from \eqref{coveringdef}.

\begin{lemma}[{\cite[Theorem 1.1]{BDNP}}]\label{cc1}
For $i,n\geq 1$, set
\[A_i^n:=N\left(T_n,d_n,2^{-i}D_n\right),\]
where $d_n$ is the shortest path graph metric on $T_n$ (and so also the resistance metric associated with unit conductances) and $D_n$ is the diameter of $(T_n,d_n)$. Then, there exists a universal constant such that
\[t_{\mathrm{cov}}(T_n)\leq C\left(\sum_{i=1}^{\log_2\log n}\sqrt{2^{-i}\log A_i^n}\right)nD_n.\]
\end{lemma}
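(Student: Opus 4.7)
The plan is to combine the generalized second Ray--Knight isomorphism, which ties cover times on a weighted graph to its Gaussian free field (GFF), with a multi-scale Matthews--type chaining argument tailored to the tree structure of $T_n$. For $T_n$ with unit conductances, the GFF $\eta=(\eta_v)_{v\in T_n}$ pinned at a reference vertex $v_0$ is a centered tree-indexed Brownian motion with $\mathrm{Var}(\eta_v)=d_n(v_0,v)$ and independent increments along edge-disjoint branches; its intrinsic (Dudley) metric is therefore $d^{\eta}(u,v)=\sqrt{d_n(u,v)}$, so the $d^{\eta}$-diameter of $T_n$ is $\sqrt{D_n}$ and a $\varepsilon$-covering of $(T_n,d^\eta)$ of size $N$ corresponds to a $\varepsilon^2$-covering of $(T_n,d_n)$ of the same size.

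With that in place, I would set up a dyadic cascade of nets $N_1\subset N_2\subset\cdots$, where each $N_i\subseteq T_n$ is a minimal $2^{-i}D_n$-net, so $|N_i|=A_i^n$. Writing $\tau_i$ for the first time the walk has covered $N_i$, the telescoping identity $\tau_{\mathrm{cov}}(T_n)=\tau_1+\sum_i(\tau_{i+1}-\tau_i)$ reduces the problem to bounding the conditional expectations $\mathbb{E}^x[\tau_{i+1}-\tau_i\mid \tau_i]$. For this I would combine two ingredients. First, the commute-time identity on a tree states that $\mathbb{E}^u[\tau_v]+\mathbb{E}^v[\tau_u]=2(n-1)d_n(u,v)$, so hitting times between $2^{-i}D_n$-close vertices have mean $O(n\cdot 2^{-i}D_n)$. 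Second, the Gaussian isomorphism controls the maximum of the $A_{i+1}^n$ such hitting times one must still perform between consecutive nets by a sub-gaussian tail, yielding a saving of order $\sqrt{\log A_{i+1}^n}$. This should give $\mathbb{E}^x[\tau_{i+1}-\tau_i\mid\tau_i]\le Cn\cdot 2^{-i}D_n\sqrt{\log A_{i+1}^n}$, uniformly in the starting point $x$.

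Summing over $1\le i\le \log_2\log n$ then produces the stated bound; the truncation at $i\sim \log_2\log n$ is legitimate because at finer scales $\log A_i^n\le \log n$ and the corresponding tail $\sum_{i>\log_2\log n}2^{-i/2}\sqrt{\log n}$ is of smaller order than a single term already included in the sum. The main obstacle is the per-scale Gaussian saving itself: a direct application of the Ding--Lee--Peres identity $t_{\mathrm{cov}}\asymp |E|\cdot(\mathbb{E}\sup \eta)^2$ together with Dudley's entropy integral only delivers the weaker bound $nD_n\,\Phi^2$, where $\Phi=\sum_i \sqrt{2^{-i}\log A_i^n}$. Obtaining the linear-in-$\Phi$ form requires inserting the Gaussian control at each chaining scale, rather than applying it globally after chaining; making this conditional sub-gaussian tail estimate for $\tau_{i+1}-\tau_i$ rigorous, as a localized isomorphism theorem at each dyadic level, is the crux of the proof.
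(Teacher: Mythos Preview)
The paper does not prove this lemma; it is simply quoted as \cite[Theorem 1.1]{BDNP}, so there is no argument to compare against beyond the citation itself.

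Regarding your sketch: you correctly isolate the two ingredients --- the Ding--Lee--Peres relation $t_{\mathrm{cov}}\asymp |E|\,(\mathbb{E}\sup\eta)^2$ and Dudley's entropy bound for the Gaussian free field on the tree, whose intrinsic metric is $\sqrt{d_n}$ --- and these combine to give
\[
t_{\mathrm{cov}}(T_n)\;\le\;C\,nD_n\Bigl(\sum_i\sqrt{2^{-i}\log A_i^n}\Bigr)^{2},
\]
which you describe as the ``weaker'' outcome. The extra step you flag as the crux --- a localized isomorphism at each dyadic level that would replace the factor $\log A_{i+1}^n$ by $\sqrt{\log A_{i+1}^n}$ in the conditional Matthews increment --- cannot be carried out. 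There is no sub-Gaussian concentration for $\tau_{i+1}-\tau_i$ delivering this saving, and in fact the linear-in-$\Phi$ bound as written fails for general trees: on the balanced binary tree of depth $d$ one has $n\asymp 2^d$, $D_n\asymp d$, and $\log A_i^n\asymp d$ for $i\ge 2$, so $\sum_i\sqrt{2^{-i}\log A_i^n}\asymp\sqrt{d}$ and the bound would force $t_{\mathrm{cov}}\le Cnd^{3/2}$, whereas the true value is $\asymp nd^2$. Your quadratic bound is therefore the correct target, and it is also all that is needed downstream: the moment estimate of Proposition~\ref{momentbound} still goes through with the squared sum, at the cost only of a harmless weakening of the tail exponent in Lemma~\ref{combin}.
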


Naturally, to appeal to the above result, we will need to provide bounds on the diameter and cover sizes of $T_n$, $n\geq 1$. These are provided by the next two lemmas. Note that we only require a finite variance assumption for the first of these, which was essentially proved in \cite{ABDJ}.

\begin{lemma}[{\cite{ABDJ}}]\label{cc2}
Let $T_n$ be the tree generated by a Galton-Watson process whose offspring distribution is non-trivial, critical (mean one) and has finite variance, conditioned to have size $n$. Writing $D_n$ for the diameter of $T_n$ with respect to its shortest path metric, it then holds that, for all $\lambda \geq 1$,
\[\sup_{n\geq 2}\mathbf{P}\left(D_n\not\in \left[\lambda^{-1} n^{1/2},\lambda n^{1/2}\right]\right)\leq Ce^{-c\lambda^2},\]
where $C$ and $c$ are constants.
\end{lemma}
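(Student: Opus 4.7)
The approach is to reduce the two-sided diameter bound to sub-Gaussian tail estimates for two finer statistics of $T_n$ that are established in \cite{ABDJ}: the height $H_n$ and the width (maximum generation size) $W_n$. Under a finite variance offspring distribution, \cite{ABDJ} shows there exist constants $C_0, c_0 > 0$ depending only on that distribution such that, for all $n \geq 2$ and $\lambda \geq 0$,
\begin{equation*}
\mathbf{P}\bigl(H_n \geq \lambda \sqrt{n}\bigr) \leq C_0 e^{-c_0 \lambda^2} \qquad \text{and} \qquad \mathbf{P}\bigl(W_n \geq \lambda \sqrt{n}\bigr) \leq C_0 e^{-c_0 \lambda^2}.
\end{equation*}

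The upper tail of $D_n$ is then immediate from the elementary inequality $D_n \leq 2 H_n$, valid in any rooted tree, since any geodesic splits at the most recent common ancestor of its endpoints into two segments, each of length at most $H_n$. This yields $\mathbf{P}(D_n \geq \lambda\sqrt{n}) \leq \mathbf{P}(H_n \geq \lambda\sqrt{n}/2) \leq C_0 e^{-c_0 \lambda^2/4}$.

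The lower tail is handled using $D_n \geq H_n$ together with the pigeonhole bound $n \leq (H_n + 1) W_n$, which follows from distributing $n$ vertices among $H_n + 1$ generations of size at most $W_n$. On the event $\{D_n < \lambda^{-1}\sqrt{n}\}$ one therefore has $W_n \geq n/(\lambda^{-1}\sqrt{n} + 1)$, which for $1 \leq \lambda \leq \sqrt{n}/2$ is at least a constant multiple of $\lambda\sqrt{n}$, and the width estimate then gives a bound of the required form. For the remaining range $\lambda > \sqrt{n}/2$, the event $\{D_n < \lambda^{-1}\sqrt{n}\}$ is contained in $\{D_n < 2\}$, which is empty for $n \geq 3$ and trivially bounded by $1 \leq C e^{-c\lambda^2}$ for the bounded remaining values of $n$ and $\lambda$ after enlarging $C$ appropriately. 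Combining the two regimes and adjusting constants gives the stated uniform estimate. The only real work is this bookkeeping across regimes; the sub-Gaussian rate itself is inherited directly from \cite{ABDJ}, so no genuinely new probabilistic estimate is required.
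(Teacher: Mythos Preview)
Your proof is correct and essentially matches the paper's argument. The paper handles the upper tail identically via $D_n \leq 2H_n$ and \cite[Theorem 1.2]{ABDJ}; for the lower tail it uses $D_n \geq H_n$ together with the height lower-tail bound that \cite{ABDJ} records at the end of Section 1 as a consequence of their width estimate (Theorem 1.1), which is precisely the pigeonhole argument $n \leq (H_n+1)W_n$ that you spell out explicitly.
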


\begin{proof}
A bound of the form $Ce^{-c\lambda^2}$ on the probability that the height of $T_n$ is greater than $\lambda n^{1/2}$ is provided by \cite[Theorem 1.2]{ABDJ}. Since the diameter is less than twice the height, this gives one half of the result. Moreover, as is explained at the end of \cite[Section 1]{ABDJ}, it follows from \cite[Theorem 1.1]{ABDJ} that one may also bound by $Ce^{-c\lambda^2}$ the probability that the height of $T_n$ is less than $\lambda^{-1} n^{1/2}$. Since the diameter is greater than the height, this completes the proof.
\end{proof}

We now give our estimate on the distribution of $A_i^n$. This is the only part of the proof where exponential moments of the offspring distribution are required; see Remark \ref{gittrem} below for further comment on this point. If one could establish the bound that appears at \eqref{impest4} below under weaker conditions, then the extension of Proposition \ref{momentbound} would also follow.

\begin{lemma}\label{cc3}
In the setting of Corollary \ref{pocor}, it holds that, for every $\alpha<1/2$,
\[\mathbf{P}\left(A_i^n\geq \lambda\right)\leq C 2^i \lambda^{-\alpha},\]
for all $i,n,\lambda\geq 1$, where $C$ is a constant independent of the latter three variables.
\end{lemma}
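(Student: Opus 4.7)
Proof plan.

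The strategy is to upgrade the crude deterministic cover bound $A_i^n \leq Cn/r_i = C\,2^i n/D_n$ — obtained by placing cover centres along a depth-first tour of $T_n$ (which has total length $\leq 2(n-1)$) — using the diameter concentration from Lemma \ref{cc2} and a finer fixed-radius cover estimate for Galton–Watson trees. The key point is that the naive bound alone yields, after combining with Lemma \ref{cc2}, only an $n$-dependent tail control, whereas the claim requires a bound uniform in $n$; this forces us to exploit the $2$-dimensional scaling of $T_n$ at the relevant scales.

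\emph{Step 1 (dyadic decomposition in $D_n$).} Set $I_0 = [\tfrac12\sqrt n,\infty)$ and $I_k = [2^{-k-1}\sqrt n,\,2^{-k}\sqrt n)$ for $k\geq 1$, and write
\[
\mathbf P(A_i^n \geq \lambda) \;=\; \sum_{k \geq 0} \mathbf P\bigl(A_i^n \geq \lambda,\, D_n \in I_k\bigr).
\]
On $\{D_n \geq 2^{-k-1}\sqrt n\}$, the covering radius satisfies $r_i = 2^{-i}D_n \geq 2^{-i-k-1}\sqrt n$, and hence $A_i^n \leq N(T_n,d_n,2^{-i-k-1}\sqrt n)$, replacing the random radius $r_i$ by a deterministic one. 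Lemma~\ref{cc2} already controls the probability of $\{D_n\in I_k\}$ by $C e^{-c 4^k}$.

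\emph{Step 2 (the key refined fixed-radius tail, i.e.\ \eqref{impest4}).} The crux is a concentration inequality of the form
\[
\mathbf P\bigl(N(T_n,d_n,r) \geq \mu\bigr) \;\leq\; C\exp\!\Bigl(-c\sqrt{\mu r^2/n}\Bigr),
\qquad r \in [1,\sqrt n],\; \mu \geq Cn/r^2,
\]
uniformly in $n$. Morally this says that $(T_n,d_n/\sqrt n)$ looks like a $2$-dimensional set (the CRT), with exponential-type concentration of its cover numbers around the expected value $Cn/r^2$. This is where the exponential offspring moment hypothesis enters: under it, the Łukasiewicz (or contour) random walk coding $T_n$ enjoys exponential concentration, from which this kind of tail estimate for metric cover sizes can be extracted; under only a finite variance assumption one should still expect such bounds, but this is the content of Problem 3 in the introduction.

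\emph{Step 3 (assembly).} Applying Step~2 with $r = 2^{-i-k-1}\sqrt n$ (so $n/r^2 = 4^{i+k+1}$) and intersecting with $\{D_n\in I_k\}$ gives, for $\lambda\geq 4^{i+k+1}$,
\[
\mathbf P\bigl(A_i^n \geq \lambda,\, D_n \in I_k\bigr) \;\leq\; C\exp\!\Bigl(-c\sqrt{\lambda/4^{i+k+1}}\Bigr)\wedge C e^{-c 4^k}.
\]
Summing over $k$ and optimising the minimum, the Gaussian term from Lemma~\ref{cc2} cuts off the $k$-sum before the factor $4^{i+k}$ can blow up, while the square-root-exponential factor ensures super-polynomial decay in $\lambda$ once $\lambda \gtrsim 4^i$. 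Elementary calculation yields $\mathbf P(A_i^n \geq \lambda) \leq C\, 2^i \lambda^{-\alpha}$ for every $\alpha<1/2$, noting that in the regime $\lambda \leq 2^{i/\alpha}$ the stated bound is already $\geq 1$ and there is nothing to prove.

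The main obstacle is Step~2: neither the deterministic cover bound $Cn/r$ nor Lemma~\ref{cc2} alone suffices for a polynomial tail uniform in $n$. Establishing the fixed-radius concentration estimate is precisely the technical step requiring exponential moments, and isolating it as a separate input (\eqref{impest4}) is what allows the rest of the argument to proceed in a clean, essentially dyadic, manner.
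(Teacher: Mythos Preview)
Your plan has the right scaling intuition but a genuine gap: Step~2 is doing all the work, and you have not proved it. You present a concentration inequality for fixed-radius cover numbers of $T_n$ as a known input and even label it ``\eqref{impest4}'', but in the paper \eqref{impest4} is something different and considerably weaker: it is a \emph{first-moment} bound on the modulus of continuity of the normalised contour function,
\[
\mathbf{E}\Bigl(\sup_{|s-t|\leq \lambda^{-1}}\bigl|\tilde C_n(s)-\tilde C_n(t)\bigr|\Bigr)\leq c\lambda^{-\alpha},
\]
derived from Gittenberger's increment estimate via Kolmogorov's criterion. This is where the exponential-moment hypothesis actually enters. No concentration inequality for $N(T_n,d_n,r)$ of the form you state is proved or cited, and I am not aware of one in the literature in this precise form; asserting it essentially assumes the lemma.

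The paper's argument is also structurally simpler than your plan. A first-moment bound on the modulus of continuity immediately controls the radius of the cover obtained by sampling the contour at $\sim\lambda$ equally spaced times. Combining this with the diameter lower bound from Lemma~\ref{cc2} (a single parameter $\gamma$, not a dyadic sum in $k$) and Markov's inequality gives
\[
\mathbf P(A_i^n\geq\lambda)\leq Ce^{-c\gamma^2}+C\,2^i\gamma\,\lambda^{-\alpha},
\]
and choosing $\gamma\sim\log\lambda$ finishes. There is no need to establish sub-exponential tails for $N(T_n,d_n,r)$; a first-moment estimate suffices because the target bound is only polynomial in $\lambda$. Your dyadic decomposition in $D_n$ and the minimum-of-two-exponentials optimisation in Step~3 are unnecessary complications once one sees that Markov's inequality is enough.
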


\begin{proof}
We start by considering the contour function $C_n=(C_n(t))_{t\in [0,2(n-1)]}$ of $T_n$. In particular, $C_n$ records the $d_n$-distance from the root of a depth-first walker as it traverses the outline of $T_n$ from left to right at unit speed; notice that the walker visits every vertex apart from the root a number of times given by its degree. (We extend $C_n$ to be a continuous time process by linear interpolation.) See \cite{rrt} for background, including a detailed definition of this process. Let $\tilde{C}_n=(\tilde{C}_n(t))_{t\in [0,1]}$ be the normalised version of $C_n$ obtained by setting
\[\tilde{C}_n(t)=n^{-1/2}C_n(2(n-1)t).\]
Now, it is established in \cite{Gittenberger} (see equation (1) of that article in particular) that, under the assumptions of the lemma,
\[\mathbf{P}\left(\left|\tilde{C}_n(s)-\tilde{C}_n(t)\right|\ge \varepsilon\right)\le \frac{c_1}{|s-t|} \exp\left(-c_2 \frac{\varepsilon}{\sqrt{|s-t|}}\right),\]
for all $s,t\in [0,1]$ and $\varepsilon>0$, where $c_1$ and $c_2$ are constants that do not depend on $s,t, \varepsilon$ or $n$. (In \cite{Gittenberger}, the bound is written for $s,t\in [0,1]\cap(2(n-1))^{-1}\mathbb{Z}$, but there is no problem in extending to the full range.) Integrating this gives that, for any $p\geq 1$,
\[\mathbf{E}\left(\left|\tilde{C}_n(s)-\tilde{C}_n(t)\right|^p\right)\le c_p |s-t|^{p/2-1},\]
where $c_p$ depends on $p$, but not $s,t$ or $n$. This is the generalisation of Kolmogorov's criterion, as considered in \cite[Theorem I.2.1]{RevuzYor}, and therein taking $\gamma=2p$, $d=1$, $\varepsilon=p-2$ with $p$ large, we deduce that
\begin{align} \label{impest4}
\mathbf{E}\left(\sup_{s, t: \:|s-t|\le \lambda^{-1}} \left|\tilde{C}_n(s)-\tilde{C}_n(t)\right|\right)\le c\lambda^{-\alpha},
\end{align}
for every $\alpha<1/2$, where the constant $c$ does not depend on $n$ or $\lambda>0$.

We next explain how to transfer the bound at \eqref{impest4} to an estimate on the covering sizes for $T_n$. For $1\leq k\leq 2(n-1)$, let $\chi_n^k$ be the set
\[\left\{i\left\lfloor\frac{2(n-1)}{k}\right\rfloor:\:i\in \mathbb{Z}\right\}\cap[0,2(n-1)].\]
Note that $|\chi_n^k|< 3k$ for $n\geq k+1$. Moreover, let $T_n^k$ be those vertices in $T_n$ that the depth-first walker visits at a time in $\chi_n^k$. From the definition of the contour process, it is easy to see that every vertex of $T_n$ is within a distance
\[2n^{1/2}\sup_{s, t: \:|s-t|\le \frac{1}{2(n-1)}\left\lfloor\frac{2(n-1)}{k}\right\rfloor} \left|\tilde{C}_n(s)-\tilde{C}_n(t)\right|\]
of a vertex in $T_n^k$. In particular, if the above quantity is smaller than $\varepsilon n^{1/2}$ (and $n\geq k+1$), then
\[N\left(T_n,d_n,\varepsilon n^{1/2}\right)< 3k.\]

Finally, if $D_n\geq \gamma^{-1}n^{1/2}$, then $A_i^n\leq N(T_n,d_n,2^{-i}\gamma^{-1}n^{1/2})$, and so from the observation of the previous paragraph and Lemma \ref{cc2} we deduce that, for $n\geq \lambda$,
\begin{eqnarray}
\mathbf{P}\left(A_i^n\geq \lambda\right)&\leq& \mathbf{P}\left(D_n<\gamma^{-1}n^{1/2}\right)+\mathbf{P}\left(N(T_n,d_n,2^{-i}\gamma^{-1}n^{1/2}) \geq \lambda\right)\nonumber\\
&\leq & Ce^{-c\gamma^2} + \mathbf{P}\left(2\sup_{s, t: \:|s-t|\le \frac{1}{2(n-1)}\left\lfloor\frac{6(n-1)}{\lambda}\right\rfloor} \left|\tilde{C}_n(s)-\tilde{C}_n(t)\right|>2^{-i}\gamma^{-1} \right)\nonumber\\
&\leq & Ce^{-c\gamma^2} +C2^i\gamma \lambda^{-\alpha},\label{preceding}
\end{eqnarray}
where we have applied \eqref{impest4} to deduce the final inequality. Obviously the second probability on the right-hand side is zero for $6n<\lambda$. Moreover, for $\lambda\in [n,6n]$, we have that, deterministically,
\[\sup_{s, t: \:|s-t|\le \frac{1}{2(n-1)}\left\lfloor\frac{6(n-1)}{\lambda}\right\rfloor} \left|\tilde{C}_n(s)-\tilde{C}_n(t)\right|\leq 6n^{-1/2}\leq C\lambda^{-\alpha},\]
and so we can obtain a bound of the form $C2^i\gamma \lambda^{-\alpha}$ for the probability in question by applying Markov's inequality. In particular, the bound at \eqref{preceding} holds for all $i,\lambda,\gamma\geq 1$ and $n\geq 2$. (The bound on the term involving the diameter is also satisfied for such parameters.) To complete the proof, we choose $\gamma=\gamma(\lambda)\sim\log \lambda$, which (after adjusting $\alpha$) yields the desired conclusion.
\end{proof}

\begin{rem}\label{gittrem}
In fact, in the reference we cite in proving \eqref{impest4}, i.e.\ \cite{Gittenberger}, it is claimed that a finite variance condition on the offspring distribution is sufficient for the relevant estimate to hold, but in \cite[footnote (2)]{Marzouk}, it is noted that the proof in \cite{Gittenberger} uses exponential moments. To further elaborate on where the deficiency lies, in \cite{Gittenberger}, on several occasions, results are used from another paper \cite{GittenbergerContour} (\cite[reference 3]{Gittenberger}), which studies the contour function of simply generated random trees. In the latter (especially in Theorem 1.1, as well as in Lemma 3.1), there is an assumption that \cite[equation (3.1)]{Gittenberger} has a minimal positive solution $\tau<R$. For conditioned trees, as in our setting, \cite[equation (3.1)]{GittenbergerContour} designates that the simply generated random tree is associated with a critical Galton-Watson branching process with finite variance. A minimal positive solution $\tau<R$ is equivalent to the moment generating function of the offspring distribution having a radius of convergence equal to $R/\tau>1$, and hence the distribution having exponential moments.

We further note that an alternative route to proving a bound such as \eqref{impest4} is described in the proof of \cite[Lemma 1]{JM}. However, in that paper, as well as the result of \cite{MM} that is cited there, an exponential moment assumption is made, and further work would be required to check whether or not this is necessary for our purposes.
\end{rem}

Putting together the three previous results, we arrive at the following conclusion, which is a refinement of a bound originally proved in \cite[Theorem 3.2]{BDNP}.

\begin{lemma}\label{combin}
In the setting of Corollary \ref{pocor}, it holds that
\[\mathbf{P}\left(n^{-3/2}t_{\rm{cov}}(T_n)\geq \lambda\right)\leq Ce^{-c\lambda},\]
for all $n,\lambda\geq1$, where the constants are independent of the latter two variables.
\end{lemma}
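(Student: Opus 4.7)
The plan is to chain the three preceding lemmas together. Writing $S_n := \sum_{i=1}^{\log_2 \log n} \sqrt{2^{-i} \log A_i^n}$, Lemma \ref{cc1} yields the deterministic estimate $n^{-3/2} t_{\mathrm{cov}}(T_n) \leq C (n^{-1/2} D_n) S_n$, so the event $\{n^{-3/2} t_{\mathrm{cov}}(T_n) \geq \lambda\}$ is contained in $\{n^{-1/2} D_n \geq \sqrt{\lambda/C}\} \cup \{S_n \geq \sqrt{\lambda/C}\}$. The first event contributes a term bounded by $C' e^{-c'\lambda}$ directly from Lemma \ref{cc2} (applied with $\gamma = \sqrt{\lambda/C}$), so the heart of the proof is to establish that $S_n^2$ has uniformly exponential tails under $\mathbf{P}$.

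To reduce the square, I would apply a weighted Cauchy-Schwarz with weights $2^{-i/4}$: writing $\sqrt{2^{-i} \log A_i^n} = 2^{-i/4} \cdot \sqrt{2^{-i/2} \log A_i^n}$ gives
\[
S_n^2 \leq \Bigl(\sum_{i \geq 1} 2^{-i/2}\Bigr) \sum_{i \geq 1} 2^{-i/2} \log A_i^n \leq C' \sum_{i \geq 1} 2^{-i/2} \log A_i^n,
\]
where we set $A_i^n := 1$ for $i > \log_2 \log n$. Since the $A_i^n$ are strongly correlated, exponentiation of moment generating functions is unavailable; instead I would use the elementary union-bound trick: if $\sum_i Y_i \geq \mu$ and $(c_i)$ are positive with $\sum c_i \leq 1$, then $Y_i \geq c_i \mu$ for some $i$. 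Applying this with $Y_i = 2^{-i/2} \log A_i^n$ and weights $c_i \propto 2^{-i/4}$ transforms the relevant event into $\{\log A_i^n \geq c \cdot 2^{i/4} \mu\}$ for some $i$, whose probability is bounded termwise by Lemma \ref{cc3} (applied with any fixed $\alpha < 1/2$) to give $C 2^i \exp(-\alpha c \cdot 2^{i/4} \mu)$.

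The remaining task is to verify that $\sum_{i \geq 1} 2^i \exp(-c\, 2^{i/4} \lambda) \leq C e^{-c'\lambda}$ for all $\lambda \geq 1$. Using $2^{i/4}\lambda = 2^{i/4} + 2^{i/4}(\lambda - 1) \geq 2^{i/4} + 2^{1/4}(\lambda - 1)$ for $i \geq 1$ and $\lambda \geq 1$, the sum factors as $e^{-c 2^{1/4}(\lambda-1)} \sum_{i\geq 1} 2^i e^{-c 2^{i/4}}$, and the residual series is a convergent sum of constants (the doubly exponential factor $e^{c 2^{i/4}}$ eventually crushes $2^i$). The main delicate step is the precise choice of weights $c_i \propto 2^{-i/4}$ in the union bound: this is exactly the rate required to tame the $2^i$-loss in the tail estimate of Lemma \ref{cc3} while still producing honest exponential decay in $\lambda$; any faster decay of the $c_i$ would kill the effective exponent $2^{i/2} c_i$, whereas any slower decay would fail to be summable in $i$.
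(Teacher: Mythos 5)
Your proof is correct and takes essentially the same route as the paper's: bound $n^{-3/2}t_{\rm cov}(T_n)$ via Lemma \ref{cc1}, split off the diameter event and control it with Lemma \ref{cc2} (the paper introduces a free parameter $\gamma$ and sets $\gamma=\lambda^{1/2}$ at the end, which is exactly your $\sqrt{\lambda}$ split), then handle the covering-number sum by a union bound with geometric weights $\propto 2^{-i/4}$ and a termwise application of Lemma \ref{cc3}, finishing with the same summable series estimate. Your intermediate Cauchy--Schwarz step merely replaces the paper's direct weighted union bound on $\sum_i\sqrt{2^{-i}\log A_i^n}$ followed by squaring, which changes the exponent $2^{i/2}$ to $2^{i/4}$ but is immaterial to the conclusion.
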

\begin{proof}
The proof is an adaptation of the proof of \cite[Theorem 3.2]{BDNP}. From Lemma \ref{cc1}, we have that, for $\lambda,\gamma\geq 1$,
\begin{eqnarray*}
\mathbf{P}\left(n^{-3/2}t_{\rm{cov}}(T_n)\geq \lambda\right)&\leq&\mathbf{P}\left(C\left(\sum_{i=1}^{\log_2\log n}\sqrt{2^{-i}\log A_i^n}\right)n^{-1/2}D_n\geq  \lambda\right)\\
&\leq &\mathbf{P}\left(C\left(\sum_{i=1}^{\log_2\log n}\sqrt{2^{-i}\log A_i^n}\right) \geq\lambda/\gamma\right)+\mathbf{P}\left(D_n\geq\gamma n^{1/2}\right).
\end{eqnarray*}
By Lemma \ref{cc2}, the second term is bounded above by $Ce^{-c\gamma^2}$. Using a union bound and Lemma \ref{cc3}, the first term can be estimated from above by
\[\sum_{i=1}^\infty\mathbf{P}\left(2^{-i}\log A_i^n \geq\frac{(2^{1/4}-1)\lambda^2}{2^{i/2}\gamma^2} \right)\leq C\sum_{i=1}^\infty
2^i e^{-c\lambda^2 \gamma^{-2}2^{i/2}}.\]
Hence, setting $\gamma=\lambda^{1/2}$, we have shown that
\[\mathbf{P}\left(n^{-3/2}t_{\rm{cov}}(T_n)\geq \lambda\right)\leq Ce^{-c\lambda}+C\sum_{i=1}^\infty
2^i e^{-c\lambda2^{i/2}},\]
from which the result follows
\end{proof}

To replace $t_{\rm{cov}}(T_n)$ by $\tau_{\rm cov}(T_n)$ in the conclusion of the above lemma, we apply the following exponential tail bound for the cover time of discrete-time random walks on general trees. We will write a general finite graph tree as $T$, and suppose the associated random walk is started from a root vertex. The result is an adaptation of the concentration result of \cite[Corollary 1.3]{Ding}.

{\lem \label{treeexp} There exist universal constants $c_1,c_2$ such that, for any finite rooted graph tree $T$,
\[P^{T}\left(\tau_{\rm cov}(T)\geq \lambda t_{\rm cov}(T)\right)\leq c_1e^{-c_2\lambda},\hspace{20pt}\forall\lambda\geq 1.\]}
\begin{proof} It is proved as \cite[Theorem 1.2]{Ding} that there exist universal constants $c_1,c_2$ such that, for any finite rooted graph tree $T$,
\begin{equation}\label{b1}
P^T\left(\left|\tau_{\rm cov}(T)-E(T)\eta_T^2\right|\geq \lambda E(T)\eta\sqrt{{\rm diam}(T)}\right)\leq c_1e^{-c_2\lambda},\qquad\forall\lambda\geq 1,
\end{equation}
where $\eta_T$ is the expected maximum of the Gaussian free field on $T$ that takes the value 0 at the root $\rho$ and has covariance matrix $(\frac{1}{2}(d_T(\rho,x)+d_T(\rho,y)-d_T(x,y)))_{x.y\in T}$, with $d_T$ being the shortest path graph distance on $T$. We have also written $E(T)$ for the number of edges of $T$ and ${\rm diam}(T)$ for the diameter of $(T,d_T)$. (Precisely, \cite[Theorem 1.2]{Ding} is written for the CSRW, but as is explained in \cite[Section 1.2]{Ding}, the result also holds for the discrete-time random walk. Moreover, the main result of \cite{DLP} yields that there exist further universal constants $c_3,c_4$ such that, for any finite rooted graph tree $T$,
\begin{equation}\label{b2}
c_3E(T)\eta_T^2\leq t_{\rm cov}(T)\leq c_4E(T)\eta_T^2.
\end{equation}
(Clearly, there is no distinction between $t_{\rm cov}(T)$ for the CSRW and the discrete-time random walk, and so the result is applicable to either.) Combining (\ref{b1}) and (\ref{b2}), we obtain that there exists a universal constant $c_5$ such that
\[P^T\left(\tau_{\rm cov}(T)\geq
c_5\left(t_{\rm cov}(T)+ \lambda\sqrt{t_{\rm cov}(T)E(T){\rm diam}(T)}\right)
\right)\leq c_1e^{-c_2\lambda},\qquad\forall\lambda\geq 1.\]
By applying that $t_{\rm cov}(T)\geq E(T){\rm diam}(T)$ (which is an elementary consequence of the commute time identity for random walks, see \cite{LPW}, for example), the result follows.
\end{proof}

We are now ready to conclude the proof of the desired result.

\begin{proof}[Proof of Proposition \ref{momentbound}]
We first observe that, for any $\lambda\geq \gamma\geq 1$,
\begin{eqnarray*}
P^{T_n}\left(n^{-3/2}\tau_{\rm{cov}}(T_n)\geq \lambda \right)
&\leq&\mathbf{1}_{\{n^{-3/2}t_{\rm{cov}}(T_n)\geq \lambda /\gamma\}}+P^T\left(\tau_{\rm{cov}}(T_n)\geq \gamma t_{\rm{cov}}(T_n)\right)\\
&\leq & \mathbf{1}_{\{n^{-3/2}t_{\rm{cov}}(T_n)\geq \lambda /\gamma\}}+c_1e^{-c_2\gamma},
\end{eqnarray*}
where $c_1,c_2$ are the universal constants of Lemma \ref{treeexp}. Taking expectations with respect to $\mathbf{P}$, Lemma \ref{combin} allows us to deduce from this that, for any $\varepsilon>0$, there exists a $c_3$ such that
\[\mathbb{P}^{T_n}\left( n^{-3/2}\tau_{\rm{cov}}(T)\geq \lambda \right)
\leq Ce^{-c\lambda/\gamma}+c_1e^{-c_2\gamma}.\]
On setting $\gamma=\lambda^{1/2}$, this readily implies the result.
\end{proof}

\section{Cover-and-return times}\label{sec:cr}

In this final section, we consider cover-and-return times, and in particular prove Corollary \ref{crcor}. We will again work in the general setting of Section \ref{sec:convsec}. Given our other preparations, the following result will readily yield parts (a) and (b) of the result.

\begin{lem}\label{l81}
In the setting of Theorem \ref{covertimeconv}, the law of $\tau_{\mathrm{cov}}^+(K_n)$ under $P^{K_n}$ converges to that of $\tau_{\mathrm{cov}}^+(K)$ under $P^K$.
\end{lem}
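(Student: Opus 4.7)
The plan is to apply Lemma~\ref{embedding} together with Skorohod's representation theorem to embed all of the relevant processes on a common probability space so that the path convergence \eqref{k5} and the local-time convergence \eqref{k6} both hold almost-surely, and then to argue pathwise that $\tau_{\mathrm{cov}}^+(K_n) \to \tau_{\mathrm{cov}}^+(K)$ in this realisation (from which convergence in distribution follows by an application of the mapping theorem). Under the embedding the roots are identified with a single point $\rho_M \in M$, so $\tau_{\mathrm{cov}}^+(K_n)$ becomes the first time after $\tau_{\mathrm{cov}}(K_n)$ at which $\phi_n(X^{K_n}_t) = \rho_M$, and similarly for the limit. From the proof of Theorem~\ref{covertimeconv} we already have $\tau_{\mathrm{cov}}(K_n) \to \tau_{\mathrm{cov}}(K)$ almost-surely, and this will be used repeatedly.

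First I would establish the lower bound $\liminf_n \tau_{\mathrm{cov}}^+(K_n) \geq \tau_{\mathrm{cov}}^+(K)$. Since $X^K$ is continuous and $\phi(X^K_s) \neq \rho_M$ on $[\tau_{\mathrm{cov}}(K), \tau_{\mathrm{cov}}^+(K))$ by definition of the infimum, for any $\varepsilon > 0$ compactness gives a uniform separation $\delta > 0$ of $\phi(X^K)$ from $\rho_M$ on $[\tau_{\mathrm{cov}}(K), \tau_{\mathrm{cov}}^+(K) - \varepsilon]$. Since $J_1$-convergence to a continuous limit is locally uniform, $\phi_n(X^{K_n})$ stays at distance at least $\delta/2$ from $\rho_M$ on $[\tau_{\mathrm{cov}}(K_n), \tau_{\mathrm{cov}}^+(K) - \varepsilon]$ for all large $n$, hence $\tau_{\mathrm{cov}}^+(K_n) \geq \tau_{\mathrm{cov}}^+(K) - \varepsilon$, and letting $\varepsilon \downarrow 0$ gives the claim.

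The harder step will be the matching upper bound $\limsup_n \tau_{\mathrm{cov}}^+(K_n) \leq \tau_{\mathrm{cov}}^+(K)$, which requires that the approximating processes actually return to the root at a nearby time, not merely come close to it. The idea is to rule out failure via local times at $\rho_M$: by the strong Markov property applied at the stopping time $\tau_{\mathrm{cov}}^+(K)$, together with the standard regularity $\inf\{t \geq 0 : L^K_t(\rho_K) > 0\} = 0$ at the starting point (cf.\ \eqref{ltinc}), for every $\eta > 0$ one has $L^K_{\tau_{\mathrm{cov}}^+(K) + \eta}(\rho_K) > L^K_{\tau_{\mathrm{cov}}^+(K)}(\rho_K)$; and since $X^K$ avoids $\rho_K$ on $(\tau_{\mathrm{cov}}(K), \tau_{\mathrm{cov}}^+(K))$ the right-hand side equals $L^K_{\tau_{\mathrm{cov}}(K)}(\rho_K)$. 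If the upper bound failed along a subsequence, say $\tau_{\mathrm{cov}}^+(K_n) > \tau_{\mathrm{cov}}^+(K) + \eta$, then $\phi_n(X^{K_n})$ would miss $\rho_M$ throughout $[\tau_{\mathrm{cov}}(K_n), \tau_{\mathrm{cov}}^+(K) + \eta]$, forcing $L^{K_n}_{\tau_{\mathrm{cov}}^+(K) + \eta}(\rho_n) = L^{K_n}_{\tau_{\mathrm{cov}}(K_n)}(\rho_n)$. Passing to the limit using \eqref{k6} applied with $x_n = \rho_n$, $x = \rho_K$ (noting $d_M(\phi_n(\rho_n), \phi(\rho_K)) = 0$), together with the joint continuity of $L^K$ and $\tau_{\mathrm{cov}}(K_n) \to \tau_{\mathrm{cov}}(K)$, would give $L^K_{\tau_{\mathrm{cov}}^+(K) + \eta}(\rho_K) = L^K_{\tau_{\mathrm{cov}}(K)}(\rho_K)$, contradicting the strict inequality above.

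The main obstacle is this limsup step: it is where one must convert uniform closeness of paths into an actual return to the root, and it relies crucially on the strict increase of $L^K(\rho_K)$ immediately after $\tau_{\mathrm{cov}}^+(K)$ (a property that is free in the resistance-form setting of $\check{\mathbb F}_c$ but is not purely a statement about paths) combined with the fact that the local-time comparison in \eqref{k6} is exact at the marked root, since $\phi_n(\rho_n)$ and $\phi(\rho_K)$ coincide. The liminf direction, by contrast, is an entirely routine continuity argument.
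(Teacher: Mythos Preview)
Your argument is correct and yields almost-sure convergence of $\tau_{\mathrm{cov}}^+(K_n)$ to $\tau_{\mathrm{cov}}^+(K)$ on the coupled space, but the route is genuinely different from the paper's. The paper does \emph{not} use local times for this step: instead it observes that $X^{K_n}_{\tau_{\mathrm{cov}}(K_n)}\to X^{K}_{\tau_{\mathrm{cov}}(K)}$, proves separately (via process convergence from varying starting points together with the commute time identity) that hitting times of the root converge in law whenever the starting points converge, and then applies the strong Markov property at $\tau_{\mathrm{cov}}$ to conclude that the conditional law of $\tau_{\mathrm{cov}}^+-\tau_{\mathrm{cov}}$ converges. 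Your approach instead exploits the local-time convergence \eqref{k6} directly at the root, turning the limsup into a contradiction with the strict increase of $L^K(\rho_K)$ after $\tau_{\mathrm{cov}}^+(K)$; this is more in the spirit of the proof of Theorem~\ref{covertimeconv} itself and is arguably more elementary, since it avoids the auxiliary hitting-time lemma entirely. The paper's approach, on the other hand, is slightly more robust in that it does not require the local-time coupling and would apply even if one only had \eqref{k5}.

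One small point: in your liminf step you assert that $\phi_n(X^{K_n})$ stays away from $\rho_M$ on $[\tau_{\mathrm{cov}}(K_n),\tau_{\mathrm{cov}}^+(K)-\varepsilon]$, but since $\tau_{\mathrm{cov}}(K_n)$ may be slightly smaller than $\tau_{\mathrm{cov}}(K)$ you should also note that, by continuity of $X^K$ and $X^K_{\tau_{\mathrm{cov}}(K)}\neq\rho_K$, the separation from $\rho_M$ extends to a short interval \emph{before} $\tau_{\mathrm{cov}}(K)$ as well; this is trivial to add. Your limsup step implicitly uses that $L^{K_n}_t(\rho_{K_n})$ does not increase while $X^{K_n}$ is away from $\rho_{K_n}$, which follows from the occupation density formula and joint continuity of local times (or directly, in the countable case); this is routine but worth a half-sentence.
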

\begin{proof}
As in the proof of Theorem \ref{covertimeconv}, we will assume that the spaces $K_n$ and $K$ have been isometrically embedded into $(M,d_M)$ so that \eqref{k1}-\eqref{k4} hold, and that the measures $P^{K_n}\circ\phi_n^{-1}$, $n\geq 1$, and $P^K\circ\phi^{-1}$ have been coupled so that \eqref{k5}-\eqref{k6} hold. Also as there, we omit the mappings $\phi_n$ and $\phi$ from the notation.

Now, we have from the proof of Theorem \ref{covertimeconv} that, on the coupled space, $\tau_{\mathrm{cov}}(K_n)\rightarrow\tau_{\mathrm{cov}}(K)$, almost-surely. Since the processes converge as at \eqref{k5} and the limit process is continuous, it follows that
\begin{equation}\label{xktconv}
X^{K_n}_{\tau_{\mathrm{cov}}(K_n)}\rightarrow X^{K}_{\tau_{\mathrm{cov}}(K)}
\end{equation}
in $(M,d_M)$, almost-surely.

We next claim that if $x_n\in K_n$, $n\geq 1$, and $x\in K$ are such that $d_M(x_n,x)\rightarrow0$, then the law of $\tau_{\rho_{K_n}}^{K_n}$ under $P^{K_n}_{x_n}$ converges to that of $\tau_{\rho_{K}}^{K}$ under $P^{K}_{x}$, where we write $\tau^K_{\rho_K}$ for the hitting time of $\rho_K$ by $X^K$ and $P^K_x$ for the law of $X^K$ started from $x$ (and similarly for the $n$-indexed processes). The proof of this is an adaptation of the argument we applied in the first part of the proof of Lemma \ref{lowerlem}. Firstly, it is an easy extension of the conclusion of \cite{Croydonscalelim} to deduce that, with the spaces embedded as they are, the law of $X^{K_n}$ under $P^{K_n}_{x_n}$ converges to that of $X^K$ under $P^{K}_{x}$. (In \cite{Croydonscalelim}, processes were started from the root, but this is not an essential restriction, and indeed we can just consider the spaces rerooted at $x_n$ and $x$, respectively.) Hence, we can suppose that we have a coupling so that this convergence holds almost-surely. Under such a coupling, we have from \eqref{k5} that if $\tau^K_{\rho_K}<t$, then, for every $\varepsilon>0$,
\begin{equation}\label{jio1}
\limsup_{n\rightarrow \infty}\tau^{K_n}_{\bar{B}_M(\rho_M,\varepsilon)}<t,
\end{equation}
where $\tau^{K_n}_{\bar{B}_M(\rho_M,\varepsilon)}$ is the hitting time of the relevant closed ball in $(M,d_M)$ by $X^{K_n}$. Applying the strong Markov property at time $\tau^{K_n}_{\bar{B}_M(\rho_M,\varepsilon)}$, it follows that, for all $\varepsilon,\delta>0$,
\begin{eqnarray*}
\lefteqn{\limsup_{n\rightarrow \infty}P^{K_n}_{x_n}\left(\tau_{\rho_{K_n}}^{K_n}\geq t\right)}\\
&\leq& \limsup_{n\rightarrow \infty}\left(P^{K_n}_{x_n}\left(\tau^{K_n}_{\bar{B}_M(\rho_M,\varepsilon)}\geq t-\delta\right)+\sup_{y\in \bar{B}_M(\rho_M,\varepsilon)\cap K_n}P^{K_n}_y\left(\tau^{K_n}_{\rho_{K_n}}\geq \delta\right)\right)\\
&\leq & P^{K}_x\left(\tau^{K}_{\rho_K}\geq t-\delta\right)+\varepsilon\delta^{-1}\mu_K(K),
\end{eqnarray*}
where for the final inequality follows from \eqref{k3}, \eqref{jio1} and the commute time identity for stochastic processes associated with resistance forms (see \cite[Lemma 2.4]{Croydonscalelim}, for example). Taking $\varepsilon\rightarrow 0$ and then $\delta\rightarrow 0$, we find that
\[\limsup_{n\rightarrow \infty}P^{K_n}_{x_n}\left(\tau_{\rho_{K_n}}^{K_n}\geq t\right)\leq P^{K}_x\left(\tau^{K}_{\rho_K}\geq t\right).\]
Conversely, if $\tau^K_{\rho_K}>t$, then, by continuity, there exists an $\varepsilon>0$ such that $X^K_{[0,t+\varepsilon]}\cap \bar{B}_M(\rho_M,\varepsilon)=\emptyset$. Hence, from \eqref{k5}, we deduce that $X^{K_n}_{[0,t+\varepsilon]}\cap \bar{B}_M(\rho_M,\varepsilon/2)=\emptyset$ for large $n$, and consequently, $\liminf_{n\rightarrow\infty}\tau_{\rho_{K_n}}^{K_n}> t$. Hence $\liminf_{n\rightarrow\infty}\tau_{\rho_{K_n}}^{K_n}\geq \tau_{\rho_{K}}^{K}$, which implies
\[\liminf_{n\rightarrow \infty}P^{K_n}_{x_n}\left(\tau_{\rho_{K_n}}^{K_n}> t\right)\geq \liminf_{n\rightarrow \infty}P^{K}_{x}\left(\tau_{\rho_{K}}^{K}> t\right).\]
Combining this and the corresponding upper bound confirms the claim.

Finally, together with the convergence at \eqref{xktconv}, the result of the previous paragraph readily implies that, almost-surely on our coupled probability space, we have the following weak convergence statement:
\begin{align*}
\lefteqn{P^{K_n}\left(\tau_{\mathrm{cov}}^+(K_n)-\tau_{\mathrm{cov}}(K_n)\in \cdot\:\vline\:\left(X^{K_n}_t\right)_{t\leq \tau_{\mathrm{cov}}(K_n)}\right)}\\
&\hspace{50pt}\rightarrow P^{K}\left(\tau_{\mathrm{cov}}^+(K)-\tau_{\mathrm{cov}}(K)\in \cdot\:\vline\:\left(X^{K}_t\right)_{t\leq \tau_{\mathrm{cov}}(K)}\right).
\end{align*}
Since we also know the cover times converge almost-surely on the coupled space, this is enough to complete the proof.
\end{proof}

To check the remaining part of Corollary \ref{crcor}, we strengthen Proposition \ref{momentbound} as follows. We stress that, as for Proposition \ref{momentbound}, this holds for the discrete-time or constant-speed random walk.

\begin{lem}\label{momentboundplus}
In the setting of Corollary \ref{pocor}, it holds that, for $p\geq 1$,
\[\sup_{n\geq 1} n^{-3p/2}\mathbb{E}^{T_n}\left(\tau_{\rm cov}^+({T}_n)^p\right)<\infty.\]
\end{lem}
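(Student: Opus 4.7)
The plan is to establish an annealed sub-exponential tail bound on $n^{-3/2}\tau_{\mathrm{cov}}^+(T_n)$ by extending the argument used for Proposition~\ref{momentbound}; the convergence of moments will then follow immediately, as in the proof of that result. First, I would restrict to the discrete-time random walk. The transfer to the constant-speed random walk will proceed identically to the first (unnamed) lemma at the start of Section~\ref{uisec}, since one still has the coupling under which the CSRW cover-and-return time equals a sum of $\tau_{\mathrm{cov}}^+$ (discrete-time) independent unit-mean exponentials, independent of the discrete clock.

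In the discrete-time setting, I would write $\tau_{\mathrm{cov}}^+(T_n)=\tau_{\mathrm{cov}}(T_n)+\sigma_n$, where $\sigma_n:=\tau_{\rho_{T_n}}\circ\theta_{\tau_{\mathrm{cov}}(T_n)}$ is the additional time taken by the walk to return to the root after the cover time. By the strong Markov property at $\tau_{\mathrm{cov}}(T_n)$, conditionally on $X^{T_n}_{\tau_{\mathrm{cov}}(T_n)}=y$, the law of $\sigma_n$ is that of $\tau_{\rho_{T_n}}$ under $P^{T_n}_y$, and in particular $\sigma_n$ is stochastically dominated by the supremum over $y\in T_n$ of such laws. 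A standard iteration of Markov's inequality via the strong Markov property yields, for any finite connected graph $T$ with a distinguished vertex $\rho$,
\[\sup_{y\in T}P^T_y\left(\tau_\rho> t\right)\leq 2^{-\lfloor t/(2t_{\mathrm{hit}}(T))\rfloor},\qquad\forall t\geq 0,\]
where $t_{\mathrm{hit}}(T):=\sup_{x,y\in T}E^T_x(\tau_y)$.

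Since $t_{\mathrm{hit}}(T_n)\leq t_{\mathrm{cov}}(T_n)$, combining the preceding display with the tail bound on $n^{-3/2}t_{\mathrm{cov}}(T_n)$ from Lemma~\ref{combin}, and optimising over an auxiliary parameter $\gamma\geq 1$ exactly as in the proofs of Lemma~\ref{combin} and Proposition~\ref{momentbound} (namely $\gamma=\lambda^{1/2}$), yields
\[\mathbb{P}^{T_n}\left(n^{-3/2}\sigma_n\geq \lambda\right)\leq Ce^{-c\sqrt\lambda}.\]
Adding this to the tail bound for $n^{-3/2}\tau_{\mathrm{cov}}(T_n)$ already derived in the proof of Proposition~\ref{momentbound} gives an analogous sub-exponential estimate for $\tau_{\mathrm{cov}}^+(T_n)$, which is more than enough to give the desired uniform $L^p$ bound for any $p\geq 1$.

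The plan involves no serious obstacle: the exponential tail bound for hitting times in step two is a classical consequence of iterated Markov plus the strong Markov property, and all the other ingredients are already in place from Proposition~\ref{momentbound}. The only mild care needed is in the interplay between the random quantity $t_{\mathrm{hit}}(T_n)$ appearing inside the conditional tail bound and the outer annealed expectation, but this is handled cleanly by the $\gamma$-optimisation step.
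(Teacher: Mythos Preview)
Your proposal is correct and follows essentially the same strategy as the paper: decompose $\tau_{\mathrm{cov}}^+=\tau_{\mathrm{cov}}+\sigma_n$, control $\sigma_n$ via an iterated Markov/strong Markov exponential tail bound for hitting times, and combine with the existing bounds from Section~\ref{uisec}. The only minor difference is in the choice of normalising quantity for the hitting-time tail: you use $t_{\mathrm{hit}}(T_n)\leq t_{\mathrm{cov}}(T_n)$ and then invoke Lemma~\ref{combin} directly, whereas the paper uses the commute-time identity to normalise by $nD_n$ and then controls $D_n/n^{1/2}$ via Cauchy--Schwarz and Lemma~\ref{cc2}. Your route is arguably slightly slicker in that it recycles Lemma~\ref{combin} wholesale rather than going back to the diameter estimate, but the two arguments are equivalent in substance.
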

\begin{proof}
Since
\[\tau_{\rm cov}^+({T}_n)^p\leq 2^{p-1}\left(\tau_{\rm cov}({T}_n)^p+\left(\tau_{\rm cov}^+({T}_n)-\tau_{\rm cov}({T}_n)\right)^p\right),\]
and we already have moment bounds for $\tau_{\rm cov}({T}_n)$ from Proposition \ref{momentbound}, it will suffice to check corresponding moment bounds for $\tau_{\rm cov}^+({T}_n)-\tau_{\rm cov}({T}_n)$. Specifically, we will show that, for $p\geq 1$,
\begin{equation}\label{hereto}
\sup_{n\geq 1}n^{-3p/2}\mathbf{E}\left(\sup_{x,y\in T_n}{E}^{T_n}_x\left(\left(\tau_y^{T_n}\right)^p\right)\right)<\infty,
\end{equation}
from which the desired result will readily follow. To this end, we simply observe that, by applying the Markov property repeatedly, for $k\in \mathbb{N}$,
\begin{align*}
\sup_{x,y\in T_n}P^{T_n}_{x}\left(\tau^{T_n}_y\geq  4 k n D_n\right)&\leq \sup_{x,y\in T_n}P^{T_n}_{x}\left(\tau^{T_n}_y\geq  4 (k-1) n D_n\right)\sup_{x,y\in T_n}P^{T_n}_{x}\left(\tau^{T_n}_y\geq  4  n D_n\right)\\
&\leq \sup_{x,y\in T_n}P^{T_n}_{x}\left(\tau^{T_n}_y\geq  4 (k-1) n D_n\right)\sup_{x,y\in T_n}\frac{E^{T_n}_{x}\left(\tau^{T_n}_y\right)}{4  n D_n}\\
&\leq \frac{1}{2}\sup_{x,y\in T_n}P^{T_n}_{x}\left(\tau^{T_n}_y\geq  4 (k-1) n D_n\right)\\
&\leq 2^{-k},
\end{align*}
where we again write $D_n$ for the diameter of $(T_n,d_n)$ and have applied the commute time identity for random walks on graph (which applies to both the CSRW and the discrete-time random walk), see \cite[Proposition 10.6]{LPW}, for example, to deduce the third inequality. From this, it is easy to conclude that, for $\lambda\geq 1$,
\begin{equation}\label{expdecc}
\sup_{n\geq 1}\sup_{x,y\in T_n}P^{T_n}_{x}\left(\tau^{T_n}_y\geq  \lambda n D_n\right)\leq Ce^{-c\lambda},
\end{equation}
where $C$ and $c$ are deterministic constants. Since we clearly have that
\begin{align*}
\lefteqn{n^{-3p/2}\mathbf{E}\left(\sup_{x,y\in T_n}{E}^{T_n}_x\left(\left(\tau_y^{T_n}\right)^p\right)\right)}\\
&= \mathbf{E}\left(\sup_{x,y\in T_n}{E}^{T_n}_x\left(\left(\frac{\tau_y^{T_n}}{nD_n}\right)^p\right)\times\left(\frac{D_n}{n^{1/2}}\right)^p\right)\\
&\leq \mathbf{E}\left(\sup_{x,y\in T_n}{E}^{T_n}_x\left(\left(\frac{\tau_y^{T_n}}{nD_n}\right)^{2p}\right)\right)^{1/2}\mathbf{E}\left(\left(\frac{D_n}{n^{1/2}}\right)^{2p}\right)^{1/2},
\end{align*}
the result at \eqref{hereto} follows from \eqref{expdecc} and Lemma \ref{cc2}.
\end{proof}

\begin{proof}[Proof of Corollary \ref{crcor}]
With Lemma \ref{l81} in place of Theorem \ref{covertimeconv}, parts (a) and (b) are checked in exactly the same way as Theorem \ref{cor:convres} and Corollary \ref{anncor} were. As for part (c), since we have Lemma \ref{momentboundplus}, again this can be proved in the same way as the corresponding result for the cover time, i.e.\ Corollary \ref{pocor}.
\end{proof}

\begin{funding}
DC received support from JSPS Grant-in-Aid for Scientific Research (C) 19K03540, 24K06758 and the Research Institute for Mathematical Sciences, an International Joint Usage/Research Center located in Kyoto University. LM was supported by the ANR grant ProGraM (ANR-19-CE40-0025).
\end{funding}


\bibliographystyle{imsart-number} 
\bibliography{coverCRT}

\end{document}